\patchcmd{\l@chapter}{1.0em}{0.8em}{}{}
\subjclass[2010]{Primary: 14F42, 19E15, 55P42, Secondary: 14F45, 55P57}
\keywords{Motivic homotopy theory, stable homotopy at infinity, punctured tubular neighborhoods, quadratic invariants}
\title{Punctured tubular neighborhoods and stable homotopy at infinity}
\author{Adrien Dubouloz}
\address{IMB UMR5584, CNRS, Universit{\'e} de Bourgogne Franche-Comt{\'e}, Dijon, France}
\email{Adrien.Dubouloz@u-bourgogne.fr}
\author{Fr{\'e}d{\'e}ric D{\'e}glise}
\address{ENS de Lyon, UMPA, UMR 5669, 46 all{\'e}e d'Italie, 69364 Lyon Cedex 07, France}
\email{frederic.deglise@ens-lyon.fr}
\author{Paul Arne {\O}stv{\ae}r}
\address{Department of Mathematics Federigo Enriques, University of Milan, Italy \&
Department of Mathematics, University of Oslo, Norway}
\email{paul.oestvaer@unimi.it, paularne@math.uio.no}
\date{\today}
\newtheorem{thm}{Theorem}[subsection]
\newtheorem{prop}[thm]{Proposition}
\newtheorem{lm}[thm]{Lemma}
\newtheorem{cor}[thm]{Corollary}
\theoremstyle{remark}
\newtheorem{rem}[thm]{Remark}
\newtheorem{notation}[thm]{Notations}
\newtheorem{ex}[thm]{Example}
\theoremstyle{definition}
\newtheorem{df}[thm]{Definition}
\newtheorem{num}[thm]{}
\numberwithin{equation}{thm}
\newtheorem{thm*}{Theorem}
\DeclareMathOperator{\Hom}{Hom}
\DeclareMathOperator{\End}{End}
\DeclareMathOperator{\Spec}{Spec}
\DeclareMathOperator{\uHom}{\underline{Hom}}
\DeclareMathOperator{\Map}{Map}
\DeclareMathOperator{\Th}{Th}
\DeclareMathOperator{\Tw}{Tw} 
\DeclareMathOperator{\ho}{h} 
\DeclareMathOperator{\Isom}{Isom} 
\DeclareMathOperator{\CH}{CH}
\DeclareMathOperator{\CHt}{\widetilde{CH}}
\DeclareMathOperator{\Zt}{\tilde Z}
\newcommand{\CHW}[1]{\widetilde{\mathrm{CH}}{}^{#1}}
\newcommand{\ZW}[1]{\tilde Z{}^{#1}}
\newcommand{\ZWhlg}[1]{\tilde Z{}_{#1}}
\DeclareMathOperator{\GW}{GW}
\DeclareMathOperator{\HM}{\underline H} 
\newcommand{\HMat}{\underline{\mathrm{H}}^{\mathrm{AT}}} 
\DeclareMathOperator{\Pic}{Pic}
\newcommand{\KMW}{\mathrm K^{MW}}
\newcommand{\Or}{\mathscr Or} 
\newcommand{\cech}{\check{\mathrm S}}
\newcommand{\ord}{\mathrm{ord}}
\newcommand{\cecho}{\check{\mathrm S}^{\ord}}
\newcommand{\cC}{\check{\mathrm C}} 
\newcommand{\derR}{\mathbf{R}}
\DeclareMathOperator{\colim}{colim}
\newcommand{\twist}[1]{\langle #1 \rangle} 
\DeclareMathOperator{\uK}{\underline K} 
\DeclareMathOperator{\Ksp}{\mathcal K} 
\DeclareMathOperator{\VV}{\mathbb V} 
\DeclareMathOperator{\uPic}{\underline{Pic}} 
\DeclareMathOperator{\uZ}{\underline{\ZZ}} 
\DeclareMathOperator{\uPicO}{\underline{Pic}^{or}} 
\DeclareMathOperator{\tdeg}{\widetilde{deg}}
\newcommand{\TN}{\operatorname{TN}^\times} 
\newcommand{\qiso}{\rightarrowtail} 
\DeclareMathOperator{\SH}{SH}
\DeclareMathOperator{\DM}{DM}
\DeclareMathOperator{\DMAT}{DM^{AT}}
\DeclareMathOperator{\DA}{D_{\AA^1}}
\newcommand{\DAx}[1]{\mathrm D_{\AA^1,#1}}
\DeclareMathOperator{\tDM}{\widetilde{DM}}
\DeclareMathOperator{\DB}{D_{\mathit B}^\sigma}
\DeclareMathOperator{\DHdg}{D^\mathit m_{Hdg}}
\DeclareMathOperator{\MMAT}{MM^{AT}}
\DeclareMathOperator{\MMN}{\mathcal M}
\DeclareMathOperator{\Der}{D}  
\DeclareMathOperator{\Sh}{Sh}  
\DeclareMathOperator{\Sp}{Sp}  
\DeclareMathOperator{\T}{\mathscr T} 
\DeclareMathOperator{\iC}{\mathscr C} 
\DeclareMathOperator{\Sch}{Sch}
\DeclareMathOperator{\Sm}{Sm}
\DeclareMathOperator{\hSm}{h-Sm}
\newcommand{\hSmp}{\operatorname{h-Sm}^{\mathrm{prop}}}
\DeclareMathOperator{\HZ}{H\ZZ} 
\DeclareMathOperator{\Id}{Id}
\newcommand{\NN} {\mathbb N}
\newcommand{\ZZ} {\mathbb Z}
\newcommand{\QQ} {\mathbb Q}
\newcommand{\RR} {\mathbb R}
\newcommand{\CC} {\mathbb C}
\renewcommand{\AA} {\mathbf A}
\newcommand{\PP} {\mathbf P}
\newcommand{\GG} {\mathbf{G}_m}
\newcommand{\GGx}[1] {\mathbf{G}_{m,#1}}
\newcommand{\uGGx}[1] {\underline{\mathbf{G}}_{m,#1}} 
\newcommand{\SL}{\mathbf{SL}}
\newcommand{\E}{\mathbb E}
\newcommand{\cO}{\mathcal O}
\newcommand{\cL}{\mathcal L}
\newcommand{\cM}{\mathcal M}
\newcommand{\cN}{\mathcal N}
\newcommand{\calC}{\mathcal C}
\newcommand{\cD}{\mathcal D} 
\newcommand{\cV}{\mathcal V} 
\newcommand{\uKMW}{\underline{\mathbf K}^{MW}}
\newcommand{\un}{\mathbf{1}} 
\newcommand{\thom}{\tau} 
\newcommand{\Dinj}{\Delta^{\mathrm{inj}}}    
\newcommand{\htp}{\Pi} 
\newcommand{\ehtp}{\bar \Pi} 
\newcommand{\eehtp}{\bar{\underline \Pi}} 
\newcommand{\uZZ} {\underline{\mathbb Z}} 
\newcommand{\cohtp}{\mathrm H} 
\newcommand{\zar}{{\textrm{Zar}}}
\newcommand{\nis}{{\textrm{Nis}}}
\newcommand{\et}{\textrm{\'et}}
\newcommand{\cdh}{{\textrm{cdh}}}
\newcommand{\h}  {{\textrm{h}}}
\DeclareMathOperator{\rk}{rk}
\DeclareMathOperator{\Tr}{Tr}
\begin{document}

\begin{abstract}
We initiate a study of punctured tubular neighborhoods and homotopy theory at infinity in motivic settings.
We use the six functor formalism to give an intrinsic definition of the stable motivic homotopy type at infinity 
of an algebraic variety.
Our main computational tools include cdh-descent for normal crossing divisors, 
Euler classes, Gysin maps, and homotopy purity.
Under $\ell$-adic realization, 
the motive at infinity recovers a formula for vanishing cycles due to Rapoport-Zink;
similar results hold for Steenbrink's limiting Hodge structures and Wildeshaus' boundary motives.
Under the topological Betti realization, 
the stable motivic homotopy type at infinity of an algebraic variety recovers the singular complex at infinity 
of the corresponding topological space. We coin the notion of homotopically smooth morphisms with respect to a motivic $\infty$-category and 
use it to show a generalization to virtual vector bundles of Morel-Voevodsky's purity theorem, 
which yields an escalated form of Atiyah duality with compact support.
Further, 
we study a quadratic refinement of intersection degrees, 
taking values in motivic cohomotopy groups.
For relative surfaces, 
we show the stable motivic homotopy type at infinity witnesses a quadratic version of Mumford's plumbing construction 
for smooth complex algebraic surfaces.
Our construction and computation of stable motivic links of Du Val singularities on normal surfaces are expressed entirely 
in terms of Dynkin diagrams.
In characteristic $p>0$,
this improves on Artin's analysis of Du Val singularities through \'etale local fundamental groups.
The main results in the paper are also valid for $\ell$-adic sheaves, 
mixed Hodge modules, and, more generally, motivic $\infty$-categories.
\end{abstract}

\maketitle

\setcounter{tocdepth}{2}
\tableofcontents

\section{Introduction}
\label{section:introduction}

\subsection{Context and motivation}
Topology at infinity is essentially the study of topological properties that persistently occur in complements 
of compact sets.
A space is intuitively simply connected at infinity if one can collapse loops far away from any 
small subspace. 
Euclidean space ${\bf R}^{n}$, 
$n\geq 3$, 
is the unique open contractible $n$-manifold that is simply connected at infinity.
For example, 
the Whitehead manifold is not simply connected at infinity and therefore not homeomorphic to ${\bf R}^{3}$.
This article describes our first attempt at finding a unified theory of punctured tubular neighborhoods and 
homotopy at infinity for open manifolds and smooth varieties.
Our overriding goal is to develop a study of intrinsic motivic invariants which can distinguish between 
$\AA^{1}$-contractible varieties.
For background on motivic homotopy theory and $\AA^{1}$-contractible varieties, 
we refer to the survey \cite{asokostvar}.
The quest for finding invariants that can help classify smooth varieties over fields up to $\AA^{1}$-homotopy 
can be traced back to work by Asok-Morel \cite{zbMATH05918255}.
Their ideas on $\AA^{1}$-h-cobordisms and $\AA^{1}$-surgery theory, 
with applications towards vector bundles over projective spaces in Asok-Kebekus-Wendt \cite{zbMATH06761479}, 
have inspired our search for motivic invariants with a pronounced geometric topological flavor.
Another great source of inspiration is Zariski's cancelation problem \cite{zbMATH06623646},
which remains difficult because of the lack of computable invariants available to distinguish non-isomorphic 
$\AA^{1}$-contractible smooth affine varieties such as the Koras-Russell cubic threefold and $\AA^{3}$ 
(see \cite{zbMATH06999230}, \cite{zbMATH06741275}).
Our notion of motivic homotopy theory at infinity combines ideas appearing in the works of 
Spitzweck \cite{zbMATH05076906},  
Wildeshaus \cite{Wild1}, 
Levine \cite{zbMATH05165887}, 
Asok-Doran \cite{zbMATH05238016}, 
and Asok-{\O}stv{\ae}r \cite{asokostvar}. 
\vspace{0.1in}

Our approach makes extensive use of the six-functor formalism in stable motivic homotopy theory, 
as developed in \cite{Ayoub,CD3};
we review and complement this material in \Cref{section:shai}.
Let $S$ be a qcqs (quasi-compact quasi-separated) base scheme. 
Its stable motivic homotopy category $\SH(S)$ is a closed symmetric monoidal $\infty$-category, 
see, e.g., \cite{DRO,hoyoisquadratic,JardineMSS,robalo}.
To any separated $S$-scheme of finite type $f\colon X\to S$ we define $\Pi_{S}^{\infty}(X)$, 
the \emph{stable motivic homotopy type at infinity of $X$}, 
by the homotopy exact sequence
\begin{equation}
\label{equation:hes}
\Pi_{S}^{\infty}(X) \rightarrow f_!f^!(\un_S) \xrightarrow{\alpha_X} f_*f^!(\un_S)
\end{equation}
Here $\un_S$ is the motivic sphere spectrum over $S$, 
$f_!f^!(\un_S)=\htp_S(X)$ is the stable homotopy type of $X$ and
$ f_*f^!(\un_S)=\htp^c_S(X)$ is the properly supported stable homotopy type of $X$.
The canonical morphism $\alpha_X$ is obtained from the six-functor formalism for the stable motivic 
homotopy category $\SH(S)$, 
which implies the following fundamental properties.
\vspace{0.05in}
\begin{itemize}
\item If $X/S$ is smooth, then $f_!f^!(\un_S)=\Sigma^\infty X_+$ is the motivic suspension spectrum of $X$
\vspace{0.05in}
\item If $X/S$ is proper, then $\alpha_X$ is an isomorphism
\vspace{0.05in}
\item The morphism $\alpha_X$ is covariant with respect to proper morphisms and contravariant with respect to 
\'etale morphisms
\end{itemize}

With the intrinsic definition of $\Pi_{S}^{\infty}(X)$ in \eqref{equation:hes} we deduce a number of novel properties 
in the spirit of proper homotopy theory.
Let us fix a compactification $\bar X$ of $X$ over $S$ and denote by $\partial X$ its reduced \emph{boundary}.
Then the induced immersions $j:X \rightarrow \bar X$, $i:\partial X \rightarrow X$ form a diagram of $S$-schemes
\begin{equation}
\label{equation:compactdiagram}
\xymatrix@R=14pt@C=30pt{
X\ar@{^(->}^j[r]\ar_{f}[rd] & \bar X\ar[d]
& \partial X\ar@{_(->}_i[l]\ar^{g}[ld] \\
& S & }
\end{equation}
We observe the stable homotopy type at infinity of $X$ is determined by the data in \eqref{equation:compactdiagram} 
via a canonical equivalence
\begin{equation}
\label{equation:piinfinitycompact}
\Pi_{S}^\infty(X) 
\simeq 
g_*i^*j_*f^!(\un_S)
\end{equation}
This shows that $\Pi_{S}^\infty(X)$ is independent of the chosen compactification and that our construction has 
properties analogous to Deligne's vanishing cycle functor for \'etale sheaves, 
see \cite{SGA7II}.
We may reformulate \eqref{equation:piinfinitycompact} by means of the canonically induced homotopy exact sequence
\begin{equation}
\label{equation:piinfinityexact}
\Pi_{S}^\infty(X) \to
\Pi_{S}(\partial X) \oplus \Pi_{S}(X) 
\xrightarrow{i_*+j_*} 
\Pi_{S}(\bar X)
\end{equation}

In the notation in \eqref{equation:compactdiagram}, 
let us assume $\bar X$, $\partial X$ are smooth $S$-schemes, 
and write $N$ for the normal bundle of $\partial X$ in $\bar X$.
In \Cref{subsection:shtaivptn} we use the Euler class $e(N)$ in $\SH(S)$ to deduce the homotopy exact sequence
\begin{equation}
\label{equation:eulerhes}
\Pi_{S}^\infty(X) 
\rightarrow 
\Pi_{S}(\partial X)
\xrightarrow{e(N)} 
\Sigma^\infty\Th_{S}(N)
\end{equation}
It is helpful to think of the passage from \eqref{equation:hes} to \eqref{equation:eulerhes} in the language of 
problem-solving.
Our ``problem'' is to understand $\Pi_{S}^\infty(X)$ and the ``solution'' in the smooth case is the Euler class for 
the normal bundle of the closed immersion $\partial X\not\hookrightarrow\bar X$.
\vspace{0.1in}

In the following, 
we further assume $\bar X$ is a smooth proper $S$-scheme and $\partial X$ is a normal crossing divisor on $\bar X$.
We may write $\partial X=\cup_{i \in I} \partial_i X$ as the union of its irreducible components $\partial_i X$, 
so there is a canonical closed immersion $\nu_i:\partial_i X \rightarrow \bar X$.
For any subset $J \subset I$, 
we equip $\partial_J X:=\cap_{j \in J} \partial _j X$ with its reduced subscheme structure,
where $\cap$ is suggestive notation for fiber products over the boundary $\partial X$.
If $J \subset K$, there is a canonical proper morphism $\nu_K^J:\partial_K X \rightarrow \partial_J X$.
By means of descent for the cdh-covering 
$$
\sqcup_{i\in I}\partial_i X\to \partial X
$$ 
we identify $\Pi_{S}(\partial X)$ with the 
colimit\footnote{Limits and colimits in this paper are taken in the sense of $\infty$-categories. 
To construct functorial Gysin maps we appeal to \Cref{prop:replacement}.}
of the naturally induced diagram in $\SH(S)$
\begin{equation}
\label{equation:partialXnc}
\Pi_{S}(\partial_I X)
\longrightarrow
\bigoplus_{\sharp J= \sharp I - 1} \Pi_{S}(\partial_J X)
\begin{smallmatrix}
\longrightarrow\\
\cdots \\
\longrightarrow 
\end{smallmatrix} 
\bigoplus_{\sharp J= \sharp I - 2} \Pi_{S}(\partial_J X)
\begin{smallmatrix}
\longrightarrow\\
\cdots \\
\longrightarrow 
\end{smallmatrix} 
\cdots
\begin{smallmatrix}
\longrightarrow\\
\cdots \\
\longrightarrow 
\end{smallmatrix} 
\bigoplus_{i\in I} \Pi_{S}(\partial_i X)
\end{equation} 
The face map on the summand $\Pi_{S}(\partial_K X)$ is defined by the pushforward maps
$$
\sum_{J\subset K, \sharp J= \sharp K-1} 
(\nu_{K}^J)_*
$$
Similarly, 
we identify $\Sigma^\infty\Th_{S}(N)$ with the limit of the naturally induced diagram in $\SH(S)$
\begin{equation}
\label{equation:partialXnc2}
\bigoplus_{i\in I} 
\Sigma^\infty\Th_{S}(N_i)
\begin{smallmatrix}
\longrightarrow\\
\cdots \\
\longrightarrow 
\end{smallmatrix} 
\bigoplus_{\sharp J = 2} \Sigma^\infty\Th_{S}(N_J)
\begin{smallmatrix}
\longrightarrow\\
\cdots \\
\longrightarrow 
\end{smallmatrix} 
\bigoplus_{\sharp J = 3} \Sigma^\infty\Th_{S}(N_J)
\begin{smallmatrix}
\longrightarrow\\
\cdots \\
\longrightarrow 
\end{smallmatrix} 
\cdots
\longrightarrow 
\Sigma^\infty\Th_{S}(N_I)
\end{equation}
Here, 
$N_J$ is the normal bundle of $\partial_J X$ in $\bar X$, 
and the coface map on the summand $\Sigma^\infty\Th_{S}(N_K)$ is defined by the Gysin maps
$$
\sum_{J\subset K, \sharp J= \sharp K-1} 
(\nu_{K}^J)^!
$$ 
Our general computations culminate in \Cref{thm:maincomputation}, 
where we identify $\Pi_{S}^\infty(X)$ with the homotopy fiber of the map
$$
\colim_{n \in (\Dinj)^{op}} \left(\bigoplus_{J \subset I, \sharp J=n+1} 
\Pi_{S}(\partial_JX)\right)
\xrightarrow \mu 
\underset{n \in \Dinj}\lim \left(\bigoplus_{J \subset I, \sharp J=m+1} 
\Sigma^\infty\Th_{S}(N_J)\right)
$$
induced by
$$
(\mu_{i,j})_{i,j\in I}
\colon
\bigoplus_{i \in I} \Pi_{S}(\partial_iX) 
\longrightarrow 
\bigoplus_{j \in I} \Sigma^\infty\Th_{S}(N_j)
$$
More precisely, 
$\mu_{i,j}$
is shorthand for the composite map
$$
\Pi_{S}(\partial_iX) 
\xrightarrow{\nu_{i*}}
\Pi_{S}(\bar X) 
\rightarrow
\Sigma^\infty\left(\frac{\bar X}{\bar X-\partial_jX}\right)
\xrightarrow \simeq
\Sigma^\infty\Th_{S}(N_j)
$$

To refine these techniques, we develop a theory of duality with compact support. 
We generalize the homotopy purity theorem and give new examples of rigid objects in the process.
Our approach is based on the notion of a homotopically smooth morphism.
If $f:X \rightarrow S$ is a smoothable lci morphism with virtual bundle $\tau_f$ over $X$, 
we say that $f$ is \emph{homotopically smooth} (\emph{h-smooth}) if the naturally induced morphism 
$$
\mathfrak p_f:\Th(\tau_f) \rightarrow f^!(\un_S)
$$
is an isomorphism (see \Cref{df:hsmooth} for more details).
Any closed immersion between smooth varieties over a field is h-smooth.
When $f$ is h-smooth and $i:Z \rightarrow X$ is a closed immersion with $Z/S$ h-smooth, 
\Cref{thm:generalizedhomotopypurity} shows the relative purity isomorphism
$$
\htp_S(X/X-Z,v) \simeq \htp_S(Z,i^*v+N_i)
$$
Here, 
$v$ is a virtual vector bundle over $X$ and $N_i$ is the (necessarily regular) normal bundle of 
$i:Z \rightarrow X$.
Under the additional assumption that $\htp_S(X,v)$ is rigid, 
we show in \Cref{subsection:asdr} the duality with compact support isomorphism
$$
\htp_S(X,v)^\vee \simeq \htp_S^c(X,-v-\tau_f)
$$
This duality isomorphism can be seen as a motivic analog of classical topological results due to Atiyah 
\cite[\S3]{atiyah:thomcomplexes}, Milnor-Spanier \cite[Lemma 2]{milnorspanier}.
As an application,
we identify the stable motivic homotopy type at infinity of hyperplane arrangements in 
\Cref{subsection:smhtaioha}.
\vspace{0.1in}

We define the punctured tubular neighborhood $\TN_S(X,Z)$ of a closed immersion $i\colon Z\to X$ in 
\Cref{section:shai}.
For points on hypersurfaces in affine space, 
this key invariant specializes in links considered successfully in topology by Milnor and Mumford 
(see \cite{Milnor}, \cite{mumfordihes}).
It turns out that $\TN_S(X,Z)$ is a local invariant in the sense that it only depends on a 
Nisnevich neighborhood 
of $Z$ in $X$, and, moreover, it satisfies a cdh-excision property (see \Cref{cor:cdh-invariance}).
The geometric content of our construction is transparently visible in examples, 
e.g., 
for an ordinary double point on a threefold (see \Cref{ex:doublepointon3fold}).
We invite the interested reader to compare with Levine's notion of motivic punctured tubular neighborhoods in 
\cite{zbMATH05165887}. 

In the situation with the compactification of a separated morphism of finite type $f\colon X\to S$, 
see \eqref{equation:compactdiagram}, 
\Cref{prop:basic_comput_thp-infty} shows there exists a canonical isomorphism
$$
\htp^\infty_S(X) \simeq \TN_S(\bar X,\partial X)
$$
which is natural in $(\bar X,X,\partial X)$, 
covariantly functorial for proper maps, 
and contravariantly functorial for \'etale maps.
Via this isomorphism, we can study stable motivic homotopy types at infinity through the 
geometric construction of punctured tubular neighborhoods.
This perspective helps us clarify a few simple and unifying principles across motivic $\infty$-categories.
For example, 
we generalize Wildeshaus' analytic invariance theorem for boundary motives \cite[Theorem 5.1]{Wild1}:
A closed pair of $S$-schemes $(X,Z)$ means a closed immersion $Z\not\hookrightarrow X$ of $S$-schemes, 
and a morphism $\phi\colon (Y,T)\to (X,Z)$ is an $S$-morphism $\phi\colon Y\to X$ such that $\phi^{-1}(Z)=T$.
Suppose $f:T \rightarrow Z$ is an isomorphism that extends to an isomorphism of the respective formal completions
$\mathfrak f:\hat Y_T \rightarrow \hat X_Z$.
If $S$ is an excellent scheme, 
\Cref{thm:analytic} shows that there exists a canonical isomorphism
$$
\mathfrak f^*:
\TN_S(Y,T) 
\xrightarrow{\simeq} 
\TN_S(X,Z)
$$
In particular, 
the stable motivic homotopy type at infinity functor satisfies analytical invariance.
\Cref{prop:iso_compute_pre_Mumford} provides a way of identifying punctured tubular 
neighborhoods, without appealing to orientations,
in terms of (the homotopy fiber of) a geometrically defined fundamental class. 
\vspace{0.1in}

In \Cref{section:Mumford}, 
we employ punctured tubular neighborhoods to study a theory of motivic plumbing on surfaces;
this constitutes a refinement and extension of Mumford's seminal work in \cite{mumfordihes}.
 It provides a successful transportation of a construction from surgery theory into motivic
  homotopy, extending the ideas of \cite{zbMATH05918255}.
The setting is a closed pair \((X, D)\) consisting of a smooth surface \(X\) over a field \(k\), along with a normal crossing divisor \(D\) in \(X\) that is proper over \(k\). We will refer to this pair as a \emph{log-pair over \(k\)}. Additionally, as stated in \Cref{num:hyp_Mumford1}, we assume that for all \(i \in I\), the component \(D_i\) has a rational point \(x_i \in D_i(k)\) that does not belong to any other components of \(D\).

One part of \Cref{thm:smhatoomatrix-general}, 
which is a stable motivic homotopical analog of Mumford's calculation
in \cite{mumfordihes} obtained via the \emph{plumbing construction}, 
states that if the invertible sheaves $\omega_X|_D$ over $D$, 
and $\omega_i$ over $D_i$ for any $i \in I$, are orientable, 
then the punctured tubular neighborhood $\TN_k(X,D)$ 
--- or equivalently when $X$ is proper (\Cref{prop:basic_comput_thp-infty}) 
the homotopy at infinity $\htp^\infty_k(X-D)$ ---
is isomorphic to the cone of a map of the form
(we make the entries of the matrix explicit depending on choices of orientation classes, 
and $\htp(\mathcal D)$ denotes the ``Artin part'' of $\htp(D)$ defined in \Cref{prop:Mumford_source})
$$
\begin{pmatrix}
a & b' \\
b & \mu
\end{pmatrix}:
\htp(\mathcal D) \oplus \bigoplus_{i \in I} \un_k(1)[2] 
\rightarrow 
\htp(\mathcal D)^\vee(2)[4]  \oplus \bigoplus_{j \in I} \un_k(1)[2] 
$$
We refer to 
$\mu=(\mu_{ij})\colon\bigoplus_{i\in I}\un_k(1)[2]\rightarrow\bigoplus_{j\in I}\un_k(1)[2]$ 
as the ``quadratic Mumford matrix" since, 
over the complex numbers, 
the above specializes to computations carried out in \cite{mumfordihes}.
Its coefficients take values in the endomorphism ring of the sphere spectrum or unit $\un_k$.
We interpret $\mu_{ij}$ as the class of a quadratic form 
$(\partial_i X,\partial_j X)_{quad} \in \GW(k)$
 in the Grothendieck-Witt ring called the {\it quadratic degree} 
of the intersections of the divisors $\partial_iX$ and $\partial_j X$.
The close connection with quadratic forms arises since elements of the $i$th Chow-Witt group are represented by 
formal sums of subvarieties $Z$ of codimenison $i$ equipped with an element of $\GW(k(Z))$.
Moreover, 
the rank of the quadratic degree equals the corresponding Mumford degree.
\vspace{0.1in}

In \Cref{sec:KthNCD}, 
we discuss algebraic $K$-theory and Picard groups of $1$-dimensional schemes 
and normal crossing divisors on regular $2$-dimensional schemes.
We demonstrate that Thom spaces over a (possibly singular) 
$1$-dimensional base scheme can be trivialized if an orientation class exists.
The main result, 
\Cref{thm:orientationncdsurfaces}, 
identifies the pointed set of orientation classes of line bundles
over (eventually singular) $1$-dimensional schemes.  
Our findings in \Cref{subsection:thetacharacteristic} are applicable to 
arbitrary normal crossing divisors on surfaces;
if each branch has a positive genus, 
we assume they are oriented, or in other words, 
equipped with a Theta characteristic.
The results in Section 5 depend on our notion of an orientation class 
introduced in \Cref{subsection:ovbaqi}.
We show that several constructions in motivic homotopy theory, 
e.g., 
quadratic degree \cite{zbMATH07217788},
Gysin maps for Chow-Witt groups \cite{arXiv:2403.09266}, \cite{zbMATH07217786},  
and quadratic linking degrees \cite{arXiv:2210.11048} 
depend on choosing an orientation class, 
see \Cref{subsection:q0caqd}.
\vspace{0.1in}

Further, we specialize our results to motives.
When $k$ is a finite field, a global field, or a number ring, 
we have the motivic $t$-structure on rational Artin-Tate motives at our disposal
(see \cite{LevineAT} for the case of fields, and \cite{ScholbachAT} for number rings).
We let $\DMAT(K,\QQ)$ be the triangulated category of (constructible) rational Artin-Tate motives.
From \cite{LevineAT} it follows that $\DMAT(K,\QQ)$ admits a motivic t-structure,
whose heart is the Tannakian category $\MMAT(K,\QQ)$ of Artin-Tate motives.
In particular, 
one gets a homological and monoidal functor
$$
\HM_0:\DMAT(K,\QQ) \rightarrow \MMAT(K,\QQ)
$$
We define the Artin-Tate motive
$$
\HM_i(\TN(X,D)):=\HM_0(\TN(X,D)[-i])
$$
as the $i$-th (motivic) homology of the punctured tubular neighborhood of $(X,D)$.
When $X$ is in addition proper over $K$, 
this is the homology of the boundary motive of $(X-D)$
(see \Cref{ex:realization} and \Cref{prop:basic_comput_thp-infty}), 
or the \emph{motivic homology at infinity}
$$
\HM_i^\infty(X-D)=\HM_i(\TN(X,D))
$$
In \Cref{cor:Mumford_ATmotives} we show the homology motive $\HM_i(X)$ vanishes for $i\not\in [0,3]$ 
and there is an exact sequence in the Tannakian category $\MMAT(S,\QQ)$ of Artin-Tate motives
\begin{align*}
0 \rightarrow &\HM_3(\TN(X,D)) \rightarrow \bigoplus_{i \in I} \un_S(2)
\xrightarrow{\sum_{i<j} p_{ij}^{i!}-p_{ij}^{j!}} \bigoplus_{i<j} M_S(D_{ij})(2)  \\
& \rightarrow \HM_2(\TN(X,D)) \rightarrow \bigoplus_{i \in I} \un_S(1) 
\xrightarrow{\ \mu\ } \bigoplus_{j \in I} \un_S(1) \\
& \rightarrow \HM_1(\TN(X,D)) \rightarrow \bigoplus_{i<j} M_S(D_{ij})
\xrightarrow{\sum_{i<j} p^i_{ij*}-p^j_{ij*}} \bigoplus_{i \in I} \un_S 
\rightarrow \HM_0(\TN(X,D)) \rightarrow 0
\end{align*}
Here $\mu$ is the quadratic Mumford matrix and $M_S(D_{ij})$ is the 
mixed Artin-Tate motive of $D_{ij}=D_i\times_X D_j$.
In the above, 
$\HM_0(\TN(X,D))$ and $\HM_3(\TN(X,D))$ are pure of respective weights $0$ and $-4$,
while $\HM_1(\TN(X,D))$ and $\HM_2(\TN(X,D))$ are mixed of weights $\{0,-2\}$ and $\{-2,-4\}$, 
respectively (see \cite{IvMo} for the notion of weights).
We extend the above result to the case where the components of $D$
may have positive genus, 
at the price of working in the category of integral Nori motives $\MMN(K,\ZZ)$
when $K$ is a field of characteristic $0$ with a fixed complex embedding; see \Cref{prop:Mumford_motives} for a precise formulation.

Moreover, 
we study the example of \emph{Ramanujam's surface} $\Sigma$ \cite{Ramanujam}.
Over the complex numbers, 
it is a topologically contractible affine algebraic surface which is not 
homeomorphic to the affine plane.
Working over a field $k$ of characteristic different from $2$, 
\Cref{ex:ramsurface} identifies $\Sigma$'s integral motive at infinity $M^\infty(\Sigma)$ with $\un_k\oplus \un_k(2)[3]$.
\vspace{0.1in}

Our setup provides universal formulas in the various realizations of motives, 
e.g., 
$\ell$-adic, rigid, syntomic, Galois representations, etc.
For example, 
the computation \eqref{equation:RZgeneralization} specializes under $\ell$-adic realization 
to the Rapoport-Zink formula for vanishing cycles \cite[Lemma 2.5]{RapZink}, 
and similarly for Steenbrink's limit Hodge structures \cite{Steenb}.
We expect that \Cref{cor:snccorollary} yields an explicit formula for Ayoub's nearby cycles in the semi-stable case,
cf.~\cite{zbMATH05194871}.
\vspace{0.1in}

We illustrate the general case with concrete examples of $\mathbf{A}^{1}$-equivalent smooth affine surfaces with 
non-isomorphic stable motivic homotopy types at infinity.
For any integer $n>0$, 
the Danielewski surface $D_{n}$ is the closed subscheme of $\mathbf{A}^{3}$ cut out by the equation $x^{n}z=y(y-1)$, 
see \cite{Danielewskisurfaces}.
We note that $D_{1}$ is the Jouanolou device over $\mathbf{P}^{1}$;
in fact, 
$D_{n}$ is $\mathbf{A}^{1}$-equivalent to $\mathbf{P}^{1}$ \cite[\S 3.4]{asokostvar}.
Over any field $k$, 
one can distinguish between $\Pi_{k}^{\infty}(D_{m})$ and $\Pi_{k}^{\infty}(D_{n})$ for $m\neq n$
by viewing Danielewski surfaces as affine modifications of $\mathbf{A}^{2}$.
We refer to \Cref{subsection:examples} for precise statements and further examples, 
\cite{zbMATH07149737} for background on $\mathbf{A}^{1}$-contractibility of affine modifications, 
and \cite{fieseler} for first homology at infinity of Danielewski surfaces over the complex numbers.
The affine modifications give an affirmative answer to Problem 3.4.5 in \cite{asokostvar}.
\vspace{0.1in}

At this stage, 
we should come clean on some technical points concerning fundamental classes and orientations.
First, 
our setup gives a quadratic generalization of Mumford's plumbing construction 
\cite{mumfordihes} using Chow-Witt groups.
While Mumford uses orientations on the normal bundles of the branches,
which are copies of the projective line, 
much of the subtleties in our setting come from working with twisted Milnor-Witt $K$-theory sheaves.  
The latter is needed to compute the quadratic degree maps of the intersections of the branches 
taking values in the Grothendieck-Witt ring.
On the one hand, we develop the idea of parallelization to compute 
"the fundamental class of the diagonal" in terms of 
motivic fundamental classes \cite{DJK}.
In another direction closely related to differential geometry and quadratic enumerative geometry, 
we discuss the foundations for orientations of algebraic vector bundles via quadratic isomorphisms. 
Making clever choices of orientation classes is a key point in our computations of 
quadratic Mumford matrices.
This approach enables us to  
compute stable motivic invariants without appealing to $\SL$-orientations.
\Cref{section:ovbaqi} explains this material, 
e.g., 
the orientation classes of invertible sheaves on arbitrary schemes,
where we also introduce and show some fundamental properties of quadratic Picard groupoids.
\vspace{0.1in}

Punctured tubular neighborhoods can also be applied to the study of isolated singularities of surfaces, in particular rational double points, also known as Du Val singularities. In characteristic $p>0$,
Artin \cite{zbMATH03557923} showed that the \'etale local fundamental group of such a singularities 
cannot always distinguish between double and regular points.
We show that, 
with the exception of $E_8$-type singularity, 
the stable motivic link $\TN(\Gamma)$ of a Du Val singularity is different from the stable motivic link 
of $\TN(\mathbb{A}^2_{k},\{0\})=\un_k\oplus \un_k(2)[3]$. 
In particular, 
$\TN(\Gamma)$ distinguishes Du Val singularities other than $E_8$ from regular points. 
For $E_8$ and the complex numbers, 
the identification $\TN(E_8)\simeq\TN(\mathbb{A}^2_{k},\{0\})$ reflects the fact that the topological 
link of $E_8$ is the Poincaré homology $3$-sphere $\Sigma(2,3,5)$ \cite{zbMATH02668474}, 
a compact topological $3$-manifold with the same singular homology groups as $S^3$, 
whose fundamental group is isomorphic to the binary dodecahedral group.
We refer to \Cref{tab:DuVal-links} for a summary of our computation of stable motivic links of Du Val 
singularities.
\vspace{0.1in}

A final comment is that defining the stable homotopy type at infinity $\htp_{S}^{\infty}$ is the first step towards a refined invariant in 
unstable motivic homotopy theory.
The problem of defining unstable motivic homotopy types at infinity witness the tension 
between unstable and stable motivic homotopy theory.
For example, 
the six functor formalism is not available in the unstable setting.
To remedy this, 
one can take into account all possible smooth compactifications.
Nonetheless, 
some of the techniques developed in this paper will carry over to unstable motivic homotopy categories, 
e.g., 
the calculations in \Cref{sec:crossing_sing} hold in the cdh-topology, 
and one can expect more developments along these lines.

\begin{rem}
\label{remark:generality}
This paper's results hold more generally for any motivic $\infty$-category such as 
triangulated and abelian mixed motives, Artin-Tate motives, \'etale motives, 
torsion and $\ell$-adic categories, mixed Hodge modules, ... in place of $\SH$. 
If there exists a realization functor that commutes with the six operations, e.g., 
the Betti or $\ell$-adic realizations, then this follows from the universality of $\SH$.
\end{rem}

\subsubsection{Conventions}
Our results are couched in the axiomatic setting of \cite{CD3}, \cite{Khan} which complements \cite{Ayoub}.
We fix a \emph{motivic $\infty$-category} (\cite[Definition 2.4.45]{CD3}) $\T$ over the category of qcqs schemes, 
i.e., 
a \emph{monoidal stable homotopy functor} according to \cite{Ayoub}. 
Our primary example is the motivic stable homotopy category $\SH$.
In the language of presentable stable monoidal $\infty$-categories \cite{Khan}, 
$\SH$ is the initial motivic $\infty$-category.
Thus there is a unique morphism of motivic $\infty$-categories $\SH \rightarrow \T$.
To maintain intuition, we shall refer to the objects of $\T(S)$ as $\T$-spectra over $S$.
For more details, see Section \ref{sec:notations}.

\subsubsection{Acknowledgements}
The authors are grateful to Aravind Asok, Robin Carlier, Jean Fasel, Fangzhou Jin, Marc Levine, and Kirsten Wickelgren for their collaboration, discussions, and encouragement on some of the topics in this paper.
Our referee provided a valuable report that clarified some constructions and results in this paper.
We gratefully acknowledge the support of the 
Centre for Advanced Study at the Norwegian Academy of Science and Letters in Oslo,
Norway, which funded and hosted our research project ``Motivic Geometry" during the 2020/21 academic year,
and we extend our thanks to the French ``Investissements d'Avenir" project ISITE-BFC (ANR-15-IDEX-0008), 
the French ANR project ``HQDiag" (ANR-21-CE40-0015),
and the RCN Frontier Research Group Project no.~250399 ``Motivic Hopf Equations" 
and no.~312472 ``Equations in Motivic Homotopy."
{\O}stv{\ae}r acknowledges the generous support from the Alexander von Humboldt Foundation and 
The Radboud Excellence Initiative.

\subsection{The motivic formalism}
\label{sec:notations}

Throughout, 
all schemes are quasi-coherent and quasi-compact (=qcqs), 
and all separated and smooth maps are assumed to be of finite type.
The natural framework for this paper is Morel-Voevodsky's stable homotopy category $\SH(S)$ 
of the base scheme $S$.
Owing to the works \cite{Ayoub,Ayoub2}, \cite{CD3}, for varying $S$,  
these categories satisfy \emph{Grothendieck's six functors formalism}, 
which we will use extensively.
The noetherian hypothesis was eliminated in \cite[Appendix C]{hoyoisquadratic}.
Most of the results in this paper, 
however, 
can be stated in the general formalism of Grothendieck's six functors,
as axiomatized in \cite{CD3}. 
We will freely use the language, constructions, and notations from \emph{loc. cit.},
together with its natural $\infty$-categorical enhancement of \cite{KhanCoef, DrewGall} 
(which applies to premotivic model categories). 
Let us fix a \emph{motivic triangulated category} $\T$, 
see \cite[Definition 2.4.45]{CD3}, 
which also admits an $\infty$-categorical enhancement (e.g., it arises from a premotivic model category). 
We refer to $\T$ as a motivic $\infty$-category and note 
that $\T$ satisfies Grothendieck's six functors formalism, 
summarized, for example, in \cite[2.4.50]{CD3}.
The added generality of \cite{KhanCoef} verifies that the pair of adjoint functors
$(f^*,f_*)$, $(p_!,p^!)$ for $p$ separated, 
and $(\otimes,\uHom)$ are in fact adjunctions of $\infty$-categories.
The above applies to the following examples.

\begin{itemize}
\item $\SH$ -- the stable motivic homotopy category, see e.g., \cite{Ayoub, KhanCoef}.
\vspace{0.05in}
\item $\DM_\QQ$ -- rational mixed motives, see \cite[Part IV]{CD3}.
\vspace{0.05in}
\item $\DM$ -- motives defined as modules over Spitzweck's 
motivic cohomology ring spectrum relative to $\ZZ$,  
see \cite{SpiMod}.\footnote{This viewpoint was advocated in \cite{orpaomodulescras,orpaomodules}.
If one restricts to schemes over a prime field $k$ and inverts the characteristic exponent of $k$, 
one can employ $\cdh$-motives as defined in \cite{CD4} (using $\cdh$-sheaves with transfers).}
\vspace{0.05in}
\item $\tDM$ -- Milnor-Witt motives defined as modules over Milnor-Witt motivic cohomology, 
 if one restricts to base schemes defined over some field $k$ of characteristic not $2$;
 see \cite{BCDFO}, \cite{2020arXiv200602086B}, \cite{zbMATH07224517}.
\vspace{0.05in}
\item $\DM_\et=\mathrm{DA}_\et$ -- \'etale mixed motives, see \cite{AyoubEt,CD5}.
\vspace{0.05in}
\item $D(-_\et,\ZZ_\ell)$ -- $\ell$-adic \'etale sheaves on $\ZZ[1/\ell]$-schemes,
$\ell$ a prime number, 
see \cite{BBD}, \cite[7.2.18]{CD5}, 
and on excellent schemes, 
also its subcategory $D^b_c(-_\et,\ZZ_\ell)$ of bounded complexes with constructible cohomology.
\vspace{0.05in}
\item $\DB$ -- analytical sheaves on $k$-schemes for a complex embedding $\sigma:k \rightarrow \CC$,
$\DB(X)$ is the derived category of sheaves on the analytical site $X^\sigma(\CC)$.
This is classical; see also \cite{Ayoub3}. 
More generally,
given any mixed Weil theory $E$ over a base field $k$,
by restricting to $k$-schemes, 
one has the category $D_E$ of modules over the ring spectrum associated with $E$.
See \cite[\textsection 17.2]{CD3} for details.
\vspace{0.05in}
\item $\DHdg$ -- the category of motivic Hodge modules,
which corresponds to complexes of Saito's mixed Hodge modules of geometric origin
(obtained by the realization of mixed motives), see \cite{Drew}.
\end{itemize}

These examples are naturally related via premotivic adjunctions subject to our conventions above: 
\begin{equation}
\label{eq:motivic_cat}
\begin{split}
\xymatrix@R=10pt{
&&&& \DB \\
\SH\ar^{\tilde M}[r] & \tDM\ar^\pi[r] & \DM\ar^{a^\et}[r] & 
\DM_\et\ar^{\rho_\ell}[rr]\ar^{\rho_B}[ru]\ar_{\rho_{\mathrm{Hdg}}}[rd] && 
D(-_\et,\ZZ_\ell) \\
&&&& \DHdg
}
\end{split}
\end{equation}

\begin{itemize}
\item By our definitions of $\tDM$ and $\DM$,
the first two functors are induced by taking free modules. 
See \cite[\textsection 7.2]{CD3}, \cite{orpaomodules} for accounts using model categories.\footnote{The construction 
can be carried out more easily using (monoidal) $\infty$-categories developed in \cite{LurieHA}.}
\item The functor $a^\et$ changes the topology,
see \cite{2017arXiv171106258E}, 
taking into account the Dold-Kan correspondence and the $E_\infty$-ring spectra representing motivic cohomology and \'etale motivic cohomology.
\item The functor $\rho_B$ is defined in \cite{Ayoub3} (see \cite{CD3} for mixed Weil theories).
\item The functors $\rho_\ell$ and $\rho_{\mathrm{Hdg}}$ are defined in \cite{CD5} and \cite{Drew}, 
respectively.
\end{itemize}
Formally, 
being part of a premotivic adjunction,
each of the functors in \eqref{eq:motivic_cat} admits a natural right adjoint.
Thus, by construction, they commute with $f^*$, $p_!$, $\otimes$.
Moreover, 
when restricting to (quasi-) excellent schemes, they also commute with the other three operations in 
Grothendieck’s six functors formalism,
see the indicated references.
With rational coefficients, 
both $\tilde M$ and $a^\et$ are equivalences (see \cite{DFJK} and \cite{CD3}, respectively).
Furthermore, 
$\SH_\QQ \rightarrow \DM_\QQ$ is split with complementary factor \emph{Morel's minus part} of $\SH$ by \cite[16.2]{CD3}.
The reader should feel free to keep in mind a general $\T$, 
or specialize to $\SH$ and one of the realization functors in \eqref{eq:motivic_cat}.

\subsection{Conventions on divisors, vector bundles and virtual vector bundles}
\label{sub:conventions}

\indent We adopt the following standard convention concerning normal crossing and smooth normal crossing divisors: A \emph{smooth normal crossings} divisor on a locally noetherian scheme $X$ is an effective Cartier divisor $D\subset X$  such that for every point $x\in D$ the local ring $\cO_{X,x}$ is regular and there there exists a regular system of parameters $x_1,\ldots, x_d$ is the maximal ideal of $\cO_{X,x}$,  $1\leq r\leq d=\dim_x X$ such that $D$ is cut out by $x_1,\ldots, x_r$ in $\cO_{X,x}$. We say that a Cartier divisor $D$ on $X$ has \emph{normal crossings} if for every point $x\in D$ there exists an \'etale neighborhood $U\to X$ of $X$ such that $D\times_X U$ is a smooth normal crossings divisor on $U$. In  
\Cref{sec:crossing_sing} we will introduce variants of these notions for more general crossing singularities.  

We adopt the following convention for the correspondence between coherent locally free sheaves and vector bundles:
the vector bundle $E=\VV(\mathcal E)$ associated with a coherent locally free sheaf of $\mathcal{O}_X$-modules
$\mathcal{E}$ on a scheme $X$ is the relative spectrum of the symmetric algebra $\mathrm{Sym}(\mathcal{E})$.
For a vector bundle $p:V\rightarrow X$, we denote by $V^\times$ the complement of the zero section.

Concerning locally free sheaves and corresponding vector bundles associated with differential properties for 
morphisms of schemes, 
we adopt the following conventions:
\begin{itemize}
\item Given a smooth morphism $f:X\to S$, 
let $\Omega_f=\Omega_{X/S}$ be the sheaf of relative Kähler differentials of $f$ and call it the \emph{cotangent sheaf} of $f$.
 Its associated vector bundle, the relative spectrum of the symmetric algebra of $\Omega_f$,
 is the \emph{tangent bundle} $T_f=T_{X/S}$ of $f$.
\item Given a regular closed immersion $i:Z\to X$, with corresponding ideal sheaf
 $\mathcal{I}_Z \subset \mathcal{O}_X$, its \emph{conormal sheaf} is the
 $\mathcal O_Z$-module $\mathcal{C}_i=\mathcal C_{Z/X}=\mathcal{I}_Z/\mathcal{I}_Z^2$.
 Its associated vector bundle is the \emph{normal bundle} $N_{Z/X}$ of $Z$ in $X$.
\item We denote by $\mathcal E \otimes \mathcal F$ the tensor product of $\mathcal O_X$-modules
 and by $\mathcal E^\vee:=\mathcal{H}om_X(\mathcal E,\mathcal{O}_X)$ the dual.
\end{itemize}

Given any morphism of $f:X\to S$, we let $\cL_f=\cL_{X/S}$
 be its associated cotangent complex. In general, this is a complex of $\mathcal O_X$-modules.
 When $f$ is a local complete intersection morphism (lci for short), $\cL_f$ is a perfect
 complex. Moreover, when $f:X\to S$ is lci smoothable, say $f=p\circ i:X\to Y\to S$ where $i:X\to Y$ is a regular closed immersion
 and $p:Y\to S$ is smooth, we have $\cL_f=(\mathcal{C}_{i}\to i^*\Omega_p)$ where $i^*\Omega_p$ and $\mathcal C_{i}$ are in homological degree $0$ and $1$, respectively.

We will use Deligne's category  $\uK(X)$  of virtual coherent locally free sheaves of $\mathcal{O}_X$-modules 
on a scheme $X$  (see \cite{DelDet}).
 Given a locally free sheaf $\mathcal{E}$ on $X$, we denote by $\twist {\mathcal{E}}$ its image
 in $\uK(X)$. The correspondence between coherent locally free sheaves and vector bundles extends using the same convention as above to a correspondence between virtual locally free sheaves $\cV$ and their associated virtual vector bundles $v=``\mathbb{V}(\cV)"$. For a morphism of schemes $f:X\to Y$ and a (virtual) locally free sheaf $\cV$ on $Y$, we denote by $f^{-1}\cV$ the pullback of $\cV$ to $X$. 
 
 Recall also that $\uK(X)$ can be described using Thomason's K-theory space
 $K(X)$ (the infinite loop space associated with Thomason's K-theory spectrum,
  \cite[3.1]{Thomason}) as follows:
  we view the simplicial set $K(X)$ as an $\infty$-category and
 consider its associated $\infty$-groupoid $K(X)^\simeq$ (the sub-$\infty$-category
 generated by $1$-morphisms that are equivalences). Then $\uK(X)$ is the
 homotopy category associated with $K(X)^\simeq$ --- according to
 \cite[4.12, end of 4.4]{DelDet} and \cite[3.1.1]{Thomason}. This presentation
  has the advantage of giving an explicit functor
$$
\mathrm D_{perf}(X) \rightarrow \uK(X), \mathcal K \mapsto \twist{\mathcal K}
$$
by associating to a perfect complex $\mathcal K$ of $\mathcal O_X$-modules
 the corresponding $0$-simplex of $K(X)$,
 which follows from the very construction of Thomason using complicial biWaldhausen
 categories. 

Recall Deligne's graded determinant functor of Picard categories (\cite[Ex. 4.13]{DelDet})
$$
\uK(X) \xrightarrow{(\rk,\det)} \uZ_X \times \uPic(X), \cV \mapsto (\rk\cV,\det\cV)
$$
where $\uPic(X)$ denote Deligne's Picard category  of invertible sheaves on $X$, and for a virtual locally free sheaf $\cV$, 
$\det\cV$ is the  \emph{determinant} of $\cV$ 
and $\rk\cV$ is its virtual rank.
  
For an lci morphism $f:X \rightarrow S$, the \emph{virtual tangent bundle}  $\tau_f=\tau_{X/S}$ of $X/S$ is the virtual vector bundle on $X$ associated to $\twist{\cL_f}$. The \emph{canonical sheaf} $\omega_f=\omega_{X/S}$ of $X/S$ is the determinant $\det\twist{\cL_f}$ of $\twist{\cL_f}$.

\subsection{Limits and colimits in $\infty$-categories}

\begin{num}\label{num:ifty-diagrams}

This work will extensively use the concept of limits and, dually, colimits in an $\infty$-category. The primary references for this material are \cite[\textsection 4]{JoyalQC}, \cite[\textsection 1.2.13]{LurieHTT}, and \cite[\textsection 6.2]{CisinskiHC}.

Let us recall the basic ideas. Given a simplicial set \( K \) and an $\infty$-category \( \iC \) modeled by a quasi-category, a \( K \)-diagram in \( \iC \) is defined as a map of simplicial sets \( f:K \rightarrow \iC \). All our examples will derive from a category \( \mathcal{I} \), where \( K = N \mathcal{I} \) represents the nerve of \( \mathcal{I} \). It is useful to think of the functors\footnote{We note that we abusively denote these functors by \( f:\mathcal{I} \rightarrow \iC \).} \( N\mathcal{I} \rightarrow \iC \) as a \emph{homotopy coherent} \( \mathcal{I} \)-diagram (see \cite[1.2.6]{LurieHTT}).

For a general \( K \)-diagram \( f:K \rightarrow \iC \), we can associate the slice $\infty$-category \( \iC/f \) (and the coslice category \( f\backslash \iC \))\footnote{These categories are denoted by \( \iC_{-/f} \) and \( \iC_{f/-} \) respectively in \cite{LurieHTT}.} which intuitively consists of objects \( X \) in \( \iC \) such that for any \( 0 \)-simplex \( i \), there exist maps \( X \rightarrow f(i) \) and homotopy coherent diagrams for all \( 1 \)-simplexes \( p \in K \) 
$$
\xymatrix@C=30pt@R=-2pt{
& f(i) \ar^{p_*=f(p)}[dd] \\
X \ar[ru] \ar[rd] & \\
& f(j)
}
$$
and so on. 
Formally, the slice $\infty$-category \( \iC/f \) can be defined via the join construction \( \star \) of simplicial sets participating in the adjunction
$$
\Hom_f(X \star K,\iC) \simeq \Hom(X,\iC/f)
$$
See \cite[Prop. 3.2 and p. 214]{JoyalQC}, \cite[\textsection 1.2.9.2]{LurieHTT}, or \cite[3.4.14 and 6.2.1]{CisinskiHC}. The coslice is defined dually by the formula \( f\backslash\iC=(\iC^{op}/f^{op})^{op} \). Note that if \( \iC \) is a quasi-category, then so are \( \iC/f \) and \( f\backslash \iC \); see \cite[Cor. 3.9]{JoyalQC}. One of the advantages of quasi-categories is that the notion of an initial (or final) object is well-behaved. In particular, if such an object exists, the space of initial (or final) objects is contractible, making it unique in the $\infty$-categorical sense.
\end{num}

\begin{df}
The limit (resp. colimit) of a $K$-diagram $f:K \rightarrow \iC$ exists if $\iC/f$ (resp. $f\backslash \iC$) admits an initial (resp. final) object. We denote by 
\begin{align*}
\lim f &= \lim_{i \in K} f(i) \\
\text{resp. } \colim f &= \underset{i \in K}{\colim} f(i)
\end{align*}
any such initial (resp. final) object, referring to it as the limit (resp. colimit) of $f$ 
(usually, 
we treat it as an object of $\iC$, rather than as an object of $\iC/f$ (resp. $f\backslash \iC$)).
\end{df}

One of the most important properties for us is the following (see \cite[Prop. 6.2.9]{CisinskiHC}):
\begin{prop}
Assume that all $K$-limits (resp. $K$-colimits) exist in $\iC$. Then the $\infty$-functor $f \mapsto \lim f$ (resp. $f \mapsto \colim f$) is left (resp. right) adjoint to the constant diagram functor $\mathrm{ct}:\iC \rightarrow \mathrm{Fun}(K,\iC)$.
\end{prop}
Here, we denote by $\mathrm{Fun}(K,\iC)$ the quasi-category of $K$-diagrams, which is also referred to as $\uHom(K,\iC)$ in \emph{loc. cit.}; indeed, it is the internal Hom of the monoidal category of simplicial sets.

\begin{rem}
\begin{enumerate}
\item The preceding proposition applies in particular to presentable $\infty$-categories, as they are both complete and cocomplete. This means they admit $K$-limits and $K$-colimits for any simplicial set $K$ (see \cite[Def. 5.5.0.1, Cor. 5.5.2.4]{LurieHTT}).
\item The preceding result is significant because it immediately connects the notions of limits and colimits in an $\infty$-category $\iC$, associated with a model category $\mathcal M$, to the concepts of homotopy limits and colimits relative to $\mathcal M$ (see e.g., \cite[\textsection 2.3]{CisinskiHC} for the latter). Specifically, a Quillen adjunction of model categories induces an adjunction of the associated $\infty$-categories. See also \cite[\textsection 7.9]{CisinskiHC}.
\end{enumerate}
\end{rem}

We conclude this section with a useful lemma for computing limits and colimits in 
$\infty$-categories 
(which we have not been able to locate in the literature, but see also \cite{ZhenLinSE}).

\begin{lm}[Replacement lemma]\label{prop:replacement}
Let $f:K \rightarrow \iC$ be a $K$-diagram. Assume that for each $0$-simplex $i$ of $K$, we are given an isomorphism in $\iC$ denoted by 
$$
\psi_i:f(i) \rightarrow X_i
$$
Then there exists a $K$-diagram $f':K \rightarrow \iC$ and an isomorphism $\phi:f \rightarrow f'$ of $K$-diagrams such that for all $0$-simplices $i$ of $K$, we have $f'(i)=X_i$ and the map $\phi(i):f(i) \rightarrow f'(i)=X_i$ is equal to $\psi_i$.
\end{lm}
\begin{proof}
Let us consider \( K_0 \), the discrete simplicial set of \( 0 \)-simplices of \( K \). The canonical map \( s: K_0 \rightarrow K \) is a monomorphism. Therefore, the induced restriction map
\[
s^*: \mathrm{Fun}(K, \iC) \rightarrow \mathrm{Fun}(K_0, \iC)
\]
is an isofibration (as defined in \cite[Def. 2.3]{JoyalQC}, where it is referred to as quasi-fibrant, or in \cite[Def. 3.3.15]{CisinskiHC}).\footnote{This follows from the fact that the Joyal model structure on simplicial sets is cartesian. The map \( s \) is a cofibration for this model structure, and fibrations between fibrant objects (i.e., quasi-categories) are isofibrations. 
For a direct proof, see \cite[Tag 01F3]{Kerodon}.} 
Now, the collection of all the isomorphisms \( \psi_i \) defines an equivalence \( \psi: (f(i))_{i \in K_0} \rightarrow (X_i)_{i \in K_0} \) in \( \mathrm{Fun}(K_0, \iC) \). Since \( s^*(f) = (f(i))_{i \in K_0} \) by definition, and \( s^* \) is an isofibration, there exists an equivalence \( \phi: f \rightarrow f' \) for some \( K \)-diagram \( f' \) such that \( s^*(\phi) = \psi \).
\end{proof}

\section{Complements on six functors}
\label{sec:recall}
\subsection{Thom spaces}
\begin{num}\label{num:Thom_spaces}
The \emph{Thom space} of a vector bundle $p:V \rightarrow X$ with zero section $s:X\to V$ is the object
$$
\Th(V)=\Th_X(V):=p_\sharp s_*(\un_X) \in \T(X)
$$
Here $p_\sharp$ is the left adjoint of $p^*$. For a coherent locally free sheaf of $\cO_X$-modules $\mathcal{E}$, we use also sometimes use the notation $\Th(\mathcal{E})$ as a short hand for $\Th(\mathbb{V}(\mathcal{E}))$. The \emph{Tate twist} is a particular case of this notation, 
namely, 
we have $\un_X(n)=\Th(\cO_X^n)[-2n]=\Th(\mathbf{A}^n_X)[-2n]$.
According to the stability property of $\T$ (\cite[2.4.4, 2.4.14]{CD3}),
the object $\Th(V)$ is $\otimes$-invertible in $\T(X)$ with $\otimes$-inverse (\cite[2.4.1, 2.4.12]{CD3})
$$
\Th(-V):=s^!p^*(\un_X)=s^!(\un_V)
$$
The construction of Thom spaces is functorial in $V$ and, as a consequence of the localization property of $\T$ 
(\cite[2.4.6, 2.4.10]{CD3}),
it uniquely extends to a monoidal functor with values in the associated homotopy category
 (cf.~\cite[2.4.15]{CD3} and \cite[1.5.18]{Ayoub})
$$
\Th:\uK(X) \rightarrow \ho \T(X)
$$
from Deligne's category $\uK(X)$ of virtual locally free sheaves on $X$.

For an arbitrary (resp. separated) morphism of schemes $f:Y\rightarrow X$ and a virtual vector bundle $v$ over $X$, 
the projection formula and the $\otimes$-invertibility of $\Th(V)$ imply the exchange isomorphism
\begin{equation}
    \label{eq:exchange-iso}
f^*\Th(v) \xrightarrow{\simeq} \Th(f^{-1}v) 
\quad \quad (\textrm{resp. } 
\Th(f^{-1}v)\otimes f^!(\un_X) \xrightarrow{\simeq} f^!\Th(v))
\end{equation}
\end{num}
 
To comply with Morel-Voevodsky's definition, we introduce the following.
\begin{df}\label{df:Thom_spaces}
Let $f:X \rightarrow S$ be a smooth morphism and let $v$ a virtual vector bundle over $X$.
The \emph{Thom space of $v$ relative to $S$} is the 
object 
$$
\Th_S(v)=f_\sharp(\Th(v))
$$
Beware that when $f$ is not the identity, the functor $\Th_S$ is not monoidal.

In the sequel, 
when we do not indicate the base of a Thom space, 
we consider it over the same base scheme as the virtual bundle.
\end{df}

\begin{ex}\label{ex:thom_spaces}
\begin{enumerate}
\item If $\T=\SH$ and $v=\twist V$ for a vector bundle $V/X$, then by homotopy purity 
$
\Th_S(v)
\simeq 
\Sigma^\infty (V/V^\times)
$. 
\item If $\T=\tDM$, 
the Thom space $\Th_S(v)$ depends only on the rank and determinant of $v$ 
(see \cite[\textsection 7]{DFJK} for a more precise statement).
\item If $\T$ is oriented in the sense of \cite[2.4.38]{CD3}, 
e.g., 
any category under $\tDM$ in \eqref{eq:motivic_cat}, 
then for every virtual vector bundle $v$ of virtual rank $n$ on a smooth $S$-scheme $p:X\to S$, 
there is a canonical \emph{Thom isomorphism}
$
\Th_S(v) \xrightarrow{\simeq} \un_S(n)[2n]
$
compatible with pullbacks and the $\otimes$-structure on the functor $\Th$.
Since Thom spaces are always reduced to Tate twists for oriented theories,   
this is mainly interesting for generalized theories such as Chow-Witt groups, 
hermitian K-theory, 
and stable (co)homotopy.
\end{enumerate}
\end{ex}

\begin{rem}
Following the procedure of \cite[\textsection 16.2]{BH_norms},
 it is possible to refine the construction of the Thom space at the $\infty$-categorical level.
 More precisely, one builds a monoidal $\infty$-functor, still denoted as above,
$$
\Th:\Ksp(X) \rightarrow \T(X)
$$
where $\Ksp(X)$ is the monoidal $\infty$-groupoid associated with Thomason-Trobaugh K-theory space
 of $X$.\footnote{In \emph{op. cit.}, when considered with values in the stable motivic homotopy category,
 this functor is called the motivic $J$-homomorphism.}
 Note also that this $\infty$-functor can in fact be made natural in $X$,
 with respect to the contravariant $\infty$-functoriality
 of its source and target. 
 In the sequel of this work, we will not need this refinement, as
 we will not use the higher functoriality of Thom spaces.
\end{rem}

\subsection{Internal theories and functoriality}
The six functors formalism encodes the axioms of four (co)homology theories; 
see, e.g., \cite{BO} for the combination of cohomology and Borel-Moore homology. 
Next, we give a systematic definition from the motivic point of view.

\begin{df}
\label{df:internal_theories}
Let $f:X \rightarrow S$ be a separated morphism and let $v$ a virtual vector bundle over $X$.
One associates to $X/S$ and $v$ the following objects of $\T(S)$:
\begin{itemize}
\item Homotopy: $\htp_S(X,v)=f_!(\Th(v) \otimes f^!(\un_S))$
\item Cohomotopy: $\cohtp_S(X,v)=f_*(\Th(v) \otimes f^*(\un_S)) \simeq f_*(\Th(v))$
\item Borel-Moore (or properly supported) homotopy: $\htp^c_S(X,v)=f_*(\Th(v) \otimes f^!(\un_S))$
\item Properly supported cohomotopy: $\cohtp^c_S(X,v)=f_!(\Th(v) \otimes f^*(\un_S)) \simeq f_!(\Th(v))$
\end{itemize}
When $v=0$, we simply write $\htp_S(X)$, $\cohtp_S(X)$, $\htp^c_S(X)$, $\cohtp^c_S(X)$.

The natural transformation $\alpha_f:f_! \rightarrow f_*$ yields canonical maps:
\begin{align}\label{eq:forget_supp}
\alpha_{X/S}&:\htp_S(X,v) \rightarrow \htp^c_S(X,v) \\
\label{eq:forget_supp2}\alpha'_{X/S}&:\cohtp^c_S(X,v) \rightarrow \cohtp_S(X,v) \text{ (``forgetting proper support")}
\end{align}
Both $\alpha_{X/S}$ and $\alpha'_{X/S}$ are isomorphisms whenever $X/S$ is proper.
\end{df}

\begin{rem}
If $X/S$ is smooth separated, 
$\htp_S(X)$ is called the premotive of $X/S$ in \cite{CD3}. 
For all $\T$, 
with the exception of $D^b_c(-,\ZZ_\ell)$, 
the objects $\Pi_S(X)(n)$ for $X/S$ smooth generate $\T(X)$ under colimits.
\end{rem}

\begin{ex}\label{ex:htp&cohtp}
Here is a summary comparing our notations with more familiar ones.
\begin{enumerate}
\item $\T=\SH$ and $X/S$ smooth: $\htp_S(X)=\Sigma^\infty X_+$ and for a vector bundle $V$ on $X$, 
we have $\htp_S(X,\langle V \rangle)=\Sigma^\infty \Th(V)$.
\item $\T=\DM$ and $X/S$ smooth: $\htp_S(X)$ is Voevodsky's motive $M_S(X)$ of $X/S$.
 When $X/S$ is proper and $X$ is regular, $\cohtp_S(X)=:h_S(X)$ is the \emph{relative Chow-motive} of $X/S$.
 It is a pure motive of weight $0$ in the sense of Bondarko. 
 See \cite{Jin} for the comparison of these objects with Corti-Hanamura's definition.
\item $\T=\DM$, $k$ a perfect field, $X/k$ smooth separated:
 $\htp_k(X)=M(X)=\underline C_*L(X)$, where, with the notations of \cite[chap. 5]{FSV},
 $\underline C_*$ is the Suslin complex functor, and $L(X)$ is the sheaf with transfers represented by $X$.
 If $k$ is of characteristic $0$,
 or one works with $\DM[1/p]$ if $k$ has characteristic $p>0$,
 then $\htp_k^c(X)=M^c(X)=\underline C_*L^c(X)$
 where $L^c(X)$ the sheaf of quasi-finite correspondences (see \cite[chap. 5]{FSV} in characteristic $0$
 and \cite[8.10]{CD5} in general).
\item $\T=D^b_{c}(-_\et,\ZZ_\ell)$ and $f:X \rightarrow S$ any morphism: $\cohtp_S(X)=\derR f_*(\ZZ_\ell)$ is the complex computing \'etale cohomology of $X$ in $D^b_{c}(S_\et,\ZZ_\ell)$.
 In particular, if $S=\Spec(k)$, the complex compute absolute \'etale cohomology of $X$ after forgetting
 the action of the absolute Galois group of $k$.
 Similarly, $\cohtp_S^c(X)$ computes cohomology with compact support.
\item $\T=\DM_h$: using the model category of \cite{CD5}, for a smooth $S$-scheme $X$,
  $\htp_S(X)$ is obtained as the infinite suspension of the $\h$-sheaf represented by $X$.
\end{enumerate}
\end{ex}

\begin{rem}
As explained in \Cref{sec:notations}, the comparison functors from $\SH$ to the other motivic categories $\T$
 considered in \emph{loc. cit.} commute with the six operations provided that one restricts to excellent base schemes.
In particular, the four internal theories considered in $\SH$ realize the corresponding theories in $\T$ -- of course, this universal property of $\SH$ was at the heart of Voevodsky's theory since the beginning.
See \cite{DrewGall} for a complete account incorporating the six functors.
Practically any assertion concerning these internal theories proved in $\SH$ is equally valid in $\T$.
\end{rem}

%

\begin{num}
\label{num:natural_fct} 
\textit{Natural functoriality}:
For a morphism $f \colon Y \rightarrow X$ between  separated $S$-schemes, we have the following naturally induced maps (which explain our choice of terminology): 
\begin{itemize}
\item $f_*:\htp_S(Y,f^{-1}v) \rightarrow \htp_S(X,v)$
\item $f^*:\cohtp_S(X,v) \rightarrow \cohtp_S(Y,f^{-1}v)$
\item $f_*:\htp^c_S(Y,f^{-1}v) \rightarrow \htp^c_S(X,v)$, when $f$ is proper
\item $f^*:\cohtp^c_S(X,v) \rightarrow \cohtp^c_S(Y,f^{-1}v)$, when $f$ is proper
\end{itemize}
In addition, 
when $f$ is proper then the comparison maps $\alpha_{X/S}$ and $\alpha'_{X/S}$ (see \eqref{eq:forget_supp} and \eqref{eq:forget_supp2} ) are compatible with 
$f_*$ and $f^*$.
\end{num}

\begin{rem}\label{rem:functoriality}
\begin{enumerate}
\item Given an arbitrary virtual bundle $w$ on $Y$, there is in general
 no pushforward map on (internal) homotopy $\htp_S(Y,w) \rightarrow \htp_S(X,v)$.
 To get such a map, one has to give an isomorphism $w \simeq f^{-1}(v)$ of virtual vector bundles.
\item Each of the above functoriality can in fact be enhanced into an $\infty$-functor.
 See \cite[2.1.11]{EHKSY} for the precise formulation.
\end{enumerate}
\end{rem}

\begin{ex}
\label{ex:topological_invariance}
Suppose $X/S$ is a separated $S$-scheme, 
and let $\nu:X_0 \to  X$ be the immersion on the underlying reduced subscheme
 (in fact any nil-immersion will work). 
The localization property for $\T$ implies that $(\nu^*,\nu_*)$ is an equivalence of categories (\cite[2.3.6]{CD3}).
As $\nu_*=\nu_!$ it follows that $\nu^*=\nu^!$.
For any virtual vector bundle $v$ on $X$ and $v_0=\nu^*(v)$,
one deduces the naturally induced isomorphisms 
\begin{align*}
\nu_*:\htp_S(X_0,v_0) \xrightarrow{\simeq} \htp_S(X,v), &\ \nu_*:\htp^c_S(X_0,v_0) \xrightarrow{\simeq} \htp^c_S(X,v) \\
\nu^*:\cohtp_S(X,v) \xrightarrow{\simeq} \cohtp_S(X_0,v_0), &\ \nu^*:\cohtp^c_S(X,v) \xrightarrow{\simeq} \cohtp^c_S(X_0,v_0)
\end{align*}
In particular, with $v=0$, we get
$$
\htp_X(X_0) 
\simeq \htp^c_X(X_0)
\simeq \cohtp_X(X_0)
\simeq \cohtp^c_X(X_0)
\simeq \un_X
$$
\end{ex}

 \begin{num}\label{def:stably-A1-cont}
 A smooth separated $S$-scheme $f:X \rightarrow S$ is said to be \emph{stably $\AA^1$-contractible over $S$}  
 if the induced map 
  $f_*:\htp_S(X) \rightarrow \un_S$ is an isomorphism. Note that due to the existence of the conservative family $(s^*)_{s \in S}$ of \cite[Prop. 4.3.17]{CD3}, this property is equivalent to ask that for every point $s \in S$, the fiber $X_s$ is stably $\AA^1$-contractible
 over $\kappa(s)$.  
 \end{num}

\begin{lm}
\label{lm:stably_A1-contract_virtual_vb} 
Let $S$ be a regular scheme and suppose $f:X \rightarrow S$ is stably $\AA^1$-contractible over $S$. 
Then every virtual bundle $v$ over $X$ is constant relative to $S$, 
i.e., 
$v=f^*v_0$ for some virtual vector bundle $v_0$ over $S$.

Moreover, let $T$ be the  tangent bundle of $X/S$ and let $v_0$ be the virtual vector bundle over $S$ such that $\langle T \rangle=f^*v_0$. Then there is a naturally induced isomorphism
$$
f_*f^!(-) \simeq \Th_S(v_0) \otimes -
$$
\end{lm}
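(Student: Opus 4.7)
The plan is to handle the two assertions in turn. For the first, since $f\colon X\to S$ is smooth and $S$ is regular, $X$ is itself regular. On such schemes, Quillen K-theory is represented in $\SH$ by Voevodsky's motivic ring spectrum $\mathbf{KGL}$, so I get
\[
K_0(X)\simeq [\htp_S(X),\mathbf{KGL}_S]_{\SH(S)}
\]
and similarly $K_0(S)\simeq[\un_S,\mathbf{KGL}_S]$. Invoking the hypothesis $\htp_S(X)\simeq \un_S$ and tracing through, the pullback $f^*\colon K_0(S)\to K_0(X)$ is an isomorphism. Since the isomorphism classes in Deligne's Picard category $\uK(X)$ are exactly parametrized by $K_0(X)$, every virtual bundle $v$ on $X$ is isomorphic to some $f^{-1}v_0$; choosing such a lift for the tangent class produces a $v_0$ over $S$ with $\langle T\rangle = f^{-1}v_0$.

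For the second assertion, I would first invoke relative purity for the smooth morphism $f$ to rewrite the exceptional inverse image as $f^!(-)\simeq f^*(-)\otimes \Th(T_f)$. Plugging $\langle T\rangle=f^{-1}v_0$ from Part~1 into the exchange isomorphism \eqref{eq:exchange-iso} gives $\Th(T_f)\simeq f^*\Th(v_0)$, and using that $f^*$ is symmetric monoidal I obtain
\[
f^!(-)\simeq f^*(\Th(v_0)\otimes -),\qquad\text{hence}\qquad f_*f^!(-)\simeq f_*f^*(\Th(v_0)\otimes -).
\]

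It remains to show that under the hypothesis the unit $\mathrm{id}\to f_*f^*$ is an equivalence. Combining the two adjunctions $(f^*,f_*)$ and $(f_\sharp,f^*)$ with the smooth projection formula $f_\sharp f^*A\simeq A\otimes \htp_S(X)$, for any $A,K\in\T(S)$ I get
\[
\Map(A,f_*f^*K)\simeq \Map(f_\sharp f^*A,K)\simeq \Map(A\otimes\htp_S(X),K)\simeq \Map(A,K),
\]
so Yoneda yields $f_*f^*\simeq \mathrm{id}$. Substituting into the previous display concludes $f_*f^!(-)\simeq \Th(v_0)\otimes - \,=\,\Th_S(v_0)\otimes -$, the desired formula.

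The main obstacle is Part~1: one must bridge a categorical hypothesis in $\T$ with the geometric statement that every virtual bundle on $X$ descends to $S$. The K-theory argument above is cleanest in $\SH$, where $\mathbf{KGL}$-representability holds for regular schemes; for a general motivic $\infty$-category $\T$ one would either assume the contractibility hypothesis already in $\SH$ and transport it along the universal morphism $\SH\to\T$, or appeal to representability of K-theory directly in $\T$. Regularity of $S$ is essential here, both for representability and to guarantee that pullback of virtual bundles behaves well.
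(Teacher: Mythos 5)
Your proof is correct and mirrors the paper's own argument: Part~1 via representability of $K_0$ in $\SH(S)$ and the hypothesis $\htp_S(X)\simeq\un_S$, Part~2 via relative purity plus the projection formula, reducing to $f_*f^*\simeq\mathrm{id}$. The paper proves the last point by identifying $f_*f^*$ with $\uHom(\Pi_S(X),-)$; your Yoneda argument via the $(f_\sharp,f^*)$ adjunction is the same computation carried out at the level of mapping spaces.
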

\begin{proof}
The first assertion is a consequence of the representability of $K_0$ in $\SH(S)$. To prove the assertion, one considers for every object $\E$ of $\T(S)$ the composite of exchange isomorphisms
$$
f_*f^!(\E) \stackrel{(a)}\simeq f_*(\Th(T) \otimes f^*(\E))=f_*(\Th(f^*v_0) \otimes f^*(\E))
\stackrel{(b)}\simeq \Th(v_0) \otimes f_*f^*(\E) \stackrel{(c)}\simeq \Th(v_0) \otimes \E
$$
Here (a) is an instance of the relative purity isomorphism, 
(b) follows from the fact that $\Th(v_0)$ is $\otimes$-invertible, 
and (c) holds because $f$ is a stable $\AA^1$-weak equivalence and since $f$ is smooth,
 one has: $f_*f^*(\E)\simeq\uHom(\Pi_S(X),\E)$.\footnote{Recall the last isomorphism follows from the axioms
 of premotivic categories: indeed by the smooth projection formula, $f_\sharp f^*(-)=\Pi_S(X) \otimes -$
 and we conclude as $f_*f^*$ is right adjoint to $f_\sharp f^*$.} 
\end{proof}

The following statement is analogous to the traditional definition of relative homology and cohomology.

\begin{df}
\label{df:quotient}
Let $f\colon Y \rightarrow X$ be a morphism of separated $S$-schemes and let $v$ be a virtual vector bundle $v$ over $X$. 
We denote the homotopy cofiber of $f_*:\htp_S(Y,f^{-1}v) \rightarrow \htp_S(X,v)$ by $\htp_S(X/Y,v)$ so that there is a
homotopy exact sequence
$$
\htp_S(Y,f^{-1}v) \xrightarrow{f_*} \htp_S(X,v) \rightarrow \htp_S(X/Y,v)
$$
Dually, we denote the homotopy fiber of $f^*:\cohtp_S(X,v) \rightarrow \cohtp_S(Y,f^{-1}v)$ by $\cohtp_S(X/Y,v)$ so that there is a
homotopy exact sequence
$$
\cohtp_S(X/Y,v) \rightarrow \cohtp_S(X,v) \xrightarrow{f^*} \cohtp_S(Y,f^{-1}v)
$$
\end{df}


\subsection{Fundamental classes, homotopical smoothness and purity}

\begin{num}
\label{num:Gysin}
\textit{Exceptional functoriality (Gysin maps)}: 
Due to the existence of the fundamental classes introduced in \cite{DJK} the four theories in 
\Cref{df:internal_theories} satisfy exceptional functoriality (see \cite[4.3.4]{DJK} 
for the general case of a triangulated motivic category).

Let $f:Y \rightarrow X$ be a smoothable lci morphism, 
i.e., 
$f$ factors as a regular closed immersion followed by a smooth morphism, 
with cotangent complex $\cL_f$ and associated virtual tangent bundle $\tau_f$. 
One deduces, 
from the system of fundamental classes in \cite[Theorem 3.3.2]{DJK}, 
the canonical natural transformation
\begin{equation}
\label{eq:fdl_class}
\mathfrak p_f(-):\Th(\tau_f) \otimes f^* \rightarrow f^!
\end{equation}
By adjunction, 
one deduces trace and cotrace maps (see \textsection 4.3.4 in \emph{loc. cit.})
$$
\begin{array}{lcr}
\mathrm{tr}_f:f_!(\Th(\tau_f) \otimes f^*) \rightarrow Id & \textrm{and} & 
\mathrm{cotr}_f:Id \rightarrow f_*(\Th(-\tau_f) \otimes f^!)
\end{array}
$$
The latter maps induce the \emph{Gysin maps}:
\begin{itemize}
\item $f^!:\htp_S(X,v) \rightarrow \htp_S(Y,f^{-1}v-\tau_f)$, when $f$ is proper
\item $f_!:\cohtp_S(Y,f^{-1}v+\tau_f) \rightarrow \cohtp_S(X,v)$, when $f$ is proper
\item $f^!:\htp^c_S(X,v) \rightarrow \htp^c_S(Y,f^{-1}v-\tau_f)$
\item $f_!:\cohtp^c_S(Y,f^{-1}v+\tau_f) \rightarrow \cohtp^c_S(X,v)$
\end{itemize}
Again, 
assuming $f$ is proper, 
the comparison maps $\alpha_{X/S}$ and $\alpha'_{X/S}$ are compatible with the above Gysin morphisms in the obvious sense.
\end{num}

\begin{rem}
In \Cref{sec:functorial-Gysin}, we will show how to turn some of the above Gysin maps into an $\infty$-functor.
\end{rem}

\begin{num}\label{num:fond-classes}\textit{Fundamental classes}. 
Characteristic classes are cohomology classes  used for classification and computations. It is also possible to define these invariants as cohomotopy  classes. 
Recall also that fundamental classes extend to bivariant homotopy (suitably twisted), see \cite{DJK} as already mentioned in \Cref{num:Gysin}.

\begin{ex}
\label{num:Euler_seq}
\textit{Euler exact sequence and Euler classes}.
Let $f:X\rightarrow S$ be a smooth $S$-scheme and let $V=\mathbb{V}(\mathcal{E})$ be a vector bundle of rank $r$ on $X$. From the localization triangle associated with the zero section $s$ of $V$ and the homotopy property
$\htp_S(V) \simeq \htp_S(X)$, one derives the homotopy exact sequence
$$
\Th_S(V)[-1] \rightarrow \htp_S(V^\times) \rightarrow \htp_S(X) \xrightarrow{s^!} \Th_S(V)
$$
Note that, by definition, 
when $X=S$, 
then $s^!:\un_X \rightarrow \Th(V)$ is the realization in $\T(X)$ of $V$'s Euler class $e(V)\in\SH(X)$
defined in \cite[Definition 3.1.2]{DJK}.
When $f:X \rightarrow S$ is not the identity, then $s^!$ is the image of the realization of $e(V)$ by $f_!$.
This justifies our notation $e_S(V,\T)=s^!$.
In particular,
note that $e_S(V,\T)$ is zero whenever $V$ contains the trivial line bundle $\AA^1_X$ as a direct summand 
(\emph{loc. cit.}, Corollary 3.1.8).

In the case $S$ is the spectrum of a field, we have the following: 
\begin{enumerate}
\item When $\T=\DM$ or, more generally, when $\T$ is oriented, the \emph{motivic Euler class}
 $$e(V):\un_X \rightarrow \Th(V)\simeq \un_X(n)[2n]$$ corresponds to the top Chern class
 $c_n(V)$ under the isomorphism $\mathbb H_M^{2n,n}(X) \simeq \CH^n(X)$.
\item As a map in $\tDM(X)$, 
the realization of the stable homotopy Euler class $e(V)$ 
corresponds to Barge-Morel-Fasel's Euler class
 in the Chow-Witt group $\CHt^n(X,\det \mathcal{E}^{\vee})$ of $X$ twisted by the determinant of $\mathcal{E}^{\vee}$. 
\end{enumerate}
\end{ex}

 For a smoothable lci morphism $f:X \rightarrow S$ with virtual tangent bundle $\tau_f$
 one has the canonical class
$$
\eta_f:\Th(\tau_f) \rightarrow f^!(\un_S)
$$
which we will consider as a homotopy class in
$$ 
H_0^{\T}(X/S,\tau_f):=[\Th(\tau_f),f^!(\un_S)]=[f_!(\Th(\tau_f)),\un_S] 
$$
for the bivariant homology theory (with respect to ${\T}$) of $X/S$ and twist $\tau_f$.
In fact, this bivariant class is a cohomotopy class; that is, an element of the abelian group
$$
H_{\T}^n(X,\tau_f):=[\un_X,\Th(\tau_f)[n]]
$$
We impose the following assumptions.
\begin{enumerate}
\item $f$ is proper.
\item there exists a virtual bundle $v$ over $S$ and
 an isomorphism $\epsilon:\tau_f \simeq f^{-1}(v)$.
 The couple $(\epsilon,v)$, or simply $\epsilon$ when $v$ is clear,  will be called an \emph{$f$-parallelization} 
 of $\tau_f$.
\end{enumerate}
In this case, we can consider the composite map
$$
H_{\T}^0(X) \xrightarrow{\epsilon_*} H_{\T}^0(X,\tau_f-f^{-1}v)
 \xrightarrow{f_!} H_{\T}^0(S,-v)
$$
Here, the choice of $\epsilon$ yields the first map, and the second one is the Gysin map in cohomotopy 
(see \emph{op. cit.}).
The image of the unit element $1$ in cohomotopy $H_{\T}^0(X)$
can be deduced from the fundamental class $\eta_f$ via the composite
$$
\Th(v) \xrightarrow{ad_f} f_*f^*(\Th(v))\simeq f_!(\Th(f^{-1}v))
 \xrightarrow{\epsilon_*} f_!(\Th(\tau_f)) \xrightarrow{\eta_f} \un_S
$$
\end{num}
\begin{df}
Let $f:X \rightarrow S$ be a proper smoothable lci map
 with an $f$-parallelization $(\epsilon,v)$ of its virtual tangent bundle.
 The associated twisted fundamental class is given by 
$$
\eta_f^\epsilon=f_!\epsilon_*(1) \in H_{\T}^0(S,-v)
$$
When $f=i:Z \rightarrow X$ is a regular closed immersion,
 and we consider an $f$-parallelization $(\epsilon,v)$
 of its normal bundle $N_i$, corresponding to an $f$-parallelization 
 $\epsilon':\tau_i=-\twist{N_i} \rightarrow -v$, we also
 define the twisted fundamental class of $(Z,\epsilon)$ in $X$ as
$$
[Z]_X^\epsilon=f_!\epsilon'_*(1) \in H_{\T}^0(X,v)
$$
\end{df}

\begin{ex}
In our definition, the reader might be surprised by the cohomotopical index $0$.
 The "true" degree is hidden in the twist. In particular,
 for $\T=\DM$ (resp. $\tDM$),
 and a rank $d$ virtual bundle $v$ over a smooth $k$-scheme $X$, we have
$$
H^0_{\DM}(X,v) \simeq \CH^d(X), \; \text{(resp. } H^0_{\tDM}(X,v) \simeq \CHW d(X,\det v)\text{)} 
$$
The Chow (resp. Chow-Witt) group of $X$ (resp. twisted by the invertible sheaf $\det(v)$). For $\T=\SH$, there is also a canonical isomorphism $
H^0_{\SH}(X,v) \simeq \CHt^d\big(X,\det(v)\big)
$, see \Cref{prop:cohomology-tchd} in the Appendix. 
In the motivic case or any of the oriented triangulated motivic categories
 of \eqref{eq:motivic_cat},
 the motivic fundamental class of a closed immersion $i:Z \rightarrow X$
 and $f$-parallelization $(\epsilon,v)$ is the usual cycle class of
 $Z$ in $\CH^d(X)$ (resp. in the relevant cohomology in degree $2d$ and twist $d$).
 It is independent of the chosen $f$-parallelization.
 This is not the case in the category of Milnor-Witt motives and in $\SH$, 
 as modifying the twist $\cL$ in $\CHW d(X,\cL)$ can change the group. 
 \end{ex}

\begin{ex}\label{ex:i-parallelization}
Given a regular closed immersion $i:Z \rightarrow X$, a  way to obtain an $i$-parallelization of the normal bundle $N_i$
 is to consider an lci morphism $p:X \rightarrow Z'$ such that $p \circ i$ is \'etale.
 Indeed, in that case, if $\tau_p$ denotes the virtual tangent bundle of $p$,
 we get a canonical isomorphism $\epsilon:\twist{N_i} \simeq i^{-1}\tau_p$
 as the tangent bundle of $p \circ i$ is trivial. 

An important example for us comes from the diagonal immersion
 $\delta:X \rightarrow X \times_S X$ of a smooth $S$-scheme $X$.
It admits two smooth retractions given by the projections $p_j$, for $j=1,2$. 
We denote the corresponding twisted fundamental classes by 
$$
[\Delta_{X/S}]^j_{X\times X} \in H^0_{\T}(X \times_S X,p_j^{-1}\twist{T_{X/S}})
$$ 
\end{ex}

\begin{rem}
The fundamental classes defined above are virtual in the sense that
they live in a group twisted by a virtual vector bundle.
 For regular closed immersions $i:Z \rightarrow X$,
 the twisting virtual bundle will be of non-negative rank
 and $\eta_i$ corresponds to the usual fundamental class of $Z$ in $X$.
 On the contrary, for a smooth proper morphism $f:X \rightarrow S$, the twisting virtual 
 bundle will be of non-positive rank. In fact, $\eta_f$ is rather the analog 
 of cobordism classes (see \cite[Def. 2.1.6]{MorLNM}).
 For an extension of the above fundamental classes to derived stacks,
 we refer the reader to \cite{KhanVirt}.
\end{rem}

\begin{num}
\label{num:h-smoothness}
\textit{Homotopical smoothness and purity.} 
\begin{df}
\label{df:hsmooth}
(See also \cite[Definition 4.3.7]{DJK}). Let $f:X \rightarrow S$ be a smoothable lci morphism with virtual tangent bundle $\tau_f$. We say that $f$ is \emph{homotopically smooth} (\emph{h-smooth}) with respect to the motivic $\infty$-category 
$\T$ if the natural transformation $$\mathfrak p_f(-):\Th(\tau_f) \otimes f^* \rightarrow f^!$$ (see \eqref{eq:fdl_class}) evaluated at the sphere spectrum $\un_S$ is an isomorphism $\mathfrak p_f:\Th(\tau_f) \rightarrow f^!(\un_S)$.
\end{df}

\begin{num}\label{num:hsmooth}
One gets the following basic properties of h-smoothness:
considering composable lci smoothable morphisms $f$, $g$, $h=f \circ g$ (which is also lci smoothable),
if $f$ and $g$ (resp.~$f$ and $h$) are h-smooth, 
then so is $h$ (resp.~$g$). 
Moreover, 
if $g^!$ is conservative, $g$ and $h$ being h-smooth implies that $f$ is h-smooth. 
On the other hand, h-smoothness is not stable under base change.
\end{num}

\begin{ex}
Here are some examples of h-smooth maps $f:X \rightarrow S$.
\begin{itemize}
\item $f$ is smooth
\item $X$, $S$ are smooth over some base $B$ and $f$ is a morphism of $B$-schemes
\item $X$, $S$ are regular over a field $k$ and $\T$ is continuous, see \cite[Appendix A]{DFJK}
(all our examples are continuous in this sense)
\item (Absolute purity) $X$ and $S$ are regular and $\T=\SH_\QQ, \DM_\QQ, \DM_\et, D(-_\et,\ZZ_\ell)$
\end{itemize}
\end{ex}

In particular, 
a closed immersion between smooth varieties over a field is h-smooth. On the other hand, 
not all regular closed immersions are h-smooth: 
\begin{ex}
 Consider the regular closed immersion $$i:Z=Z_1\cup_{\{o\}} Z_2\rightarrow X=\AA^{2}$$ 
of the union of coordinate axes $Z_j\simeq \AA^1$, $j=1,2$  
in the affine plane $\AA^{2}$ over a field $k$. We claim that $i$ is not h-smooth
 (see \Cref{cor:comput_i^!_smc} and \Cref{ex:comput_i^!_smc} for more context).

The normal bundle $N_{Z/X}$ is the trivial line bundle of rank $1$.
 Let $i_0:\{o\}\rightarrow Z$ be the induced closed immersion
 and note that the composite immersion $i \circ i_0:\{o\}\rightarrow X$ is h-smooth, with trivial normal bundle $N_{\{o\}/X}$ of rank $2$.
 Now apply cdh-descent to the canonically induced cdh-distinguished square of closed immersions 
$$
\xymatrix@=16pt{ \{o\} \ar[dr]^{i_0} \ar[r]^{i_{0,1}} \ar[d]_{i_{0,2}} & Z_1 \ar[d]^{i_1} \\ 
Z_2 \ar[r]^{i_2} & Z}
$$
We obtain the homotopy exact sequence 
$$
\un_Z\rightarrow i_{1*}\un_{Z_1}\oplus i_{2*}\un_{Z_2}\rightarrow i_{0*}\un_{\{o\}}
$$
Applying $i_0^!$ to this sequence and using the base change isomorphisms 
$i_0^!i_{j,*}(\un_{Z_j})\simeq i_{0,j}^!(\un_{Z_j})$ and the purity isomorphisms 
$i_{0,j}^!(\un_{Z_j})\simeq \Th_{\{o\}}(-N_{\{o\}/Z_j})\simeq \un_k(-1)[-2]$ for 
the h-smooth closed immersions $i_{0,j}:\{o\}\rightarrow Z_j$ we get the homotopy exact sequence
$$
i_0^!(\un_Z)\rightarrow \un_k(-1)[-2]\oplus \un_k(-1)[-2]\rightarrow \un_k
$$
The second map in the above sequence is given by a pair of elements in $\pi_{2r,r}(k)$ for some $r<0$.
Hence, it is trivial, 
and we obtain the isomorphism (see \Cref{cor:comput_i^!_smc} for a generalization)
$$
i_0^!(\un_Z)\simeq \un_k(-1)[-2]\oplus \un_k(-1)[-2]\oplus \un_k[-1]
$$
On the other hand, if $i$ was h-smooth, we would have $i^!(\un_X)\simeq \Th_Z(-N_{Z/X})$.
Hence, by applying $i_0^!$ and using \eqref{eq:exchange-iso} and the $\otimes$-invertibility of 
$\Th_{\{o\}}(i_0^{-1}N_{Z/X})$, we would obtain isomorphisms 
$$ 
i_0^!(\un_Z)\simeq 
i_0^!i^!(\un_X)\otimes \Th_{\{o\}}(i_0^{-1}N_{Z/X})\simeq 
\Th_{\{o\}}(-N_{\{o\}/X})\otimes \Th_{\{o\}}(i_0^{-1}N_{Z/X})\simeq 
\un_k(-1)[-2] 
$$
\end{ex}

The h-smoothness property allows one to compare the four different theories in 
Definition \ref{df:internal_theories} and generalizes the smooth case.
The following isomorphisms can be seen as (internal) duality isomorphism,
 extending the classical duality between homology and cohomology with compact support.
\begin{prop}
\label{ex:purity&comparison} 
Let $f:X \rightarrow S$ be an h-smooth morphism with virtual tangent bundle $\tau_f$. Then the purity isomorphism $\mathfrak{p}_f:\Th(\tau_f) \rightarrow f^!(\un_S)$ induces isomorphisms
\begin{align*}
\htp_S(X,v)=f_!\big(\Th(v) \otimes f^!(\un_S)\big) &\xrightarrow{\mathfrak p_f^{-1}} f_!\big(\Th(v) \otimes 
\Th(\tau_f)\big)=\cohtp_S^c(X,v+\tau_f) \\
\htp^c_S(X,v)=f_*\big(\Th(v) \otimes f^!(\un_S)\big) &\xrightarrow{\mathfrak p_f^{-1}} f_*\big(\Th(v) \otimes 
\Th(\tau_f)\big)=\cohtp_S(X,v+\tau_f)
\end{align*}
Moreover, 
these isomorphisms transform the natural functoriality (resp.~Gysin map) in the source to the Gysin map
(resp.~natural functoriality) on the target.
\end{prop}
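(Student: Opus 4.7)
The plan is to derive both statements from two inputs: the hypothesis of h-smoothness, which guarantees that $\mathfrak p_f:\Th(\tau_f)\to f^!(\un_S)$ is an isomorphism in $\T(X)$, and the associativity of the system of fundamental classes established in \cite[Theorem 3.3.2]{DJK}. The first part, producing the two displayed isomorphisms, is essentially formal. Since $\Th(v)$ is $\otimes$-invertible and the functor $\Th$ from $\uK(X)$ is monoidal, tensoring $\mathfrak p_f$ with $\Th(v)$ and applying $f_!$ (resp.\ $f_*$) yields
\[
f_!\bigl(\Th(v)\otimes f^!(\un_S)\bigr)\xrightarrow{\simeq} f_!\bigl(\Th(v)\otimes\Th(\tau_f)\bigr)=f_!\Th(v+\tau_f),
\]
which is by definition $\cohtp_S^c(X,v+\tau_f)$; an identical argument with $f_*$ replacing $f_!$ gives $\cohtp_S(X,v+\tau_f)$.

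The substantive content is the compatibility of functorialities. Given a smoothable lci morphism $g:Y\to X$ between separated $S$-schemes, the composite $f'=f\circ g$ is again smoothable lci with $\tau_{f'}\simeq g^{-1}\tau_f+\tau_g$. One must show that under the purity isomorphisms for $f$ and $f'$ the natural functoriality
\[
g_*:\htp_S(Y,g^{-1}v)\longrightarrow\htp_S(X,v),
\]
arising from the counit $g_!g^!\to\mathrm{id}$, matches the Gysin map
\[
g_!:\cohtp_S^c\bigl(Y,g^{-1}(v+\tau_f)+\tau_g\bigr)\longrightarrow\cohtp_S^c(X,v+\tau_f),
\]
which is induced by the trace $g_!(\Th(\tau_g)\otimes g^*-)\to\mathrm{id}$ associated with $\eta_g$. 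The associativity formula of \cite[Theorem 3.3.2]{DJK} identifies $\eta_{f'}$ with the composite of $g^!(\eta_f)$ and $\eta_g$, up to the canonical exchange isomorphism $\Th(\tau_{f'})\simeq\Th(\tau_g)\otimes g^*\Th(\tau_f)$. Under the adjunctions $(f_!,f^!)$ and $(g_!,g^!)$, this identification translates precisely into the commutativity of the square comparing $g_*$ with $g_!$. The remaining cases, namely $\cohtp_S$ with $f^*$ mapped to $f_!$, and (for proper $f$) $\htp_S$ with $f^!$ mapped to $f_*$, follow by the same argument after passing to the appropriate adjoints, combined with the usual base change and projection formula isomorphisms.

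The main obstacle is bookkeeping rather than conceptual. One has to track the exchange isomorphism $g^*\Th(v)\simeq\Th(g^{-1}v)$, the projection formula (which allows $\Th(v)$ to pass through $g_!$ and $f_!$), and the units/counits of the various adjunctions carefully enough to reduce the compatibility to the single associativity statement. Once the diagrammatic reduction is carried out cleanly, both the existence of the isomorphisms and the stated compatibility of functorialities follow formally from h-smoothness together with \cite[Theorem 3.3.2]{DJK}.
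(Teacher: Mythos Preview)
Your proposal is correct and follows the same line as the paper. The paper itself gives no detailed proof of this proposition; it simply asserts that the result is implied by the associativity formula for fundamental classes in \cite[Theorem 3.3.2]{DJK}, and you have correctly unpacked that reference: the displayed isomorphisms are formal from h-smoothness, and the exchange of natural functoriality with Gysin maps reduces to the identification $\eta_{f'}\simeq g^!(\eta_f)\circ\eta_g$ furnished by associativity.
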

The first statement is clear. The last one is a direct consequence of the definitions
 --- both the purity isomorphisms and the Gysin maps are obtained by multiplication
  by a fundamental class ---
 and from the ``associativity formula" for fundamental classes in \cite[Theorem 3.3.2]{DJK}.

of the purity isomorphism, as multiplication by

\subsection{Closed pairs}\label{sec:closed_pairs}

\begin{num}
\label{num:closed_pairs}
A \emph{closed $S$-pair} is a pair $(X,Z)$ consisting of a separated $S$-scheme $f:X\rightarrow Z$ 
and a closed subscheme $i:Z\hookrightarrow X$ of $X$. 
For such a pair, we denote by $j:X-Z\rightarrow X$ the complementary open immersion, 
so that we have a commutative diagram
\begin{equation}
\label{eq:loc-notation}    
\xymatrix@R=8pt@C=40pt{
Z\ar@{^(->}^i[r]\ar_p[rd] & X\ar^{f}[d] & X-Z\ar@{_(->}_/4pt/j[l]\ar^q[ld] \\
& S &
}
\end{equation}
According to \Cref{df:quotient}, one associates to such a closed $S$-pair the $\T$-spectrum $\htp_S(X/X-Z)$
 (resp. $\cohtp_S(X/X-Z)$) which corresponds to the homotopy (resp. cohomotopy) of $X$ with support in $Z$. \\
A morphism $(\Phi,\varphi):(Y,T) \rightarrow (X,Z)$ of closed $S$-pairs is a topologically cartesian commutative diagram
\begin{equation}
\label{equation:morphismclosedpair}
\xymatrix@=14pt{
T\ar[d]_{\varphi}\ar[r] & Y\ar[d]^{\Phi} \\
Z\ar[r] & X
}
\end{equation}
Here, the horizontal maps are closed immersions. 
Note that $\htp_S(X/X-Z)$ (resp. $\cohtp_S(X/X-Z)$ is covariantly (resp. contravariantly) functorial
for morphisms of closed $S$-pairs. 
A morphism of closed $S$-pairs $(\Phi,\varphi)$ is said to be cartesian if 
\eqref{equation:morphismclosedpair} is cartesian as a diagram of schemes. 
It is said to be Nisnevich-excisive (resp. $\cdh$-excisive) if \eqref{equation:morphismclosedpair} 
is Nisnevich-distinguished (resp. $\cdh$-distinguished) in the sense of \cite{zbMATH05678570}.
An excisive morphism of closed $S$-pairs induces an isomorphism in $\T(S)$. Indeed, 
this follows from Nisnevich excision,  which is implied by the localization property in \cite[3.3.4]{CD3}.
\end{num}
\begin{df}
\label{df:closedSpairs}
A closed $S$-pair $(X,Z)$ is \emph{weakly smooth (resp.~weakly h-smooth)} 
if there exists a Nisnevich neighborhood $V$ of $Z$ in $X$ such that $V$ and $Z$ are smooth 
(resp.~h-smooth, see \Cref{df:hsmooth}) over $S$.
\end{df}

We note that for closed $S$-pairs as in \Cref{df:closedSpairs}, 
the closed immersion $i:Z\rightarrow X$ is necessarily regular with normal bundle $N_{Z/X}$.
\end{num}

\begin{num}
\label{num:res-cot-cplx}
Suppose $(X,Z)$ is a closed $S$-pair with the property that $X$ is h-smooth over $S$ in some 
Nisnevich neighborhood of its closed subscheme $Z$. 
Then, although the cotangent complex $\cL_{X/S}$ might not be a perfect complex on $X$, 
by assumption,  
it restricts to a perfect complex on a suitable Nisnevich neighborhood of $Z$ in $X$.
Thus, one can canonically define $i^{-1} \tau_{X/S}$ as a virtual vector bundle on $Z$ 
(by choosing an appropriate Nisnevich neighborhood and showing that it is independent of the choice).
\end{num}
We extend the Morel-Voevodsky homotopy purity theorem as follows, 
see also \Cref{thm:generalizedhomotopypurity2} for a refinement when $Z$ has smooth crossing singularities.

\begin{thm}
\label{thm:generalizedhomotopypurity}
Let $(X,Z)$ be a closed $S$-pair and let $v$ be virtual vector bundle on $X$. Then the following hold:
\begin{enumerate}
    \item If $X$ is h-smooth over $S$ in a Nisnevich neighborhood of $Z$, then there are  canonical purity isomorphisms 
\begin{equation}
\label{eq:purity-1}
\begin{array}{rcl}
    \htp_S(X/X-Z,v) & \simeq & \cohtp_S^c(Z,i^{-1}v+i^{-1}\tau_{X/S}) \\ 
    \cohtp_S(X/X-Z,v) & \simeq &\htp^c_S(Z, i^{-1}v-i^{-1}\tau_{X/S})
\end{array}
\end{equation}
    \item If moreover $(X,Z)$ is weakly h-smooth, then there are canonical purity isomorphisms \begin{equation}
\label{eq:purity}
\begin{array}{rcl}
\htp_S(X/X-Z,v) & \simeq & \htp_S(Z,i^{-1}v+\twist{N_{Z/X}}) \\
\cohtp_S(X/X-Z,v) & \simeq & \cohtp_S(Z, i^{-1}v-\twist{N_{Z/X}}) 
\end{array}
\end{equation}
\end{enumerate}
\end{thm}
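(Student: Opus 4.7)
The plan is a Nisnevich-local reduction, followed by the localization triangle and, for part~(2), the comparison isomorphisms of \Cref{ex:purity&comparison}. Since h-smoothness of $f:X\to S$ is assumed only on a Nisnevich neighborhood $V$ of $Z$ in $X$, the inclusion of closed pairs $(V,Z)\hookrightarrow(X,Z)$ is cartesian and Nisnevich-excisive, so by Nisnevich excision (a consequence of the localization property \cite[3.3.4]{CD3}) it induces isomorphisms $\htp_S(V/V-Z,v|_V)\xrightarrow{\simeq}\htp_S(X/X-Z,v)$ and analogously for $\cohtp_S$. Replacing $X$ by $V$ we may therefore assume $f$ is h-smooth globally, which simultaneously ensures that $i^{-1}\tau_{X/S}$ is unambiguously defined as a virtual vector bundle on $Z$ (cf.~\Cref{num:res-cot-cplx}).

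Set $p=f\circ i:Z\to S$. Applying $f_!$ to the localization triangle $j_!j^*\to\mathrm{id}\to i_*i^*$ evaluated on $\Th(v)\otimes f^!(\un_S)$, and using $j^!=j^*$ together with the exchange isomorphism \eqref{eq:exchange-iso} for $j$, one identifies the defining cofiber as
\begin{equation*}
\htp_S(X/X-Z,v)\;\simeq\;p_!\bigl(\Th(i^{-1}v)\otimes i^*f^!(\un_S)\bigr).
\end{equation*}
Dually, applying $f_*$ to the localization triangle $i_*i^!\to\mathrm{id}\to j_*j^*$ on $\Th(v)$, together with \eqref{eq:exchange-iso} for $i$, yields
\begin{equation*}
\cohtp_S(X/X-Z,v)\;\simeq\;p_*\bigl(\Th(i^{-1}v)\otimes i^!(\un_X)\bigr).
\end{equation*}
Under the reduction, the fundamental class $\mathfrak p_f:\Th(\tau_{X/S})\xrightarrow{\simeq} f^!(\un_S)$ of \Cref{df:hsmooth} is an isomorphism. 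Pulling back along $i$ gives $i^*f^!(\un_S)\simeq\Th(i^{-1}\tau_{X/S})$, which substituted into the first display is the first isomorphism of~(1). For the second, use $i^!f^!=p^!$ together with \eqref{eq:exchange-iso} to rewrite $i^!(\un_X)\simeq\Th(-i^{-1}\tau_{X/S})\otimes p^!(\un_S)$; substitution then gives the second isomorphism of~(1).

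For part~(2), weak h-smoothness makes $p:Z\to S$ itself h-smooth, so \Cref{ex:purity&comparison} supplies comparison isomorphisms $\htp_S(Z,w)\simeq\cohtp_S^c(Z,w+\tau_{Z/S})$ and $\htp^c_S(Z,w)\simeq\cohtp_S(Z,w+\tau_{Z/S})$. The cotangent-complex transitivity triangle $i^*\cL_{X/S}\to\cL_{Z/S}\to\cL_{Z/X}$, combined with $\cL_{Z/X}\simeq\mathcal C_{Z/X}[1]$ for the regular closed immersion $i$, translates into the virtual identity $i^{-1}\tau_{X/S}=\tau_{Z/S}+\langle N_{Z/X}\rangle$ in $\uK(Z)$. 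Substituting this into~(1) and applying \Cref{ex:purity&comparison} with $w=i^{-1}v\pm\langle N_{Z/X}\rangle$ then yields~(2). Naturality under cartesian morphisms of weakly h-smooth closed $S$-pairs follows by assembling the naturality of each ingredient: Nisnevich excision, the localization triangles, the fundamental-class isomorphism $\mathfrak p_f$ (by \cite[3.3.2]{DJK}), and the comparison isomorphisms of \Cref{ex:purity&comparison}. The main subtlety I anticipate is not any single identification but rather the coherence of the two applications of fundamental classes — one for $f$, one for $p=f\circ i$ — with the cotangent transitivity triangle; this is precisely the content of the associativity of fundamental classes in \cite[Theorem 3.3.2]{DJK}, which must be invoked to ensure that the composite isomorphism in~(2) is independent of the order in which the two purities are applied.
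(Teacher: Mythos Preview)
Your proof is correct and follows essentially the same approach as the paper: Nisnevich excision to reduce to the globally h-smooth case, then the localization triangle to identify $\htp_S(X/X-Z,v)$ with $p_!\bigl(\Th(i^{-1}v)\otimes i^*f^!(\un_S)\bigr)$, followed by the purity isomorphism $\mathfrak p_f$ and, for part~(2), the comparison isomorphisms of \Cref{ex:purity&comparison} combined with the virtual identity $i^{-1}\tau_{X/S}=\tau_{Z/S}+\langle N_{Z/X}\rangle$. Your treatment of the dual statement for $\cohtp_S$ is more explicit than the paper's (which simply refers to the dual localization triangle), and your remark on the coherence of the two purity isomorphisms via the associativity formula of \cite[Theorem 3.3.2]{DJK} makes explicit a point the paper leaves implicit.
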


\begin{proof}
By Nisnevich excision for closed $S$-pairs, we are reduced to the case where $f:X\to S$ is h-smooth, with virtual tangent bundle $\tau_f$. The fact that the two isomorphisms do not depend on the choice of a Nisnevich neighborhood follows by 
the functoriality of the excision isomorphism. With the notation \eqref{eq:loc-notation}, by inserting $\Th(v) \otimes f^!(\un_S)$ in the localization
exact homotopy  sequence  
$$
j_!j^! \rightarrow Id \rightarrow i_*i^*
$$
and applying $f_!$
 we get the exact homotopy 
$$
\Pi_S(X-Z,j^{-1}v) \rightarrow \Pi_S(X,v) \rightarrow p_!\big(\Th(i^{-1}v) \otimes i^*f^!(\un_S)\big)
$$
Here 
we used the identifications
\begin{align*}
f_!j_!j^!\big(\Th(v) \otimes f^!(\un_S)\big) &\simeq 
q_!\big(\Th(j^{-1}v) \otimes q^!(\un_S)\big)=\Pi_S(X-Z,j^{-1}v) \\
f_!i_*i^*\big(\Th(v) \otimes f^!(\un_S)\big) & \simeq 
f_!i_!\big(\Th(i^{-1}v) \otimes i^*f^!(\un_S)\big)=p_!\big(\Th(i^{-1}v) \otimes i^*f^!(\un_S)\big)
\end{align*}
In particular, 
there is an isomorphism
$$
\htp_S(X/X-Z,v) \simeq p_!\big(\Th(i^{-1}v) \otimes i^*f^!(\un_S)\big)
$$
The purity isomorphism then yields the desired isomorphism
\begin{align*}
\htp_S(X/X-Z,v) \simeq p_!\big(\Th(i^{-1}v) \otimes i^*f^!(\un_S)\big) & \xrightarrow{\mathfrak p_f^{-1}} p_!\big(\Th(i^{-1}v) \otimes i^*(\Th(\tau_f) \otimes f^*(\un_S))\big) \\
& = p_!\big(\Th(i^{-1}v+i^{-1}\tau_f)\big)=\cohtp_S^c(Z,i^{-1}v+i^{-1}\tau_f) 
\end{align*}
In the case where $Z/S$ is h-smooth, with virtual tangent bundle $\tau_p$, the  purity isomorphism $\mathfrak{p}_p$ in  \Cref{ex:purity&comparison}  yields in turn an isomorphism 
$$\cohtp_S^c(Z,i^{-1} v+i^{-1}\tau_f)\cong \htp_S(Z,i^{-1}v+i^{-1}\tau_f-\tau_p)= \htp_S(Z,i^{-1}v+\twist{N_{Z/X}})$$
The second isomorphism in \Cref{thm:generalizedhomotopypurity} 
is now a direct consequence of the h-smoothness property of $Z/S$.

The dual statements for $\cohtp_S(X/X-Z,v)$ follow from similar arguments applied to the dual localization homotopy exact sequence 
$
i_!i^! \rightarrow Id \rightarrow j_*j^*
$.
\end{proof}

\begin{rem}
One should be cautious about the functoriality of the purity isomorphisms concerning arbitrary morphisms of h-smooth closed $S$-pairs, as it does not hold true in the naive sense unless a transversality assumption is added (as indicated in \cite[3.2.9(i)]{DJK}). For more general statements regarding motives, we refer interested readers to \cite[\textsection 2.4]{Deg5}. Additionally, we will introduce a method for establishing basic functoriality for some related Gysin morphisms.
\end{rem}

\subsection{Computations of weak duals} \label{subsec:weak-duals}
\begin{num}
Recall \cite[5.2]{Deg8} that an object $M$ of a monoidal category with unit $\un$ 
is said to be \emph{rigid}
(or \emph{strongly dualizable}) with dual $M^\vee$ if there exists pairing and co-pairing maps
$$
\mu:M \otimes M^\vee \rightarrow \un, \epsilon:\un \rightarrow M^\vee \otimes M
$$
satisfying relations that express the functors $M \otimes -$ and $- \otimes M^\vee$
as both left and right adjoints. 
In a general symmetric monoidal category, 
if an object $M$ is rigid, 
then $\uHom(M,\un)$ is a (strong) dual of $M$, 
and the duality pairing is given by the evaluation map $M \otimes \uHom(M,\un) \rightarrow \un$.  
This justifies the terminology \emph{weak dual} of $M$ for the object $\uHom(M,\un)$. 
Next, we highlight some weaker results which will be useful in the remaining. 
We first pin down a notion that appears to be missing in previous works on the six functors formalism.
\end{num}

\begin{df}
A separated morphism $f:X \rightarrow S$ is called \emph{pre-$\T$-dualizing} 
if the map
\begin{equation}
\label{eq:pre-dualizing}
\un_X \rightarrow \uHom\big(f^!(\un_S),f^!(\un_S)\big)
\end{equation}
obtained by adjunction from the identity of $f^!(\un_S)$ is an isomorphism in $\T(X)$.
\end{df}

\begin{ex} According to \Cref{df:hsmooth}, any h-smooth morphism is pre-dualizing.
\end{ex}
\begin{rem}
\label{rem:pre-dualizing}
The notion of a pre-dualizing morphism is closely linked with Grothendieck-Verdier duality, 
as shown in \cite[4.4.11]{CD3}.
In fact, 
if $f^!(\un_S)$ is a dualizing object (\cite[Definition 4.4.4]{CD3}), 
then $f$ is pre-dualizing. 
Thus, 
it follows from \cite{Ayoub} that $f$ is pre-$\SH$-dualizing as soon as its target is smooth 
over a field of characteristic $0$.
In many cases, 
if the target of $f$ is regular, then $f$ is pre-dualizing:
see \cite{Gabber} for $D(-_\et,\ZZ_\ell)$,
\cite{CD3} for $\DM$, 
\cite{CD4} for $\DM_\et$, 
and \cite{DFJK} for $\SH_\QQ$. 
\end{rem}

The following proposition provides formulas for some weak duals, hence for potential strong duals when they exist. 

\begin{prop}\label{prop:weak-dual}
Let $f:X \rightarrow S$ be a separated $S$-scheme and let $v$ be a virtual vector bundle over $X$. Then the following hold:
\begin{enumerate}
\item  There exists a canonical isomorphism
$$
\uHom(\cohtp_S^c(X,v),\un_S) \xrightarrow \simeq \htp_S^c(X,-v)
$$
which is functorial in $X$, 
for both the natural functoriality for proper maps 
(\ref{num:natural_fct}) and for the Gysin morphisms for smoothable lci morphisms (\ref{num:Gysin}).

\item If, moreover, $f$ is pre-dualizing, then there exists an isomorphism
$$
\uHom(\htp_S(X,v),\un_S) \xrightarrow \simeq \cohtp_S(X,-v)
$$
which is again functorial for the natural functorialities and Gysin maps.

\item If moreover $f$ is h-smooth, with virtual tangent bundle $\tau_f$, then the purity isomorphism $\mathfrak{p}_f$ induces canonical isomorphisms
$$
\begin{array}{rrcl}
 \qquad &
\uHom\big(\htp_S(X,v),\un_S\big) \simeq \htp_S^c(X,-v-\tau_f) & \textrm{and} & \uHom\big(\cohtp^c_S(X,v),\un_S\big) \simeq \cohtp_S(X,-v+\tau_f)
\end{array}
$$
which are natural with respect to the natural functorialities and the Gysin maps, both restricted to proper morphisms.
\end{enumerate}
\end{prop}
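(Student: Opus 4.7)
The strategy is to exploit a single general identity from the six-functor formalism, namely
\[
\uHom(f_!M, N) \simeq f_*\uHom(M, f^!N),
\]
which follows formally from the $(f_!, f^!)$ and $(f^*, f_*)$ adjunctions combined with the projection formula for $f_!$. Combined with the $\otimes$-invertibility of Thom spaces ($\Th(v)^{-1} = \Th(-v)$) and, in part (3), the purity isomorphism $\mathfrak{p}_f$ of \Cref{ex:purity&comparison}, this identity produces all three isomorphisms.

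For part (1), I would apply the identity with $M = \Th(v)$ and $N = \un_S$: since $\Th(v)$ is invertible we get $\uHom(\Th(v), f^!(\un_S)) \simeq \Th(-v) \otimes f^!(\un_S)$, whose pushforward along $f_*$ is $\htp_S^c(X,-v)$. For part (2), take $M = \Th(v) \otimes f^!(\un_S)$; the tensor-hom adjunction gives
\[
\uHom(\Th(v) \otimes f^!(\un_S), f^!(\un_S)) \simeq \uHom(\Th(v), \uHom(f^!(\un_S), f^!(\un_S))),
\]
and the pre-dualizing hypothesis collapses the inner hom to $\un_X$, leaving $\Th(-v)$ after using invertibility. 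Pushing forward yields $f_*\Th(-v) = \cohtp_S(X,-v)$.

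For part (3), h-smoothness implies pre-dualizing, so (1) and (2) both apply. Rewriting the purity isomorphism in the two forms $\cohtp_S(X, w) \simeq \htp_S^c(X, w-\tau_f)$ and $\htp_S^c(X, w) \simeq \cohtp_S(X, w+\tau_f)$ from \Cref{ex:purity&comparison} and substituting into the conclusions of (2) and (1) respectively, one reads off the two stated isomorphisms.

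Each of these identifications is built from canonical structural isomorphisms, so naturality for pushforwards along proper maps is essentially automatic. The delicate point---and what I expect to be the main obstacle---is compatibility with the Gysin morphisms for smoothable lci maps. Verifying this will require tracing the trace and cotrace transformations of \Cref{num:Gysin} through the adjunction identity above, and invoking the associativity and base-change compatibilities of fundamental classes from \cite[Theorem 3.3.2]{DJK} to show that proper pushforward on one side of the duality corresponds to the Gysin map on the other, and vice versa. In part (3), since the purity isomorphism relies on h-smoothness, which is not stable under base change (\Cref{num:hsmooth}), the Gysin functoriality must be restricted to proper morphisms as in the statement.
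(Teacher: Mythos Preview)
Your proposal is correct and matches the paper's proof essentially line for line: the paper also uses the internal adjunction $\uHom(f_!M,\un_S)\simeq f_*\uHom(M,f^!(\un_S))$, the $\otimes$-invertibility of $\Th(v)$, and (for part~(2)) the pre-dualizing hypothesis to collapse $\uHom(f^!(\un_S),f^!(\un_S))$ to $\un_X$, then obtains~(3) by combining~(1) and~(2) with \Cref{ex:purity&comparison}. The one place you diverge is in tone: where you flag the Gysin compatibility as the ``delicate point'' requiring a trace/cotrace chase through \cite{DJK}, the paper simply declares that ``each functoriality statement is clear by construction,'' so you are being more cautious than the authors themselves.
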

\begin{proof}
To prove the isomorphism in (1) we use
\begin{align*}
\uHom(\cohtp_S^c(X,v),\un_S)=\uHom(f_!(\Th(v)),\un_S) & \xrightarrow{(a)} f_*\uHom\big(\Th(v),f^!(\un_S)\big) \\
& \stackrel{(b)} \simeq f_*\big(\Th(-v) \otimes f^!(\un_S)\big)=\htp_S^c(X,-v)
\end{align*}
Here, 
(a) (resp.~(b)) follows from the internal interpretation of the fact that $f^!$ is right adjoint to $f_!$ 
(resp. that $\Th(v)$ is $\otimes$-invertible).

To deduce (2), we consider the isomorphisms 
\begin{align*}
\uHom(\htp_S(X,v),\un_S)=\uHom\big(f_!\big(\Th(v) \otimes f^!(\un_S)\big),\un_S\big) 
& \xrightarrow{(a)} f_*\uHom\big(\Th(v) \otimes f^!(\un_S),f^!(\un_S)\big) \\
& \stackrel{(b)} \simeq f_*\Big(\Th(-v) \otimes \uHom\big(f^!(\un_S),f^!(\un_S)\big)\Big) \\
& \stackrel{(c)} \simeq f_*\big(\Th(-v) \otimes \un_X\big)=\cohtp_S(X,-v)
\end{align*}
Here, 
(a) and (b) are justified as before in (1), and (c) follows from the assumption that $f$ is pre-dualizing.
The isomorphisms in (3) are a combination of (1) and (2), 
and the isomorphisms of \Cref{ex:purity&comparison}.

Each functoriality statement is clear by construction. 
\end{proof}

\begin{ex} \label{ex:wts-dual}Here are known examples to which \Cref{prop:weak-dual} applies to give formulas for strong duals: 
\begin{enumerate}
\item For $\T=\SH(k)$, where $k$ is a field of characteristic $0$, according to \cite[Theorem 1.4]{RiouDual} any constructible spectrum is rigid. It follows from \cite{Ayoub2} that the six operations preserve constructibility for morphisms of $k$-schemes finite type.

(a) In particular, 
$\cohtp^c_k(X,v)$ and $\htp_k^c(X,v)$ are both rigid, and the point (1) above shows that $\htp_k^c(X,v)$ is dual to $\cohtp^c_k(X,-v)$ (and reciprocally).

(b) Similarly, $\htp_k(X,v)$ and $\cohtp_k(X,v)$ are constructible, and thus rigid.
As \Cref{rem:pre-dualizing} shows that $X/k$ is pre-dualizing, point (2) of the above proposition shows that  $\htp_k(X,v)$ is dual to $\cohtp_k(X,-v)$. See \Cref{thm:strong_duality} for a generalization.

(c) Finally, if $X$ is smooth, point (3) shows that $\htp_k(X,v)$ is dual to $\htp^c_k(X,-v-\twist{T_{X/k}})$,
which is the expected generalization of Poincar\'e duality. 
This result will be extended in
\Cref{thm:strong_duality-pair}.
\item Using \cite[Theorem 2.4.9]{BD1} (see also \cite[Theorem 5.8]{mhskpao}), 
the same results hold in $\SH(k)[1/p]$ if $k$ has positive characteristic $p$.
\end{enumerate}
\end{ex}

The situation is more complicated over a base scheme $S$ of positive dimension.
When $X/S$ is smooth and proper, \Cref{ex:duality} shows that 
$\Pi_S(X,v)=\Pi_S^c(X,v)$ is rigid for any virtual vector bundle $v$. 
Theorems \ref{thm:strong_duality}, \ref{thm:generalizedhomotopypurity2} and \Cref{cor:complementdivisor_duality} below give several new examples of rigid relative spectra and motives. 
In general, 
neither properness nor smoothness alone ensures rigidness, see \Cref{ex:nonrigodness}.

\begin{ex}\label{ex:duality}
\textit{Poincar\'e duality} (see \cite[5.4]{Deg8}). Let $f:X \rightarrow S$ be a smooth proper $S$-scheme with tangent bundle $T$. Then, for any virtual bundle $v$ over $X$,
$\htp_S(X,v)$ is rigid with dual $$\htp_S(X,-\twist {T}-v)=\Th_S(-v-\twist {T})$$
Note that the given expression of the dual corresponds to that in \Cref{prop:weak-dual}(2) via the purity isomorphism $\htp_S(X,-v-\twist{T})\simeq \cohtp^c_S(X,-v)=\cohtp_S(X,-v)$ of \Cref{ex:purity&comparison}. 

Indeed, letting  $\delta:X \rightarrow X \times_S X$ be the diagonal closed immersion,
the pairing and co-pairing maps are given by the composite maps
\begin{align*}
& \htp_S(X,v) \otimes \htp_S(X,-v-\twist {T})
\stackrel{(*)} \simeq \htp_S(X \times_S X,-\twist{p_1^{-1}T_f})
\xrightarrow{\delta^!} \htp_S(X) \xrightarrow{f_*} \un_S \\
& \un_S \xrightarrow{f^!} \htp_S(X,-\twist {T})
\xrightarrow{\delta_*}
\htp_S(X \times_S X,-\twist{p_1^{-1}T})
\stackrel{(*)} \simeq \htp_S(X,-v-\twist {T}) \otimes \htp_S(X,v)
\end{align*}
Here the labels $(*)$'s are instances of the K\"unneth isomorphism \eqref{eq:kunneth2} given in the next subsection. 
The required identities follow from the base change formula for Gysin morphisms in \cite[3.3.2(iii)]{DJK}.
\end{ex}

\begin{ex}
\label{ex:nonrigodness}
Let $i:Z \rightarrow S$ be a h-smooth closed immersion (e.g., $Z$ and $S$ are smooth over a field $k$) 
with nonempty open complement $j:U\rightarrow S$. 
We claim that $\htp_S(U)=j_!(\un_U)$ is not rigid. 
Indeed, 
assuming the contrary, 
according to \Cref{prop:weak-dual} its dual would be isomorphic to $j_*(\un_U)$. 
Since $i^*$ is monoidal, 
it would follow that $i^*j_!(\un_U)$ is rigid with dual $i^*j_*(\un_U)$. 
The first spectrum is trivial, 
whereas purity identifies the second one with an extension of $\un_Z$ by $\Th(N_{Z/S})$, 
which is thus necessarily a nontrivial spectrum.
An identical (dual) argument shows that $\htp_S(Z)$ is not rigid. 

In a similar vein, 
\cite[Remark 8.2]{motiviclandweber} gives the following:
let $S=\Spec(R)$ be the spectrum of a discrete valuation $R$ with quotient field $K$.  
Then  $\Pi_S(\Spec(K))$ is not rigid in $\SH(S)$.
\end{ex}

\subsection{K\"unneth isomorphisms}

We collect here several variants of K\"unneth formulas (see also \Cref{prop:Kunneth-smooth-nc}).

\begin{ex}\textit{K{\"u}nneth isomorphisms}.\label{ex:kunneth}
Let $X$, $Y$ be separated $S$-schemes and $v$, $w$ be virtual vector bundles over $X$, $Y$, respectively.
Then, one deduces from the projection and base change formulas a canonical isomorphism
(obtained from exchange isomorphisms, see \cite{CD3})
\begin{equation}
\label{eq:kunneth1}
\cohtp^c_S(X,v) \otimes \cohtp^c_S(Y,w) \simeq \cohtp^c_S(X \times_S Y,p_1^{-1}v+p_2^{-1}w)
\end{equation}
If $X$ and $Y$ are in addition smooth over $S$, 
then we have the more usual K{\"u}nneth formula (see \cite[1.1.37]{CD3})
\begin{equation}
\label{eq:kunneth2}
\htp_S(X,v) \otimes \htp_S(Y,w) \simeq \htp_S(X \times_S Y,p_1^{-1}v+p_2^{-1}w)
\end{equation}
Using the relative purity isomorphism, 
one can also deduce \eqref{eq:kunneth2} from the previous one.
\Cref{ex:non-kunneth} shows the second K{\"u}nneth formula \eqref{eq:kunneth2} fails in the non-smooth case.
\end{ex}

\begin{ex}\label{ex:non-kunneth}
One can extend the K\"unneth formula \eqref{eq:kunneth2} to the non smooth case
 (see below for example) but one still needs assumptions.
 Indeed, one cannot replace in general smoothness by h-smoothness.
 For example, for the zero section $s:X\rightarrow \AA^n_X=S$, $n\geq 1$, one has 
  $\Pi_S(X)=s_*(\un_X)(n)[2n]$ and 
$$
\Pi_S(X) \otimes_S \Pi_S(X)=s_*(\un_X)(n)[2n] \otimes s_*(\un_X)(n)[2n]=s_*(\un_X)(2n)[4n]
$$
The latter is different from $\Pi_S(X \times_S X)=\Pi_S(X)$
 (in any of our motivic $\infty$-categories).
\end{ex}

\begin{num}
In the following result, 
we give some new cases of K{\"u}nneth formulas to compute stable homotopy at infinity
 (see Propositions \ref{cor:Kunneth-infty} and \ref{cor:Kunneth-A1-Cont}).
To a cartesian square of separated morphisms
$$
\xymatrix@=16pt{
X \times_S Y\ar_q[d]\ar|h[rd]\ar^-p[r] & Y\ar^g[d] \\
X\ar_f[r] & S
}
$$
we associate the following commutative diagram of exchange transformations and the map $\alpha_?$ 
forgetting proper support
\begin{equation}
\label{equation:etfps}
\begin{split}
\xymatrix@R=10pt@C=34pt{
f_!f^!(\un) \otimes g_!g^!(\un)\ar_\sim[d]\ar^{\alpha_f \otimes \alpha_g}[r] & f_*f^!(\un) \otimes g_*g^!(\un)\ar^{(1)}[d] \\
g_!(g^*f_!f^!(\un) \otimes g^!(\un))\ar_\sim[d]\ar^{\alpha_g(\alpha_f)}[r] & g_*(g^*f_*f^!(\un) \otimes g^!(\un))\ar^{(2)}[d] \\
g_!(p_!q^*f^!(\un) \otimes g^!(\un))\ar_\sim[d]\ar^{\alpha_g(\alpha_p)}[r] & g_*(p_*q^*f^!(\un) \otimes g^!(\un))\ar^{(3)}[d] \\
h_!(q^*f^!(\un) \otimes p^*g^!(\un))\ar_{(4)}[d]\ar^{\alpha_h}[r] & h_*(q^*f^!(\un) \otimes p^*g^!(\un))\ar^{(4)}[d] \\
h_!(h^!(\un))\ar^{\alpha_h}[r] & h_*(h^!(\un)) \\
}
\end{split}
\end{equation}
Here, 
$\alpha_r$ denotes any map induced by the natural transformation $r_! \rightarrow r_*$.
\end{num}
\begin{thm}\label{th:Kunneth-infty}
With the above notation, assume that one of the following conditions is satisfied:
\begin{enumerate}[label=\roman*)]
\item $Y$ is smooth and proper over $S$.
\item $S$ is the spectrum of a field $k$ of characteristic exponent $p$ and either $\T$ is $\ZZ[1/p]$-linear 
or receives a realization functor from $\DM_\et$ as in \eqref{eq:motivic_cat}.
\item $Y$ is smooth and stably $\AA^1$-contractible over $S$
with stably constant tangent bundle $T_g$ (see \Cref{def:stably-A1-cont}). 
\end{enumerate}
Then all the vertical maps in \eqref{equation:etfps} are isomorphisms, 
and there is an induced commutative diagram
$$
\xymatrix@R=10pt@C=44pt{
\htp_S(X) \otimes \htp_S(Y)\ar^{\alpha_X \otimes \alpha_Y}[r]\ar_\sim[d] & \htp^c_S(X) \otimes \htp^c_S(Y)\ar^\sim[d] \\
\htp_S(X \times_S Y)\ar^{\alpha_{XY}}[r] & \htp^c_S(X \times_S Y)
}
$$
\end{thm}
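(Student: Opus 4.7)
The plan is to show, under each of the three hypotheses separately, that all four vertical maps (1)--(4) in \eqref{equation:etfps} are isomorphisms; the commutative square in the conclusion then follows for free from the outer rectangle, whose subsquares commute by naturality of the transformation $\alpha$ from lower-shriek to lower-star. Reading the right-hand column from top to bottom, map (1) is the projection formula for $g_*$ with arguments $A=f_*f^!(\un_S)$ and $B=g^!(\un_S)$; map (2) is the Beck--Chevalley exchange $g^*f_*\to p_*q^*$ applied to $f^!(\un_S)$; map (3) is the projection formula for $p_*$ with arguments $A=g^!(\un_S)$ and $B=q^*f^!(\un_S)$; and map (4) is the K\"unneth-type transformation $q^*f^!(\un_S)\otimes p^*g^!(\un_S)\to h^!(\un_S)$, which also appears in the left-hand column. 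The left-column analogues of (1)--(3) involve only the exceptional direct images $g_!, p_!$ and the base change $g^*f_!\simeq p_!q^*$, all of which are unconditional isomorphisms in Grothendieck's six-functor formalism.

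Under hypothesis (i), smoothness of $g$ transfers to smoothness of $q$ with $T_q=p^*T_g$, and relative purity provides $q^!\simeq\Th(T_q)\otimes q^*$ and $g^!(\un_S)\simeq\Th(T_g)$. These combine through $h^!=q^!f^!$ to yield $h^!(\un_S)\simeq p^*\Th(T_g)\otimes q^*f^!(\un_S)\simeq p^*g^!(\un_S)\otimes q^*f^!(\un_S)$, establishing (4). Smoothness of $g$ also delivers smooth base change for (2). Properness of $g$ forces $g_*=g_!$, reducing (1) to the always-valid projection formula for $g_!$; the first factor $g^!(\un_S)=\Th(T_g)$ of (3) is $\otimes$-invertible, hence rigid, so the projection formula for $p_*$ with rigid first argument holds unconditionally.

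Under hypothesis (iii), smoothness of $g$ again handles (2) and (4), and rigidity of the invertible object $g^!(\un_S)=g^*\Th(v_0)$ again handles (3). The essential new input for (1) is \Cref{lm:stably_A1-contract_virtual_vb}: it supplies $g_*g^!(-)\simeq\Th_S(v_0)\otimes -$ with $\twist{T_g}=g^*v_0$, and through $g^!(\un_S)=g^*\Th(v_0)$ this entails further that $g_*g^*(-)\simeq (-)$. Both source and target of (1) then rewrite as $f_*f^!(\un_S)\otimes\Th(v_0)$, and unwinding the adjunction defining the projection-formula transformation identifies it with this canonical isomorphism. Under hypothesis (ii), with $S=\Spec k$, the key input is that every constructible object of $\T(k)$ is rigid: by \cite{RiouDual} in characteristic zero, and by \cite[Theorem 2.4.9]{BD1} after inverting the characteristic exponent $p$; for $\T$ receiving a realization from $\DM_\et$ commuting with the six operations, the K\"unneth isomorphism transports from $\DM_\et$, where rigidity already holds. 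Granted that $f_*f^!(\un_k)$, $g^!(\un_k)$, and $g_*g^!(\un_k)$ are rigid, the projection formulas (1), (3) and the Beck--Chevalley exchange (2) follow from the general principle that such natural transformations are isomorphisms whenever the relevant tensor factor is rigid; map (4) follows similarly, by duality, from the corresponding K\"unneth isomorphism for $h_!$.

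The principal obstacle is case (iii): in the absence of properness of $g$, the projection formulas for $g_*$ and $p_*$ must be extracted from the $\AA^1$-contractibility hypothesis via the chain of natural isomorphisms provided by \Cref{lm:stably_A1-contract_virtual_vb}, rather than being automatic. Case (i) rests on the classical combination of relative purity with properness, and case (ii) reduces via rigidity of constructible objects to an application of Grothendieck--Verdier duality on top of the unconditional K\"unneth and projection formulas for the exceptional direct image.
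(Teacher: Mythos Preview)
Your treatment of cases (i) and (iii) is correct and matches the paper's approach; in fact you give more detail than the paper, which dismisses (i) as ``transparent'' and for (iii) verifies (4) individually and then checks only that the \emph{composite} of (1)--(4) agrees with the isomorphism produced by \Cref{lm:stably_A1-contract_virtual_vb}. Your map-by-map decomposition in case (iii) is a legitimate refinement.

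The gap is in case (ii). Your ``general principle'' that rigidity of a tensor factor forces the relevant transformation to be an isomorphism is valid for the projection-formula maps (1) and (3), but map (2) is the base-change exchange $g^*f_*\to p_*q^*$, which is \emph{not} a projection formula and does not follow from rigidity of any tensor factor by a formal argument. Likewise, map (4) is the K\"unneth transformation $q^*f^!(\un)\otimes p^*g^!(\un)\to h^!(\un)$ for dualizing objects, and your phrase ``follows similarly, by duality, from the corresponding K\"unneth isomorphism for $h_!$'' does not supply an argument: naive dualization of the unconditional isomorphism $f_!A\otimes g_!B\simeq h_!(q^*A\otimes p^*B)$ does not produce (4), because $f_*$ is related to $f_!$ via Verdier duality $D_Sf_!D_X$ with the dualizing object, not via the weak dual $\uHom(-,\un)$. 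The paper handles exactly these two points by citing \cite[Theorem~2.4.6]{JY1}, whose content is precisely that over a field, with $\ZZ[1/p]$-coefficients, the K\"unneth maps for $f_*$ and for $f^!$ on constructible objects are isomorphisms; rigidity is an \emph{input} to that theorem, not a substitute for it. For the $\DM_\et$-realization clause the paper cites \cite[\S3.1]{CisInter}. You should either invoke these references or reproduce the Verdier-duality argument that underlies them.
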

\begin{proof}
In each case, we have to prove that the morphisms (1) to (4) in \eqref{equation:etfps} are isomorphisms. 
Case i) is transparent.
Next, we consider Case ii). 
If $\T$ is $\ZZ[1/p]$-linear then
all the isomorphisms follow from \cite[Theorem 2.4.6]{JY1} with $Y_1=Y_2=S$, $X_1=X$, $X=Y$.
More precisely, 
the composite of (1), (2), and (3) is an isomorphism due to point (2) of 2.4.6, and (4) is an isomorphism by (3) of 2.4.6.
If $\T$ receives a functor from $\DM_\et$, one can reduce to the latter case by appealing to
\cite[Sec. 3.1]{CisInter}.

It remains to prove the assertion in Case iii). 
The isomorphism (4) follows from the fact that $g$ (resp.~$q$) is smooth with tangent bundle $T_g$ (resp.~$T_q=p^*T_g$), 
and from the relative purity isomorphism
$$
q^*f^!(\un_S) \otimes p^*g^!(\un_S) 
\simeq 
q^*f^!(\un_S) \otimes p^*\Th(T_g) 
\simeq 
q^!f^!(\un_S)=h^!(\un_S)
$$

Using \Cref{lm:stably_A1-contract_virtual_vb} 
applied respectively to $q$ and $g$, one deduces
$$
h_*h^!(\un_S)=f_*q_*q^!f^!(\un_S) 
\simeq f_* \Th(f^*v_0) \otimes f^!(\un_S)=\Th(v_0) \otimes f_*f^!(\un_S)
\simeq f_*f^!(\un_S) \otimes g_*g^!(\un_S) 
$$
where $v_0$ is the virtual vector bundle over $S$ such that $\twist{T_g}=g^*v_0$. 

It is now a formal, though lengthy, 
exercise to check that the preceding isomorphism is equal to the composition of the maps (1)-(4). 
\end{proof}

\subsection{Functorial Gysin morphisms}\label{sec:functorial-Gysin}

\begin{num}
We now show how to deduce $\infty$-functorial Gysin maps
 out of purity isomorphisms (in fact, duality) and from the $\infty$-categorical ``replacement lemma''
 of \Cref{prop:replacement}.\footnote{We thank Robin Carlier for explaining this trick.}

Let us fix a base scheme $S$ and a virtual bundle $v$ on $S$.
 We will denote by $\hSm_S$ (resp. $\hSmp$ the category of $h$-smooth $S$-schemes (\Cref{df:hsmooth}),
  with arbitrary $S$-morphisms (resp. with proper $S$-morphisms).
 Given a scheme $X$ in $\hSm_S$, with structural morphism $f:X \rightarrow S$, we will denote
 by $\tau_X=\tau_f$ the virtual tangent bundle associated with $f$, and by $v_X$ the pullback
 of $v$ to $X$.
\end{num}
\begin{prop}\label{prop:enhanced-Gysin}
There exists $\infty$-functors
\begin{align*}
\htp^!_S:(\hSmp_S)^{op} & \rightarrow \T(S), X/S \mapsto \htp_S(X,v_X-\tau_X) \\
\cohtp_{S!}:\hSmp_S & \rightarrow \T(S), X/S \mapsto \cohtp_S(X,v_X+\tau_X) \\
\htp^{c!}_S:(\hSm_S)^{op} & \rightarrow \T(S), X/S \mapsto \htp^c_S(X,v_X-\tau_X) \\
\cohtp^c_{S!}:\hSm_S & \rightarrow \T(S), X/S \mapsto \cohtp^c_S(X,v_X+\tau_X)
\end{align*}
together with natural isomorphisms of $\infty$-functors:
\begin{align*}
\cohtp_S^c \xrightarrow \sim \htp^!_S, \qquad &  \htp_S^c \xrightarrow \sim \cohtp_{S!}, \\
\cohtp_S \xrightarrow \sim \htp^{c!}_S, \qquad &  \htp_S \xrightarrow \sim \cohtp^c_{S!},
\end{align*}
which at the level of a $1$-morphism $f:Y \rightarrow X$ is given by the commutative diagrams
$$
\xymatrix@R=14pt@C=38pt{
\cohtp^c_S(X,v_X)\ar_-{f^*}[d]\ar^-{\mathfrak p_{X/S}}[r] & \htp_S(X,v_X-\tau_X)\ar^{\htp^!_{S}(f)=f^!}[d]
&
\htp^c_S(X,v_X)\ar_-{f_*}[d]\ar^-{\mathfrak p^{-1}_{X/S}}[r] & \cohtp_S(X,v_X+\tau_X)\ar^{\htp_{S!}(f)=f_!}[d]
\\
\cohtp^c_S(Y,v_Y)\ar^-{\mathfrak p_{Y/S}}[r] & \htp_S(Y,v_Y-\tau_Y)
&
\htp^c_S(Y,v_Y)\ar^-{\mathfrak p^{-1}_{Y/S}}[r] & \cohtp_S(Y,v_Y+\tau_Y) \\
\cohtp_S(X,v_X)\ar_-{f^*}[d]\ar^-{\mathfrak p^{-1}_{X/S}}[r] & \htp^c_S(X,v_X-\tau_X)\ar^{\htp^!_{S}(f)=f^!}[d]
&
\htp_S(X,v_X)\ar_-{f_*}[d]\ar^-{\mathfrak p_{X/S}}[r] & \cohtp^c_S(X,v_X+\tau_X)\ar^{\htp_{S!}(f)=f_!}[d]
\\
\cohtp_S(Y,v_Y)\ar^-{\mathfrak p^{-1}_{Y/S}}[r] & \htp^c_S(Y,v_Y-\tau_Y)
&
\htp_S(Y,v_Y)\ar^-{\mathfrak p_{Y/S}}[r] & \cohtp^c_S(Y,v_Y+\tau_Y)
}
$$
where $f$ is proper in the first two diagrams,
 and $\mathfrak p_X$, $\mathfrak p_Y$ are induced by the purity isomorphisms (\Cref{eq:fdl_class}) of $X/S$, $Y/S$ respectively.
\end{prop}
\begin{proof}
Each case follows by applying \Cref{prop:replacement} respectively to the functors $\cohtp^c_S$,
 $\htp^c_S$ (restricted to $\hSmp$), $\cohtp_S$, $\htp_S$ (restricted to $\hSm$) and to the purity
 isomorphisms of \Cref{ex:purity&comparison}.
\end{proof}

\begin{rem}
Note that we can identify the Gysin map $\htp_S^!(f)$ obtained from the above proposition
 with the Gysin map of \Cref{num:Gysin}. Indeed, due to the last statement of \Cref{ex:purity&comparison},
 both maps are homotopy equivalent.
\end{rem}

\section{Canonical resolutions of crossing singularities}
\label{section:NCboundary}
\subsection{Ordered Cech semi-simplicial scheme associated to a closed cover} 

\begin{num}
\label{num:ordeded_Cech}
\label{num:degeneracy}
Let $X$ be a noetherian scheme and consider a finite closed cover
of $X$,  i.e., a surjective map 
$$
p:X_\bullet=\sqcup_{i \in I} X_i \rightarrow X
$$
obtained from a finite collection of closed immersions $\nu_i:X_i \rightarrow X$, $i\in I$.
We let $\cap=\times_X$ be a shorthand for the fiber product of closed $X$-schemes. For every nonempty subset $J \subset I$ we set $X_J=\cap_{j \in J} X_j$ and denote by $\nu_J:X_J\rightarrow X$ the canonically induced closed immersion. For every pair of nonempty subsets $J \subset K$ of $I$, we let $\nu_K^J:X_K \rightarrow X_J$ be the canonically induced closed immersion so that we have $\nu_K=\nu_J\circ \nu_K^J$.

The \v Cech simplicial $X$-scheme $\cech_*(X_\bullet/X)$ associated with $p$ takes the form
\begin{equation}
\label{equation:cechsimplicial}
\cech_n(X_\bullet/X):= \bigsqcup_{(i_0,\hdots,i_n) \in I^{n+1}} X_{i_0} \cap \hdots \cap X_{i_n}
\end{equation}
with degeneracy morphisms $\delta^k_n:\cech_n(X_\bullet) \rightarrow \cech_{n-1}(X_\bullet)$, $k=0,\ldots, n$,
given by the sum of the canonical immersions
$$
X_{i_0}\cap \cdots \cap X_{i_k}\cap \cdots \cap X_{i_n} \to  
X_{i_0}\cap \cdots \cap \widehat{X_{i_k}}\cap \cdots \cap X_{i_n}
$$ 

The choice of a total ordering on $I$ induces a natural bijection between the set of subsets $J\subset I$ of cardinality 
$\sharp J=n+1$ and the set of $(n+1)$-tuples $(i_0,\ldots, i_n)\in I^{n+1}$ given by mapping a subset $J$ to the unique $(n+1)$-tuple 
$(i_0,\ldots, i_n)\in I^{n+1}$ such that $J=\{i_0,\ldots,i_n\}$ and $i_0<\cdots <i_n$.
In the following we fix such a total ordering and we set 
\begin{equation}
\label{equation:cechsemisimplicial}
\cecho_n(X_\bullet/X)
:=\bigsqcup_{\substack{(i_0,\ldots,i_n)\in I^{n+1} \\ i_0<\cdots <i_n}} X_{i_0}\cap \cdots \cap X_{i_n}=\bigsqcup_{J\subset I,\,\sharp J=n+1} X_J
\end{equation}
There is a canonical embedding $\cecho_*(X_\bullet/X) \subset \cech_*(X_\bullet/X)$ of $\NN$-graded $Z$-schemes given 
in degree $n$ by mapping each $X_{j_0}\cap \cdots \cap X_{j_n}$ to itself via the identity. 
The degeneracy morphisms $\delta_n^k$ in the simplicial structure on $\cech_*(X_\bullet/X)$ preserve $\cecho_*(X_\bullet/X)$
and induce degeneracy morphisms $$\delta_n^k=\bigsqcup_{J=\{i_0<\hdots<\widehat{i_k}<\hdots<i_n\} \subset K=\{i_0<\hdots<i_n\}  } \nu_K^J:\cecho_n(X_\bullet/X) \to \cecho_{n-1}(X_\bullet/X)$$
endowing $\cecho_*(X_\bullet/X)$ with the structure of a semi-simplicial 
$X$-scheme\footnote{Recall that a semi-simplicial object in a category $\mathscr C$ is a contravariant functor from 
$\Dinj\to \mathscr C$,  where $\Dinj$ denotes the category of finite ordered sets with injective maps as morphisms.}. 
We refer to the latter as the \emph{ordered \v Cech semi-simplicial $X$-scheme} associated to the finite closed cover $p:X_\bullet \rightarrow X$. 
\begin{rem}
By construction, the ordered \v Cech semi-simplicial scheme $\cecho_*(X_\bullet/X)$
is bounded by the cardinality $\sharp I$ of the index set $I$ in the sense that $\cecho_n(X_\bullet/X)=\varnothing$ for all $n>\sharp I$.
In particular, it is much smaller than $\cech_*(X_\bullet/X)$. 
\end{rem}
\end{num}

\subsection{Ordered hyperdescent for closed covers}
\begin{num}\label{num:6functors_extension}
We now use the $\infty$-categorical enhancement of the motivic category $\T$, 
and in particular the adjunction of $\infty$-functors $(f^*,f_*)$ and $(f_!,f^!)$.
Let us fix a base scheme $S$ and write $\Sch_S$ for the category of separated $S$-schemes. 
To any object $\E$ of $\T(S)$, we associate the covariant $\infty$-functor
$$
\htp_{S}(-;\E):\Sch_S \rightarrow \T(S), \; (f:X\rightarrow S) \mapsto f_!f^!(\E)
$$ 
and, dually, the contravariant $\infty$-functor
$$
\cohtp_S(-;\E):\Sch_S^{op} \rightarrow \T(S), \; (f:X\rightarrow S) \mapsto f_*f^*(\E)
$$
\end{num}

\begin{num}
Back to the  setup in \Cref{num:ordeded_Cech}, 
we assume in addition that $f:X\rightarrow S$ is a separated $S$-scheme.
For every nonempty subset $J \subset I$,  we let $f_J\colon X_J \rightarrow S$ be the composite of the closed immersion 
$\nu_J:X_J \rightarrow X$ with $f:Z\to S$.
To the ordered \v Cech semi-simplicial $X$-scheme $\cecho_n(X_\bullet/X)$ and any object $\E$ of $\T(S)$, we associate the functors
\begin{align*}
\left( (\Dinj)^{op} \rightarrow  \Sch_S \right) & \xrightarrow{\htp_{S}(-;\E)}  \T(S) \\
\left(\Dinj \rightarrow \Sch_S^{op} \right) & \xrightarrow{\cohtp_S(-;\E)}  \T(S)
\end{align*}
By using the augmentation map to $X$, 
we obtain canonical maps involving the limit and colimit of the preceding functors
\begin{align}
\label{eq:compute_closed_cover1}
\htp_{X_\bullet/X;\E}:&\colim_{n \in (\Dinj)^{op}} \left(\bigoplus_{J \subset I, \sharp J=n+1} \htp_{S}(X_J;\E)\right) 
\rightarrow \htp_{S}(X;\E) \\
\label{eq:compute_closed_cover2}
\cohtp_{X_\bullet/X;\E}:&\cohtp_S(X;\E) \rightarrow 
\underset{n \in \Dinj}\lim\left(\bigoplus_{J \subset I, \sharp J=n+1} \cohtp_S(X_J;\E)\right)
\end{align}
The next theorem interprets the colimit (resp.~limit) as the ``standard'' resolution of homology (resp.~cohomology) of 
$X/S$ with $\E$-coefficients.
\end{num}
\begin{thm}
\label{thm:compute_closed_cover}
For every finite closed cover $p:X_\bullet \rightarrow X$,  the maps 
 $\htp_{X_\bullet/X;\E}$ and $\cohtp_{X_\bullet/X;\E}$ are both isomorphisms in $\T(S)$.
\end{thm}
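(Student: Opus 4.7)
The plan is to reduce the claim to an intrinsic statement in $\T(X)$ and prove it by induction on $|I|$, with cdh-descent (equivalently, closed Mayer--Vietoris) at each step.

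Since each $\nu_J$ is a closed immersion, $\nu_{J,!}=\nu_{J,*}$, so $\htp_S(X_J;\E)=f_!\nu_{J,*}\nu_J^!f^!(\E)$ and $\cohtp_S(X_J;\E)=f_*\nu_{J,*}\nu_J^*f^*(\E)$. As $f_!$ preserves colimits and $f_*$ preserves limits, the claim reduces to showing that for every $\mathcal G\in\T(X)$ the augmentation maps
\begin{equation*}
\colim_{n\in (\Dinj)^{op}}\bigoplus_{|J|=n+1}\nu_{J,*}\nu_J^!\mathcal G\xrightarrow{\sim}\mathcal G,\qquad \mathcal G\xrightarrow{\sim}\lim_{n\in \Dinj}\bigoplus_{|J|=n+1}\nu_{J,*}\nu_J^*\mathcal G
\end{equation*}
are equivalences in $\T(X)$.

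I proceed by induction on $|I|$. The case $|I|=1$ reduces to topological invariance (\Cref{ex:topological_invariance}), since a surjective closed immersion is a nil-immersion. For $|I|=2$, writing $X=X_1\cup X_2$ with intersection $X_{12}$, the resulting commutative square is an abstract blow-up (cdh-distinguished) square because $X=X_1\cup X_2$ forces $\nu_1$ to be an isomorphism above the open $X\setminus X_2$. Cdh-descent for $\T$, derived from the localization property as in \cite{CD3}, then yields the Mayer--Vietoris (co)fiber sequences
\begin{equation*}
\nu_{12,*}\nu_{12}^!\mathcal G\to \nu_{1,*}\nu_1^!\mathcal G\oplus\nu_{2,*}\nu_2^!\mathcal G\to \mathcal G,\quad \mathcal G\to \nu_{1,*}\nu_1^*\mathcal G\oplus \nu_{2,*}\nu_2^*\mathcal G\to \nu_{12,*}\nu_{12}^*\mathcal G,
\end{equation*}
which is precisely the assertion for $|I|=2$.

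For $|I|=n\geq 3$, I pick $i_0\in I$, set $I'=I\setminus\{i_0\}$ and $Y=\bigcup_{i\in I'}X_i$, and apply the $|I|=2$ case to the closed cover $\{Y,X_{i_0}\}$ of $X$, whose intersection is $Y\cap X_{i_0}=\bigcup_{i\in I'}(X_i\cap X_{i_0})$. The inductive hypothesis, applied to the closed covers $\{X_i\}_{i\in I'}$ of $Y$ and $\{X_i\cap X_{i_0}\}_{i\in I'}$ of $Y\cap X_{i_0}$ (each of cardinality at most $n-1$), identifies the two non-trivial corners of the Mayer--Vietoris pushout with ordered Čech colimits. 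Partitioning the indexing set $\{J\subset I:J\neq\varnothing\}$ according to whether $i_0\in J$ exhibits the ordered Čech semi-simplicial diagram for $X_\bullet/X$ as a pushout in $\mathrm{Fun}((\Dinj)^{op},\T(X))$ of the ordered Čech diagrams of the sub-covers; taking colimits reproduces the Mayer--Vietoris pushout and yields the desired equivalence. The cohomotopy case is dual.

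The principal obstacle is the combinatorial bookkeeping: one must verify that the partition $J\subset I'$ versus $J=J'\sqcup\{i_0\}$ is compatible with the semi-simplicial face maps $\nu_K^J$ so as to assemble into an honest pushout of $\Dinj^{op}$-diagrams, and to correctly track the augmentation $\nu_{Y\cap X_{i_0},*}\nu_{Y\cap X_{i_0}}^!\mathcal G\to \nu_{X_{i_0},*}\nu_{X_{i_0}}^!\mathcal G$ (induced by $Y\cap X_{i_0}\hookrightarrow X_{i_0}$) that mediates between the degree-zero term $\nu_{X_{i_0},*}\nu_{X_{i_0}}^!\mathcal G$ (corresponding to $J=\{i_0\}$) and the higher-degree subdiagram indexed by $J\ni i_0$ with $|J|\ge 2$.
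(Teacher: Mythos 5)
Your proof is a genuinely different route from the paper's. The paper reduces the statement, after passing to $S=X$, to the case $X=\Spec\kappa(x)$ via the conservative family of functors $(i_x^!)_{x\in X}$ (resp.~$(i_x^*)$) together with proper base change, and then contracts the constant semi-simplicial object explicitly; there is no induction and no Mayer--Vietoris. Your plan is a MV induction on $\sharp I$, which is a legitimate alternative and would be the approach of choice in a setting where one does not have a pointwise conservative family at hand.

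However, the crux of the inductive step is stated incorrectly, and the resulting gap is real. The partition of $\{\varnothing\neq J\subset I\}$ by whether $i_0\in J$ does \emph{not} exhibit $N:=\cecho_*(X_\bullet/X)$ (pushed into $\T(X)$) as a pushout in $\mathrm{Fun}((\Dinj)^{op},\T(X))$ of ordered \v Cech diagrams of sub-covers. The sub-diagram on $J\subset I'$ is indeed $N':=\cecho_*(\{X_i\}_{i\in I'}/Y)$, closed under faces; but the complementary collection of $J$'s containing $i_0$ is \emph{not} closed under faces (the face forgetting $i_0$ lands in $N'$), so it is a quotient $N/N'$, not a sub-diagram, and $N/N'$ is \emph{not} the ordered \v Cech diagram of any sub-cover. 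In fact $(N/N')_n\simeq N''_{n-1}$ (with $(N/N')_0=\nu_{X_{i_0}*}\nu_{X_{i_0}}^!\cG$), where $N''$ is the ordered \v Cech diagram for $\{X_i\cap X_{i_0}\}_{i\in I'}$ over $Y\cap X_{i_0}$: a degree-shifted ``cone'' on $N''$ with apex $X_{i_0}$, in which the extra face (forgetting $i_0$, say when $i_0$ is maximal in the chosen order) has been replaced by the zero map. What the argument actually needs is the separate assertion that the geometric realization of this cone equals $\mathrm{cofib}\big(\colim N''\to \nu_{X_{i_0}*}\nu_{X_{i_0}}^!\cG\big)$. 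That is a d\'ecalage-type statement for semi-simplicial objects in a stable $\infty$-category; it is true (e.g.~by passing to the associated chain complex: the chain complex of $N/N'$ is the mapping cone of the augmentation of the chain complex of $N''$ onto $X_{i_0}$, and total cofibers of bounded chain complexes compute realizations), but it is precisely the content you flag as ``the principal obstacle'' and then do not carry out. As written, therefore, the inductive step is not proved.

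Two smaller points. First, the reduction to reduced schemes via \Cref{ex:topological_invariance}, which you only invoke for $\sharp I=1$, is also needed for the $\sharp I=2$ step: the square $(X_{12},X_1,X_2,X)$ is cdh-distinguished only once $X$ and the $X_i$ are reduced, since otherwise $X_1\cap(X\setminus X_2)\to X\setminus X_2$ need not be a scheme isomorphism. Second, once the d\'ecalage lemma is supplied, your route goes through and one then applies $\colim N'\to\colim N\to\colim N/N'$, the inductive hypotheses for $N'$ and $N''$, and the cdh Mayer--Vietoris sequence for the two-element cover $\{Y,X_{i_0}\}$ to conclude by two-out-of-three; the cohomotopy statement is then indeed dual.
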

\begin{proof}
Using \Cref{ex:topological_invariance}, 
we can reduce to the case where $X$ and each $X_i$ are reduced.

Let us consider the case of $\htp_{X_\bullet/S;\E}$. 
For every nonempty subset $J \subset I$, there is an isomorphism $f_{J!}f_J^! \simeq f_!\nu_{J!}\nu_J^!f^!$.
So by replacing $\E$ with $f^!(\E)$, we are reduced to the case $S=X$. 
There is, see for example \cite[B.20]{DFJK}, a conservative family of functors
$$
i_z^!
\colon 
\T(X) \rightarrow \T\big(\Spec(\kappa(x))\big), x \in X
$$
Therefore, it suffices to show $i_x^!\big(\htp_{X_\bullet/X;\E}\big)$ is an isomorphism for all $x \in X$.
Given $J \subset I$, 
we consider the following cartesian square
$$
\xymatrix@=24pt{
X'_J\ar_{\nu'_J}[d]\ar^{i'_x}[r] & X_J\ar^{\nu_J}[d] \\
\{x\}\ar^{i_x}[r] & X
}
$$
By proper base change for the proper map $\nu_J$,
we have an isomorphism $i_x^!\nu_{J!}\nu_J^! \simeq \nu'_{J!}i_x^{\prime!}\nu_J^!$. 
Since, on the other hand, 
we have $\nu'_{J!}i_x^{\prime!}\nu_J^! \simeq \nu'_{J!}\nu_J^{\prime !}i_x^!$,
and because the pullback of the ordered \v Cech complex $\cecho_*(X_\bullet/X)$ along $\{x\} \rightarrow X$ 
corresponds to the ordered \v Cech complex $\cecho_*(X_\bullet \times_X \{x\}/\{x\})$,
we deduce the isomorphism
$$
i_x^!\big(\htp_{X_\bullet/X;\E}\big)
\simeq 
\htp_{X_\bullet \times_X \{x\}/\{x\};i_x^!\E}
$$
Since $X$ is reduced, we may therefore assume $X=\{x\}$ is the Zariski spectrum of a field.
In this case, 
the $X_i$'s are closed reduced subschemes of the reduced scheme $\{x\}$, 
and thus the closed cover $p':\sqcup_{i \in I} X'_i \rightarrow \{x\}$ is given by a sum of identity maps.
To conclude,  
one can then observe, for example, 
the existence of explicit homotopy contraction of the semi-simplicial augmented pointed $X$-scheme
$$
\cecho_*(X_\bullet/\{x\})_+ \rightarrow \{x\}_+
$$
The proof for the map $\cohtp_{X_\bullet/X;\E}$ is entirely analogous, using the conservative family of functors
$$
i_x^*:\T(X) \rightarrow \T\big(\Spec(\kappa(x))\big), x \in X
$$
of \cite[Proposition 4.3.17]{CD3}.
\end{proof}

\begin{rem}\label{rem:compute_closed_cover&reduction}
In formulas \eqref{eq:compute_closed_cover1} and \eqref{eq:compute_closed_cover2}, one can arbitrarily replace
 the closed subscheme $X_J$ of $X$ by its reduction according to \Cref{ex:topological_invariance}.
 In the followings, we will use that possibility without further warning.
\end{rem}

\begin{rem} 
\Cref{thm:compute_closed_cover} does not extend to arbitrary cdh-covers.
For instance, 
it does not work for the proper cdh-cover $\PP^1_k \rightarrow \Spec k$ for apparent reasons: 
for such a connected cover,
one needs the whole Cech complex to get a resolution of the point. 
Similarly, 
the ordered \v Cech complex associated with a nontrivial finite \'etale cover does not yield a resolution 
in the \'etale topology.
In the cdh-topology it is possible to generalize \Cref{thm:compute_closed_cover} by replacing closed covers 
$p:X_\bullet \rightarrow X$ by proper cdh-covers such that there exists a stratification of $X$ having the 
property that for every stratum $Y$, 
there exists a member of the covering family $X_i \rightarrow X$ for which 
$X_i \times_X Y \rightarrow Y$ is an isomorphism. 
The proof of \Cref{thm:compute_closed_cover} carries over to this setting by applying the proper base change theorem, 
and this generalization allows in particular to incorporate the elementary cdh-covers. 
A similar consideration applies to Nisnevich covers.
\end{rem}


\subsection{Schemes and subschemes with crossing singularities}
\label{sec:crossing_sing}

\begin{notation}
\label{num:normal_crossing}
\label{not:extended-nc}
Let $Z$ be a separated $S$-scheme with finitely many irreducible components $Z'_i$, $i\in I$. For every nonempty subset $J \subset I$, we let $Z'_J=(\cap_{j \in J} Z'_j)$, where $\cap=\times_X$,
 and $Z_J=(Z'_J)_{\mathrm{red}}$. We denote by $\nu_J$ the canonically induced closed immersion of $Z_J$ in $Z$. 
For every pair of nonempty subsets $J \subset K$ of $I$, we denote by 
$\nu_K^J:Z_K \rightarrow Z_J$
the naturally induced closed immersion. For a virtual vector bundle $v$ on $Z$ and a nonempty subset 
$J\subset I$, we let $v_J=\nu_J^{-1}v$. 

For a closed $S$-pair $(X,Z)$ corresponding to a closed subscheme $i:Z\rightarrow X$ 
with irreducible components $Z_i'$, $i\in I$, we extend the above notation by setting 
$$\bar{\nu}_J=i\circ \nu_J:Z_J\rightarrow Z \rightarrow X$$ 
For a virtual vector bundle $v$ on $X$, we let $v_J$ denote the pullback of $v$ to $Z_J$ by $\bar{\nu}_J$.

We fix the following terminology on normal crossing singularities in the rest of this paper. 
\end{notation}

\begin{df}
\label{df:normal_crossing}
With the notation above, 
we say that $Z$ has \emph{smooth} (resp.~\emph{regular}, \emph{h-smooth}) \emph{reduced crossing} over $S$ if, 
for any non-empty $J \subset I$, 
$Z_J$ is a smooth (resp.~regular, h-smooth) $S$-scheme.
\end{df}

With our conventions, 
the intersection of the irreducible components of $Z$ is allowed to have nontrivial multiplicity.
Note that h-smoothness is insensible to reduction; 
we will simply write \emph{h-smooth crossing}.

\begin{prop} 
\label{ex:sncexample1}
Let $Z/S$ be an h-smooth crossing scheme and let $v$ is a virtual vector bundle on $Z$. Then $\Pi_S(Z,v)$ is isomorphic to the colimit in the  underlying $\infty$-category of $\T(S)$ of the diagram   
\begin{equation}
\label{equation:snches2}
\htp_S(Z_I,v_I) 
\rightrightarrows
\bigoplus_{K \subset I, \sharp K=\sharp I-1} \htp_S(Z_K,v_K)\ 
\rightrightarrows\hdots
\bigoplus_{J \subset I, \sharp J=2} \htp_S(Z_J,v_J)
\rightrightarrows \bigoplus_{i \in I} \htp_S(Z_i,v_i)
\end{equation}
with degeneracy maps 
$$
(\delta_n^k)_*=
\sum_{J=\{i_0<\hdots<\widehat{i_k}<\hdots<i_n\} \subset K=\{i_0<\hdots<i_n\}  } (\nu_K^J)_*
$$
and with augmentation map 
$$\sum_{i\in I} \nu_{i*}:\bigoplus_{i \in I} \htp_S(Z_i,v_i) \rightarrow \htp_S(Z,v)$$ 

Dually, $\cohtp_S(Z,v)$ is isomorphic to the limit of the  diagram
\begin{equation}
\label{equation:cohom-limit}
\bigoplus_{i \in I} \cohtp_S(Z_i,v_i)
\rightrightarrows
\bigoplus_{J \subset I, \sharp J=2} 
\cohtp_S(Z_J,v_J)
\rightrightarrows \cdots
\bigoplus_{K \subset I, \sharp K=\sharp I-1}
 \cohtp_S(Z_K,v_K)
 \rightrightarrows
\cohtp_S(Z_I,v_I) 
\end{equation}
with co-degeneracy maps 
$$
(\delta_n^k)^*=\sum_{J=\{i_0<\hdots<\widehat{i_k}<\hdots<i_n\} \subset K=\{i_0<\hdots<i_n\}  } (\nu_K^J)^*
$$
and with co-augmentation map 
$$\sum_ {i\in I} \nu_i^*: \cohtp_S(Z,v)\rightarrow \bigoplus_{i \in I} \cohtp_S(Z_i,v_i)$$
\end{prop}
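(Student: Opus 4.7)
The plan is to derive this directly from the ordered hyperdescent theorem \ref{thm:compute_closed_cover} applied to the closed cover of $Z$ by its irreducible components, with the twist by $v$ handled through the exchange isomorphism for Thom spaces. Note that the h-smoothness assumption plays no role in the proof itself; it is only used to know that the terms $\htp_S(Z_J,v_J)$ admit the nice descriptions studied elsewhere in the paper. In particular, the statement is really a formal consequence of cdh-descent for closed covers together with the projection/exchange formulas of the six-functor formalism.

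The first step is to view $Z$ as a $Z$-scheme via its identity and apply \Cref{thm:compute_closed_cover} to the finite closed cover $p:\sqcup_{i\in I} Z_i'\to Z$ given by the irreducible components $Z_i'$. For any object $\mathcal F\in \T(Z)$, this yields a canonical isomorphism in $\T(Z)$
\[
\colim_{n\in(\Dinj)^{op}} \Bigl(\bigoplus_{J\subset I,\,\sharp J=n+1} \nu_{J!}\nu_J^!(\mathcal F)\Bigr)\xrightarrow{\simeq} \mathcal F,
\]
where we have replaced each scheme-theoretic intersection $Z_J'$ by its reduction $Z_J$ via \Cref{rem:compute_closed_cover&reduction}. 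Now specialize this to $\mathcal F=\Th(v)\otimes f^!(\un_S)\in\T(Z)$, where $f:Z\to S$ is the structural morphism, and apply $f_!$ to both sides. Using the identity $f\circ\nu_J=f_J$ and the exchange isomorphism \eqref{eq:exchange-iso} for the closed immersion $\nu_J$, which gives $\nu_J^!(\Th(v)\otimes f^!\un_S)\simeq \Th(v_J)\otimes \nu_J^!f^!\un_S=\Th(v_J)\otimes f_J^!\un_S$ since $\Th(v)$ is $\otimes$-invertible, we identify the $J$-th summand with
\[
f_!\nu_{J!}\nu_J^!\bigl(\Th(v)\otimes f^!\un_S\bigr)\simeq f_{J!}\bigl(\Th(v_J)\otimes f_J^!\un_S\bigr)=\htp_S(Z_J,v_J).
\]
This yields the required colimit presentation of $\htp_S(Z,v)$.

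For the dual statement about $\cohtp_S(Z,v)$, I would run the same argument with the second map of \Cref{thm:compute_closed_cover}, which gives an isomorphism $\mathcal G\xrightarrow{\simeq}\lim_{n\in\Dinj}\bigoplus_{J}\nu_{J*}\nu_J^*(\mathcal G)$ in $\T(Z)$ for any $\mathcal G\in\T(Z)$. Taking $\mathcal G=\Th(v)$ and applying the $\infty$-functor $f_*$ (which preserves limits), and using $\nu_J^*\Th(v)\simeq \Th(v_J)$ together with $f_*\nu_{J*}=f_{J*}$, produces the limit diagram \eqref{equation:cohom-limit}. Note that because the $\Dinj$-shaped diagram is bounded (there are no nonempty $Z_J$ for $\sharp J>\sharp I$), the limit is actually a finite one.

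The remaining point is the bookkeeping identification of the face (resp.\ coface) maps and the (co)augmentation with the prescribed formulas $(\delta_n^k)_*=\sum (\nu_K^J)_*$ (resp.\ $(\delta_n^k)^*=\sum (\nu_K^J)^*$) and $\sum_i\nu_{i*}$ (resp.\ $\sum_i\nu_i^*$). This is immediate from the definition of the ordered \v Cech semi-simplicial scheme $\cecho_*(Z_\bullet/Z)$ in \eqref{equation:cechsemisimplicial}, since the semi-simplicial face maps are by construction the sums of closed immersions $\nu_K^J$ indexed by subsets $J\subset K$ with $\sharp J=\sharp K-1$, and the augmentation to $Z$ is the sum of the $\nu_i$. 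Applying the functor $f_!\nu_{?!}\nu_?^!$ (resp.\ $f_*\nu_{?*}\nu_?^*$) converts these to pushforwards (resp.\ pullbacks), which closes the argument. The main conceptual point is thus the reduction from \Cref{thm:compute_closed_cover}; the only mildly delicate step is ensuring compatibility of the twist $v$ with the identification over each stratum, which is where the exchange isomorphism for Thom spaces is essential.
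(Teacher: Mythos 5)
Your proof is correct and follows essentially the same route as the paper: apply the ordered hyperdescent theorem (\Cref{thm:compute_closed_cover}) with $S=X=Z$ to the closed cover by irreducible components, taking $\E=\Th(v)\otimes f^!(\un_S)$ (resp.\ $\E=\Th(v)$), use topological invariance to pass to reductions, and then apply $f_!$ (resp.\ $f_*$). Your added observation that h-smoothness is not actually used in this argument is also accurate — it appears in the hypotheses only so that the $\htp_S(Z_J,v_J)$ admit the descriptions exploited in later results.
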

\begin{proof}
Consider the closed cover $Z_\bullet=\bigsqcup Z_i' \rightarrow Z$ of $Z$ by its irreducible components. Noting that by \Cref{ex:topological_invariance} we have, for every $J\subset I$, canonical isomorphisms $\htp_S(Z_J',v'_J)\simeq \htp_S(Z_J,v_J)$ and $\cohtp(Z_J',v'_J)\simeq \cohtp(Z_J,v_J)$, the assertion follows by appealing to \Cref{thm:compute_closed_cover} with $S=X=Z$, $X_\bullet=Z_\bullet$ and $\E=\Th(v)\otimes f^!(\un_S)$  (resp. $\E=\Th(v)$)  and then applying $f_!$ (resp. $f_*$) to the obtained resolution.
\end{proof}

%
%

\begin{ex}
\label{rem:compare}
 In the case $\T=\SH$, the $S$-scheme $Z$ in \Cref{ex:sncexample1} defines a sheaf of sets $\underline Z$ on $\Sm_{S}$. 
We claim the preceding computation yields an isomorphism $\htp_S(Z)\simeq \Sigma^{\infty}\underline Z_{+}$ in $\SH(S)$.
A proof uses the $\PP^1$-stable $\AA^1$-homotopy category $\underline \SH_\cdh(S)$ over $S$ for the big cdh site; 
i.e., 
the site of finite type $S$-schemes endowed with the $\cdh$-topology in the style of 
\cite[\textsection 6.1]{CD3}.
\Cref{thm:compute_closed_cover} holds in $\underline \SH_\cdh(S)$ due to $\cdh$-descent, 
so the comparison reduces to the smooth case, 
which holds by the general properties of an enlargement.
\end{ex}

Next, we show a K\"unneth formula for smooth crossings schemes.
\begin{prop}\label{prop:Kunneth-smooth-nc}
Suppose $Z$, $T$ are smooth crossings $S$-schemes, and $v$, $w$ are virtual bundles over $Z$ and $T$, respectively.
Then the canonical map \eqref{equation:etfps} is an isomorphism
$$
\htp_S(Z,w) \otimes \htp_S(T,w) \xrightarrow{\simeq} \htp_S(Z \times_S T,v \times_S w)
$$
\end{prop}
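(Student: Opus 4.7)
I would argue by induction on $|I|+|I'|$, where $I$ and $I'$ index the irreducible components of $Z$ and $T$. The base case $|I|=|I'|=1$ reduces to two smooth $S$-schemes, and is then exactly the smooth K\"unneth formula \eqref{eq:kunneth2}.

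For the inductive step, assume without loss of generality that $|I|\geq 2$. Fix $i_0\in I$ and write $Z'=\bigcup_{i\neq i_0}Z_i$, so that $\{Z',Z_{i_0}\}$ is a two-element closed cover of $Z$ with intersection $Z'\cap Z_{i_0}=\bigcup_{i\neq i_0}Z_{\{i,i_0\}}$. Since $Z$ is smooth crossing, each of $Z'$, $Z_{i_0}$, and $Z'\cap Z_{i_0}$ is again a smooth crossings $S$-scheme (its components are among the $Z_J$ and all their nonempty intersections, which are smooth by assumption), and each has strictly fewer irreducible components than $|I|$. Applying \Cref{thm:compute_closed_cover} to this two-element cover produces a Mayer--Vietoris cofiber sequence in $\T(S)$
$$
\htp_S(Z'\cap Z_{i_0},v)\longrightarrow \htp_S(Z',v)\oplus \htp_S(Z_{i_0},v)\longrightarrow \htp_S(Z,v),
$$
and applying the same theorem to the two-element closed cover $\{Z'\times_S T,\,Z_{i_0}\times_S T\}$ of $Z\times_S T$ (whose intersection is $(Z'\cap Z_{i_0})\times_S T$, up to the nilpotents that \Cref{ex:topological_invariance} allows us to ignore) yields a parallel cofiber sequence for $\htp_S(Z\times_S T,v\times_S w)$.

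Since tensoring with $\htp_S(T,w)$ preserves colimits (hence cofiber sequences) in the stable $\infty$-category $\T(S)$, the canonical K\"unneth maps of \eqref{equation:etfps} for the pairs $(Z',T)$, $(Z_{i_0},T)$, $(Z'\cap Z_{i_0},T)$, and $(Z,T)$ assemble into a commuting ladder between the two cofiber sequences above. By the inductive hypothesis the three lefthand vertical arrows are isomorphisms, so the two-out-of-three property for equivalences in cofiber sequences of a stable $\infty$-category forces the K\"unneth map for $(Z,T)$ to be an isomorphism as well.

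The main obstacle is the commutativity of this ladder, that is, the naturality of the K\"unneth map \eqref{equation:etfps} with respect to the closed immersions $Z'\hookrightarrow Z$, $Z_{i_0}\hookrightarrow Z$ and their base changes along $T\to S$. This reduces to a compatibility between the various exchange isomorphisms in \eqref{equation:etfps}, the pushforwards $\nu_*$ governing the \v Cech descent of \Cref{thm:compute_closed_cover}, and the projection formula; it is a routine but genuinely coherent diagram chase in the six-functor formalism, and is where one uses the $\infty$-categorical enhancement of $\T$ to avoid issues of incompatible choices in the derived setting.
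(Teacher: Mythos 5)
Your argument is correct and is essentially the proof given in the paper: express both $\htp_S(Z,v)$ and $\htp_S(Z\times_S T,\,v\times_S w)$ through Mayer--Vietoris cofiber sequences coming from a two-element closed cover (the $\sharp I=2$ case of \Cref{thm:compute_closed_cover}, equivalently cdh-descent), tensor with the factor $\htp_S(T,w)$, compare the two ladders via the naturality of the exchange transformations in \eqref{equation:etfps}, and conclude by two-out-of-three. The paper's write-up differs only in bookkeeping: instead of a symmetric double induction on $|I|+|I'|$, it first handles ``$Z/S$ smooth, $T/S$ arbitrary smooth crossings'' in one stroke by replacing $\htp_S(T,w)$ with the finite colimit of \Cref{ex:sncexample1} and using that $\otimes$ commutes with colimits, leaving a single Mayer--Vietoris induction on the number of components of $Z$ alone.

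One assertion you make is not literally true (and the paper's phrasing of the induction quietly shares the same issue): $Z'\cap Z_{i_0}$ need not have fewer than $|I|$ irreducible components, since the $Z_{\{i,i_0\}}$, while smooth over $S$, may be disconnected, so their union may have more components than $Z$ itself. The standard repair is to run the induction first on the (relative) dimension of $Z$ --- which strictly drops for the intersection term because $Z_{i_0}$ and $\bigcup_{i\neq i_0}Z_i$ share no top-dimensional component --- and only then on the number of top-dimensional components. With that refinement your argument closes without further change.
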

\begin{proof}
The case where $Z/S$ is smooth and $T/S$ is smooth crossing follows from \Cref{ex:sncexample1} and the fact
 $\otimes$ commutes with homotopy colimits (as a left adjoint).
 To treat the case where $Z/S$ has smooth crossings, we can therefore argue by induction on the number of irreducible
 components of $Z$. Let $Z'$ be an irreducible component of $Z$ and $Z''$ the union of the other irreducible components.
 The cdh-distinguished homotopy exact sequence associated with the cdh-cover $(Z',Z'')$ of $Z$ takes the form
\begin{equation}
\label{equation:cdhsequence}
\htp_S(Z' \times_Z Z'') \rightarrow \htp_S(Z') \oplus \htp_S(Z'') \rightarrow \htp_S(Z)
\end{equation}
By induction, the result holds for $Z'$ (resp. $Z''$ and $Z' \times_Z Z''$) and $T$. We conclude by tensoring
\eqref{equation:cdhsequence} with $\htp_S(T)$ and applying descent for the cdh-cover $(Z' \times_S T,Z'' \times_S T)$
 of $Z \times_S T$.
\end{proof}

As another corollary, the following computation explains the defect of absolute purity
 in the case of the immersion of a normal crossing divisor
 (and, in fact, in a slightly more general situation
  using our notion of h-smoothness).
\begin{cor}
\label{cor:comput_i^!_smc}
Let $i:Z \rightarrow X$ be a closed immersion such that $Z/X$ has h-smooth crossings.
Then $i^!(\un_X)$ is isomorphic to the homotopy colimit of the diagram
$$
\Th_Z(-N_I) 
\rightrightarrows
\bigoplus_{K \subset I, \sharp K=\sharp I-1} \Th_Z(-N_K)
\rightrightarrows\hdots
\bigoplus_{J \subset I, \sharp J=2} \Th_Z(-N_J)
\rightrightarrows \bigoplus_{i \in I} \Th_Z(-N_i)
$$
Here $N_J$ is the normal bundle of $Z_J$ in $Z$,
 $Th_Z(-N_J)$ is the associated Thom space (of the opposite), seen over $Z$.
 For any $J \subset K$, we consider the Gysin map of \Cref{prop:enhanced-Gysin}
$$
Th_Z(-N_J)=\cohtp_Z(Z_J,\twist{-N_J}) \xrightarrow{(\nu_K^J)_!=i^*\cohtp_{X!}(\nu_K^J)} \cohtp_Z(Z_K,\twist{-N_K})=Th_Z(-N_K)
$$
with the identification of the virtual cotangent bundle of $Z_J/X$ with
 the virtual bundle $\twist{-N_J}$. Then the degeneracy maps in the above diagram
 are given by the formulas:
$$
(\delta_n^k)_!=
\sum_{J=\{i_0<\hdots<\widehat{i_k}<\hdots<i_n\} \subset K=\{i_0<\hdots<i_n\}  } (\nu_K^J)_!
$$
\end{cor}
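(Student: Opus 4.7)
The plan is to apply \Cref{thm:compute_closed_cover} to the closed cover $p\colon Z_\bullet = \sqcup_{i\in I} Z_i' \to Z$ of $Z$ by its irreducible components, taking as ambient base the scheme $X$ itself and as coefficient object $\E = \un_X \in \T(X)$. Writing $\bar\nu_J = i\circ \nu_J \colon Z_J \to X$ for the composite closed immersions, and using \Cref{rem:compute_closed_cover&reduction} to replace the (possibly non-reduced) $Z_J'$ by $Z_J$, the theorem identifies
$$
i_* i^!(\un_X) \;\simeq\; \colim_{n \in (\Dinj)^{op}} \bigoplus_{\sharp J = n+1} \bar\nu_{J*}\bar\nu_J^!(\un_X)
$$
with face maps induced by the natural pushforwards $(\nu_K^J)_*$.

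Next I would invoke the h-smooth crossings hypothesis: each composite $\bar\nu_J$ is an h-smooth closed immersion, so its virtual tangent bundle is $-\twist{N_J}$ and the purity isomorphism of \Cref{ex:purity&comparison} gives $\bar\nu_J^!(\un_X)\simeq \Th(-N_J)$. Each summand thus becomes $\bar\nu_{J*}\Th(-N_J) = i_*\nu_{J*}\Th(-N_J) = i_*\cohtp_Z(Z_J,-\twist{N_J})$. Since $i$ is a closed immersion, $i_*$ is fully faithful (by localization, $i^*i_*\simeq\mathrm{id}$) and preserves colimits (it coincides with $i_!$), so cancelling $i_*$ yields
$$
i^!(\un_X) \;\simeq\; \colim_{n \in (\Dinj)^{op}} \bigoplus_{\sharp J = n+1} \cohtp_Z(Z_J,-\twist{N_J})
$$
in $\T(Z)$. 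To recognise the claimed differentials, I would appeal to the last clause of \Cref{ex:purity&comparison}: under purity, the natural pushforward $(\nu_K^J)_*$ on homotopy corresponds to the Gysin pushforward $(\nu_K^J)_!$ on cohomotopy, converting the face maps of \Cref{thm:compute_closed_cover} into the $(\nu_K^J)_!$'s advertised in the statement.

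The hard part will be the $\infty$-categorical coherence of the purity identification across the whole semi-simplicial diagram: one needs the levelwise equivalences $\bar\nu_J^!(\un_X)\simeq \Th(-N_J)$ to assemble into an equivalence of semi-simplicial objects, not merely a levelwise equivalence. This reduces to the naturality of the system of fundamental classes of \cite{DJK} with respect to the proper pushforwards $(\nu_K^J)_*$, which is built into the construction of the Gysin maps recalled in \Cref{num:Gysin}.
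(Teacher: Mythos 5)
Your argument matches the paper's proof: the paper cites \Cref{ex:sncexample1} applied to $Z/X$ with $v=0$, whereas you invoke \Cref{thm:compute_closed_cover} directly, but since \Cref{ex:sncexample1} is itself that theorem specialised to the closed cover by irreducible components, the two are the same computation. Your subsequent steps (identify $\bar\nu_J^!(\un_X)\simeq \Th(-N_J)$ by h-smoothness, cancel $i_*$, and match face maps via \Cref{ex:purity&comparison}) are exactly what the paper does, with your final remark on coherence of the levelwise purity equivalences being a welcome explicit acknowledgement of a point the paper leaves implicit.
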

\begin{proof}
Applying \Cref{ex:sncexample1} to \( Z/X \) with \( v=0 \) results in the computation of \( \htp_X^c(Z) = \htp_X(Z) = i_! i^!(\un_X) \) as a colimit. By utilizing \Cref{prop:enhanced-Gysin} and the isomorphism of \( \infty \)-functors \( \htp^c_X \simeq \cohtp_{X!} \), we obtain an isomorphic diagram that still computes \( i_! i^!(\un_X) \), but consisting of objects of the form \( \cohtp_X(Z_J, -N_J) \). We conclude by applying the functor \( i^* \) and using the appropriate identifications.
\end{proof}

\begin{ex}\label{ex:comput_i^!_smc}
\Cref{cor:comput_i^!_smc}, applied 
to a strict normal crossing divisor in a regular scheme,
explains the failure of absolute purity for snc divisors and, 
more generally, for regular closed immersions that are h-smooth.
The augmentation map 
\begin{equation}\label{eq:snc_fdl_class}
\epsilon_i:\bigoplus_{i \in I} \Th_Z(-N_i) \rightarrow i^!(\un_X)
\end{equation}
coming form the above corollary can be seen as the ``best'' approximation of the fundamental class associated with $i$,
 in the spirit of \cite{DJK}.
\end{ex}


\begin{num}
\label{nota:normal-bundle-pair}
Consider a closed $S$-pair $(X,Z)$ such that $Z$ has h-smooth crossings over $S$ and such that for every nonempty subset 
$J\subset I$, $\overline{\nu}_J:Z_J\rightarrow X$ is an h-smooth closed immersion (see \Cref{not:extended-nc}). 
This holds, for instance, when $X$ is h-smooth in a Nisnevich neighborhood of $Z$. 
In such circumstances, $\bar{\nu}_J$ is, in particular, 
a regular immersion. We denote its associated normal bundle by $N_J$. Denote by $j: X-Z \to X$ the complementary open 
immersion. 
\end{num}

\begin{prop}
\label{cor:snccorollary2} Let $(X,Z)$ be a closed $S$-pair such that $Z$ has h-smooth crossings over $S$ and such that $X$ is h-smooth over $S$  in a Nisnevich neighborhood of $Z$.
 Let $v$ be a virtual vector bundle on $X$.  

Then the object $\Pi_{S}(X-Z,j^{-1}v)$ is isomorphic to the limit of the diagram
\begin{equation}
\label{equation:poles}
\htp_{S}(X,v) 
\xrightarrow \epsilon \bigoplus_{i \in I} \htp_{S}(Z_i,v_i+\twist{N_i}) 
\rightrightarrows \bigoplus_{J \subset I, \sharp J=2} \htp_{S}(Z_J,v_J+\twist{N_J})\rightrightarrows \cdots 
\rightrightarrows \htp_{S}(Z_I,v_I+\twist{N_I})
\end{equation}
given by the sums of the Gysin maps from \Cref{prop:enhanced-Gysin}
\begin{align*}
\epsilon=\sum_{i \in I} \htp_S^!(\bar{\nu}_i) & \; & 
(\delta^n_k)^!=\sum_{J=\{i_0<\hdots<\widehat{i_k}<\hdots<i_n\} \subset K=\{i_0<\hdots<i_n\}} \htp_S^!(\nu_{K}^J)
\end{align*}
associated to the closed immersions $\bar{\nu}_i:Z_i\rightarrow X$ and $\nu_K^J:Z_K\rightarrow Z_J$.

Dually, the object $\cohtp_S(X-Z,j^{-1}v)$ is isomorphic to the colimit  of the diagram 
\begin{equation}
\label{equation:poles-dual}
\cohtp_{S}(Z_I,v_I-\twist{N_I})
\rightrightarrows
\cdots
\rightrightarrows
\bigoplus_{J \subset I, \sharp J=2} \cohtp_{S}(Z_J,v_J-\twist{N_J})
\rightrightarrows
\bigoplus_{i \in I} \cohtp_{S}(Z_i,v_i-\twist{N_i})  \xrightarrow {\epsilon'}  \cohtp_{S}(X,v) 
\end{equation}
given by sums of the Gysin maps from \Cref{prop:enhanced-Gysin}
\begin{align*}
\epsilon'=\sum_{i \in I} \cohtp_{S!}(\bar{\nu}_{j}) & \ & 
(\delta_k^n)_!=\sum_{J=\{i_0<\hdots<\widehat{i_k}<\hdots<i_n\} \subset K=\{i_0<\hdots<i_n\}} \cohtp_{S!}(\nu_{K}^J)
\end{align*}
\end{prop}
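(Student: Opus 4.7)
The plan is to combine three ingredients: the open--closed localization triangle for the pair $(X,Z)$, the ordered \v Cech hyperdescent \Cref{thm:compute_closed_cover} applied to the finite closed cover $\sqcup_{i\in I}Z'_i\to Z$, and the homotopy purity theorem \Cref{thm:generalizedhomotopypurity}(2) applied to each weakly h-smooth closed pair $(X,Z_J)$.

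By Nisnevich excision for closed $S$-pairs, I first reduce to the case where $f:X\to S$ is globally h-smooth with virtual tangent bundle $\tau_f$. Set $\mathcal E=\Th(v)\otimes f^!(\un_S)$. Applying $f_!$ to the localization triangle $j_!j^!\to\mathrm{Id}\to i_*i^*$ evaluated at $\mathcal E$, and using $j^!=j^*$ together with the $\otimes$-invertibility of Thom spaces, yields the cofiber sequence
$$
\Pi_S(X-Z,j^{-1}v) \to \Pi_S(X,v) \to f_!i_*i^*(\mathcal E).
$$
The task therefore reduces to identifying $f_!i_*i^*(\mathcal E)$ with the totalization of the cosimplicial object $\bigl\{\bigoplus_{|J|=n+1}\Pi_S(Z_J,v_J+\twist{N_J})\bigr\}_{n\geq 0}$, compatibly with the connecting map from $\Pi_S(X,v)$.

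To carry out this identification, I apply the cohomology version of \Cref{thm:compute_closed_cover} with base $Z$ and coefficient object $i^*\mathcal E\in\T(Z)$ to the closed cover $\sqcup Z'_i\to Z$, obtaining
$$
i^*\mathcal E \simeq \lim_{n\in\Dinj}\bigoplus_{|J|=n+1}\nu_{J*}\bar\nu_J^*\mathcal E.
$$
Since this diagram vanishes for $n>|I|-1$, the limit is \emph{finite}. Hence the left adjoint $f_!i_*=f_!i_!=(fi)_!$, which is exact on the stable $\infty$-category $\T(S)$, preserves it, giving
$$
f_!i_*i^*(\mathcal E) \simeq \lim_{n}\bigoplus_{|J|=n+1}p_{J!}\bar\nu_J^*\mathcal E.
$$
For each $J$, the pair $(X,Z_J)$ is weakly h-smooth, so \Cref{thm:generalizedhomotopypurity}(2) provides $p_{J!}\bar\nu_J^*(\mathcal E)\simeq \Pi_S(X/(X-Z_J),v)\simeq \Pi_S(Z_J,v_J+\twist{N_J})$; the twist matches after the computation $\tau_{p_J}=\bar\nu_J^{-1}\tau_f-\twist{N_J}$.

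The dual statement for $\cohtp_S(X-Z,j^{-1}v)$ follows by a parallel argument applied to the dual triangle $i_!i^!\to\mathrm{Id}\to j_*j^*$ and the homology version of hyperdescent, using that $\nu_J^!i^!\Th(v)=\bar\nu_J^!\Th(v)\simeq \Th(v_J-\twist{N_J})$ by h-smoothness of $\bar\nu_J$, and that $f_*i_!=(fi)_*$ preserves the finite colimit produced by descent.

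The main obstacle I anticipate is verifying that the (co)simplicial face maps produced by hyperdescent coincide with the prescribed Gysin maps $(\nu_K^J)^!$ (respectively $(\nu_K^J)_!$), and that the connecting map from $\Pi_S(X,v)$ is $\sum_{i\in I}\bar\nu_i^!$. The \v Cech resolution naturally yields face maps built from the adjunction units for the pullbacks $\bar\nu_J^*$ rather than from Gysin maps; the match requires unwinding the purity isomorphism in terms of fundamental classes, and invoking the associativity of motivic fundamental classes \cite[Theorem 3.3.2]{DJK} applied to the factorization $Z_K\hookrightarrow Z_J\hookrightarrow X$ of each iterated regular closed immersion.
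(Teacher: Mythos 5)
Your proof is correct and follows essentially the same strategy as the paper: reduce to the globally h-smooth case, cross the localization triangle with the ordered \v{C}ech hyperdescent of \Cref{thm:compute_closed_cover} applied to the closed cover of $Z$, pass $(fi)_!$ through the resulting bounded (hence finite) limit, and identify each term via purity. Your appeal to \Cref{thm:generalizedhomotopypurity}(2) is a mild repackaging of the paper's inline computation with $\mathbb{E}\simeq\Th(v)\otimes\Th(\tau_f)$, and your explicit finiteness/exactness justification for commuting $f_!i_*$ past the limit fills in a point the paper leaves tacit, while your suggestion to track the face maps via associativity of fundamental classes from \cite[Theorem 3.3.2]{DJK} is exactly the "straightforward check" the paper alludes to.
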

\begin{proof}
With reference to \eqref{eq:loc-notation}, inserting $\mathbb{E}=\Th(v)\otimes f^!(\un_S)$ in the localization homotopy exact sequence $j_!j^! \rightarrow Id \rightarrow i_*i^*$ and applying $f_!$  yields the homotopy exact sequence
$$
\htp_S(X-Z,j^{-1}v)=f_!j_!j^!(\mathbb{E}) \rightarrow \htp_S(X,)=f_!(\mathbb{E}) \rightarrow f_!i_*i^*(\mathbb{E})
$$
By applying \Cref{thm:compute_closed_cover} to the closed cover $\bigsqcup Z_i'\to Z$ of $Z$ by its irreducible components, and then applying $f_!$ and arguing as in the proof of \Cref{ex:sncexample1}, we obtain the isomorphism 
$$f_!i_*i^*(\mathbb{E})\simeq \lim_{ n \in \Dinj} \left(\bigoplus_{J \subset I, \sharp J=n+1} f_!\bar{\nu}_{J*}\bar{\nu}_J^*(\mathbb{E})\right)$$


The object $f_!\bar{\nu}_{J*}\bar{\nu}_J^*(\mathbb{E})$ of $\T(S)$ depends only on a Nisnevich neighborhood 
of $Z$ in $X$.
Thus, under our hypotheses, 
we may replace $X$ by an h-smooth Nisnevich neighborhood of $Z$ in $X$ and assume that $f:X\rightarrow S$ 
itself is h-smooth, say with virtual relative tangent bundle $\tau_f$. 
We then have the purity isomorphism $\mathbb{E}\simeq \Th(v)\otimes \Th(\tau_f)$. 
Furthermore, 
under our assumptions, 
for every $J\subset I$,  
$\bar{\nu}_J:Z_J \rightarrow X$ and $f_J=f\circ \bar{\nu}_J:Z_J \rightarrow S$ are h-smooth morphisms. 
Since $\bar{\nu}_J^{-1}\tau_f=\tau_{f_J}+\twist{N_J}$, 
where $\tau_{f_J}$ is the virtual tangent bundle of the h-smooth morphism $f_J$ and 
$\Th(\tau_{f_J})\simeq f_J^!(\un_S)$ by purity, 
we obtain the isomorphisms 
\begin{align*}
f_!\bar{\nu}_{J*}\bar{\nu}_J^*(\mathbb{E})=f_{J!} \Th(\bar{\nu}_J^{-1}\tau_f)\otimes \Th(v_J)) & \simeq 
f_{J!}(\Th(\tau_{f_J})\otimes \Th(N_J)\otimes \Th(v_J)) \\ 
& \simeq f_{J!}((\Th(v_J)\otimes f_J^!(\un_S))\otimes \Th(N_J)) \\
& = \Pi_S(Z_J,v_J+\twist{N_J})
\end{align*}
In fact, applying the construction of \Cref{prop:enhanced-Gysin},
 we deduce that the above isomorphism can be turned into
an isomorphism of diagrams from the one obtained previously
 with the one considered in the statement, with the announced Gysin maps. 

The assertion for $\cohtp_S(X-Z,v)$ follows similarly by starting with the dual localization 
homotopy exact sequence $i_!i^!\rightarrow Id\rightarrow j_*j^*$.
We leave further details to the reader.
\end{proof}

\begin{rem}
The above result, 
in the dual case of $\cohtp_S(X-Z)$ and the torsion part of the motivic $\infty$-category
 $\DM_\et$, gives back the result of Fujiwara \cite[\textsection 8, third consequence]{Fujiwara}
 for torsion \'etale sheaves, deduced from the absolute purity theorem of Gabber.
\end{rem}

\begin{rem} Let us specialize the preceding result to the cases $\T=\DM, \DM_\et, \DM_\QQ$,
 and more specifically $\T=\DM_\QQ$ when considering Bondarko's weight structure (see \cite{BonWeights}).
Under the assumption and notations of \Cref{cor:snccorollary2},  
the motive $M_S(X-Z)$ is the limit of the augmented semi-simplical diagram
\begin{equation}
\label{eq:poles}
M_S(X) \xrightarrow \epsilon \bigoplus_{i \in I} M_S(Z_i)\twist{1}
\rightrightarrows \bigoplus_{J \subset I, \sharp J=2} M_S(Z_J)\twist{2} \hdots 
\rightarrow M_S(Z_I)\twist{c}
\end{equation}
with the same formulas as in \eqref{equation:poles} for the augmentation $\epsilon$ and the coface maps $\delta^n_k$.

In the case where $f:X\to S$ is smooth and proper, 
and $Z=D$ is a normal crossing divisor with irreducible components $D_i$, $i\in I$,
the formula for the motive $M_S(X-D)$ of the complement of a normal crossing divisor $D$ of $X/S$ is a relative motivic analog of the De Rham complex with logarithmic poles that Deligne used to define mixed Hodge structures. 
The motive of the non-proper  $S$-scheme $X-D$ is expressed as the ``complex" \eqref{eq:poles} whose terms $M_S(D_J)\twist{\sharp J}$ are  pure of weight $0$ for Bondarko's motivic weight structure. 
In particular,  it gives a canonical and functorial weight filtration for the motive 
$M_S(X-D)$ (recall that a pure object of weight $0$ shifted $n$ times has weight $n$).
We view this as a motivic analog of the fact that the weight filtration of the mixed Hodge structure on $X-D$ over $S=\Spec(\CC)$ arises from the naive filtration of the De Rham complex with logarithmic poles associated with $(X,D)$.

Dually, 
we can identify the Chow motive $h_S(X-D)$ with the colimit of the diagram
\begin{equation}
\label{equation:chowmotive}
h_S(D_I)\twist{-c} \rightarrow \hdots \bigoplus_{J \subset I, \sharp J=2} h_S(D_J)\twist{-2}
\rightrightarrows \bigoplus_{i \in I} h_S(D_i)\twist{-1} 
\xrightarrow 
{\epsilon'} h_S(X)
\end{equation}
When $S=\Spec(\CC)$, it follows from the identification of the orientation of the motivic spectrum representing algebraic De Rham cohomology given in \cite[Example 5.4.2(1)]{Deg12} that 
the De Rham realization of \eqref{equation:chowmotive}, 
see \cite[\textsection 3.1]{CD2}, 
can be canonically identified with the de Rham complex with logarithmic poles associated with
$(X, D)$.
\end{rem}

We finally derive 
the following generalization of a computation due to Rappoport and Zink, see \Cref{rem:RZS} for details. 
\begin{prop}
\label{cor:snccorollary}Let $(X,Z)$ be a closed $S$-pair corresponding to a closed immersion $i:Z\to X$ such that $Z$ has h-smooth crossings over $S$ and such that for every irreducible component $Z_i'$ of $Z$, the induced closed immersion $\bar{\nu}_i:Z_i\rightarrow X$ is h-smooth\footnote{This holds in particular when $X$ is h-smooth in a Nisnevich neighborhood of $Z$.}. For every $J\subset I$, let $N_J$ be the normal bundle of the induced regular closed immersion $\bar{\nu}_J:Z_J \rightarrow X$. 

Then the object $i^*j_*(\un_{X-Z})$ of $\mathscr T(Z)$ is isomorphic to the colimit  in the underlying $\infty$-category of the augmented semi-simplicial diagram of length $c+1$
$$
\cohtp_Z(Z_I,\twist{-N_I}) 
 \rightarrow \hdots \bigoplus_{J \subset I, \sharp J=2} \cohtp_Z(Z_j,\twist {-N_J})
\rightrightarrows
\bigoplus_{i \in I} \cohtp_Z(Z_i,\twist{-N_i})  \xrightarrow \epsilon \un_Z
$$
where the degeneracy maps are given (as in \Cref{cor:comput_i^!_smc}) by the formula
\begin{align*}
(\delta_n^k)_!=
\sum_{J=\{i_0<\hdots<\widehat{i_k}<\hdots<i_n\} \subset K=\{i_0<\hdots<i_n\} } i^*\cohtp_{X!}(\nu_{K}^J)
\end{align*}
using the ($\infty$-functorial) Gysin maps of \Cref{prop:enhanced-Gysin},
 associated to the regular closed immersions $\nu_K^J:Z_K\rightarrow Z_J$, $J\subset K$.
 The last map $\epsilon$ is obtained by composing \eqref{eq:snc_fdl_class} with the canonical map
 $i^!(\un_X) \rightarrow i^*(\un_X)=\un_Z$.

Dually, 
the object $i^!j_!(\un_{X-Z})$ in $\mathscr T(Z)$ is isomorphic to the limit of the following augmented 
semi-cosimplicial diagram of length $c+1$
$$
\un_Z 
\xrightarrow{\epsilon'} 
\bigoplus_{i \in I}\cohtp_Z^c(Z_i,\twist{N_i})
\rightrightarrows \bigoplus_{J \subset I, \sharp J=2} \cohtp_Z^c(Z_J,\twist{N_J}) \hdots
\rightarrow 
\cohtp_Z^c(Z_I,\twist{N_I})
$$
with degeneracy maps
\begin{align*}
(\delta_n^k)'_!= \sum_{J=\{i_0<\hdots<\widehat{i_k}<\hdots<i_n\} \subset K=\{i_0<\hdots<i_n\}} i^!\cohtp^c_{X_!}(\nu_{K}^J)
\end{align*}
\end{prop}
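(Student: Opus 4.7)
The plan is to derive both formulas from fiber/cofiber sequences attached to the standard localization triangles, combined with \Cref{cor:comput_i^!_smc} as a cdh-descent-type resolution of $i^!\un_X$.

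First, I will check that the hypotheses ensure $\bar{\nu}_J=i\circ\nu_J:Z_J\to X$ is h-smooth for every nonempty $J\subset I$. Factoring $\bar{\nu}_J = \bar{\nu}_i\circ\nu_J^{\{i\}}$ for any $i\in J$, the map $\nu_J^{\{i\}}:Z_J\to Z_i$ is a closed immersion between h-smooth $S$-schemes (from the h-smooth crossings assumption), hence h-smooth in our settings, while $\bar{\nu}_i$ is h-smooth by hypothesis; stability of h-smoothness under composition (\Cref{num:hsmooth}) concludes. Applying $i^*$ to the localization triangle $i_*i^!\un_X\to \un_X\to j_*\un_{X-Z}$ and using $i^*i_*\simeq\mathrm{Id}$ gives a cofiber sequence in $\T(Z)$:
\begin{equation*}
i^!\un_X \xrightarrow{\alpha} \un_Z \to i^*j_*\un_{X-Z}
\end{equation*}
Dually, applying $i^!$ to $j_!\un_{X-Z}\to \un_X \to i_*\un_Z$ and using $i^!i_*\simeq \mathrm{Id}$ yields a fiber sequence for the same canonical map $\alpha$ induced by $i^!\to i^*$:
\begin{equation*}
i^!j_!\un_{X-Z}\to i^!\un_X \xrightarrow{\alpha} \un_Z
\end{equation*}

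For the first formula, \Cref{cor:comput_i^!_smc} presents $i^!\un_X$ as the colimit of the semi-simplicial diagram $F_\bullet$ with $F_n=\bigoplus_{|J|=n+1}\Th_Z(-N_J)=\bigoplus_{|J|=n+1}\cohtp_Z(Z_J,\twist{-N_J})$ and Gysin face maps. Since cofibers commute with colimits, $\mathrm{cofib}(\alpha)$ is the colimit of the augmented semi-simplicial diagram $F_\bullet\to\un_Z$; and by construction of $\alpha$ the level-$0$ augmentation unfolds to the composition of \eqref{eq:snc_fdl_class} with $i^!\un_X\to\un_Z$, matching the described $\epsilon$ exactly.

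For the dual statement, I would resolve $\un_Z$ as a limit via the cohomotopy version of \Cref{thm:compute_closed_cover}, namely $\un_Z\simeq\lim_J\nu_{J*}\un_{Z_J}$, and use the commutation of fibers with limits in the stable setting to reduce to $\mathrm{fib}(i^!\un_X\to\nu_{J*}\un_{Z_J})$ for each $J$. By $\nu_J^*\dashv\nu_{J*}$-adjunction this equals $\nu_{J*}\mathrm{fib}(\nu_J^*i^!\un_X\to\un_{Z_J})$. The main obstacle is identifying this last fiber with $\Th(N_J)$, so as to recover the term $\cohtp_Z^c(Z_J,\twist{N_J})=\nu_{J*}\Th(N_J)$ and, by tracking functoriality, the Gysin $(\nu_K^J)_!$-coface maps. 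For this I would substitute the colimit expression for $i^!\un_X$ from \Cref{cor:comput_i^!_smc} and apply closed-immersion base change along the transverse squares $Z_K\cap Z_J=Z_{K\cup J}$, together with purity for the h-smooth $\bar{\nu}_{K\cup J}$; the switch of twist sign from $-N_J$ to $+N_J$ between the two formulas intrinsically reflects the Verdier-dual character of fiber versus cofiber.
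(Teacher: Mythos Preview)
Your argument for the first assertion is correct and is exactly the paper's proof: apply $i^*$ to the localization triangle $i_!i^!(\un_X)\to\un_X\to j_*(\un_{X-Z})$ and feed in the resolution of $i^!(\un_X)$ from \Cref{cor:comput_i^!_smc}.

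For the dual assertion, however, your proposed route has two genuine gaps. First, the step ``use the commutation of fibers with limits to reduce to $\mathrm{fib}(i^!\un_X\to\nu_{J*}\un_{Z_J})$ for each $J$'' is not valid over a \emph{semi}-cosimplicial index. In a stable $\infty$-category one has $\lim_J\mathrm{fib}(A\to B_J)\simeq\mathrm{fib}(\lim_J A\to\lim_J B_J)$, but for the constant diagram $\lim_J A$ is \emph{not} $A$ when the index lacks degeneracies (already for $[0]\rightrightarrows[1]$ the limit of the constant $A$ is $A\oplus A[-1]$). Second, the adjunction manoeuvre $\mathrm{fib}(i^!\un_X\to\nu_{J*}\un_{Z_J})\simeq\nu_{J*}\mathrm{fib}(\nu_J^*i^!\un_X\to\un_{Z_J})$ presupposes $i^!\un_X\simeq\nu_{J*}\nu_J^*i^!\un_X$, which has no reason to hold for the individual $\nu_J$.

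The paper's proof of the dual is strictly parallel to the first part and avoids these issues: one applies $i^!$ to the \emph{other} localization triangle $j_!j^!\to Id\to i_*i^*$, obtaining directly the fiber sequence $i^!j_!(\un_{X-Z})\to i^!(\un_X)\to\un_Z$, and then invokes \Cref{cor:comput_i^!_smc} just as before. No separate resolution of $\un_Z$ or term-by-term fiber computation is needed.
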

\begin{proof}
The first assertion immediately follows by applying $i^*$ to the localization triangle
 $$i_!i ^!(\un_X) \rightarrow \un_X \rightarrow j_*j^*(\un_X)=j_*(\un_{X-Z})$$
 and using the computation of \Cref{cor:comput_i^!_smc}. The other assertion is obtained
 similarly, starting from the dual localization triangle and applying $i^!$. 
%
%
\end{proof}

\begin{rem}
\label{rem:RZS}
Let $\T$ be a motivic $\infty$-category with a realization functor from $\DM_\et$ as in \eqref{eq:motivic_cat}.
Assume that $X$ is regular and that $Z=D$ is a normal crossing divisor in $X$ with irreducible components $D_i$, $i\in I$.
The above formula shows that the motive $i^*j_*(\un_{X-Z})$  is the colimit in the underlying $\infty$-category of the diagram
\begin{equation}
\label{equation:RZgeneralization}
\nu_{I*}(\un_{D_I})(c)[2c] \xrightarrow{d_{c-2}} \hdots 
\xrightarrow{d_1}
\bigoplus_{J \subset I, \sharp J=2} \nu_{J*}(\un_{D_J})(2)[4] \xrightarrow{d_0}
\bigoplus_{i \in I} \nu_{i*}(\un_{D_i})(1)[2] \xrightarrow \epsilon \un_D
\end{equation}
Here, 
$d_n=\sum_k (-1)^k (\delta_n^k)_!$
 is the alternate sum of Gysin maps associated with the relevant closed immersions
  (see \ref{num:Gysin}, given that $\nu_{J*}(\un_{D_J})=\cohtp_D(D_J)$).
The computation for \eqref{equation:RZgeneralization} specializes under $\ell$-adic realization
 to the Rapoport-Zink formula \cite[Lemma 2.5]{RapZink},
 which was inspired by analogous computations of Steenbrink in Hodge theory \cite{Steenb}.
 The lemma of Rapoport and Zink is used to obtain the so-called
 \emph{weight spectral sequence} (see \cite[Satz 2.10]{RapZink}) which has been used to deduce
 various cases of Deligne's weight monodromy conjecture (see the introduction of \cite{Ito}).
 Similarly, one can deduce from our computation a motivic version of the Rapoport-Zink
 and Steenbrink weight spectral sequences, which naturally specializes by realization to both versions.
\end{rem}

\subsection{Explicit models in the $\ZZ$-linear case}
\label{sec:explicit_models}

\begin{num}\label{num:Z-linear_yoneda_motivic}
We now assume that $\T$ is an $\HZ$-linear motivic $\infty$-category.
 Thus, for any scheme $S$, $\T(S)$ is a presentable $\HZ$-linear $\infty$-category,
 and this implies that the given functor $\htp_S$ admits a right Kan extension \cite[\textsection 4.3]{LurieHTT}
 along the $\HZ$-linear Yoneda embedding $\ZZ_S$\footnote{Actually, one can even get a functor $\ehtp_S$
 from the stable $\AA^1$-derived $\infty$-category $\Der_{\AA^1}$ by the $\HZ$-linear analog
 of the universality theorem of Drew and Gallauer \cite{DrewGall}.}
$$
\xymatrix@R=10pt@C=20pt{
\Sm_S\ar^{\htp_S}[rr]\ar_-{\ZZ_S}[rd] && \T(S) \\
& \Der\big(\Sh(\Sm_S,\ZZ)\big)\ar_-{\ehtp_S}[ru]\ar@{=>}[u] &
}
$$
Let us also consider the inclusion $\rho:\Sm_S \rightarrow \Sch_S$
 of Nisnevich sites. In this situation, one has an adjunction
 of $\HZ$-linear $\infty$-categories (see \cite[\textsection 6.1, Ex. 6.1.13]{CD3})
$$
\rho_!:\Sh(\Sm_S,\ZZ) \leftrightarrows \Sh(\Sch_S,\ZZ):\rho^*
$$
 such that $\rho^*$ is the restriction functor and $\rho_!$ is fully faithful.
 This, together with the fact that $\T$ satisfies $\cdh$-descent (\cite[cdh-descent]{CD3}) 
 implies that the functor $\bar \Pi_S$ admits a left Kan extension
$$
\xymatrix@=10pt{
\Der\big(\Sh(\Sm_S,\ZZ)\big)\ar^{\ehtp_S}[rr]\ar_/-4pt/{\rho^\cdh_!}[rd] &\ar@{=>}[d]& \T(S) \\
& \Der\big(\Sh_\cdh(\Sch_S,\ZZ)\big)\ar_/12pt/{\eehtp_S}[ru]
}
$$
where $\rho_!^\cdh=a_\cdh \rho_!$ is composite with the associated $\cdh$-sheaf functor.
Given an $S$-scheme $X$ of finite type, we set
$$
\uZZ_S(X,\T)=\eehtp_S(\ZZ_S^\cdh(X))
$$
where $\ZZ_S^\cdh(X)$ is the $\cdh$-sheaf of abelian groups represented by $X$.
 In this way, one has defined a covariant ($\infty$-)functor
 $\uZZ_S^{\T}:\Sch_S \rightarrow \T(S)$. As $\eehtp_S$ is obtained by a right Kan extension,
 one also gets the formula for any morphism $p:X \rightarrow S$ of finite type:
$$
\uZZ_S(X,\T) \simeq \varprojlim_{V/X} p_{V!}p_V^!(\un_S)
$$
where the limit runs over the $S$-morphisms $V \rightarrow X$ with $V$ a smooth $S$-scheme.
In particular, one gets a canonical map
\begin{equation}\label{eq:compare_mot_ext}
\htp_S(X;\un_S)=p_!p^!(\un_S) \longrightarrow \varprojlim_{V/X} p_{V!}p_V^!(\un_S)\simeq \uZZ_S(X,\T) 
\end{equation}
with the notation of \Cref{num:6functors_extension}. 
One should be cautious that this map is an isomorphism when $X/S$ is smooth, 
but not necessarily in general (see, however, \Cref{cor:model-smcrossings} below).

Using both extensions, we will show that the computations obtained 
 in the previous paragraph can be enhanced by giving models
 in terms of explicit complexes of (pre)sheaves.
\end{num}

\begin{num}\label{num:Z-linear_Cech_smcrossings}
Let us consider the notation and assumptions of the previous paragraph.
 We also consider an $S$-scheme $Z$ of finite type with reduced smooth crossings,
 and the finite closed cover
 $p:Z_\bullet=\sqcup_{i \in I} Z_i \rightarrow Z$
 associated with its integral components as in \Cref{num:normal_crossing}.
 We let $c=\#I$ be the number of integral components of $Z$.
 Then we can consider the complex $\cC^\ord_*(X_\bullet/X,\ZZ)$
 of abelian sheaves in $\Sh(\Sm_S,\ZZ)$ associated with the ordered \v Cech complex
$$
\cC^\ord_n(Z_\bullet/Z,\ZZ)=\sum_{J \subset I, \sharp J=n+1} \ZZ_S(Z_J) \\
$$
where we recall that $Z_J=(Z'_J)_{red}$, and with differentials
\begin{equation}\label{eq:HZ-diff-smcrossings}
d^n=\sum_{K \subset I, \sharp K=n+1} \sum_{k=0}^n (-1)^k.(\nu_{K}^{K\backslash k})_*    
\end{equation}
where we have denoted by $K\backslash k$ the set $K$ minus its $k$-th element,
 for the order on $K$ induced by that of $I$.
 We can view this complex in the big category of $\cdh$-sheaves by applying the functor
 $\rho_!^\cdh$. Then it becomes an augmented complex in $\Sh_\cdh(\Sch_S,\ZZ)$ 
$$
\rho_!^\cdh\cC^\ord_*(Z_\bullet/Z,\ZZ) \xrightarrow{\epsilon_{Z_\bullet/Z}} \ZZ_S^\cdh(Z) 
$$
Using the same idea as in \Cref{thm:compute_closed_cover}, we get the following lemma:
\end{num}
\begin{lm}
Consider the above assumptions. Then the augmented \v Cech ordered complex is
 acyclic \emph{i.e.} the map $\epsilon_{Z_\bullet/Z}$ is a quasi-isomorphism
 of complexes of $\Sh_\cdh(\Sch_S,\ZZ)$.
\end{lm}
\begin{proof}
As stated, this is analogous to the proof \Cref{thm:compute_closed_cover}.
 We can assume that $X=S$ using the existence of the functor $p_{Z\sharp}$ for the projection map $p_Z:Z \rightarrow S$,
 as we work with the big $\cdh$-site. As $(p_i:Z_i \rightarrow Z)$ is a $\cdh$-cover,
 it suffices to check that $\epsilon_{Z_\bullet/Z}$ is a quasi-isomorphism after pullback
 along $p_i:Z_i \rightarrow Z$. Then the closed cover $p$ becomes split and the lemma
 follows. 
\end{proof}

\begin{cor}\label{cor:model-smcrossings}
Consider the above assumptions (\Cref{num:Z-linear_yoneda_motivic}
and \Cref{num:Z-linear_Cech_smcrossings}).
There are isomorphisms in $\T(S)$
$$
\eehtp_S\cC_*^\ord(Z_\bullet/Z,\ZZ) \xrightarrow \simeq \uZZ_S(Z,\T)
 \xleftarrow \simeq \htp_S(Z)
$$
The first isomorphism is obtained by applying $\eehtp_S$ to
 the augmented ordered \v Cech complex, and the second one is defined 
 in \eqref{eq:compare_mot_ext}.
\end{cor}
\begin{proof}
The first isomorphism is obvious from the above lemma,
 and the second one follows either by using the fact both objects admit
 a finite resolution by objects associated with smooth $S$-schemes
 (or by induction on the number of integral components of $Z$).    
\end{proof}

\begin{rem}
The preceding corollary can be viewed as a method for computing the (homotopy) colimit described in \Cref{cor:snccorollary2}.
More precisely, it provides a way to identify a suitable model for this homotopy colimit.
\end{rem}

\begin{ex}\label{ex:sncexample2}
\begin{enumerate}
\item Assume $\T=\DAx t$ is the $t$-local stable $\AA^1$-derived motivic $\infty$-category,
 for the topology $t=\nis, \et, \h$
 (see e.g. \cite[Ex. 5.3.31]{CD3} for the first two,
  and \cite{CD4} for the last one). Then $\eehtp_S\cC_*^\ord(Z_\bullet/Z,\ZZ)$ is nothing else than
  the infinite suspension of the $\AA^1$-localization of the complex
\begin{equation}
\label{eq:alternate-complex}
\ZZ^t_S(Z_{I}) 
\xrightarrow{d_{c-2}} \bigoplus_{J \subset I, \sharp J=c-1} \ZZ_S^t(Z_{J}) \rightarrow
\hdots 
\xrightarrow{\ d_1\ }
\bigoplus_{J \subset I, \sharp J=2} \ZZ_S^t(Z_{J})
\xrightarrow{\ d_0\ } \bigoplus_{i \in I} \ZZ_S^t(Z_{i})
\end{equation}
with representable $\ZZ$-linear $t$-sheaves over $\Sm_S$ as indicated,
 and with differentials given by the alternating sum of formula \eqref{eq:HZ-diff-smcrossings}.
 This gives an explicit model for the ``$t$-local $\AA^1$-motive'' $\htp_S(Z,\DA_t)$ associated
 with the smooth reduced crossing $S$-scheme $Z$. In fact, the latter object is also modeled
 by the $\ZZ$-linear $t$-sheaf $\ZZ_S(Z)$ on $\Sm_S$ represented by $Z$,
 and the isomorphism with the above complex is then given by the natural augmentation map.
\item Assume $\T=\DM_\Lambda$ is the motivic $\infty$-category of $\Lambda$-linear motives.
 We assume either that $S$ is regular and defined over a field of characteristic exponent $p$ and $p \in \Lambda^\times$,
 or that $S$ geometrically unibranch and $\Lambda=\QQ$. Then a model for the motive $M_S(Z)_\Lambda$ is given
 by considering the complex 
 \begin{equation}
\label{eq:alternate-complex-cor}
[Z_{I}] \xrightarrow{d_{c-2}} \bigoplus_{J \subset I, \sharp J=c-1} [Z_{J}]
\xrightarrow{\ d_{c-3}\ }
\hdots 
\xrightarrow{\ d_1\ } \bigoplus_{J \subset I, \sharp J=2} [Z_{J}]
\xrightarrow{\ d_0\ } \bigoplus_{i \in I} [Z_{i}]
\end{equation}
in the additive category $\Sm^{\mathrm{cor}}_S$ of smooth $S$-schemes with finite correspondences,
 taking its image in $\DM^{\mathrm{eff}}(S,\Lambda)$ and then taking its infinite suspension.
 Another possible model is the analog complex but made with the corresponding Nisnevich $\Lambda$-linear sheaves with transfers.
It is obtained by applying the associated free sheaf with transfers functor $\ZZ^{tr}_S$.
\end{enumerate}
\end{ex}

\begin{rem}
The preceding formulas are the motivic relative version of the classical computation of the homology of a normal crossing scheme. 
 It actually gives back the known formulas by realization of motives (Betti, \'etale, etc...). 

A dual formula holds for computing the relative Chow motive $h_S(Z)=f_*f^*(\un_S)$.
To that end, we consider the isomorphism $\mathrm{h}(Z_\bullet/Z,\un_S)$ of Theorem \ref{thm:compute_closed_cover}: 
$h_S(Z)$ is quasi-isomorphic to the image of the complex  \eqref{eq:alternate-complex} under the (derived) internal Hom functor 
$\derR \uHom(-,\un_S)$, see also \Cref{thm:strong_duality}.
\end{rem}

\begin{num}\label{num:Z-linear_coCech_smcrossings}
We use the notation of \Cref{num:Z-linear_yoneda_motivic} and assume 
$(X,Z)$ is a closed $s$-pair such that $X$ is $S$-smooth (see \Cref{num:closed_pairs}).
 Let us denote by $\ZZ_S(X/X-Z)$ the cokernel of the canonical map
 $\ZZ_S(X-Z) \rightarrow \ZZ_S(X)$ in the abelian category $\Sh(\Sm_X,\ZZ)$.
 This cokernel is covariant with respect to morphisms of closed pairs,
 and in particular contravariant in $Z$ with respect to closed immersions.
 
Let $p:Z_\bullet=\sqcup_{i \in I} Z_i \rightarrow Z$ be a finite closed cover.
 Using again the notation of \eqref{num:ordeded_Cech}, we can define
 an ordered \v Cech complexes of sheaves in $\Sh(\Sm_X,\ZZ)$,
 with the cohomological convention
$$
\cC_\ord^n(X/X-Z_\bullet,\ZZ)=\bigoplus_{J \subset I, \sharp J=n+1} \ZZ_S(X/X-Z_J)
$$
and differentials
\begin{equation}\label{eq:HZ-codiff-smcrossings}
d^n=\sum_{K \subset I, \sharp K=n+1} \sum_{k=0}^n (-1)^k.(\nu_{K}^{K\backslash k})^*
\end{equation}
using notation as in \Cref{num:Z-linear_Cech_smcrossings}
(again $K\backslash k$ is the set $K$ minus its $k$-th element).
 This is a co-augmented complex in $\Sh(\Sm_S,\ZZ)$ 
$$
\ZZ_S(X/X-Z) \xrightarrow{\epsilon'_{X/X-Z_\bullet}} \cC^*_\ord(X/X-Z_\bullet,\ZZ)
$$
The following lemma is a particular case  of \Cref{thm:compute_closed_cover}.
\end{num}
\begin{lm}
Under the above assumptions, the co-augmentation $\epsilon'_{X/X-Z_\bullet}$
 is a quasi-isomorphism of complexes of Zariski (and a fortiori Nisnevich) sheaves.
\end{lm}
\begin{proof}
One reduces to the case where $X=S$, using the (derived or $\infty$) functor $p_\sharp$,
$p:X \rightarrow S$, and to the small Zariski site $X_\zar$.
Moreover, it suffices to check the statement on fibers along points $x$ of the scheme $X$.
Now the result reduces to an exercise in homological algebra using that 
$$
\ZZ_X(X/X-Z_J)_x=
\begin{cases} 
\ZZ & x \in Z_J  \\ 
0 & x \notin Z_J 
\end{cases}
$$
\end{proof}

\begin{rem}
In fact, the above statement is equivalent (and the proof is the same)
 to a higher version of the classical
 Mayer-Vietoris triangle, stating that the augmented complex:
$$
\ZZ_S(X-Z)  \xrightarrow{\epsilon'} \bigoplus_{i \in I} \ZZ_S(X-Z_i)
 \rightarrow \hdots \rightarrow \bigoplus_{J \subset I, \sharp J=n+1} \ZZ_S(X-Z_J)
 \rightarrow \hdots \rightarrow \ZZ_S(X-Z_I)
$$
is exact for the Zariski topology, the differentials being alternated sums as above.
 We could not find a reference in the literature
 for this rather obvious generalization of the Mayer-Vietoris triangle.
\end{rem}

\begin{cor}\label{cor:model-cosmcrossings}
Under the above assumptions, there are isomorphisms in $\T(S)$
$$
p_{Z!}i^*p_X^!(\un_S) \simeq \ZZ_S(X/X-Z,\T) \xrightarrow{\epsilon'_{X/X-Z_\bullet}} \ehtp_S \cC^*_\ord(X/X-Z_\bullet,\ZZ)
$$
where $p_Z$, and $p_X$ are the canonical projections.
The first isomorphism follows from the localization property.
\end{cor}

\begin{ex}\label{ex:sncexample2bis}
We can consider again the settings of \Cref{ex:sncexample2}, $\T=\DAx t, \DM_\Lambda$.
 These motivic $\infty$-categories are all defined through ($\AA^1$-)localization and ($\PP^1$-)stabilization
 of a derived category of $\Lambda$-linear $t$-sheaves with/without transfers.\footnote{To be 
 precise, one must consier an intermediary abelian category of symmetric $\GG$-spectra in order to get the $\PP^1$-stable category: see \cite[\textsection 5.3.C]{CD3}. The reader as the choice of applying the natural
 suspension functor at the level of abelian cateories (\emph{loc. cit.} (5.3.16.1)) to the next resolution in order to get a model in those terms.} If one denotes by
 $\ZZ^\epsilon_S(X)$ the corresponding free sheaf, with the expected properties, represented by a smooth $S$-scheme $X$. Extending the definitions of \Cref{num:Z-linear_coCech_smcrossings},
  one denotes by $\ZZ^\epsilon_S(X/X-Z)$ the cokernel of the canonical map $\ZZ^\epsilon_S(X-Z) \rightarrow \ZZ^\epsilon_S(X)$ for any closed pair $(X,Z)$ with $X/S$ smooth.
  Then one can consider the complex
\begin{equation}
\label{eq:alternate-cocomplex}
\bigoplus_{i \in I} \ZZ_S^\epsilon(X/X-Z_{i})
 \xrightarrow{\ d^0\ } \bigoplus_{J \subset I, \sharp J=2} \ZZ_S^\epsilon(X/X-Z_{J}) 
 \xrightarrow{\ d^1\ } \hdots 
 \rightarrow \bigoplus_{J \subset I, \sharp J=c-1} \ZZ_S^\epsilon(X/X-Z_{J}) 
 \xrightarrow{d^{c-2}} \ZZ^\epsilon_S(X/X-Z_{I})
\end{equation}
with differentials given by formula \eqref{eq:HZ-codiff-smcrossings}.
 In the above, we have used a cohomological convention, so that the complex is concentrated
 in degree $[0,c-1]$. There is a natural augmentation map,
 which makes the above complex into a (cohomological) resolution of $\ZZ_S^\epsilon(X/X-Z)$
 once viewed in the category $\T(S)$ (that is, after $\AA^1$-localization and $\PP^1$-stabilization).

Note that later, it will be convenient to use homological conventions for the preceding
 complex. Then it is concentrated in homological degrees $[-c+1,0]$. 
\end{ex}

\subsection{Application to strong duality}
\label{subsection:asdr}

Next, we deduce some applications of the computations of Section \ref{sec:crossing_sing} towards strong duality results.

\begin{prop}\label{thm:strong_duality}
Let $Z/S$ be a proper $S$-scheme with smooth crossings, 
and let  $v$ be a virtual bundle over $Z$.
Then $\htp_S(Z,v)$ is rigid with dual $\cohtp_S(Z,-v)$ isomorphic to limit of the diagram 
$$
\xymatrix@=20pt{
	\bigoplus_{i \in I} \htp_S(Z_i,-v_i-\twist{T_i})\ar@<2pt>[r]\ar@<-2pt>[r]
	& \bigoplus_{J \subset I, \sharp J=2} \htp_S(Z_J,-v_J-\twist{T_J})
	\ar[r] & \hdots\ar[r]
	& \htp_S(Z_I,-v_I-\twist{T_I})
}
$$
where for every $J\subset I$, $T_J$ denotes the  tangent bundle of $Z_J/S$.
\end{prop}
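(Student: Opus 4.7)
The plan is to combine the \v Cech-type resolution of $\htp_S(Z,v)$ from \Cref{ex:sncexample1} with Poincar\'e duality on each smooth proper stratum (\Cref{ex:duality}), exploiting the fact that dualizable objects in a stable symmetric monoidal $\infty$-category are closed under finite (co)limits. First, since $Z$ has smooth crossings over $S$, each stratum $Z_J$ ($\varnothing\neq J\subset I$) is smooth over $S$; and since $Z/S$ is proper, each closed subscheme $Z_J$ of $Z$ is proper over $S$ as well. \Cref{ex:sncexample1} then presents $\htp_S(Z,v)$ as the colimit of the augmented semi-simplicial diagram of length at most $\sharp I$ with $n$-th term $\bigoplus_{\sharp J=n+1}\htp_S(Z_J,v_J)$ and face maps given by sums of pushforwards $(\nu_K^J)_*$ for $J\subset K$.

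By \Cref{ex:duality}, each $\htp_S(Z_J,v_J)$ is rigid with strong dual $\htp_S(Z_J,-v_J-\twist{T_J})$. Since dualizable objects form a stable full subcategory of any stable symmetric monoidal $\infty$-category---indeed the cofiber of a map between dualizable objects is dualizable, as it fits into a distinguished triangle whose dual is again a distinguished triangle---the finite colimit $\htp_S(Z,v)$ is rigid. Applying the contravariant functor $\uHom(-,\un_S)$ to the colimit presentation yields
$$
\htp_S(Z,v)^{\vee} \simeq
\underset{n\in \Dinj}\lim \bigoplus_{\sharp J=n+1}
\htp_S(Z_J,-v_J-\twist{T_J})
$$
with transition maps obtained by dualizing the $(\nu_K^J)_*$. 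By the compatibility of Poincar\'e duality with proper pushforward---which follows from the base change formula for fundamental classes in \cite[3.3.2(iii)]{DJK} already invoked in \Cref{ex:duality}---the dual of $(\nu_K^J)_*$ is the Gysin map $(\nu_K^J)^!\colon \htp_S(Z_J,-v_J-\twist{T_J})\to \htp_S(Z_K,-v_K-\twist{T_K})$. The twist simplifies as claimed thanks to the conormal short exact sequence $0\to T_K\to (\nu_K^J)^{-1}T_J\to N_{Z_K/Z_J}\to 0$, giving $(\nu_K^J)^{-1}\twist{T_J} = \twist{T_K} + \twist{N_{Z_K/Z_J}}$, the last summand exactly canceling the Gysin correction $-\tau_{\nu_K^J}=\twist{N_{Z_K/Z_J}}$.

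Finally, one identifies this limit with $\cohtp_S(Z,-v)$: the dual half of \Cref{ex:sncexample1} presents $\cohtp_S(Z,-v)$ as the limit of the semi-cosimplicial diagram with terms $\bigoplus_{\sharp J=n+1}\cohtp_S(Z_J,-v_J)$ and coface maps $(\nu_K^J)^*$. For the smooth proper $Z_J/S$, the purity isomorphism of \Cref{ex:purity&comparison} gives $\cohtp_S(Z_J,-v_J)\simeq \htp_S(Z_J,-v_J-\twist{T_J})$ and, by the last sentence of loc. cit., carries natural pullbacks to Gysin maps, so the two limit diagrams are canonically equivalent. The main technical point is the naturality assertion in the second paragraph: dualizing the semi-simplicial diagram must carry pushforwards to Gysin maps with exactly the stated twists; once this compatibility is granted, the result follows formally from the presentation of $\htp_S(Z,v)$ as a finite colimit of rigid pieces.
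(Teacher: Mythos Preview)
Your proof is correct and follows essentially the same approach as the paper: both present $\htp_S(Z,v)$ as a finite colimit of the rigid pieces $\htp_S(Z_J,v_J)$ via \Cref{ex:sncexample1}, conclude rigidity from closure under finite colimits, and identify the dual with $\cohtp_S(Z,-v)$. The only organizational difference is that the paper invokes \Cref{prop:weak-dual}(2) to obtain the dual $\cohtp_S(Z,-v)$ directly and then expands it as a limit via the cohomotopy half of \Cref{ex:sncexample1}, whereas you dualize the colimit term-by-term first (checking the maps become Gysin maps) and identify with $\cohtp_S(Z,-v)$ afterward via \Cref{ex:purity&comparison}; the content is the same.
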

\begin{proof}
According to \eqref{equation:snches2},  $\htp_S(Z,v)$ is isomorphic to the colimit of the finite diagram
$$
\xymatrix@=20pt{
	\htp_S(Z_I,v_I)\ar[r] &
	\hdots \ar[r]
	& \bigoplus_{J \subset I, \sharp J=2} \htp_S(Z_J,v_J)\ar@<2pt>[r]\ar@<-2pt>[r]
	& \bigoplus_{i \in I} \htp_S(Z_i,v_i)
}
$$
whose components are spectra of smooth proper schemes, hence rigid spectra. This implies $\htp_S(Z,v)$ is rigid. The fact that its dual is $\cohtp_S(Z,-v)$ follows from \Cref{prop:weak-dual}(2). On the other hand, by \eqref{equation:cohom-limit}, $\cohtp_S(Z,-v)$ isomorphic to the colimit of the diagram  $$
\xymatrix@=20pt{
	\cohtp_S(Z_I,-v_I)\ar[r] & 
	\hdots \ar[r]
	& \bigoplus_{J \subset I, \sharp J=2} \cohtp_S(Z_J,-v_J)\ar@<2pt>[r]\ar@<-2pt>[r]
	& \bigoplus_{i \in I} \cohtp_S(Z_i,v_i)
}
$$
whose components are isomorphic to $\htp_S(Z_J,-v_J-\twist{T_J})$ by combining \Cref{ex:duality} and \Cref{prop:weak-dual}(2).
\end{proof}

\begin{thm}\label{thm:strong_duality-pair} 
 Let $(X,Z)$ be a closed $S$-pair such that $X/S$ is smooth and proper, with tangent bundle $T$, and such that $Z/S$ has smooth crossings.  Let $v$ be a virtual vector bundle on $X$. 

Then $\htp_S(X-Z,j^{-1}v)$ and $\cohtp_S(X-Z,j^{-1}v)$ are rigid with duals $\htp^c_S(X-Z,-j^{-1}(v+\twist{T}))$ and $\cohtp^c_S(X-Z,-j^{-1}(v-\twist{T}))$, respectively.
\end{thm}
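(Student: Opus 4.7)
The plan is to combine the finite semi-cosimplicial/semi-simplicial decompositions of $\htp_S(X-Z,j^{-1}v)$ and $\cohtp_S(X-Z,j^{-1}v)$ from Proposition \ref{cor:snccorollary2} with the rigidity of motives of smooth proper $S$-schemes from Example \ref{ex:duality}, and then to read off the duals from Proposition \ref{prop:weak-dual}(3).

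First, I would establish rigidity. Since $X/S$ is smooth (hence h-smooth) and $Z/S$ has smooth (hence h-smooth) crossings, the hypotheses of Proposition \ref{cor:snccorollary2} are satisfied, and every immersion $\bar\nu_J\colon Z_J\hookrightarrow X$ is h-smooth. Thus $\htp_S(X-Z,j^{-1}v)$ is the totalization of a finite semi-cosimplicial diagram whose terms are $\htp_S(X,v)$ and $\htp_S(Z_J,v_J+\twist{N_J})$ for $\varnothing\neq J\subset I$. Each $Z_J$ is closed in the proper scheme $X/S$, hence proper over $S$, and smooth by hypothesis. Consequently, every term is rigid by Example \ref{ex:duality}. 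Since the full subcategory of rigid objects in the stable symmetric monoidal $\infty$-category $\T(S)$ is thick, i.e., closed under finite limits and colimits, the totalization $\htp_S(X-Z,j^{-1}v)$ is itself rigid. Dually, Proposition \ref{cor:snccorollary2} exhibits $\cohtp_S(X-Z,j^{-1}v)$ as a finite colimit with terms $\cohtp_S(X,v)$ and $\cohtp_S(Z_J,v_J-\twist{N_J})$. For a smooth proper $W/S$ with tangent bundle $T_W$, the purity isomorphism of Example \ref{ex:purity&comparison} identifies $\cohtp_S(W,w)$ with $\htp_S(W,w-\twist{T_W})$, which is rigid by Example \ref{ex:duality}. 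Therefore each term of the colimit is rigid, and so is $\cohtp_S(X-Z,j^{-1}v)$.

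With rigidity in hand, the identification of the duals reduces to computing weak duals. Since $j\colon X-Z\hookrightarrow X$ is an open immersion into the smooth $X/S$, the composite $f\circ j\colon X-Z\to S$ is smooth, and hence h-smooth (see \ref{num:hsmooth}), with tangent bundle $j^*T$. Applying Proposition \ref{prop:weak-dual}(3) to $f\circ j$ with virtual twist $j^{-1}v$ yields
$$
\uHom\bigl(\htp_S(X-Z,j^{-1}v),\un_S\bigr)\simeq \htp^c_S\bigl(X-Z,-j^{-1}(v+\twist{T})\bigr),
$$
which is the desired formula. Setting $w=-j^{-1}(v-\twist{T})$ in the $\cohtp^c$-half of the same proposition produces
$$
\uHom\bigl(\cohtp^c_S(X-Z,-j^{-1}(v-\twist{T})),\un_S\bigr)\simeq \cohtp_S(X-Z,j^{-1}v).
$$
Since $\cohtp_S(X-Z,j^{-1}v)$ is rigid, double duality converts this into the asserted identification $\cohtp_S(X-Z,j^{-1}v)^\vee\simeq \cohtp^c_S(X-Z,-j^{-1}(v-\twist{T}))$.

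The only non-formal ingredient is the fact that the dualizable objects of $\T(S)$ form a thick subcategory closed under finite (co)limits, a standard consequence of stability and of the tensor product commuting with colimits in each variable. Beyond that, the main care required is the bookkeeping of virtual twists when passing between the two formulations of weak duality in Proposition \ref{prop:weak-dual}, and checking that $j^*\twist{T}=\twist{j^*T}$ combines cleanly with $j^{-1}v$ into $j^{-1}(v\pm\twist{T})$.
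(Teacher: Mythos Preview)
Your proof is correct and follows essentially the same route as the paper: use \Cref{cor:snccorollary2} to express $\htp_S(X-Z,j^{-1}v)$ and $\cohtp_S(X-Z,j^{-1}v)$ as finite (co)limits of rigid objects (via \Cref{ex:duality} for smooth proper $Z_J$ and $X$), conclude rigidity from thickness, and then read off the duals from \Cref{prop:weak-dual}.

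One small imprecision in your last step: from $\uHom(B,\un_S)\simeq A$ with $A=\cohtp_S(X-Z,j^{-1}v)$ rigid, double duality alone does \emph{not} give $A^\vee\simeq B$; you need $B=\cohtp^c_S(X-Z,-j^{-1}(v-\twist{T}))$ to be rigid as well. But this follows immediately from what you have already proved: by the purity isomorphism of \Cref{ex:purity&comparison}, $\cohtp^c_S(X-Z,w)\simeq\htp_S(X-Z,w-\twist{j^*T})$, and you established rigidity of $\htp_S(X-Z,-)$ in your first paragraph. With $B$ rigid, $B^\vee\simeq A$ gives $A^\vee\simeq B^{\vee\vee}\simeq B$ as desired.
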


\begin{proof} One first appeals to 
\Cref{cor:snccorollary2} to conclude that $\htp_S(X-Z,j^{-1}v)$ (resp.\ $\cohtp_S(X-Z,j^{-1}v)$) 
is rigid as a limit (resp.\ colimit) of a finite diagram whose components are rigid spectra due to 
the assumption that $X$, and hence all the $Z_J$, $J\subset I$, are smooth proper $S$-schemes. 
The given expressions for the dual then follow from \Cref{prop:weak-dual}.
\end{proof}
Finally, we deduce an improvement of \Cref{thm:generalizedhomotopypurity}.
\begin{thm}
\label{thm:generalizedhomotopypurity2}
Let $(X,Z)$ be a closed $S$-pair 
such that $Z/S$ is proper with smooth crossing over $S$ and such that 
$X$ is smooth in a Nisnevich neighborhood of $Z$.  

Then, for every virtual vector bundle $v$ on $X$, 
 $\htp_S(X/X-Z,v)$ and $\cohtp_S(X/X-Z,v)$ are rigid with duals $\htp_S(Z,-i^{-1}v-i^{-1}\tau_{X/S})$ and $\cohtp_S(Z,-i^{-1}v+i^{-1}\tau_{X/S})$, respectively.
 \end{thm}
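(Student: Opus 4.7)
The plan is to deduce the theorem as a clean assembly of three inputs already established in the excerpt: the purity isomorphism of \Cref{thm:generalizedhomotopypurity}(1), the collapse of properly supported and ordinary (co)homotopy for proper morphisms, and the strong duality \Cref{thm:strong_duality} for proper smooth crossings schemes.

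First, since $X$ is smooth in a Nisnevich neighborhood of $Z$, it is in particular h-smooth there, and $i^{-1}\tau_{X/S}$ is a well-defined virtual vector bundle on $Z$ by \ref{num:res-cot-cplx}. Part (1) of \Cref{thm:generalizedhomotopypurity} then supplies canonical purity isomorphisms
\begin{align*}
\htp_S(X/X-Z,v) & \simeq \cohtp^c_S(Z, i^{-1}v + i^{-1}\tau_{X/S}), \\
\cohtp_S(X/X-Z,v) & \simeq \htp^c_S(Z, i^{-1}v - i^{-1}\tau_{X/S}).
\end{align*}
Second, because $Z/S$ is proper, the natural transformations $\alpha_{Z/S}$ and $\alpha'_{Z/S}$ of \Cref{df:internal_theories} are isomorphisms, so properly supported (co)homotopy agrees with the non-supported version on $Z$. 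Substituting into the displays above yields
\begin{align*}
\htp_S(X/X-Z,v) & \simeq \cohtp_S(Z, i^{-1}v + i^{-1}\tau_{X/S}), \\
\cohtp_S(X/X-Z,v) & \simeq \htp_S(Z, i^{-1}v - i^{-1}\tau_{X/S}).
\end{align*}

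Third, since $Z/S$ is proper with smooth crossings, \Cref{thm:strong_duality} tells us that $\htp_S(Z,w)$ is rigid with dual $\cohtp_S(Z,-w)$ for every virtual bundle $w$ on $Z$. By the symmetry of rigid duality, $\cohtp_S(Z,-w)$ is then likewise rigid with dual $\htp_S(Z,w)$. Applying the second formulation with $w = -i^{-1}v - i^{-1}\tau_{X/S}$ identifies $\htp_S(X/X-Z,v) \simeq \cohtp_S(Z,-w)$ as rigid with dual $\htp_S(Z, -i^{-1}v - i^{-1}\tau_{X/S})$. Applying the first formulation with $w = i^{-1}v - i^{-1}\tau_{X/S}$ identifies $\cohtp_S(X/X-Z,v) \simeq \htp_S(Z,w)$ as rigid with dual $\cohtp_S(Z, -i^{-1}v + i^{-1}\tau_{X/S})$. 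This is exactly the statement of the theorem.

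No essential obstacle is anticipated: the argument is a matter of chaining together three structural results, and the only delicate points are bookkeeping the signs and twists and checking that the hypotheses of smooth (hence h-smooth) neighborhood of $Z$ and properness of $Z/S$ with smooth crossings supply precisely what \Cref{thm:generalizedhomotopypurity}(1) and \Cref{thm:strong_duality} need. Should one wish to be explicit about functoriality of the resulting duality pairings, one can trace through the pairing and co-pairing constructed in \Cref{ex:duality} via the purity identifications above, but for the statement as stated this is not required.
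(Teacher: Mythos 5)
Your proof is correct and follows the same route the paper intends when it says the result is ``a direct combination of \Cref{thm:generalizedhomotopypurity} and \Cref{thm:strong_duality}'': purity to identify $\htp_S(X/X-Z,v)$ and $\cohtp_S(X/X-Z,v)$ with (co)homotopy of $Z$, drop the proper support using properness of $Z/S$, then invoke the strong duality for proper smooth crossings $S$-schemes. Your write-up merely makes explicit the bookkeeping of twists and the $\alpha,\alpha'$-collapse that the paper leaves implicit.
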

\begin{proof}
This is a direct combination of  \Cref{thm:generalizedhomotopypurity} and \Cref{thm:strong_duality}. 
\end{proof}
In other words, under the stated hypothesis, one gets a canonical (generalized) \emph{purity isomorphism} of the form:
\begin{equation}\label{eq:generalizedhomotopypurity2}
\htp_S(X/X-Z,v) \simeq \htp_S(Z,-i^{-1}v+i^{-1}\tau_{X/S})^\vee
 \simeq \cohtp_S(Z,i^{-1}\tau_{X/S}-i^{-1}v)
\end{equation}
Note that this isomorphism is natural in $X$ with respect to pullbacks,
 and in $Z$ with respect to inclusions $T \rightarrow Z$.
 It can also be checked that, whenever $Z$ is smooth over $S$, it coincides
 with the purity isomorphism of Morel and Voevodsky
 (see e.g. \Cref{thm:generalizedhomotopypurity}(2))
 composed with the inverse of the Poincar\'e duality isomorphism of \Cref{ex:duality}.


\subsection{Complements of stably contractible arrangements}
\label{subsection:smhtaioha}
To illustrate the preceding results, we determine the stable homotopy types of complements of normal crossing $S$-schemes with stably $\AA^1$-contractible components. 

\begin{num} \label{def:Stab-A1-cont-arrang}A \emph{stably $\AA^1$-contractible arrangement over $S$} is a closed $S$-pair $(X,Z)$ consisting of a smooth  stably $\AA^1$-contractible $S$-scheme $X$ and a closed subscheme $Z\subsetneq X$ with smooth crossing over $S$ that satisfies the following assumptions (see \Cref{num:normal_crossing}).
\begin{enumerate}
\item For any $J \subset I$,
every connected component of $Z_J$ is stably $\AA^1$-contractible over $S$.
\item For any 
$K \subsetneq J \subset I$, $Z_K$ is nowhere dense in $Z_J$.
\end{enumerate}
For a subset $J \subset I$, we set $n_J=\sharp J$, 
and for any generic point $x$ of $Z_J$ we let $c_x$ denote the codimension of $x$ in $X$.
\end{num}
\begin{ex} 
A basic example of a stably $\AA^1$-contractible arrangement consists of an arrangement of affine hyperplanes 
in affine space $\AA^d_S$ over $S$. 
\end{ex}

\begin{prop}
\label{prop:complementdivisor}
Let $S$ be a smooth stably $\AA^1$-contractible scheme over a field $k$ and let $(X,Z)$ be stably $\AA^1$-contractible arrangement over $S$. 
Then there exists a canonical isomorphism
$$
\Pi_S(X-Z) \simeq \bigoplus_{J \subset I, x \in Z_J^{(0)}} \un_S\big(c_x\big)\big[2c_x-n_J\big]
$$
In addition, 
if $Z$ is a normal crossing subscheme of $X$, then the isomorphism takes the form
$$
\Pi_S(X-Z) \simeq \bigoplus_{n=0}^d m(n)\un_S(n)[n]
$$
Here $d$ is the relative dimension of $X$ over $S$ and $m(n)$ denotes the sum of the number of 
connected components of all codimension $n$ subschemes $Z_J$ of $X$. 

\end{prop}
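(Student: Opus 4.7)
The plan is to combine \Cref{cor:snccorollary2} with \Cref{lm:stably_A1-contract_virtual_vb}, identify every term of the resulting augmented semi-cosimplicial diagram as a Tate-twisted sphere, show that all Gysin differentials vanish, and then compute the limit formally.

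First, since $X/S$ is smooth (hence h-smooth in a Nisnevich neighborhood of $Z$) and $Z$ has smooth crossings over $S$, \Cref{cor:snccorollary2} with $v=0$ expresses $\Pi_S(X-Z)$ as the limit of the augmented semi-cosimplicial diagram
\[
\Pi_S(X) \xrightarrow{\epsilon} \bigoplus_{i\in I}\Pi_S(Z_i,\twist{N_i}) \rightrightarrows \bigoplus_{|J|=2}\Pi_S(Z_J,\twist{N_J}) \rightrightarrows \cdots \rightrightarrows \Pi_S(Z_I,\twist{N_I})
\]
whose differentials are alternating sums of Gysin maps $\bar\nu_i^!$ and $(\nu_K^J)^!$. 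Decomposing $Z_J=\bigsqcup_{x\in Z_J^{(0)}}Z_{J,x}$ into connected components, each $Z_{J,x}$ is smooth and stably $\AA^1$-contractible over $S$, so \Cref{lm:stably_A1-contract_virtual_vb} gives $\twist{N_J|_{Z_{J,x}}}=f^*v_0$ for a virtual bundle $v_0$ of rank $c_x$ over $S$. Since $S$ is itself stably $\AA^1$-contractible over $k$ and virtual bundles over a field are determined by rank in Deligne's Picard category, $v_0\simeq \cO_S^{c_x}$. Combined with the second statement of \Cref{lm:stably_A1-contract_virtual_vb}, this yields $\Pi_S(Z_{J,x},\twist{N_J|_{Z_{J,x}}})\simeq \un_S(c_x)[2c_x]$, and similarly $\Pi_S(X)\simeq \un_S$.

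Second, I would verify that all Gysin differentials in the diagram vanish. For each closed immersion $\nu\colon Z_{K,y}\hookrightarrow Z_{J,x}$ appearing in the diagram, \Cref{lm:stably_A1-contract_virtual_vb} applied to the stably $\AA^1$-contractible $Z_{K,y}$ forces the normal bundle $N_{Z_{K,y}/Z_{J,x}}$ to be isomorphic in $\uK(Z_{K,y})$ to the trivial bundle of rank $c_y-c_x$; in particular, it contains $\cO_{Z_{K,y}}$ as a direct summand. By \cite[Corollary 3.1.8]{DJK} (recalled in \Cref{num:Euler_seq}) its Euler class vanishes. Under homotopy purity the Gysin map $\nu^!$ is identified with this Euler class after the Tate-sphere identifications of the previous step, and thus vanishes as a morphism $\un_S(c_x)[2c_x]\to \un_S(c_y)[2c_y]$. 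The same reasoning applies to the summands of the augmentation $\epsilon$, which correspond to Euler classes of the trivial-in-$\uK$ normal bundles $N_i$.

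With every differential null, the limit decomposes formally. The totalization of the semi-cosimplicial piece with vanishing structure maps equals $\bigoplus_{n\geq 0} A^n[-n]$, where $A^n=\bigoplus_{|J|=n+1,\,x\in Z_J^{(0)}}\un_S(c_x)[2c_x]$, and then
\[
\Pi_S(X-Z) \simeq \mathrm{fib}\bigl(0\colon \un_S \to \textstyle\bigoplus_{n\geq 0}A^n[-n]\bigr) \simeq \un_S\oplus \bigoplus_{J\neq\emptyset,\,x\in Z_J^{(0)}}\un_S(c_x)[2c_x-n_J],
\]
using $|J|=n+1\Leftrightarrow n_J=n+1$. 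Incorporating the $\un_S$ summand as the $J=\emptyset$ contribution (with $c_\emptyset=n_\emptyset=0$) recovers the first formula. The second formula is a direct specialization: normal crossing forces $c_x=n_J$ on every $Z_J^{(0)}$, so the first formula collapses to $\bigoplus_{n=0}^d m(n)\un_S(n)[n]$. The main obstacle is the purity identification of the Gysin differentials with Euler classes in the precise bigraded framework of $\T(S)$; once this bookkeeping is made rigorous, the remainder of the argument is formal.
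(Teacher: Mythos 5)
Your overall strategy --- express $\Pi_S(X-Z)$ via \Cref{cor:snccorollary2}, identify each term as a Tate sphere using \Cref{lm:stably_A1-contract_virtual_vb}, kill the differentials, and assemble the limit --- matches the paper's proof, and the first and last steps are handled correctly. However, the vanishing argument for the Gysin differentials contains a genuine gap.

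You claim that because $N_{Z_{K,y}/Z_{J,x}}$ is isomorphic in $\uK(Z_{K,y})$ to the trivial bundle of rank $c_y-c_x$, ``in particular, it contains $\cO_{Z_{K,y}}$ as a direct summand,'' and then invoke \cite[Corollary 3.1.8]{DJK} to conclude the Euler class vanishes. This inference is false: a vector bundle that is stably trivial (i.e., trivial in $K_0$, which is all \Cref{lm:stably_A1-contract_virtual_vb} gives) need not split off a trivial line bundle. The paper itself provides a counterexample in \Cref{rem:sftmotivicrealization}: on the affine quadric $5$-fold there is a rank-$2$, nontrivial, stably trivial bundle whose Euler class equals $\eta\neq 0$. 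So the appeal to \cite[Corollary 3.1.8]{DJK} is not available, and the crux of the vanishing step is unsupported. (Whether the Gysin map $(\nu_K^J)^!$ literally ``is'' an Euler class under the Tate-sphere identifications is a secondary concern; this identification is not needed.)

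The correct --- and considerably simpler --- argument, which is the paper's, bypasses bundles and Euler classes entirely: after the Tate-sphere identifications, each component of each differential is a morphism $\un_S(c_x)[2c_x]\to \un_S(c_y)[2c_y]$ with $c_y>c_x$ (by the ``nowhere dense'' hypothesis in \Cref{def:Stab-A1-cont-arrang}), hence an element of $\pi^{2r,r}(S)$ with $r=c_y-c_x>0$. Since $S$ is stably $\AA^1$-contractible over $k$, this group is isomorphic to $\pi^{2r,r}(k)$, which vanishes by Morel's $\AA^1$-connectivity theorem. No structure on the normal bundles beyond stable triviality (needed for the sphere identifications themselves) enters the argument; the differentials die because the entire mapping group is zero, not because a particular class in a possibly nonzero group happens to vanish.
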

\begin{proof}
According to \Cref{cor:snccorollary2}
one obtains that $\Pi_S(X-Z)$ is the homotopy limit of the augmented semi-simplicial diagram 
\begin{equation}
\label{equation:finitetower}
\Pi_S(X) \rightarrow  \bigoplus_{i \in I} \Pi_S(Z_i,N_i)
\rightrightarrows \cdots \rightrightarrows \bigoplus_{J \subset I, \sharp J=n} \Pi_S(Z_J,N_J) \rightrightarrows \cdots
\end{equation}
Let $x$ is a generic point of $Z_J$, for $J \subset I$, 
and write $Z_J(x)$ for the associated connected component.
By assumption, $Z_J(x)$ is smooth and stably $\AA^1$-contractible over $S$, hence over $k$. It follows from \Cref{lm:stably_A1-contract_virtual_vb} that the rank $c_x$ vector bundle $N_J|_{Z_J(x)}$ is stably trivial, 
and hence
$$
\Pi_S(Z_J,N_J) 
\simeq 
\bigoplus_x \Pi_S(Z_J(x),N_J|_{Z_J(x)}) 
\simeq 
\bigoplus_x \un_S(c_x)[2c_x]
$$
To deduce the first assertion, it suffices to show that the morphisms in \eqref{equation:finitetower} are zero.
Recall that these maps are sums of Gysin morphisms $(\nu_K^{J})^!$ for $J,K \subset I$, $K=J \cup\{k\}$,
$\nu_K^J:Z_K \rightarrow Z_J$. 
We are reduced to consider maps of the form
\begin{equation}
\label{equation:finitemap}
\un_S(c_x)[2c_x] \rightarrow \un_S(c_y)[2c_y]
\end{equation}
Here, 
$x$ (resp.~$y$) is a generic point of $Z_J$ (resp.~$Z_K$). 
Since $Z_K$ is nowhere dense in $Z_J$,
all such maps belong to some stable cohomotopy group $\pi^{2r,r}(S)$ for $r>0$.
The assumption that $S$ is stably $\AA^1$-contractible over $k$ implies $\pi^{2r,r}(S) \simeq \pi^{2r,r}(k)$.
Morel's $\AA^1$-connectivity theorem shows the latter group is trivial. 
It follows that the map \eqref{equation:finitemap} is zero.

For the second assertion, 
it suffices to note that if $Z$ is a normal crossing subscheme,
then for any $J \subset I$, $Z_J$ has pure codimension $n_J$ in $X$.
\end{proof}

Using \Cref{prop:weak-dual}(3), we obtain the following rigidity result.

\begin{cor}
\label{cor:complementdivisor_duality}
With the notation and assumptions of  \Cref{prop:complementdivisor}, 
$\Pi_S(X-Z)$ is rigid with dual
$$
\Pi_S^c(X-Z)(-d)[-2d]\simeq  
\bigoplus_{K \subset I, x \in Z_K^{(0)}} \un_S(-c_x)[-2c_x+n_K]
$$
\end{cor}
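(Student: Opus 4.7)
The first observation is that \Cref{prop:complementdivisor} exhibits $\Pi_S(X-Z)$ as a finite direct sum of shifted Tate twists $\un_S(c_x)[2c_x - n_K]$, each of which is $\otimes$-invertible (Thom spaces of virtual bundles are invertible by the stability property of $\T$) and hence rigid with dual $\un_S(-c_x)[-2c_x+n_K]$. Rigid objects are closed under finite direct sums in any symmetric monoidal stable $\infty$-category, and weak duality converts finite sums to finite sums, so $\Pi_S(X-Z)$ is rigid with
\begin{equation*}
\uHom(\Pi_S(X-Z), \un_S) \simeq \bigoplus_{K \subset I,\, x \in Z_K^{(0)}} \un_S(-c_x)[-2c_x+n_K].
\end{equation*}
What remains is to match this abstract dual with $\Pi_S^c(X-Z)(-d)[-2d]$.

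For the matching, let $f' \colon X-Z \hookrightarrow X \xrightarrow{f} S$ denote the composite structure morphism; it is smooth and in particular h-smooth of relative dimension $d$. By \Cref{prop:weak-dual}(3),
\begin{equation*}
\uHom(\Pi_S(X-Z), \un_S) \simeq \Pi_S^c(X-Z, -\tau_{f'}).
\end{equation*}
The plan is to reduce the twist $-\tau_{f'}$ to a pure Tate twist via two applications of \Cref{lm:stably_A1-contract_virtual_vb}. First, since $f \colon X \to S$ is stably $\AA^1$-contractible and $S$ is regular, the lemma furnishes a virtual vector bundle $v_0$ on $S$ with $\tau_{X/S} \simeq f^*v_0$, and restricting to $X-Z$ gives $\tau_{f'} \simeq (f')^*v_0$. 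Second, applying the lemma to the stably $\AA^1$-contractible $g \colon S \to \Spec k$ yields a virtual vector bundle $v_0'$ on $\Spec k$ of rank $d$ with $v_0 \simeq g^* v_0'$. Since $K_0(\Spec k)$ is $\ZZ$ classifying by rank, $v_0'$ is isomorphic in $\uK(\Spec k)$ to the trivial rank-$d$ virtual bundle, whence $\Th(v_0') \simeq \un_k(d)[2d]$, and pulling back produces $\Th(v_0) \simeq \un_S(d)[2d]$.

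The conclusion is then a formal application of the projection formula for $f'_*$ with the $\otimes$-invertible object $\Th(-v_0)$, combined with the exchange isomorphism $(f')^*\Th(-v_0) \simeq \Th(-(f')^*v_0)$ of \eqref{eq:exchange-iso}:
\begin{equation*}
\Pi_S^c(X-Z, -\tau_{f'}) \simeq \Th(-v_0) \otimes \Pi_S^c(X-Z) \simeq \Pi_S^c(X-Z)(-d)[-2d].
\end{equation*}
Comparing with the summand-wise computation of the dual in the first paragraph yields the stated isomorphism. The only mildly nontrivial ingredient is the double invocation of stable $\AA^1$-contractibility, which is precisely what absorbs the virtual tangent bundle of $f'$ into a pure Tate shift; once this reduction is in place, the result is a direct consequence of \Cref{prop:weak-dual} and \Cref{prop:complementdivisor}.
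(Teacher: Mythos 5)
Your proof is correct and follows the same route the paper intends (the paper's own proof is just the one-line remark ``Using \Cref{prop:weak-dual}(3), we obtain the following rigidity result''). Rigidity follows immediately from the explicit decomposition of $\Pi_S(X-Z)$ into shifted Tate twists in \Cref{prop:complementdivisor}, and the identification of the dual with $\Pi_S^c(X-Z)(-d)[-2d]$ requires exactly the twist reduction you carry out: \Cref{prop:weak-dual}(3) gives the dual as $\Pi_S^c(X-Z,-\tau_{f'})$, and your two applications of \Cref{lm:stably_A1-contract_virtual_vb} (to $X/S$, then to $S/\Spec k$) absorb $\tau_{f'}$ into the pure Tate twist $(d)[2d]$, after which the projection formula for $f'_*$ against the $\otimes$-invertible $\Th(-v_0)$ concludes. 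One marginal simplification: since a composition of smooth stably $\AA^1$-contractible morphisms is again stably $\AA^1$-contractible, one could apply \Cref{lm:stably_A1-contract_virtual_vb} once to $X/\Spec k$ instead of twice, but your version is equally correct.
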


\section{Punctured tubular neighborhoods and stable homotopy at infinity}
\label{section:shai}
\subsection{Punctured tubular neighborhoods}

\begin{df}\label{df:PuncturedTN}
Let $(X,Z)$ be a closed $S$-pair and let $v$ be a virtual vector bundle on $X$. The  \emph{punctured tubular $\T$-neighborhood} 
$\TN_S(X,Z,v)$ of $Z$ in $X$ relative to $S$ twisted by $v$ is the homotopy fiber in $\T(S)$ of the composite
$$
\beta_{X,Z}:\htp_S(Z,i^{-1}v) \xrightarrow{\nu_*} \htp_S(X,v) \xrightarrow{\nu^*} \htp_S(X/X-Z,v)
$$
Here the first map is induced by the immersion $i:Z\rightarrow X$, 
and the second one is defined in \Cref{df:quotient}. 
In the case of a trivial twist, we use the notation $\TN_S(X,Z)$.  
\end{df}


It is straightforward to verify that \(\TN_S(X,Z)\) is functorial for morphisms of closed pairs. Additionally, the functor \(\TN_S\) maps excisive morphisms to isomorphisms. Notably, the punctured tubular neighborhood depends solely on a Nisnevich neighborhood of \(Z\) in \(X\). In \Cref{cor:cdh-invariance} below, we will demonstrate an even more useful cdh-excision property. 

\begin{rem}
Our definition is motivated by the notion of the \emph{link} of a point on a hypersurface, as discussed by Brauner, Zariski, Milnor, and Mumford (see \cite{Milnor}, \cite{mumfordihes}). Following Mumford's work, we can interpret \(\beta_{X,Z}\) as a tubular neighborhood of \(Z\) in \(X\), and the homotopy cofiber corresponds to the pointed tubular neighborhood, drawing an analogy with the Gysin sequence (see the next example).

Extending this analogy, we can show that the complex realization of our definition, when \(Z\) is a point on a complex hypersurface in affine space, is indeed the link described above. This relationship will be clearly illustrated in our examples.
\end{rem}

\begin{ex}
\label{example:TN_toy_example}
Let $(V,X)$ be the closed $S$-pair corresponding to the zero section $s:X\rightarrow V$ of a vector bundle 
$V$ on a separated $S$-scheme $X$. 
Then, 
by definition, 
one obtains the homotopy exact sequence (see \ref{num:Euler_seq} for notation)
$$
\TN_S(V,X) \rightarrow \htp_S(X) \xrightarrow{e_S(V)} \Th_S(V) \label{eq:vb-homotopy-seq}
$$
In particular, 
$\TN_S(V,X)\simeq \htp_S(V^\times)$,
where $V^\times$ denotes the complement of the image of $s$. 
Hence $\TN_S(V,X)$ is the extension of $\htp_S(X)$ by $\Th_S(V)[-1]$ classified by the Euler class $e_S(V)$. 
The vanishing of $e_S(V)$ is, 
by definition, 
equivalent to the existence of a splitting 
$$ 
\TN_S(V,X) \simeq \htp_S(X) \oplus \Th_S(V)[-1]
$$
\end{ex}
\begin{rem}
\label{rem:sftmotivicrealization}
Assume that $S$ is the spectrum of a perfect field $k$ of characteristic exponent $p$. 
\Cref{example:TN_toy_example} implies that for the closed $S$-pair $(V,X)$ corresponding to the zero section 
$s:X\rightarrow V$ of a vector bundle $V$ of rank $r$ on a separated $S$-scheme $X$, 
$\TN_S(V,X)$ is a strictly finer invariant than its motivic realization. 
Indeed, the realization in $\DM(k)[1/p]$ of $\TN_S(V,X)$ is the extension of $M(X)$ by $M(X)(r)[2r-1]$ classified 
by the map $\tilde c_r(V): M(X) \to  M(X)(r)[2r-1]$ induced by multiplication with the top Chern class 
$c_r(V) \in \mathrm{CH}^r(X) \simeq  \Hom(M(X),\un(r)[2r])$. 
In particular, 
the sequence splits if $c_r(V)=0$.

However, the vanishing of the homotopy Euler class $e(V)$, 
which implies the vanishing of the Euler class in Chow-Witt groups, 
is a strictly stronger condition than the vanishing of the top Chern class $\tilde c_r(V)$. 
For the smooth affine quadric $5$-fold $X\colon x_1y_1+x_2y_2+x_3y_3=1$ in $\AA^{6}$, 
the kernel of the surjection $(x_1,x_2,x_3): k[Q]^{3}\rightarrow k[Q]$ defines a nontrivial and 
stably trivial vector bundle $V$ of rank $2$ on $X$. 
While $V$'s Chern classes are trivial, 
$V$'s Euler class in $\CHt^{2}(X)=\mathrm{K}_{-1}^{\mathrm{MW}}(k)$ equals $\eta$, 
see the case $n=2$ in \cite[Lemma 3.5]{zbMATH06349725}. 
\end{rem}

\Cref{example:TN_toy_example} admits the following generalization.
\begin{prop}
\label{prop:TN_weakly_smooth}
Let $(X,Z)$ be a weakly h-smooth closed $S$-pair (see \Cref{df:closedSpairs}) with normal bundle $N_{Z/X}$.
Then, there exists a homotopy exact sequence
$$
\TN_S(X,Z) \longrightarrow \htp_S(Z) \xrightarrow{e_S(N_{Z/X})} \Th_S(N_{Z/X})
$$
In other words, 
$\TN_S(X,Z)\simeq\htp_S(N_{Z/X}^\times)$. 
Moreover, if the Euler class of $N_{Z/X}$ vanishes, then 
$$
\TN_S(X,Z)\simeq \htp_S(Z) \oplus \Th_S(N_{Z/X})[-1]
$$
\end{prop}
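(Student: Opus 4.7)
The plan is to reduce to the h-smooth case via Nisnevich excision, invoke the homotopy purity theorem to compute $\htp_S(X/X-Z)$, and then identify the resulting boundary map with the Euler class of the normal bundle. For the reduction, I observe that the functor $(X,Z) \mapsto \TN_S(X,Z)$ carries Nisnevich-excisive morphisms of closed $S$-pairs to isomorphisms: each of $\htp_S(Z)$, $\htp_S(X)$, and $\htp_S(X/X-Z)$ enjoys Nisnevich excision---the last by the localization property of $\T$ (see \cite[3.3.4]{CD3})---and $\TN_S$ is assembled as a homotopy fiber from these. By weak h-smoothness (\Cref{df:closedSpairs}), I may therefore shrink $X$ to a suitable Nisnevich neighborhood of $Z$ and assume that both $f\colon X\to S$ and $p\colon Z\to S$ are h-smooth. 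Under this reduction, \Cref{thm:generalizedhomotopypurity}(2) with $v=0$ yields a canonical purity isomorphism $\htp_S(X/X-Z) \simeq \htp_S(Z,\twist{N_{Z/X}})$, and since $p$ is h-smooth with virtual tangent bundle $\tau_p$, the fundamental-class isomorphism $\mathfrak p_p\colon\Th(\tau_p)\xrightarrow{\simeq} p^!(\un_S)$ combined with the projection formula identifies the right-hand side with $\Th_S(N_{Z/X})$.

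The core step is to match the resulting composite
$$\htp_S(Z) \xrightarrow{i_*} \htp_S(X) \longrightarrow \htp_S(X/X-Z) \xrightarrow{\simeq} \Th_S(N_{Z/X})$$
with the Euler class $e_S(N_{Z/X})$. For this I would appeal to the naturality of the purity isomorphism---and more fundamentally of the fundamental classes of \cite{DJK} underlying it---with respect to the deformation-to-the-normal-cone specialization from the pair $(X,Z)$ to the pair $(N_{Z/X},Z)$. Under this zig-zag, the composite above reduces to the analogous map for the toy pair $(N_{Z/X},Z)$ associated to the zero section $s\colon Z\to N_{Z/X}$, which by \Cref{num:Euler_seq} and \Cref{example:TN_toy_example} is precisely $s^!=e_S(N_{Z/X})$.

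Granting this identification, the homotopy exact sequence in the statement coincides with the one of \Cref{example:TN_toy_example} applied to $(N_{Z/X},Z)$, yielding $\TN_S(X,Z) \simeq \htp_S(N_{Z/X}^\times)$. The final assertion is then immediate: a null-homotopic map between stable objects has homotopy fiber canonically splitting as the direct sum of the source with the $[-1]$-shifted target, so vanishing of $e_S(N_{Z/X})$ gives $\TN_S(X,Z)\simeq \htp_S(Z)\oplus \Th_S(N_{Z/X})[-1]$. The main obstacle is the core step above: matching the boundary map with the Euler class requires carefully unwinding the deformation-to-the-normal-cone construction underlying the purity isomorphism and checking its compatibility with the definition of $e_S(N_{Z/X})$ via the refined Gysin map for the zero section.
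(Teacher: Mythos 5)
Your proof is correct and aligns with the paper's strategy in its reduction (Nisnevich excision to both $X$, $Z$ h-smooth) and its invocation of purity (\Cref{thm:generalizedhomotopypurity}). You diverge in identifying the composite with the Euler class: you propose transporting the computation through deformation to the normal cone to the toy pair $(N_{Z/X},Z)$, where the boundary map is $s^!=e_S(N_{Z/X})$ by \Cref{num:Euler_seq}. The paper instead draws a commutative diagram factoring $\beta_{X,Z}$ (post-composed with the purity isomorphism) as $i^!\circ i_*$---the left-hand triangle is definitional, and identifying the other composite with the Gysin map $i^!$ rests on the associativity of fundamental classes in \cite[Theorem 3.3.2]{DJK}---and then cites the excess intersection formula \cite[Proposition 3.3.4]{DJK} to conclude $i^!\circ i_*=e(N_{Z/X})$, since the excess bundle of the self-intersection square is exactly $N_{Z/X}$. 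The two routes unpack the same DJK machinery at different depths: the excess intersection formula is itself established via deformation to the normal cone, so the paper's argument is yours packaged more abstractly. Your version is more elementary but more laborious, and---as you rightly flag---it needs care: the middle term $\htp_S(X)$ is \emph{not} identified with $\htp_S(N_{Z/X})$ by the deformation (only the quotients and the copies of $\htp_S(Z)$ are), so one must check that the full composite, not just its target, tracks correctly through the zig-zag. Black-boxing the excess intersection formula is precisely what lets the paper sidestep that bookkeeping.
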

\begin{proof}
One can assume that $X$ and $Z$ are h-smooth over $S$ by excision.
By appealing to the purity isomorphism of \Cref{thm:generalizedhomotopypurity}, 
one deduces the commutative diagram
$$
\xymatrix{
\htp_S(Z)\ar_{i_*}[rd]\ar^{\beta_{X,Z}}[rr] & & \htp_S(X/X-Z)\ar^\simeq[r]\ar@{}|/-1pt/{(1)}[d] & \htp_S(Z,N_{Z/X}) \\
& \htp_S(X)\ar@/_10pt/_{i^!}[rru]\ar[ru] & &
}
$$
Indeed, the commutativity of part (1) follows from the definitions of the Gysin map, 
the purity isomorphism, 
and the associativity formula for fundamental classes in \cite[Theorem 3.3.2]{DJK}.
Then, 
the homotopy exact sequence follows from the excess intersection formula of \cite[Proposition 3.3.4]{DJK}.
The remaining assertions follow as in the previous example.
\end{proof}

The following result presents a motivic version of a classical computation of topological punctured tubular neighborhoods, which arises from the octahedron axiom.

\begin{prop}
\label{prop:octahedron_TN}
Let $(X,Z)$ be a closed $S$-pair and let $v$ be a virtual vector bundle on $X$. 
Then, 
the columns and rows of the following diagram are homotopy exact
\begin{equation}
\label{eq:ocatedron}
\xymatrix@R=20pt@C=30pt
{
0\ar[r]\ar[d] & \htp_{S}(X-Z,j^{-1}v)\ar^{j_*}[d] \ar@{=}[r]\ar@{}|{(1)}[rd] & \htp_{S}(X-Z,j^{-1}v)\ar^{\alpha_{X,Z}}[d] \\
\htp_{S}(Z,i^{-1}v)\ar^{i_*}[r]\ar@{=}[d]\ar@{}|{(2)}[rd] & \htp_{S}(X,v)\ar[d]\ar[r] & \htp_{S}(X/Z,v)\ar[d] \\
\htp_{S}(Z,i^{-1}v)\ar^-{\beta_{X,Z}}[r] & \htp_{S}(X/X-Z,v)\ar[r] & \TN_S(X,Z,v)[1]
}
\end{equation}
\end{prop}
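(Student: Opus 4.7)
The plan is to interpret \eqref{eq:ocatedron} as a $3\times 3$ diagram of cofiber sequences in the stable $\infty$-category $\mathscr T(S)$, and then read off the claimed exactness from the $3\times 3$ lemma (equivalently, two applications of Verdier's octahedron). I begin by defining the map $\alpha_{X,Z}$ as the composite
$$
\htp_S(X-Z,j^{-1}v) \xrightarrow{j_*} \htp_S(X,v) \longrightarrow \htp_S(X/Z,v),
$$
in parallel with the definition of $\beta_{X,Z}$ given in \Cref{df:PuncturedTN}. With these choices the squares $(1)$ and $(2)$ commute tautologically, and the upper-left $2\times 2$ subsquare commutes trivially since every path through $0$ vanishes. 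Thus the whole diagram \eqref{eq:ocatedron} is a commutative diagram in $\mathscr T(S)$.

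Next I observe that the middle column is a cofiber sequence by the definition $\htp_S(X/X-Z,v) := \mathrm{cofib}(j_*)$ from \Cref{df:quotient}, and the middle row is a cofiber sequence by $\htp_S(X/Z,v) := \mathrm{cofib}(i_*)$ from the same definition. The top row and the left column are trivial cofiber sequences of the shape $0 \to A \xrightarrow{=} A$. I then invoke the $3\times 3$ lemma in the stable $\infty$-category $\mathscr T(S)$: starting from a commutative $2\times 2$ square (here the upper-left subsquare of \eqref{eq:ocatedron}), extending by iterated cofibers in both directions yields a $3\times 3$ diagram in which every row and every column is a cofiber sequence, and whose bottom-right corner is the canonical total cofiber $Q$. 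Reading off the bottom row and the right column identifies them as cofiber sequences terminating in the same object $Q$; by the explicit description of the $3\times 3$ extension, the first map of the bottom row is the composite of $i_*$ with the projection to $\htp_S(X/X-Z,v)$, namely $\beta_{X,Z}$, while the first map of the right column is the composite of $j_*$ with the projection to $\htp_S(X/Z,v)$, namely $\alpha_{X,Z}$.

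Finally, I identify $Q$ with $\TN_S(X,Z,v)[1]$. Since $\mathscr T(S)$ is stable and $\TN_S(X,Z,v)$ is defined in \Cref{df:PuncturedTN} as the fiber of $\beta_{X,Z}$, its cofiber is $\TN_S(X,Z,v)[1]$. Comparing with the bottom row of the $3\times 3$ diagram gives $Q \simeq \TN_S(X,Z,v)[1]$, and the right column inherits the same identification. There is no real obstacle to this argument: it is essentially formal, resting on the stability of $\mathscr T(S)$, the tautological definitions of the quotient objects $\htp_S(X/Z,v)$ and $\htp_S(X/X-Z,v)$, and the standard $3\times 3$ lemma.
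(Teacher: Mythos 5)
Your proposal is correct and takes essentially the same route as the paper: read the middle row and middle column as cofiber sequences straight from \Cref{df:quotient}, note the tautological commutativity of squares $(1)$ and $(2)$, and complete the $3\times 3$ diagram in the stable $\infty$-category $\mathscr T(S)$ (the paper phrases this as ``the octahedron axiom'' at the lower-right corner, which is the same content). Your explicit identification of the total cofiber $Q$ with $\TN_S(X,Z,v)[1]$ via \Cref{df:PuncturedTN} is exactly what the paper's invocation of the octahedron is doing.
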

\begin{proof}
Indeed, 
the middle column (resp.~row) follows from \Cref{df:quotient}, 
the commutativity of (1) follows from the definition,
and that of (2) from the definition of $\beta_{X,Z}$. 
The lower-right corner of the diagram is just the formulation of the octahedron axiom.
\end{proof}

\begin{rem}
In more classical terms for cohomology with coefficients in a ring spectrum $\E$,
one obtains long exact sequences involving the punctured tubular neighborhood
\begin{align*}
\hdots \rightarrow \E^{n,i}_Z(X) \rightarrow \E^{n,i}(Z) \rightarrow &\E^{n,i}(\TN_S(X,Z)) \rightarrow 
\E^{n+1,i}_Z(X) \rightarrow \hdots \\
\hdots  \rightarrow \E^{n,i}(X,Z) \rightarrow \E^{n,i}(X-Z) \rightarrow &\E^{n,i}(\TN_S(X,Z)) \rightarrow 
\E^{n+1,i}(X,Z) \rightarrow \hdots 
\end{align*}
Here $\E^{**}_Z(X)$ (resp.~$\E^{**}(X,Z)$) is the cohomology with support (resp.~relative cohomology).
\end{rem}

One gets the following practical way of computing punctured tubular neighborhoods by using resolution of singularities:

\begin{cor}
\label{cor:cdh-invariance}
Let $f:(Y,T) \rightarrow (X,Z)$ be a \cdh-excisive morphism of closed $S$-pairs 
and let $v$ be a virtual vector bundle on $X$.  
Then, 
the induced map
$$
\TN_S(Y,T,f^{-1}v) \rightarrow \TN_S(X,Z,v)
$$
is an equivalence.
\end{cor}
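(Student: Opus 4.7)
The strategy is to translate the problem, via the octahedron diagram of \Cref{prop:octahedron_TN}, into two separate equivalences. The rightmost column of that diagram provides, for every closed $S$-pair $(X,Z)$ and virtual bundle $v$, a canonical cofiber sequence
$$
\Pi_S(X-Z,j^{-1}v)\xrightarrow{\alpha_{X,Z}}\Pi_S(X/Z,v)\longrightarrow\TN_S(X,Z,v)[1],
$$
and analogously for $(Y,T)$. The pair morphism $f$ induces a morphism between these two cofiber sequences. By the two-out-of-three property, it suffices to show that the induced maps on the first two terms are equivalences in $\T(S)$.

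The first equivalence is immediate from the definition of a cdh-distinguished square: $f$ restricts to an isomorphism $Y-T\xrightarrow{\simeq}X-Z$, and therefore $\Pi_S(Y-T,f^{-1}v)\xrightarrow{\simeq}\Pi_S(X-Z,j^{-1}v)$.

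The second equivalence $\Pi_S(Y/T,f^{-1}v)\xrightarrow{\simeq}\Pi_S(X/Z,v)$ is the heart of the matter: it is a cdh-descent statement for $\Pi_S$. Setting $\mathbb{E}_X:=\Th(v)\otimes(f_X)^!\un_S$, the plan is to use the localization triangle $i_!i^!\to\mathrm{Id}\to j_*j^*$ applied to $\mathbb{E}_X$ and pushed down by $(f_X)_!$ to identify $\Pi_S(X/Z,v)\simeq(f_X)_!j_*j^*\mathbb{E}_X$. After using $\otimes$-invertibility of $\Th(v)$ to identify $\mathbb{E}_Y\simeq f^!\mathbb{E}_X$, one similarly gets $\Pi_S(Y/T,f^{-1}v)\simeq(f_X)_!f_!(j_Y)_*(j_Y)^*f^!\mathbb{E}_X$. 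Properness of $f$ (so $f_!=f_*$), the equality $f\circ j_Y=j\circ f|_{Y-T}$, and the base-change identity $(j_Y)^*f^!\simeq(f|_{Y-T})^!j^*$ for the open immersion $j_Y$ then rewrite this as $(f_X)_!j_*(f|_{Y-T})_*(f|_{Y-T})^!j^*\mathbb{E}_X$, and the cdh-excisiveness of $f$ collapses the unit $(f|_{Y-T})_*(f|_{Y-T})^!\simeq\mathrm{Id}$, producing $(f_X)_!j_*j^*\mathbb{E}_X=\Pi_S(X/Z,v)$.

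The main technical obstacle is assembling the exchange isomorphisms between the six functors coherently in the $\infty$-categorical setting while carrying the twist $v$ along; however, all the required ingredients---proper base change along $f$, localization for the closed pairs, and $\otimes$-invertibility of Thom spaces---are axiomatic features of the motivic $\infty$-category $\T$ and have already been used freely throughout the paper.
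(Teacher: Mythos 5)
Your overall strategy is the same as the paper's: both proofs reduce to the octahedron diagram of \Cref{prop:octahedron_TN} and the two-out-of-three property, with the middle map handled trivially by the isomorphism $Y-T\xrightarrow{\simeq}X-Z$. The difference is in the treatment of the right vertical map $\Pi_S(Y/T,f^{-1}v)\to\Pi_S(X/Z,v)$. The paper dispatches it in one line by citing the $\cdh$-descent property of $\T$ (\cite[3.3.10]{CD3}), which is precisely the statement that for a $\cdh$-distinguished square the induced square of pre-motives is cocartesian. You instead re-derive this instance of descent directly from the six-functor axioms: identifying $\Pi_S(X/Z,v)\simeq(f_X)_!j_*j^*\mathbb{E}_X$ via the localization triangle, commuting the pieces through the cartesian square of open immersions, and collapsing $(f|_{Y-T})_*(f|_{Y-T})^!$ because $f|_{Y-T}$ is an isomorphism. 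Your chain of identifications is correct, and the required properness of $f$ is indeed part of the definition of a $\cdh$-distinguished square. Two small remarks: the map you collapse is the \emph{counit} of $((f|_{Y-T})_!,(f|_{Y-T})^!)$ rather than the unit (harmless, since both are invertible for an isomorphism); and, as you yourself flag, one should verify that your composite of exchange isomorphisms agrees with the map induced by $f$ on cofibers — this is a routine compatibility of counits with composition, exactly what the appeal to \cite[3.3.10]{CD3} packages away. The upshot is that your argument is self-contained where the paper defers to a reference, at the cost of some extra diagram bookkeeping.
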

\begin{proof}
Indeed, 
according to \Cref{prop:octahedron_TN}, 
one obtains a commutative diagram whose rows are homotopy exact sequences
$$
\xymatrix@R=10pt@C=30pt{
\TN_S(Y,T,f^{-1}v)\ar[r]\ar[d] & \htp_{S}(Y-T,f^{-1}(v)|_{Y-T})\ar[r]\ar[d]
& \htp_{S}(Y/T,f^{-1}v)\ar[d] \\
\TN_S(X,Z,v)\ar[r] & \htp_{S}(X-Z,v|_{X-Z})\ar[r] & \htp_{S}(X/Z,v)
}
$$
By assumption, the middle vertical map, induced by the restriction of $f$,
 is an equivalence. Moreover, the right-most vertical map is an equivalence
 according to the \cdh-descent property of $\T$ (see \cite[3.3.10]{CD3}).
\end{proof}


In particular, one can use any suitable resolution of singularities of a pair $(X,Z)$ to compute the punctured tubular neighborhood of $(X,Z)$.
 More precisely, if we can find a cdh-excisive morphism $(Y,T) \rightarrow (X,Z)$ such that $(Y,T)$ is smooth over the base $S$, then applying \Cref{prop:TN_weakly_smooth} and \Cref{cor:cdh-invariance}, we get 
 $\TN_S(X,Z) \simeq \htp_S(N_{T/Y}^\times)$.
We obtain several examples from singularity theory in this way --- 
$S$ can be any base, the spectrum of a field $k$ or even of $\ZZ$.
\vspace{0.1in}

\begin{ex}

Let $\mathbb{P}=\mathbb{P}^1_S$ be the projective line and $\cO(-1)=\mathbb{V}(\mathcal{O}_{\mathbb{P}}(1))$ be its tautological line bundle.
Consider the relative quadratic cone $X=V(xy-z^2)$ in $\AA^3_S$.
 Then, by blowing-up the ordinary double point at the origin $o_S$, one gets a resolution $Y \rightarrow X$ whose exceptional divisor is $\mathbb{P}$,
 with normal bundle $\cO(-2)=\cO(-1)^{\otimes 2}$.
 Therefore, we have $$\TN_S(V(xy-z^2),0_S) \simeq \htp_S(\cO(-2)^\times)$$ 
 For $S=\mathrm{Spec}(\mathbb{C})$, the underlying topological manifold of the complex realization of $\cO(-2)^{\times}$ is homotopy equivalent to the total space of unit tangent bundle $\mathrm{U}TS^2$ of the sphere $S^2=\mathbb{CP}^1$. As a topological manifold, $\mathrm{U}TS^2$ is homeomorphic to $\mathbb{RP}^3\cong \mathrm{SO}(3)$. Our computation thus recovers the stable homotopy type of the link of the germ of complex of hypersurface singularity $$(V=\{u^2+v^2-z^2=0\},0)\subset (\mathbb{C}^3,0)$$ defined in \cite[Chapter 2]{Milnor} as the intersection of $V$ 
 with a real $5$-sphere $S^5_{\varepsilon}\subset \mathbb{C}^3=\mathbb{R}^6$ of sufficiently small radius 
 $\varepsilon>0$ centered at origin.
Our computation also accounts for the real case: the underlying topological manifold of the real realization of $\cO(-2)^{\times}$ is homotopy equivalent to the unit tangent bundle of the circle $S^1=\mathbb{RP}^1$, hence to two disjoint copies of $S^1$. The latter equals the link of the real germ of isolated singularity  $(V=\{u^2-v^2-z^2=0\},0)\subset (\mathbb{R}^3,0)$. 
\end{ex}

\begin{ex}
\label{ex:doublepointon3fold}
Next we consider an ordinary double point in a 3-fold: say $X=V(xt-yz)$ in $\AA^4_S$, which is singular at the origin $o_S$. A resolution of the singularity is given by the blow-up $\tilde{X}\to X$ of $o_S$ with exceptional divisor $\mathbb{P} \times \mathbb{P}$, whose normal bundle is  $\cO_{\mathbb{P} \times \mathbb{P}}(-1,-1)=\mathrm{p_1}^*\cO(-1)\otimes \mathrm{p_2}^*\cO(-1)$. Another resolution $X^{-}\to X$ is given the blow-up of $X$ with center at the the Weil non-Cartier divisor $V(x,y)$. The exceptional locus of $X^{-}\to X$ is isomorphic to $\mathbb{P}$ and its normal bundle in $X^{-}$ is equal to $\cO_{\mathbb{P}}(-1)\oplus \cO_{\mathbb{P}}(-1)$. This yields two models of the punctured tubular neighborhood
$$
\TN_S(V(xt-yz),o_S) \simeq \htp_S([\cO_{\mathbb{P} \times \mathbb{P}}(-1,-1)]^\times) \simeq \htp_S([\cO_{\mathbb{P}}(-1) \oplus \cO_{\mathbb{P}}(-1)]^\times)
$$
The $S$-schemes  $[\cO_{\mathbb{P} \times \mathbb{P}}(-1,-1)]^\times$ and $[\cO_{\mathbb{P}}(-1) \oplus \cO_{\mathbb{P}}(-1)]^\times$ are actually both isomorphic to $V-\{o_S\}$. For $S=\mathrm{Spec}(\mathbb{C})$ the underlying topological manifolds of the complex realizations of these schemes are homotopy equivalent to the $S^1$-bundle over $S^2\times S^2$ with Euler class $(1,1)\in H^2(S^2\times S^2,\mathbb{Z})\cong \mathbb{Z}^2$ and to the trivial $S^3$-bundle over $S^2$, respectively. Again, our descriptions recover the (stable) homotopy of the link of the germ  of complex of hypersurface singularity $$(V=\{x_1^2+x_2^2+x_3^2+x_4^2=0\},0)\subset (\mathbb{C}^4,0),$$ this link being homotopy equivalent the unit tangent bundle $\mathrm{U}TS^3 \cong S^2\times S^3$.  

\end{ex}

\begin{rem}
The reader will find in \Cref{thm:maincomputation} a way of computing punctured tubular neighborhoods
 when dealing with resolution of singularities whose exceptional locus is snc.
 This was our main motivation for \Cref{section:NCboundary}.
\end{rem}

\begin{num} 
One can further interpret \Cref{prop:octahedron_TN} in terms of the six functors formalism. For the closed $S$-pair $(X,Z)$, consider the commutative diagram
$$
\xymatrix@R=8pt@C=40pt{
Z\ar@{^(->}^i[r]\ar_p[rd] & X\ar^{f}[d] & X-Z\ar@{_(->}_/4pt/j[l]\ar^q[ld] \\
& S &
}
$$
of \eqref{eq:loc-notation}. 
By combining the two localization triangles one gets, 
as a functorial enhancement of \eqref{eq:ocatedron}, 
the following commutative diagram of natural transformations of $\T(X)$
\begin{equation}
\label{eq:functorial_tub_Neighb}
\begin{split}
\xymatrix@R=16pt@C=24pt
{
 0\ar[r]\ar[d] & j_!j^!\ar_{ad'_{j_!j^!}}[d]\ar@{=}[r] & j_!j^!\ar^{\alpha_j}[d] \\
 i_!i^!\ar[r]\ar@{=}[d] & Id\ar[d]\ar^-{ad_{j^*,j_*}}[r] & j_*j^*\ar[d] \\
 i_!i^!\ar^{\beta_i}[r] & i_*i^*\ar[r] & i_*i^*j_*j^*
}
\end{split}
\end{equation}
Each arrow in \eqref{eq:functorial_tub_Neighb} is a unit or counit for one of the adjunctions 
$(k^*,k_*)$ or $(k_!,k^!)$, $k=i,j$.
The second and third rows (resp.~columns) are localization triangles, 
expressed in terms of natural transformations.
In particular, 
each row and column of \eqref{eq:functorial_tub_Neighb} is exact homotopy; 
specifically, it gives rise to a homotopy exact sequence in \(\T(X)\) when evaluated at any object. 
 
Note, moreover, that $\alpha_j$ is given by the map $j_! \rightarrow j_*$  "forgetting the support." 
The map $\beta_i$ corresponds to the natural transformation $\beta_i:i^! \rightarrow i^*$, 
which is specific to the case of (closed) 
immersions.\footnote{It can also be derived from the exchange transformation $i^!Id^* \rightarrow i^*Id^!$.}
Finally, using the identification of functors 
$ i^!j_! = i^*j_*[-1]$
obtained by applying the localization triangles (middle row of the previous diagram) 
and post-composing with $j_!j^!$, yields the homotopy exact sequence
$$
i_!i^!j_!j^! \rightarrow j_!j^! \rightarrow j_*j^*j_!j^!=j_*j^*
$$
Since the last arrow identifies with $\alpha_j$, one gets $i_!i^!j_!j^!=i_*i^*j_*j^*$, 
which gives the result since $i_*=i_!$ (resp.~$j^*=j^!$) is right invertible.

We thus obtain the following expression for the punctured tubular neighborhood.
\end{num}
\begin{prop}
There is a canonical equivalence
$$
\TN_S(X,Z) 
\simeq 
p_!i^!j_!q^!(\un_S)
=
p_!i^*j_*q^!(\un_S)[-1]
$$
\end{prop}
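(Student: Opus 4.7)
The plan is to read off the statement directly from the functorial diagram \eqref{eq:functorial_tub_Neighb} and the discussion immediately preceding the proposition; no genuinely new input is needed. The key observation is that the bottom row of \eqref{eq:functorial_tub_Neighb},
$$
i_!i^! \xrightarrow{\beta_i} i_*i^* \longrightarrow i_*i^*j_*j^*,
$$
is an exact triangle of natural endofunctors of $\T(X)$.

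First, I evaluate this triangle at the object $f^!(\un_S) \in \T(X)$ and apply $f_!$. Using $f \circ i = p$, the left-hand term becomes $f_!i_!i^!f^!(\un_S) = p_!p^!(\un_S) = \htp_S(Z)$. For the middle term, inserting $f^!(\un_S)$ in the localization triangle $j_!j^! \to \mathrm{Id} \to i_*i^*$ and applying $f_!$ identifies $f_!i_*i^*f^!(\un_S)$ with the cofiber of $\htp_S(X-Z)\to \htp_S(X)$, i.e., with $\htp_S(X/X-Z)$; the induced map is, by construction, $\beta_{X,Z}$. For the right-hand term, I use $i_*=i_!$, $j^*=j^!$, and $q = f\circ j$, which give
$$
f_!\bigl(i_*i^*j_*j^*\,f^!(\un_S)\bigr) = p_!\,i^*j_*\,q^!(\un_S).
$$
Combining these, $f_!$ of the bottom row of \eqref{eq:functorial_tub_Neighb} evaluated at $f^!(\un_S)$ is the exact triangle
$$
\htp_S(Z) \xrightarrow{\beta_{X,Z}} \htp_S(X/X-Z) \longrightarrow p_!\,i^*j_*\,q^!(\un_S).
$$
But by \Cref{prop:octahedron_TN}, the third term of this exact triangle is canonically $\TN_S(X,Z)[1]$. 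Desuspending gives the second displayed isomorphism $\TN_S(X,Z) \simeq p_!\,i^*j_*\,q^!(\un_S)[-1]$.

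For the first isomorphism $\TN_S(X,Z) \simeq p_!\,i^!j_!\,q^!(\un_S)$, I invoke the classical identification $i^!j_! \simeq i^*j_*[-1]$ recorded in the paragraph immediately preceding the proposition. Applying $p_!$ on the left and composing with $q^!(\un_S)$ on the right yields
$$
p_!\,i^!j_!\,q^!(\un_S) \simeq p_!\,i^*j_*\,q^!(\un_S)[-1] \simeq \TN_S(X,Z).
$$
The only delicate point, which is not really an obstacle, is bookkeeping of the shift: one must check that the shift in $\TN_S(X,Z)[1]$ coming from \Cref{prop:octahedron_TN} matches the shift in $i^!j_![1] \simeq i^*j_*$, which is automatic once one fixes the conventions of the two rotated localization triangles used to produce \eqref{eq:functorial_tub_Neighb}.
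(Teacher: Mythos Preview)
Your proposal is correct and follows essentially the same approach as the paper: the proposition is stated immediately after the discussion of diagram \eqref{eq:functorial_tub_Neighb} and the identification $i^!j_! \simeq i^*j_*[-1]$, with the paper simply asserting ``We thus obtain the following expression''. You have made explicit precisely the argument the paper leaves implicit---evaluating the bottom row of \eqref{eq:functorial_tub_Neighb} at $f^!(\un_S)$, applying $f_!$, and matching the resulting triangle with the one in \Cref{prop:octahedron_TN}.
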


This relation explains the close connection between punctured tubular neighborhoods and nearby cycles. 
In this line of thought, we extend  \cite[Theorem 5.1]{Wild1} and \cite[1.4.6]{Deg12} to our context.

\begin{thm}
\label{thm:analytic}
Let $S$ be an excellent scheme, and let $(X,Z)$, $(Y,T)$ be closed $S$-pairs.
Assume that there exists an isomorphism $f:T \rightarrow Z$, 
which extends to an isomorphism of the respective formal completions
$\mathfrak f:\hat Y_T \rightarrow \hat X_Z$. Then, there exists a canonical equivalence
$$
\mathfrak f^*:\TN_S(Y,T) 
\xrightarrow{\simeq} 
\TN_S(X,Z)
$$
which is compatible with composition in $\mathfrak f$.
\end{thm}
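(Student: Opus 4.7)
The approach is to show that $\TN_S(X,Z)$ depends only on the formal completion $\hat X_Z$ viewed as a formal $S$-scheme together with its underlying closed subscheme $Z$. Granting this, the formal isomorphism $\mathfrak f$ tautologically produces the canonical isomorphism $\mathfrak f^*$, and the compatibility with composition is a formal consequence of functoriality. Informally, this is the motivic analog of the invariance statements for boundary motives (\cite[Theorem 5.1]{Wild1}) and for nearby cycles in \cite[1.4.6]{Deg12}, and the overall structure of the argument mirrors theirs.

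Starting from the formula $\TN_S(X,Z) \simeq p_!i^*j_*q^!(\un_S)[-1]$ of the preceding proposition, combined with the cdh-excision result in \Cref{cor:cdh-invariance}, $\TN_S(X,Z)$ is invariant under replacing $X$ by any étale neighborhood of $Z$ in $X$. The next step is to extend this invariance to the limit of all such neighborhoods by invoking the continuity of the motivic $\infty$-category $\T$ along affine étale transition maps; this is satisfied by $\SH$ thanks to \cite[Appendix C]{hoyoisquadratic}, and more generally by the motivic $\infty$-categories listed in \Cref{sec:notations} (cf.~\cite[Appendix A]{DFJK}). The cofiltered limit of étale neighborhoods of $Z$ in $X$ is the henselization $X^h_Z$, so $\TN_S(X,Z)$ is a well-defined invariant of the henselization viewed as an $S$-scheme with distinguished closed subscheme $Z$.

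The final ingredient is Artin approximation for excellent schemes. Since $S$ is excellent, both $X$ and $Y$ (being of finite type over $S$) are excellent. A classical consequence of Artin's approximation theorem is then that the formal isomorphism $\mathfrak f:\hat Y_T \xrightarrow{\simeq} \hat X_Z$ extending $f$ canonically determines an isomorphism of henselizations $Y^h_T \xrightarrow{\simeq} X^h_Z$ as $S$-schemes over $f$. Combined with the identification of the previous paragraph, this produces the canonical isomorphism $\mathfrak f^*$, and its compatibility with composition follows directly from the naturality of each step.

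The principal obstacle is twofold. First, one must carefully justify the continuity statement: each of the six operations $p_!, i^*, j_*, q^!$ appearing in the formula for $\TN_S$ must be verified to be compatible with the cofiltered limit defining $X^h_Z$ along affine étale transition maps. For $\SH$ this is documented in the references above, but transferring the argument to a general motivic $\infty$-category $\T$ requires some bookkeeping about which operations commute with such limits. Second, one has to apply Artin approximation in the right form to produce a canonical, functorial assignment of henselization isomorphisms from formal ones, rather than merely an existence statement; this canonicity is essential both for the well-definedness of $\mathfrak f^*$ and for its behavior under composition.
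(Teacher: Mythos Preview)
Your approach has a genuine gap at its final step. The assertion that Artin approximation turns a formal isomorphism $\mathfrak f:\hat Y_T \xrightarrow{\simeq} \hat X_Z$ into a \emph{canonical} isomorphism of henselizations $Y^h_T \xrightarrow{\simeq} X^h_Z$ is false. Already for $X=Y=\AA^1_k$, $Z=T=\{0\}$, there are formal automorphisms of $k[[t]]$ (for instance $t\mapsto t+\sum_{n\geq 2} a_n t^n$ with transcendental coefficients) that are not algebraic and hence do not arise from any automorphism of the henselization. Artin approximation only guarantees that $\mathfrak f$ can be \emph{approximated} to any finite order by an \'etale-local isomorphism; it does not produce a unique or canonical one. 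You correctly flag canonicity as an obstacle at the end, but the body of the argument asserts exactly the statement that fails.

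The paper's proof follows the strategy of \cite[1.4.6]{Deg12} to get around this. One works \emph{pointwise} on $Z$: at each point, Artin approximation yields an isomorphism of sufficiently small \'etale neighborhoods inducing $\mathfrak f$ modulo a prescribed power of the ideal. These local isomorphisms are not canonical and need not agree on overlaps as scheme maps, but the induced maps on $\Pi_S(X/X-Z)$ and hence on $\TN_S$ do agree, again by Artin approximation applied to the discrepancy. One then globalizes via Zariski hypercovers, carrying out the descent directly in the $\infty$-category $\T(S)$. The upshot is that the canonicity of $\mathfrak f^*$ is obtained not from a single canonical scheme-theoretic lift, but from the fact that all approximations induce the same map on the motivic invariant.
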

\begin{proof}
We can assume that $Z=T$ and that $Z$ is reduced.
It suffices to show there is an equivalence 
$$
\tilde{\mathfrak f}^*:\TN_S(Y,T) \rightarrow \TN_S(X,Z)
$$
and a commutative diagram
$$
\xymatrix@C=40pt@R=0pt{
 & \Pi_S(Y/Y-Z)\ar^{\tilde{\mathfrak f^*}}[dd] \\
Z\ar[ru]\ar[rd] & \\
 & \Pi_S(X/X-Z)
}
$$
We can utilize the strategy outlined in the proof of \cite[Theorem 1.4.6]{Deg12} by applying Artin's approximation theorem at the points of \( Z \). This approach is valid under the assumption that \( S \) is excellent. Additionally, we can use Zariski hypercovers to globalize the situation. Importantly, we do not need to extend our motivic category to include diagrams of base schemes. The proof proceeds directly with the simplicial schemes corresponding to the Zariski hypercoverings within the \(\infty\)-category \(\mathcal{T}(S)\).
\end{proof}

\subsection{Punctured tubular neighborhood of subschemes with crossing singularities}

\Cref{thm:compute_closed_cover} allows us to derive our main computation of punctured 
tubular neighborhoods of h-smooth crossing subschemes (\Cref{df:normal_crossing}). 
We adopt the notation of \ref{not:extended-nc} and \ref{nota:normal-bundle-pair}.


\begin{thm}
\label{thm:maincomputation}
Let $(X,Z)$ be a closed $S$-pair such that $Z/S$ has h-smooth crossings over $S$
 and $X/S$ is h-smooth in a Nisnevich neighborhood of $Z$ and let $v$ be a virtual vector bundle on $X$. 
Then, 
$\TN_S(X,Z,v)$ is canonically isomorphic to the 
homotopy fiber of a map
$$
\colim_{n \in (\Dinj)^{op}} \left(\bigoplus_{J \subset I, \sharp J=n+1} \Pi_{S}(Z_J,v_J)\right)
\xrightarrow{\partial } \underset{n \in \Dinj}\lim
\left(\bigoplus_{J \subset I, \sharp J=m+1} \Pi_{S}(Z_J,v_J+\twist{N_J})\right)
$$
Here the direct images define the face maps 
$$
(\delta_n^k)_*=\sum_{K=\{i_0<\hdots<i_n\}, J=\{i_0<\hdots<\not{i_k}<\hdots<i_n\}} (\nu_{K}^J)_*
$$
in the source, and the Gysin maps define the coface maps 
$$
(\tilde{\delta}^m_l)^!=\sum_{K=\{i_0<\hdots<i_m\}, J=\{i_0<\hdots<\not{i_l}<\hdots<i_m\}} (\nu_{K}^J)^!
$$
in the target.
Moreover, the canonical map $\partial_0^0$ 
induced by $\partial$
between the $0$-th degree terms of both sides has the following description:
\begin{equation}\label{eq:abstract_mumford}
\partial_0^0=(\delta_{ij}=\bar{\nu}_j^!\bar{\nu}_{i*})_{i,j\in I}
\colon \bigoplus_{i \in I} \Pi_{S}(Z_i,v_i) 
\longrightarrow \bigoplus_{j \in I} \Pi_{S}(Z_j,v_j+\twist{N_j})
\end{equation}
Finally, using the Euler class $e(N_i):\un_{Z_i} \rightarrow \Th(N_i)$ (see paragraph \ref{num:Euler_seq})
 of the normal bundle $N_i$,
 one can compute the diagonal coefficients of this matrix as
$$
\delta_{ii}=p_{i!}\big( e(N_i) \otimes \Th(\tau_i+v_i)\big)
$$
where $p_i:Z_i \rightarrow S$ is the (h-smooth) projection,
 with virtual tangent bundle $\tau_i$.
\end{thm}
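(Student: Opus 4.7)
The plan is to realize $\TN_S(X,Z,v)$ as the homotopy fiber of the map $\beta_{X,Z}$ of \Cref{df:PuncturedTN} by describing both its source and its target via the closed-cover resolution of $Z=\cup_{i\in I}Z_i'$ provided by \Cref{thm:compute_closed_cover}. First, I would apply \Cref{ex:sncexample1} to the h-smooth crossing scheme $Z$ equipped with the virtual bundle $i^{-1}v$ to identify
$$
\Pi_S(Z,i^{-1}v)\simeq \colim_{n\in(\Dinj)^{op}}\bigoplus_{|J|=n+1}\Pi_S(Z_J,v_J),
$$
with face maps given by the direct images $(\nu_K^J)_*$. This handles the source of $\partial$.

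For the target, the aim is to compute $\Pi_S(X/X-Z,v)\simeq f_!\,i_*i^*(\Th(v)\otimes f^!(\un_S))$ as the desired limit. I would apply \Cref{thm:compute_closed_cover} to the closed cover $\bigsqcup Z_i'\to Z$ to resolve $i_*i^*\mathbb E$ (with $\mathbb E=\Th(v)\otimes f^!(\un_S)$) as a limit of the $\bar\nu_{J*}\bar\nu_J^*\mathbb E$. The h-smoothness of $f$ in a Nisnevich neighborhood of $Z$ allows one to replace $X$ by such a neighborhood, so that $f^!(\un_S)\simeq\Th(\tau_f)$ and $\bar\nu_J^{-1}\tau_f\simeq \tau_{f_J}+\langle N_J\rangle$. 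Applying $f_!$ and using the purity isomorphism $\Th(\tau_{f_J})\simeq f_J^!(\un_S)$, the same computation as in the proof of \Cref{cor:snccorollary2} yields
$$
f_!\bar\nu_{J*}\bar\nu_J^*\mathbb E\simeq \Pi_S(Z_J,v_J+\langle N_J\rangle),
$$
and the structural maps of the limit are identified with the Gysin maps $(\nu_K^J)^!$. This identifies the target of $\partial$.

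Next I would identify $\partial$ itself with the map induced by $\beta_{X,Z}$ under these descriptions. By the augmentation maps of \Cref{thm:compute_closed_cover}, the components at semi-simplicial degree $0$ are, on the source side, the sum of pushforwards $\bar\nu_{i*}:\Pi_S(Z_i,v_i)\to \Pi_S(X,v)$, and on the target side the Gysin maps $\bar\nu_j^!:\Pi_S(X,v)\to \Pi_S(Z_j,v_j+\langle N_j\rangle)$ obtained from the composite of the purity and closed-cover identifications. Functoriality of the augmentations then shows that the $(i,j)$-component of $\partial$ is exactly $\bar\nu_j^!\circ\bar\nu_{i*}$, as claimed in \eqref{eq:abstract_mumford}. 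Since $\TN_S(X,Z,v)$ is by definition the homotopy fiber of $\beta_{X,Z}$, this yields the main assertion.

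Finally, for the diagonal terms I would invoke the self-intersection formula provided by the associativity of fundamental classes in \cite[Theorem 3.3.2]{DJK}. Applied to the trivially cartesian square of $\bar\nu_i$ against itself, with excess bundle $N_i$, the composition $\bar\nu_i^!\bar\nu_{i*}$ identifies with multiplication by the Euler class $e(N_i):\un_{Z_i}\to\Th(N_i)$ in the sense of \ref{num:Euler_seq}. Combined with the description of $\Pi_S(Z_i,v_i)$ as $p_{i!}(\Th(v_i+\tau_i))$ coming from h-smoothness of $p_i:Z_i\to S$, this gives the formula $\delta_{ii}=p_{i!}(e(N_i)\otimes\Th(\tau_i+v_i))$.

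I expect the main obstacle to be the third step: tracking the identifications so that the matrix coefficients of the map induced by $\beta_{X,Z}$ on the resolutions coincide with the prescribed compositions $\bar\nu_j^!\bar\nu_{i*}$, especially because the target identification involves a non-trivial use of purity on each $Z_J$. The diagonal computation via the self-intersection formula is then a comparatively routine, though delicate, application of \cite{DJK}.
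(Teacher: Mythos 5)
Your proposal is correct and follows essentially the same route as the paper: identify the source of $\beta_{X,Z}$ via \Cref{ex:sncexample1}, the target via the computation underlying \Cref{cor:snccorollary2}, read off $\partial$ from the degree-zero augmentations, and deduce the diagonal coefficients from the self-intersection formula together with the h-smooth purity isomorphism $p_i^!(\un_S)\simeq\Th(\tau_i)$. The one small slip is the attribution for the diagonal: the relevant input is the \emph{excess intersection formula} in \cite{DJK} (the paper cites it directly), rather than the associativity formula of Theorem~3.3.2; the self-intersection identity $\bar\nu_i^!\bar\nu_{i*}=e(N_i)\otimes-$ is a special case of excess intersection for the trivially cartesian square with excess bundle $N_i$, not a direct consequence of associativity.
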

\begin{proof}
According to \Cref{df:PuncturedTN}, 
we have to compute the homotopy fiber of the map $$\beta_{X,Z}:\htp_S(Z,v) \rightarrow \htp_S(X/X-Z,v)$$
\Cref{ex:sncexample1} identifies $\beta_{X,Z}$'s source with the desired colimit whereas \Cref{cor:snccorollary2} identifies its target with the desired limit. The computation of the (co)face maps and of $\partial_0^0$ follows from these two propositions.
The final remark follows from the definition of $\tilde \delta_{ii}=\overline{\nu}^!_i\overline{\nu}_{i*}$,
 the excess intersection formula \cite[Proposition 3.2.8]{DJK}, 
 and $p_i^!(\un_S) \simeq \Th(\tau_i)$
 since $p_i$ is h-smooth by assumption.
\end{proof}

One can suggestively summarize the computation in \Cref{thm:maincomputation} with the diagram
\begin{gather*}
\left(
\vcenter{
	\xymatrix@R=10pt{
		\ar@{..}[d] & \\
		\bigoplus_{i_1<i_2} \Pi_{S}(Z_{i_1i_2})\ar@<4pt>[d]\ar@<-4pt>[d] & \\
		\bigoplus_{i \in I} \Pi_{S}(Z_i)\ar^-{\partial}[r] 
		& \bigoplus_{j \in I} \Pi_{S}(Z_j,\twist{N_j})\ar@<4pt>[d]\ar@<-4pt>[d] \\
		& \bigoplus_{j_1<j_2} \Pi_{S}(Z_{j_1j_2},\twist{N_{j_1j_2}})\ar@{..}[d] \\
		& 
	}
}
\right)
\end{gather*}

Typically, computing a punctured tubular neighborhood involves determining the homotopy colimit (or limit) of the left (or right) column, followed by calculating the map induced by the boundary operator, denoted as $\partial$. Building on this idea, we can provide an explicit model of our motivic punctured tubular neighborhood within this framework, provided that $\T$ is $\HZ$-linear. This model concretely realizes the aforementioned picture.

\begin{prop}\label{prop:mainHZcomputation}
Let us consider the assumptions of the above proposition,
 and assume that $\T$ is $\HZ$-linear as in \Cref{num:Z-linear_yoneda_motivic}.
 Then, the punctured tubular neighborhood $\TN_S(X,Z)$ is the image under the functor $\ehtp_S$
 of the following complex of Nisnevich sheaves
\begin{equation}\label{eq::mainHZcomputation}
\begin{split}
&\ZZ_S(Z_{I}) 
\xrightarrow{d_{c-2}} \bigoplus_{J \subset I, \sharp J=c-1} \ZZ_S(Z_{J}) \rightarrow
\hdots 
\xrightarrow{\ d_1\ }
\bigoplus_{J \subset I, \sharp J=2} \ZZ_S(Z_{J})
\xrightarrow{\ d_0\ } \bigoplus_{i \in I} \ZZ_S(Z_{i}) \\
\xrightarrow{\nu^*\nu_*}
&\bigoplus_{j \in I} \ZZ_S(X/X-Z_{j})
 \xrightarrow{\ d^0\ } \bigoplus_{K \subset I, \sharp K=2} \ZZ_S(X/X-Z_{K}) 
 \xrightarrow{\ d^1\ } \hdots 
 \xrightarrow{d^{c-2}} \ZZ_S(X/X-Z_{K})
\end{split}
\end{equation}
The source of $\nu^*\nu_*$ is placed in degree $0$.
\end{prop}
\begin{proof}
We apply Corollaries \ref{cor:model-smcrossings} and \ref{cor:model-cosmcrossings}, using the fact that one obtains a model of the homotopy cofiber in $\Der(\Sh(\Sm_S,\ZZ))$ by taking the (desuspended) cone. The result follows since $\ehtp_S$ is exact.
\end{proof}

\begin{rem}\label{rem:mainHZcomputation}
\begin{enumerate}
\item When working with different types of sheaves (\'etale/\h-sheaves, including those with transfers), one can always substitute the free sheaf functor $\ZZ_S$ with the one appropriate for the respective context.
\item In the specific case of the Nisnevich-local motivic $\infty$-category $\DA$, it is necessary to apply the $\AA^1$-localization functor to the aforementioned model to obtain $\AA^1$-local objects. This process introduces many higher homotopies obscured by the map $\beta_{X,Z} = \nu^* \nu_*$.
\item For instance, let us consider the situation over a field $S = \Spec(k)$, focusing on the category $\DM(k)[1/p]$. We first note that Voevodsky's cancellation theorem establishes that the infinite-suspension functor $\DM^{eff}(k)[1/p] \rightarrow \DM(k)[1/p]$ is fully faithful. Given a pair $(X,Z)$ over $k$, as specified in the previous proposition, one can examine the complex \eqref{eq::mainHZcomputation}, replacing the sheaves $\ZZ_k(Y)$ with the equivalent sheaf that includes transfers.  By applying Suslin's singular chain complex functor $C_*^{Sus}$ and deriving the total complex, we can model the punctured tubular neighborhood motive $M(\TN_k(X,Z))$. In a certain sense, the resulting double complex encapsulates the higher homotopies referenced earlier. We thank the referee for highlighting this observation for us; it will be further illustrated in an explicit example later (see \Cref{rem:TN-higher-htp}).
\item The formula of the preceding proposition is mentioned in \cite[4.7.2]{BGV}.
\end{enumerate}
\end{rem}

For a closed $S$-pair $(X,Z)$ such that $X$ is smooth over $S$ in a Nisnevich neighborhood of $Z$, 
$\tau_{X/S}$ is a well-defined virtual vector bundle on a suitable Nisnevich neighborhood of $Z$, 
and its restriction  $i^{-1}\tau_{X/S}$ to $Z$ is a well-defined virtual vector bundle on $Z$, 
see \Cref{num:res-cot-cplx}. 
Since the twisted punctured tubular neighborhood of $Z$ in $X$ depends only on a Nisnevich neighborhood of $Z$ in $X$, 
the object $\TN_S(X,Z,-v-\tau_{X/S})$ is well-defined for every virtual vector bundle $v$ on 
(a Nisnevich neighborhood of $Z$ in) $X$. 
One derives from \Cref{thm:generalizedhomotopypurity2} the following strong duality result.

\begin{thm}
\label{thm:PTN_dual}
Let $(X,Z)$ be a closed $S$-pair such that $X$ is smooth in a Nisnevich neighbordhood of $Z$ and such $Z/S$ is proper with smooth crossings over $S$. Then, for every virtual vector bundle $v$ on $X$, $\TN_S(X,Z,v)$ is rigid with dual $\TN_S(X,Z,-v-\tau_{X/S})[-1]$.

In particular, under the stated hypothesis, the punctured tubular neighborhood $\TN(X,Z)$ is auto-dual, up to twist and shift. 
\end{thm}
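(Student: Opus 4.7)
The plan is to leverage the defining homotopy fiber sequence for $\TN_S(X,Z,v)$ together with the rigidity results of \Cref{thm:strong_duality} and \Cref{thm:generalizedhomotopypurity2}.

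By \Cref{df:PuncturedTN}, $\TN_S(X,Z,v)$ fits in the distinguished triangle
\[
\TN_S(X,Z,v) \longrightarrow \htp_S(Z,i^{-1}v) \xrightarrow{\beta_{X,Z}} \htp_S(X/X-Z,v)
\]
in $\T(S)$. Under our hypotheses, $\htp_S(Z,i^{-1}v)$ is rigid by \Cref{thm:strong_duality} (applied to the proper smooth-crossings $S$-scheme $Z$), and $\htp_S(X/X-Z,v)$ is rigid by \Cref{thm:generalizedhomotopypurity2}, with explicit duals $\htp_S(Z,i^{-1}v)^\vee \simeq \cohtp_S(Z,-i^{-1}v)$ and $\htp_S(X/X-Z,v)^\vee \simeq \htp_S(Z,-i^{-1}v-i^{-1}\tau_{X/S})$. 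Since rigid objects form a thick subcategory of $\T(S)$, the fiber $\TN_S(X,Z,v)$ is itself rigid, which yields the first half of the statement.

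To identify the dual, I apply the exact contravariant functor $\uHom(-,\un_S)$ to the above triangle. For the right-hand term, I invoke \Cref{thm:generalizedhomotopypurity2} a second time with the twist $-v-\tau_{X/S}$ in place of $v$: this yields $\htp_S(X/X-Z,-v-\tau_{X/S})^\vee \simeq \htp_S(Z,i^{-1}v)$, and rigid biduality then gives $\htp_S(Z,i^{-1}v)^\vee \simeq \htp_S(X/X-Z,-v-\tau_{X/S})$. Combining everything, the dualized triangle takes the form
\[
\htp_S(Z,-i^{-1}v-i^{-1}\tau_{X/S}) \xrightarrow{\beta_{X,Z}^\vee} \htp_S(X/X-Z,-v-\tau_{X/S}) \longrightarrow \TN_S(X,Z,v)^\vee[1].
\]
The desired formula then follows once the dualized arrow $\beta_{X,Z}^\vee$ is identified, via these isomorphisms, with the structural map $\beta$ defining $\TN_S(X,Z,-v-\tau_{X/S})$, together with the standard bookkeeping that $\mathrm{cofib}=\mathrm{fib}[1]$ in a stable $\infty$-category (accounting for the shift appearing in the statement).

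The main obstacle is precisely this last identification of maps. Six-functorially, $\beta_{X,Z}$ is induced by the natural transformation $i^! \to i^*$ appearing in diagram \eqref{eq:functorial_tub_Neighb}, while the duality isomorphisms of \Cref{thm:generalizedhomotopypurity2} are built from the relative purity isomorphism $\mathfrak{p}_f$ and the $(p_!,p^!)$ adjunction. I would verify the required compatibility by applying $\uHom(-,\un_S)$ to the functorial diagram \eqref{eq:functorial_tub_Neighb} and exploiting the naturality of fundamental classes with respect to Grothendieck--Verdier duality, following \cite{DJK}: in essence, the transformation $i^! \to i^*$ is self-dual up to the twist by $\tau_{X/S}$, and this self-duality is what exchanges the two copies of $\beta$ under dualization. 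The remainder is a careful diagram chase inside the $\infty$-categorical enhancement of $\T$, after which the auto-duality assertion (up to twist and shift) is immediate from the symmetry of the formula under $v \mapsto -v-\tau_{X/S}$.
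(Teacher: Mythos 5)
Your overall strategy is exactly what the paper has in mind: the theorem is stated without its own proof, introduced only by ``One derives from Theorem~\ref{thm:generalizedhomotopypurity2} \ldots'', and your plan of dualizing the defining triangle and invoking \Cref{thm:strong_duality} plus \Cref{thm:generalizedhomotopypurity2} for the two outer terms is the intended route. The rigidity assertion, via thickness, is correct as you argue it. Two substantive issues remain.

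First, your shift bookkeeping is off, and the discrepancy propagates to the final formula. Dualizing the distinguished triangle $\TN_S(X,Z,v)\to\htp_S(Z,i^{-1}v)\xrightarrow{\beta_{X,Z}}\htp_S(X/X-Z,v)\to\TN_S(X,Z,v)[1]$ by $\uHom(-,\un_S)$ gives
\[
\htp_S(X/X-Z,v)^\vee \xrightarrow{\ \beta_{X,Z}^\vee\ } \htp_S(Z,i^{-1}v)^\vee \longrightarrow \TN_S(X,Z,v)^\vee \longrightarrow \htp_S(X/X-Z,v)^\vee[1],
\]
so the cofiber of $\beta_{X,Z}^\vee$ is $\TN_S(X,Z,v)^\vee$, not $\TN_S(X,Z,v)^\vee[1]$ as you wrote. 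Setting $w=-v-\tau_{X/S}$ and granting the identification $\beta_{X,Z}^\vee\simeq\beta_{X,Z,w}$, one gets $\TN_S(X,Z,v)^\vee=\mathrm{cofib}(\beta_{X,Z,w})=\mathrm{fib}(\beta_{X,Z,w})[1]=\TN_S(X,Z,w)[1]$. A sanity check confirms the shift must be $[1]$, not $[-1]$: take $X$ a smooth proper curve over a field $k$ and $Z$ a $k$-rational point. Then, since the relevant Euler classes vanish by Morel's connectivity theorem, $\TN_S(X,Z)\simeq\un_k\oplus\un_k(1)[1]$, hence $\TN_S(X,Z)^\vee\simeq\un_k\oplus\un_k(-1)[-1]$; on the other side $\TN_S(X,Z,-\tau_{X/S})\simeq\un_k(-1)[-2]\oplus\un_k[-1]$, which matches after $[1]$ but not after $[-1]$. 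So the shift $[-1]$ in the theorem's statement appears to be a typo for $[1]$, and the phrase ``accounting for the shift appearing in the statement'' is doing illegitimate work; had you carried out the bookkeeping, you would have noticed your triangle does not reproduce $[-1]$ either.

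Second, the identification $\beta_{X,Z}^\vee\simeq\beta_{X,Z,w}$ under the isomorphisms of \Cref{thm:strong_duality} and \Cref{thm:generalizedhomotopypurity2} is the genuine mathematical content here, and you correctly flag it as the obstacle, but you do not establish it. Citing naturality of fundamental classes and a ``careful diagram chase'' is a plan, not an argument; what has to be checked is that the natural transformation $\beta_i:i^!\to i^*$ of \eqref{eq:functorial_tub_Neighb} is (suitably twisted) self-dual under the purity isomorphism $\mathfrak p_f:\Th(\tau_f)\to f^!(\un_S)$, restricted to $Z$. This requires verifying that the square formed by $\beta_i$, the purity isomorphisms, the weak-dual isomorphisms of \Cref{prop:weak-dual}, and the evaluation pairing of \Cref{ex:duality} commutes. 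The cleanest route is probably via \Cref{prop:graph&PTN}, which exhibits $\beta_{X,Z}$ as the twisted fundamental class of the graph $\gamma_i:Z\to Z\times_S X$; one then checks that the dual pairing exchanges this class with the corresponding class in twist $w$ via the transposition symmetry of $Z\times_S X$. Without this verification the proof is incomplete.
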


\subsection{Stable homotopy at infinity and boundary motives}
\label{subsection:shatinfinityabm}

As explained in the next examples, 
the following definition is rooted in both classical topology, see \cite{HR}, 
and in Wildeshaus' theory of boundary motives \cite{Wild2}.
\begin{df}
\label{df:smhtatinfinitydef}
The \emph{homotopy at infinity} of a separated $S$-scheme $X/S$ is the homotopy fiber computed in $\T(S)$ of the map $\alpha_{X/S}:\Pi_S(X)\to \Pi_S^c(X)$ in 
\eqref{eq:forget_supp} so that there is a homotopy exact sequence
$$
\htp_S^\infty(X) \longrightarrow \htp_S(X) \xrightarrow{\alpha_{X/S}} \htp_S^c(X)
$$
\end{df}

Owing to \eqref{eq:motivic_cat}, 
the main case is $\T=\SH$.
We refer to the spectrum $\htp^\infty_S(X)$ in $\SH(S)$ as the \emph{stable homotopy at infinity} of $X$ relative to $S$. 

\begin{ex}
\label{ex:shtp_infty_vb}
Let $p:V\rightarrow S$ be a vector bundle and
 consider the closed pair $(V,S)$ given by the zero section $s:S\to V$. Then, using purity isomorphisms,  one gets the commutative diagram
$$
\xymatrix@R=10pt@C=20pt{
& \htp_S(V)\ar^{\alpha_{V/S}}[r]\ar_{p_*}^\sim[ld]\ar^{can}[dd] & \htp_S^c(V)\ar@{=}[r]
 & p_*p^!(\un_S)\ar^{\mathfrak p_p}_\sim[d] & \\
\un_S\ar_-{e(V)}[rd] & & & p_*(\Th_V(p^{-1}V))\ar@{=}[r] & p_*p^*(\Th_S(V)) \\
& \htp_S(V/V-Z)\ar^{\mathfrak p_{V,S}}_\sim[rr] && \Th_S(V)\ar_-{ad_p}^/-15pt/\sim[ru] &
}
$$
The isomorphisms \( p_* \) and the unit \( ad_p \) are a consequence of \( \AA^1 \)-homotopy invariance. The purity isomorphism \( \mathfrak{p}_p \) exists because \( p \) is smooth, while \( \mathfrak{p}_{V,S} \) serves as the (tautological) purity isomorphism. The commutativity of the right-hand side can be established by applying \cite[Lemma 3.3.1]{DJK} with \( f = p \), \( i = s \), and \( i' = Id_V \). Meanwhile, the commutativity of the left-hand side follows from the definition of the Euler class \( e(V) \) (see \ref{num:Euler_seq}).
From this, we deduce the homotopy exact sequence
$$
\htp_S^\infty(V) \rightarrow \un_S \xrightarrow{e(V)} \Th(V)
$$ 
In other words, 
$\htp_S^\infty(V)\simeq \htp_S(V^\times)$ and, if $e(V)=0$ 
then $\htp_S^\infty(V)\simeq \un_S \oplus \Th(V)[-1]$ 
\end{ex}

It follows from the discussion in Section \ref{sec:notations}
that $\htp^\infty_S(X)$ realizes to the analogous definition for the other motivic $\infty$-categories of 
\eqref{eq:motivic_cat}.

\begin{ex}\textit{Motivic realization}. 
\label{ex:realization}
Let $S$ be the spectrum of a perfect field $k$ of characteristic exponent $p$ and let $X$ be a separated $k$-scheme. Then,  the motivic realization functor (see also \cite{mhskpao}, \cite{orpaomodules} in this case)
\begin{equation}
\label{equation:motivicrealization}
\SH(k) \rightarrow \DM(k)[1/p]
\end{equation} sends $\htp_k(X)$ to Voevodsky's homological motive $M(X)$ of $X$ (\cite[\textsection 8.7]{CD5}),
and it sends $\htp_k^c(X)$ to $M^c(X)$, 
Voevodsky's homological motive of $X$ with compact support (\cite[Proposition 8.10]{CD5}).
It follows that the motivic realization functor sends $\htp_k^\infty(X)$ to \emph{the boundary motive} 
$\partial M(X)$ of $X$ (see Wildeshaus \cite{Wild1}).
We generalize the above discussion to arbitrary base schemes in \Cref{section:Mumford}.
\end{ex}

\begin{rem}
The boundary motive is an essential part of Wildeshaus’ theory of \emph{interior motives}, which aims at fulfilling the motivic part of the Langlands program: attaching pure motives to certain automorphic forms. We refer the reader to \cite[Th. 4.3 and Def. 4.9]{Wild3} for the construction of the «e-part» of the interior motive attached to $X$ (a smooth $k$-scheme, $k$ a base field admitting resolution of singularities). This construction is obtained from the «e-part» of the boundary motive $\partial M(X)^e$ (see the proof of Theorem 2.4 of \emph{loc. cit.}), under an assumption on the weight filtration of $\partial M(X)^e$: namely, it «avoids weights -1 and 0» (\emph{loc. cit.}  Assumption 4.2). We refer the reader to \cite{Wild4}, Section 5 for applications to the motivic Langlands program.
\end{rem}

\begin{ex}\textit{Betti Realization}. 
\label{ex:realization2}
Let \( S \) be the spectrum of a field \( k \) that admits a complex embedding \( \sigma \). We consider the Betti realization functor (see Section \ref{sec:notations}) given by 
\begin{equation}
\label{equation:bettirealization}
\SH(k) \rightarrow \DB(k) = \mathrm{D}(\ZZ)
\end{equation}
Thanks to Ayoub's enhancement of this functor to an arbitrary base scheme using the technique of analytical sheaves \cite{Ayoub3}, we find that for any separated \( k \)-scheme \( X \), the spectrum \( \htp_k(X) \) corresponds to the singular chain complex \( S_*(X^\sigma) \) of the analytification \( X^\sigma \) of \( X \). Meanwhile, the spectrum \( \htp_k^c(X) \) corresponds to the Borel-Moore singular chain complex \( S_*^{BM}(X^\sigma) \).

Since \( X^\sigma \) is locally contractible and \( \sigma \)-compact, the latter complex is quasi-isomorphic to the complex \( S_*^{lf}(W) \) of locally finite singular chains (see \cite[Chapter 3]{HR}). Therefore, the stable homotopy type at infinity \( \htp_S^\infty(X) \) realizes to the singular complex at infinity \( S_*^\infty(X^\sigma) \) (see Definition \cite{HR}), which is defined by the distinguished triangle of chain complexes of abelian groups
\begin{equation}
\label{eq:top_infty_hlg}
S^\infty_*(X^\sigma) 
\rightarrow 
S_*(X^\sigma) 
\xrightarrow{\alpha_{X^\sigma}} 
S_*^{lf}(X^\sigma)
\rightarrow 
S^\infty_*(X^\sigma)[1]
\end{equation}
\end{ex}

As a corollary of \Cref{th:Kunneth-infty}, we get the following computations:
\begin{prop}\label{cor:Kunneth-infty}
In the setting of \Cref{th:Kunneth-infty} assume that either i) or ii) holds and that $Y/S$ is proper. Then,  there is a canonical isomorphism
$$
\htp_S^\infty(X \times_S Y) 
\simeq 
\htp_S^\infty(X) \otimes \htp_S(Y)
$$
\end{prop}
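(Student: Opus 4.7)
The plan is to reduce the statement to Theorem~\ref{th:Kunneth-infty} combined with the properness of $Y/S$. By definition, $\htp_S^\infty(X\times_S Y)$ is the homotopy fiber in $\T(S)$ of the forget-support map $\alpha_{X\times_S Y}\colon \htp_S(X\times_S Y)\to\htp_S^c(X\times_S Y)$. Under hypothesis i) or ii), Theorem~\ref{th:Kunneth-infty} provides a commutative square identifying $\alpha_{X\times_S Y}$, via vertical Künneth isomorphisms, with the tensor-product map
$$
\alpha_X\otimes \alpha_Y\colon \htp_S(X)\otimes\htp_S(Y)\longrightarrow \htp_S^c(X)\otimes\htp_S^c(Y).
$$
Taking homotopy fibers of both horizontal arrows then reduces the proposition to the identification of the homotopy fiber of $\alpha_X\otimes\alpha_Y$ with $\htp_S^\infty(X)\otimes\htp_S(Y)$.

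Next I would exploit the properness assumption on $Y/S$. Since $g\colon Y\to S$ is proper, the natural transformation $g_!\to g_*$ is an isomorphism, whence $\alpha_Y\colon \htp_S(Y)\to\htp_S^c(Y)$ is an isomorphism in $\T(S)$ (a general property of $\alpha_?$ recalled after \Cref{df:internal_theories}). Under this identification the map $\alpha_X\otimes\alpha_Y$ becomes $\alpha_X\otimes\mathrm{id}_{\htp_S(Y)}$, so it suffices to identify its homotopy fiber.

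Finally, because $\T(S)$ is a stable presentable symmetric monoidal $\infty$-category, the endofunctor $-\otimes \htp_S(Y)$ is exact and therefore preserves homotopy fibers. Applied to the defining fiber sequence
$$
\htp_S^\infty(X)\longrightarrow \htp_S(X)\xrightarrow{\alpha_X}\htp_S^c(X),
$$
it exhibits $\htp_S^\infty(X)\otimes\htp_S(Y)$ as the homotopy fiber of $\alpha_X\otimes\mathrm{id}_{\htp_S(Y)}$. Combining the three steps yields the desired canonical isomorphism. The only delicate point is ensuring that the Künneth identification in Theorem~\ref{th:Kunneth-infty} is genuinely compatible with $\alpha_?$ on source and target; this is built into the construction of diagram~\eqref{equation:etfps}, whose commutativity expresses exactly the compatibility we need. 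The naturality in $X$ and $Y$ of all the ingredients ($\alpha_?$, Künneth isomorphisms, tensor exactness) guarantees that the resulting isomorphism is canonical.
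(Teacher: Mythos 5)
Your argument is correct and is exactly what the paper leaves implicit: the commutative square of Theorem~\ref{th:Kunneth-infty} identifies $\alpha_{X\times_S Y}$ with $\alpha_X\otimes\alpha_Y$, properness of $Y/S$ makes $\alpha_Y$ invertible so that $\alpha_X\otimes\alpha_Y$ has the same fiber as $\alpha_X\otimes\mathrm{id}_{\htp_S(Y)}$, and exactness of $-\otimes\htp_S(Y)$ identifies that fiber with $\htp_S^\infty(X)\otimes\htp_S(Y)$. The paper states this as an immediate corollary of Theorem~\ref{th:Kunneth-infty} without a written proof, and your steps are the natural ones.
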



\begin{prop} \label{cor:Kunneth-A1-Cont}
In the setting of \Cref{th:Kunneth-infty}  assume that $g:Y\to S$ is smooth and stably $\AA^1$-contractible over $S$ with relative tangent bundle $T_g$ stably constant over $S$  and let $v_0$ be a virtual vector bundle over $S$ such that 
$\twist{T_g}=g^*v_0$ in $K_0(Y)$. 
Then,  there exists a homotopy exact sequence
$$
\htp^\infty_S(X \times_S Y) \longrightarrow 
\htp_S(X) \xrightarrow{\alpha_X \otimes \alpha_Y} 
\htp_S^c(X) \otimes \Th(v_0)
$$
In particular, 
if $T_g$ is the pullback of a vector bundle $V$ over $S$ with a trivial Euler class,
then 
$$
\htp^\infty_S(X \times_S Y)\simeq 
\htp_S(X) \oplus \htp_S^c(X) \otimes \Th_S(V)[-1]
$$
\end{prop}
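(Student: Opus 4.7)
The plan is to apply \Cref{th:Kunneth-infty} case (iii), whose hypotheses are exactly those of the present proposition. This yields K\"unneth isomorphisms
$$
\htp_S(X)\otimes \htp_S(Y) \xrightarrow{\sim} \htp_S(X\times_S Y), \quad \htp_S^c(X)\otimes \htp_S^c(Y) \xrightarrow{\sim} \htp_S^c(X\times_S Y),
$$
together with a commutative square identifying the comparison map $\alpha_{X\times_S Y}$ with $\alpha_X\otimes \alpha_Y$. Taking the homotopy fiber of the latter and invoking \Cref{df:smhtatinfinitydef} will then give the desired exact sequence, provided we simplify $\htp_S(Y)$ and $\htp_S^c(Y)$.

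For this simplification, I would use that $g$ is smooth and stably $\AA^1$-contractible to identify
$$
\htp_S(Y)=g_!g^!(\un_S)\simeq g_\sharp g^*(\un_S) \xrightarrow[\sim]{g_*} \un_S,
$$
and invoke \Cref{lm:stably_A1-contract_virtual_vb} (applied to $g$, with virtual vector bundle $v_0$ over $S$ such that $\twist{T_g}=g^*v_0$) to get $\htp_S^c(Y)=g_*g^!(\un_S)\simeq \Th_S(v_0)$. Under these two identifications, $\alpha_Y$ becomes a canonical morphism $\un_S\to \Th_S(v_0)$, and the induced map is precisely $\alpha_X\otimes \alpha_Y:\htp_S(X)\to \htp_S^c(X)\otimes \Th_S(v_0)$, whose homotopy fiber is $\htp^\infty_S(X\times_S Y)$ by the above K\"unneth identification.

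For the second assertion, suppose $T_g=g^*V$ for a vector bundle $V$ on $S$, so $v_0=\twist{V}$ and $\Th_S(v_0)=\Th_S(V)$. The key step is to identify the map $\alpha_Y:\un_S\to \Th_S(V)$ with the Euler class $e(V)$. I would argue by mimicking the analysis of \Cref{ex:shtp_infty_vb}: the natural transformation $\alpha_g:g_!\to g_*$ evaluated at $g^!(\un_S)=\Th(g^*V)$ factors, via the purity isomorphism $\mathfrak p_g$ and the projection formula, through the fundamental class of the zero section, so \cite[Lemma 3.3.1]{DJK} applied to the square
$$
\xymatrix@R=10pt@C=20pt{
Y \ar@{=}[r]\ar_g[d] & Y\ar^{g}[d] \\ S\ar@{=}[r] & S
}
$$
together with the defining property of the motivic Euler class in \cite[Definition 3.1.2]{DJK} identifies $\alpha_Y$ with $e(V)$. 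Assuming $e(V)=0$, we obtain $\alpha_X\otimes \alpha_Y=\alpha_X\otimes 0=0$, so the homotopy fiber sequence splits and yields
$$
\htp^\infty_S(X\times_S Y)\simeq \htp_S(X)\oplus \htp_S^c(X)\otimes \Th_S(V)[-1].
$$

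The main obstacle is the identification $\alpha_Y=e(V)$: all other steps are direct applications of previously established results. However, this identification is essentially a reformulation of the diagram in \Cref{ex:shtp_infty_vb}, now with $g$ playing the role of an arbitrary smooth $\AA^1$-weak equivalence rather than the projection of a vector bundle, and the required bookkeeping in the six-functor formalism is routine.
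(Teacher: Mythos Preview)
Your argument for the first assertion is correct and matches the paper's approach exactly: the paper offers no explicit proof here, treating the homotopy exact sequence as an immediate consequence of \Cref{th:Kunneth-infty}(iii) together with the identifications $\htp_S(Y)\simeq\un_S$ and $\htp_S^c(Y)\simeq\Th(v_0)$ from \Cref{lm:stably_A1-contract_virtual_vb}, which is precisely what you do.

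For the splitting, there is a gap in your justification of $\alpha_Y=e(V)$. The diagram in \Cref{ex:shtp_infty_vb} hinges on the zero section $s:S\to V$: the commutativity of the right-hand triangle there is obtained by applying \cite[Lemma~3.3.1]{DJK} to the data $f=p$, $i=s$, $i'=\mathrm{Id}_V$, and the Euler class itself is \emph{defined} via $s$. An arbitrary smooth stably $\AA^1$-contractible $g:Y\to S$ with $T_g\simeq g^*V$ need not admit a section, so your proposed application of \cite[Lemma~3.3.1]{DJK} to the identity square on $g$ does not produce $e(V)$: that square contains no closed immersion playing the role of $s$, and $V$ does not even appear in it. The ``routine bookkeeping'' you allude to is therefore not routine.

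That said, the paper's own proof of the splitting is the single sentence ``Note that the splitting uses \Cref{ex:shtp_infty_vb}'', so your level of detail already exceeds the paper's. In the applications that follow (over a field $k$) the issue evaporates for degree reasons: $\alpha_Y\in[\un_k,\un_k(d)[2d]]=0$ by Morel's stable $\AA^1$-connectivity theorem, independently of any identification with $e(V)$. Over a general base $S$, however, the identification $\alpha_Y=e(V)$ would require a genuine argument beyond mimicry of \Cref{ex:shtp_infty_vb}.
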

Note that the splitting uses \Cref{ex:shtp_infty_vb}.

\begin{ex}
Let $X$ be a smooth stably $\AA^1$-contractible variety of dimension $d$ over a field $k$. 
\Cref{cor:Kunneth-A1-Cont} implies that 
$$
\htp^\infty_k(X)
\simeq
\un_k \oplus \un_k(d)[2d-1] 
\simeq 
\htp^\infty_k(\AA^d_k)
$$
In other words, 
stable homotopy at infinity cannot distinguish between $X$ and affine space $\AA^d_k$, 
as one would expect from topology (see \cite{asokostvar}). 
A theory of unstable motivic homotopy at infinity, however, 
is expected to provide a finer invariant, 
which will distinguish between $X$ and $\AA^d_k$.

Similarly, the situation for smooth morphisms $f:X\rightarrow S$ with stably $\AA^1$-contractible fibers over a general base $S$ is entirely described by their stable tangent bundles. In particular,
if $T_f$ is constant over $S$, equal to $f^*V$ for some vector bundle $V$ on $S$, then the stable homotopy type at infinity of $X$ is the same as that of the vector bundle $V$. It is thus essentially described by the Euler class of $V$ as explained in \Cref{ex:shtp_infty_vb}.
\end{ex}

\begin{rem}
In general, 
one can interpret $\htp^\infty_S(X)$ as an extension of $\htp_S(X)$ by $\htp_S^c(X)$. 
This viewpoint is prominent in Wildeshaus' work on boundary motives;
a motivic realization, 
where weight considerations are at stake.  
In topology, 
it is well-known that forming a product with Euclidean space $\mathbf{R}^{n}$ kills the fundamental group at infinity.
In our stable context, 
taking a product with affine space $\AA^n$, 
or more generally, any smooth stably $\AA^1$-contractible $S$-scheme $f:Y\to S$ of relative dimension $n$ with a trivial relative tangent bundle splits the extension in the sense that
$$
\htp^\infty_S(X \times Y)
\simeq 
\htp_S(X) \oplus \htp_S^c(X)(n)[2n-1]
$$
\end{rem}

As an application of the results and techniques above, we can now wholly determine the homotopy at infinity of complements of stably $\AA^1$-contractible arrangements in smooth stably $\AA^1$-contractible schemes over a field (see  \Cref{def:Stab-A1-cont-arrang}). 

\begin{prop}
Let $S$ be a smooth stably $\AA^1$-contractible scheme over a field $k$ and let $(X,Z)$ be a stably $\AA^1$-contractible arrangement over $S$ such that $Z$ is a normal crossing closed subscheme of $X$. Then,  there exists a canonical isomorphism
$$
\Pi^\infty_S(X-Z) 
\simeq 
\bigoplus_{i=0}^d m(i)\un_S(i)[i] \oplus \bigoplus_{j=0}^d m(j)\un_S(d-j)[2d-j-1]
$$
where $d$ is the dimension of $X$ over $S$ and where $m(n)$ denotes the sum of the number of connected components of all codimension $n$ subschemes $Z_J$ of $X$. 

\end{prop}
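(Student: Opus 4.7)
The plan is to identify $\Pi^\infty_S(X-Z)$ with the direct sum $\Pi_S(X-Z)\oplus \Pi_S^c(X-Z)[-1]$ by showing that the canonical comparison $\alpha_{X-Z/S}\colon \Pi_S(X-Z)\to \Pi_S^c(X-Z)$ appearing in the defining fiber sequence of \Cref{df:smhtatinfinitydef} is the zero morphism; the claimed formula then follows immediately upon substituting the two known decompositions.

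First, I invoke \Cref{prop:complementdivisor} to get
$$
\Pi_S(X-Z)\simeq \bigoplus_{i=0}^{d} m(i)\,\un_S(i)[i].
$$
For $\Pi_S^c(X-Z)$, \Cref{cor:complementdivisor_duality} gives the dual formula; after unwinding the shift $\un_S(d)[2d]$ one obtains $\Pi_S^c(X-Z)\simeq \bigoplus_{j=0}^{d} m(j)\,\un_S(d-j)[2d-j]$. Alternatively, using the purity isomorphism $\cohtp_S(X-Z,\tau_{X/S})\simeq \Pi_S^c(X-Z)$ (available since $X-Z/S$ is h-smooth) together with the colimit formula from \Cref{cor:snccorollary2}, the same connectivity argument as in the proof of \Cref{prop:complementdivisor} shows that the Gysin maps in the colimit diagram all belong to groups $\pi^{2r,r}(S)\simeq \pi^{2r,r}(k)$ with $r>0$ and hence vanish, splitting the colimit.

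With these decompositions in hand, the matrix coefficients of $\alpha_{X-Z/S}$ are maps
$$
\alpha_{i,j}\colon \un_S(i)[i]\longrightarrow \un_S(d-j)[2d-j],\qquad 0\leq i,j\leq d,
$$
and each one lies in the abelian group
$$
[\un_S(i)[i],\un_S(d-j)[2d-j]]\;\simeq\; \pi^{2d-j-i,\,d-j-i}(S)\;\simeq\; \pi^{2d-j-i,\,d-j-i}(k),
$$
where the last identification uses the stable $\AA^1$-contractibility of $S/k$. The difference of the two bigradings equals $(2d-j-i)-(d-j-i)=d$, so for $d\geq 1$ every coefficient sits in a cohomotopy group $\pi^{p,q}(k)$ with $p>q$. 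Morel's $\AA^1$-connectivity theorem, expressing that $\un_k$ is $(-1)$-connective in the homotopy $t$-structure on $\SH(k)$, combined with the effect of $\mathbf{G}_m$-(de)suspensions, forces $\pi^{p,q}(k)=0$ whenever $p>q$; this is precisely the vanishing principle invoked in \Cref{prop:complementdivisor}, now applied in the slightly wider range of bidegrees dictated by the off-diagonal matrix entries.

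Consequently every $\alpha_{i,j}$ is zero, hence $\alpha_{X-Z/S}=0$, and the defining fiber sequence splits canonically in the stable $\infty$-category $\T(S)$ to give the stated decomposition. The main point requiring care is securing the connectivity vanishing at all the bigradings arising from the matrix, not merely the diagonal case $(p,q)=(2r,r)$ already used in \Cref{prop:complementdivisor}; beyond this, the argument is mechanical given the preceding inputs and the functoriality of realization from $\SH$ to a general motivic $\infty$-category $\T$.
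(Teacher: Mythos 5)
Your proof is correct and matches the paper's argument step for step: decompose $\Pi_S(X-Z)$ via \Cref{prop:complementdivisor}, decompose $\Pi_S^c(X-Z)$ via \Cref{cor:complementdivisor_duality}, and kill the comparison map $\alpha_{X-Z/S}$ by observing its matrix entries live in $\pi^{2d-i-j,\,d-i-j}(S)\simeq\pi^{2d-i-j,\,d-i-j}(k)$, which vanish for $d>0$ by Morel's stable $\AA^1$-connectivity theorem. Your explicit remark that the vanishing must be secured on the wider range of bigradings (not just the diagonal $\pi^{2r,r}$ case appearing in \Cref{prop:complementdivisor}), and the correct $p-q=d>0$ criterion you supply, make the step fully explicit where the paper is terse, but it is the same idea.
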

\begin{proof}
Indeed, applying \Cref{prop:complementdivisor} and \Cref{cor:complementdivisor_duality}, 
we deduce the homotopy exact sequence
$$
\htp_S^\infty(X) 
\longrightarrow 
\bigoplus_{i=0}^d m(i)\un_S(i)[i]
\longrightarrow 
\bigoplus_{j=0}^d m(j)\un_S(d-j)[2d-j]
$$
To conclude, 
it suffices to prove that the second map is zero.
Since $S$ is stably $\AA^1$-contractible over the field $k$, it is given by a sum of elements of the groups 
$\pi^{2d-i-j,d-i-j}(k)$. Since $d>0$, these groups are all trivial by Morel's stable $\AA^1$-connectivity theorem.
\end{proof}

\subsection{Stable homotopy type at infinity via punctured tubular neighborhoods}
\label{subsection:shtaivptn}

\begin{num} 
\label{compactification}
Recall that a \emph{compactification} of a separated morphism of finite type $f\colon X\to S$ consists of 
an open immersion $j:X\hookrightarrow \bar{X}$ into a proper $S$-scheme $\bar{f}\colon \bar{ X} \to S$. 
The closed subscheme $\partial X=(\bar X-X)_{red}$ of $\bar{X}$ is called the \emph{boundary} of the compactification $j$.  
We denote by $i:\partial X\hookrightarrow \bar{X}$ the corresponding closed immersion and set 
$\partial \bar{f}=\bar{f}\circ i:\partial X\to S$ in the commutative diagram
$$
\xymatrix@R=14pt@C=30pt{
X\ar@{^(->}^j[r]\ar_f[rd] & \bar X\ar^{\bar f}[d]
& \partial X\ar@{_(->}_i[l]\ar^{\partial \bar{f}}[ld] \\
& S &
}
$$
The following result gives our main tool for computing stable homotopy types at infinity.
For specializations to topology and motives, 
see \cite{HR} and \cite[Theorem 1.6]{Wild2}, 
respectively.
\end{num}


%

\begin{prop}
\label{prop:basic_comput_thp-infty}
Let $(\bar{X},\partial X)$ be the closed $S$-pair associated with a compactification of a separated $S$-scheme of 
finite type. Then,  there exists a canonical isomorphism
$$
\htp^\infty_S(X) \simeq \TN_S(\bar X,\partial X)
$$
which is natural in $(\bar X,X,\partial X)$, 
covariantly functorial with respect to proper maps, 
and contravariantly functorial with respect to \'etale maps.
\end{prop}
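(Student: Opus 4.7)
The plan is to reduce both sides to explicit expressions in terms of the six functors applied to the compactification datum $(j,i)$. Since $j$ is an open immersion we have $j^!=j^*$, and since $\bar f\colon \bar X\to S$ is proper we have $\bar f_!=\bar f_*$. Composing $f=\bar f\circ j$, I would rewrite
$$
\Pi_S(X)=\bar f_* j_! j^*\bar f^!(\un_S),\qquad
\Pi_S^c(X)=\bar f_* j_* j^*\bar f^!(\un_S),
$$
so that the forget-supports map $\alpha_{X/S}$ becomes $\bar f_*\bigl(\alpha_j(\bar f^!\un_S)\bigr)$, where $\alpha_j\colon j_!j^*\to j_*j^*$ is the natural transformation from the six-functor formalism appearing in the functorial octahedron diagram \eqref{eq:functorial_tub_Neighb}.

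The heart of the proof is the observation that the third column and third row of that octahedron
$$
j_! j^!\xrightarrow{\alpha_j} j_* j^*\longrightarrow i_* i^* j_* j^*,
\qquad
i_! i^!\xrightarrow{\beta_i} i_* i^*\longrightarrow i_* i^* j_* j^*
$$
are cofiber sequences sharing the same cofiber, so that $\alpha_j$ and $\beta_i$ admit canonically isomorphic fibers. Evaluating at $\bar f^!(\un_S)$ and applying the exact functor $\bar f_*$, I obtain a canonical equivalence between the fibers of $\bar f_*\alpha_j(\bar f^!\un_S)$ and $\bar f_*\beta_i(\bar f^!\un_S)$. By the previous paragraph, the former is $\Pi_S^\infty(X)$. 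To identify the latter with $\TN_S(\bar X,\partial X)$, I would apply $\bar f_*$ to the localization triangle $j_!j^*\to\mathrm{Id}\to i_*i^*$ evaluated at $\bar f^!(\un_S)$: this identifies $\bar f_*i_*i^*\bar f^!(\un_S)$ with $\Pi_S(\bar X/\bar X-\partial X)$ in the sense of \Cref{df:quotient}, while $\bar f_*i_!i^!\bar f^!(\un_S)\simeq \partial\bar f_!\partial\bar f^!(\un_S)=\Pi_S(\partial X)$ since $\partial\bar f=\bar f\circ i$ is proper. Under these identifications, $\bar f_*\beta_i(\bar f^!\un_S)$ becomes precisely the map $\beta_{\bar X,\partial X}$ whose fiber is the defining object of $\TN_S(\bar X,\partial X)$.

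For the naturality statements, all the equivalences are assembled from natural transformations in the six-functor formalism, so any morphism of compactification triples $(\bar X,X,\partial X)$ automatically induces a commutative square. The covariance under proper $S$-morphisms and contravariance under \'etale $S$-morphisms of $\Pi_S^\infty(-)$ are intrinsic by construction, and transport across the isomorphism by choosing any compactification extending the given morphism; independence of the choice is guaranteed by the cdh-excision property \Cref{cor:cdh-invariance} and the usual base-change isomorphisms for $f_!$, $f^!$ along proper maps, respectively $f^*$, $f_*$ along \'etale ones. The only substantive step is the cofiber identification in the second paragraph, and I expect no further obstacle.
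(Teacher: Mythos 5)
Your proof is correct and follows essentially the same route as the paper's. The paper frames the argument as a direct application of its \Cref{prop:octahedron_TN} combined with the commutative diagram reducing $\alpha_f$ to $\bar f_*(\alpha_j)$; you have unwound that reference back to the underlying functorial $3\times3$ diagram \eqref{eq:functorial_tub_Neighb}, spelled out why $\alpha_j$ and $\beta_i$ share the cofiber $i_*i^*j_*j^*$ (hence share their fiber, since in a stable $\infty$-category $\mathrm{fib}(f)\simeq\mathrm{cofib}(f)[-1]$), and made explicit the identifications $\bar f_*i_!i^!\bar f^!\un_S\simeq\Pi_S(\partial X)$ and $\bar f_*i_*i^*\bar f^!\un_S\simeq\Pi_S(\bar X/\bar X-\partial X)$. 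This is the same proof, written with the citations expanded.
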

\begin{proof}
Given the six functors formalism, this is a direct application of \Cref{prop:octahedron_TN}. More precisely, with the notation of \Cref{compactification}, 
one reduces to the commutative diagram
$$
\xymatrix@R=10pt@C=35pt{
f_!f^!(\un_S)\ar^{\alpha_f}[rr]\ar_\sim[d]
 && f_*f^!(\un_S)\ar^\sim[d] \\
\bar f_*j_!j^!\bar f^!(\un_S)\ar^-{ad'_{j_!,j^!}}[r]
 & \bar f_*\bar f^!(\un_S)\ar^-{ad_{j_*,j^*}}[r] & \bar f_*j_*j^*\bar f^!(\un_S)
}
$$
and exactness of the rows and columns of \eqref{eq:functorial_tub_Neighb}.
\end{proof}

\begin{rem}
The above result has the following geometric interpretations. First, using the notations of \Cref{prop:octahedron_TN} for the closed $S$-pair $(\bar X,\partial X)$ and that of 
\Cref{df:smhtatinfinitydef}, 
the commutative diagram in the proof of \Cref{prop:basic_comput_thp-infty} can be recast as
$$
\xymatrix@R=12pt@C=30pt{
\htp_S(X)\ar^{\alpha_X}[r]\ar@{=}[d] & \htp^c_S(X)\ar^-\sim[d] \\
\htp_S(\bar X-\partial X)\ar^-{\alpha_{\bar X,\partial X}}[r] & \htp_S(\bar X/\partial X)
}
$$
In particular, 
considering the Borel-Moore homotopy $\Pi_S^c(X)$ of $X$ naturally leads to considering the object $\bar{X}/\partial X$ 
obtained by identifying the boundary $\partial X$ of any compactification $\bar{X}$ with a point. 
The latter can be viewed as a motivic model for the one-point compactification in topology.

Second, $\htp_{S}^\infty(X)$ can be canonically identified with the homotopy fiber of the canonical map
\begin{equation}
\label{equation:smhthf}
\htp_{S}(\partial X) \oplus \htp_{S}(X) \xrightarrow{i_*+j_*} \htp_{S}(\bar X)
\end{equation}
Under motivic realization, 
\eqref{equation:smhthf} becomes the formula for the boundary motive given in \cite[Proposition 2.4]{Wild1}.
\end{rem}

A reformulation of \Cref{prop:basic_comput_thp-infty} yields the following invariance result for the  punctured tubular neighborhood of a closed subscheme $Z$ of a proper $S$-scheme $X$: 

\begin{cor}
\label{cor:independance_TN}
Let $(X,Z)$ be a closed $S$-pair such that $X/S$ is proper.
Then,  the punctured tubular neighborhood $\TN_S(X,Z)$ is isomorphic to $\htp^\infty_S(X-Z)$,
and therefore it depends only on the open subscheme $X-Z$.
\end{cor}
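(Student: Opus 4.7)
The plan is to view this corollary as a direct consequence of \Cref{prop:basic_comput_thp-infty} applied to a tautological compactification of $X-Z$. Since $X/S$ is proper by hypothesis, the open immersion $j\colon X-Z \hookrightarrow X$ is itself a compactification of the separated $S$-scheme of finite type $X-Z$ in the sense of \ref{compactification}. The reduced boundary of this compactification is precisely $Z_{\mathrm{red}}$.

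First I would invoke \Cref{prop:basic_comput_thp-infty} for the closed $S$-pair $(X, Z_{\mathrm{red}})$, which yields a canonical isomorphism
$$
\htp^\infty_S(X-Z) \simeq \TN_S(X, Z_{\mathrm{red}})
$$
Next, to pass from $\TN_S(X, Z_{\mathrm{red}})$ to $\TN_S(X, Z)$, I would appeal to the topological invariance property recorded in \Cref{ex:topological_invariance}. Indeed, the nil-immersion $\nu\colon Z_{\mathrm{red}} \to Z$ induces isomorphisms $\htp_S(Z_{\mathrm{red}}) \xrightarrow{\sim} \htp_S(Z)$, and $\htp_S(X/X-Z_{\mathrm{red}}) \xrightarrow{\sim} \htp_S(X/X-Z)$ since $X-Z=X-Z_{\mathrm{red}}$. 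Passing to homotopy fibers in \Cref{df:PuncturedTN} gives the canonical identification $\TN_S(X, Z_{\mathrm{red}}) \simeq \TN_S(X, Z)$. Composing the two isomorphisms produces the desired equivalence
$$
\TN_S(X,Z) \simeq \htp^\infty_S(X-Z)
$$

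The final sentence of the corollary is then immediate: the right-hand side is defined intrinsically from the $S$-scheme $X-Z$ (via the map $\alpha_{X-Z/S}$ of \Cref{df:smhtatinfinitydef}) without reference to the compactification $X$ or to the closed subscheme $Z$. There is no genuine obstacle here; the only point requiring a moment of care is the reduction from $Z$ to $Z_{\mathrm{red}}$ so as to match the convention that boundaries of compactifications are reduced, which is handled by topological invariance.
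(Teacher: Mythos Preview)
Your proposal is correct and matches the paper's approach: the corollary is presented there as an immediate reformulation of \Cref{prop:basic_comput_thp-infty}, applied to the tautological compactification $X-Z\hookrightarrow X$. Your extra care with the reduction $Z_{\mathrm{red}}$ via \Cref{ex:topological_invariance} is the right way to make explicit what the paper leaves implicit.
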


By combining 
\Cref{prop:TN_weakly_smooth} and \Cref{prop:basic_comput_thp-infty}, we obtain the following result.
\begin{cor} \label{cor:irreducible-smooth-boundary}
Let $(\bar{X},\partial X)$ be the closed $S$-pair associated to a compactification of a separated $S$-scheme $X$. 
Assume that $(\bar{X},\partial X)$ is weakly h-smooth with normal  bundle $N=N_{\partial X/\bar{X}}$. 
Then,  there is a homotopy exact sequence
\begin{equation}
\label{eq:boundary-homotopy-seq}
\htp_S^\infty(X) \longrightarrow \htp_S(\partial X) \xrightarrow{e(N)} \Th_S(N)
\end{equation}
Here, $e(N)$ is induced by the Euler class of $N$ (see \ref{num:Euler_seq}). 
In particular, 
$\htp_S^\infty(X)\simeq\Pi_S(N^\times)$, 
and when $e(N)$ vanishes, 
there is a splitting $\htp_S^\infty(X)\simeq\htp_S(\partial X) \oplus \Th_S(N)[-1]$.
\end{cor}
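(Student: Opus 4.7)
The plan is to directly combine the two preceding results. First, I would apply \Cref{prop:TN_weakly_smooth} to the closed $S$-pair $(\bar X,\partial X)$, which by hypothesis is weakly h-smooth with normal bundle $N$. This immediately yields the homotopy exact sequence
$$
\TN_S(\bar X,\partial X) \longrightarrow \htp_S(\partial X) \xrightarrow{e_S(N)} \Th_S(N),
$$
together with the identification $\TN_S(\bar X,\partial X) \simeq \htp_S(N^\times)$ and the splitting under vanishing of $e_S(N)$.

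Next, I would invoke \Cref{prop:basic_comput_thp-infty}, whose canonical isomorphism $\htp^\infty_S(X) \simeq \TN_S(\bar X,\partial X)$ precisely transports the conclusions of \Cref{prop:TN_weakly_smooth} from the punctured tubular neighborhood side to the stable homotopy at infinity side. Substituting gives the desired sequence \eqref{eq:boundary-homotopy-seq} and the equality $\htp^\infty_S(X) = \htp_S(N^\times)$. Here $e(N)$ in the statement refers to the map $e_S(N)$ induced from the pointwise Euler class via the exceptional pushforward along $\partial\bar f$, as made precise in \Cref{num:Euler_seq}.

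For the splitting, I would use that $\T(S)$ is a stable $\infty$-category, so a fiber sequence whose second map is null-homotopic canonically decomposes as a direct sum of the source with the desuspension of the target. Since $e(N)=0$ forces $e_S(N)=0$, we obtain $\htp^\infty_S(X) \simeq \htp_S(\partial X) \oplus \Th_S(N)[-1]$. There is no substantive obstacle: the entire content of the corollary has already been packaged into \Cref{prop:TN_weakly_smooth} and \Cref{prop:basic_comput_thp-infty}, and the only verification needed is the compatibility of the Euler class notation, which is immediate from \Cref{num:Euler_seq}.
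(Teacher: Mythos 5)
Your proposal is correct and matches the paper exactly: the paper introduces \Cref{cor:irreducible-smooth-boundary} with the single line ``By combining \Cref{prop:TN_weakly_smooth} and \Cref{prop:basic_comput_thp-infty}, we obtain the following result,'' which is precisely your argument of applying the weakly h-smooth punctured-tubular-neighborhood sequence to $(\bar X,\partial X)$ and then transporting it along the canonical isomorphism $\htp_S^\infty(X)\simeq\TN_S(\bar X,\partial X)$.
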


%
%

\begin{rem}
\label{rem:sftmotivicrealization-1}Assume that $S$ is the spectrum of a perfect field $k$ of characteristic exponent $p$. Then,  the realization in $\DM(k)[1/p]$
of the homotopy exact sequence \eqref{eq:boundary-homotopy-seq}
is the homotopy exact sequence 
$$
\partial M(X) \longrightarrow M(\partial X)
\xrightarrow{\tilde c_r(N)} M(\partial X)(r)[2r]
$$
where  $\partial M(X)$ is the boundary motive of $X$ in Example \ref{ex:realization}, $r$ is the rank of the normal bundle  $N$ of $\partial_X$ in $\bar{X}$ and the map $\tilde c_r(N)$ 
is induced by multiplication with the top Chern class 
$c_r(N) \in \mathrm{CH}^r(\partial X) \simeq  \Hom(M(\partial X),\un(r)[2r])$. \Cref{cor:irreducible-smooth-boundary} implies
that $\htp_k^{\infty}(X)$ is a strictly finer invariant than $\partial M(X)$, see  \Cref{rem:sftmotivicrealization}.
\end{rem}


\subsection{Interpretation in terms of fundamental classes}
In what follows, 
we observe connections between stable homotopy at infinity 
and more generally punctured tubular neighborhoods and certain fundamental classes.
\begin{prop}
\label{prop:diagonal&shtpinfty}
Let $f:X \rightarrow S$ be a smooth morphism with relative tangent bundle $T_f$.
Then,  the map $\alpha'_{X/S}$ obtained by adjunction from the composite
$$
\htp_S(X) \xrightarrow{\alpha_{X/S}} \htp^c_S(X) \simeq \uHom\big(\htp_S(X,-T_f),\un_S\big),
$$
where the isomorphism uses \Cref{prop:weak-dual}(4),
fits into the commutative diagram
$$
\xymatrix@=20pt{
\htp_S(X) \otimes \htp_S(X,-T_f)\ar^-{\alpha'_{X/S}}[r]\ar_\simeq[d] & \un_S \\
\htp_S(X \times_S X,-p_j^{-1}T_f)\ar^-{\delta^!}[r] & \htp_S(X)\ar_{f_*}[u]
}
$$
The left vertical map is the K\"unneth isomorphism \eqref{eq:kunneth2} and $\delta^!$ is the Gysin map 
(\Cref{num:Gysin}) associated with the diagonal immersion $\delta:X\rightarrow X\times_S X$.
\end{prop}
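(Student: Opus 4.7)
The strategy is to express both composites in the claimed square in the six-functor language and identify them via a diagram chase using fundamental-class calculus. The content amounts to showing that the ``weak'' duality pairing of \Cref{prop:weak-dual}(3) coincides with the canonical Poincar\'e-type pairing built from the diagonal fundamental class; this is the analogue, valid without requiring properness of $f$, of the strong Poincar\'e duality pairing exhibited in \Cref{ex:duality}.

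I first unwind $\alpha'_{X/S}$. Since $f$ is smooth, hence h-smooth, the purity isomorphism $\mathfrak{p}_f\colon \Th(T_f)\xrightarrow{\simeq}f^!(\un_S)$ converts $\Pi_S^c(X)$ into $f_*\Th(T_f)$ and $\Pi_S(X,-T_f)$ into $f_!(\un_X)$. The duality isomorphism $\Pi_S^c(X)\simeq \uHom(\Pi_S(X,-T_f),\un_S)$ is assembled in the proof of \Cref{prop:weak-dual} from the internal $(f_!,f^!)$-adjunction, the $\otimes$-invertibility of $\Th(T_f)$, and $\mathfrak{p}_f$. The corresponding evaluation pairing therefore unravels into an explicit composite using the projection formula, the $(f^*,f_*)$-counit, and the trace $f_!f^!(\un_S)\to \un_S$; pre-composing with $\alpha_{X/S}\otimes\mathrm{id}$ yields an explicit formula for $\alpha'_{X/S}$ in these terms. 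On the lower path, the Gysin map $\delta^!$ is induced by the diagonal fundamental class $\eta_\delta\colon \Th(\tau_\delta)\to \delta^!(\un)$ together with base change as in \ref{num:Gysin}, where $\tau_\delta=-\langle T_f\rangle$ since $N_\delta\simeq T_f$; the K\"unneth isomorphism of \Cref{ex:kunneth} is simply the projection-formula identification for $h_!=f_!\circ p_{j!}$.

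The identification of the two composites is then a diagram chase combining: (a) the associativity of fundamental classes \cite[Theorem 3.3.2]{DJK} applied to the factorization $\mathrm{id}_X=p_j\circ\delta$, which expresses that $\mathfrak{p}_{p_j}\circ\eta_\delta$ realizes the identity of $\un_X$; (b) the base change formula for Gysin morphisms \cite[3.3.2(iii)]{DJK} for the cartesian square with parallel edges $\delta$ and $p_j$; and (c) the projection formula. The main obstacle is the careful interplay between the ``outer'' purity $\mathfrak{p}_f$ hidden in the weak-duality isomorphism and the ``inner'' purity $\mathfrak{p}_{p_j}$ implicit in $\delta^!$ via base change: both ultimately refer to the same tangent structure on $X$, and their compatibility is precisely what the associativity of fundamental classes enforces.
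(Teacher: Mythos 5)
Your proposal follows essentially the same route as the paper: both reduce the statement to the associativity of fundamental classes ($\eta_\delta\cdot\eta_{p_1}=1$) and the transversal base change formula from \cite[Theorem~3.3.2]{DJK}, using Tor-independence of the pullback square defining $X\times_S X$ (which follows from smoothness of $f$) and then a short diagram chase. One imprecision in (b): the relevant cartesian square is not ``the one with parallel edges $\delta$ and $p_j$'' (these maps are not parallel in any cartesian square); it is the square with horizontal/vertical arrows $p_1,p_2\colon X\times_S X\rightrightarrows X$ and $f,f\colon X\rightrightarrows S$, from which one extracts $\Delta^*(\eta_f)=\eta_{p_1}$. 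Apart from that slip the argument matches the paper's.
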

In other words, the map $\alpha_{X/S}$, whose homotopy cofiber  is the stable homotopy at infinity of $X/S$,
 can be computed under the canonical isomorphisms
\begin{align*}
[\htp_S(X),\htp^c_S(X)] &\simeq [\htp_S(X) \otimes \htp_S(X,-T_f),\un_S] \\
 &\simeq  [\htp_S(X \times_S X),\Th(p_j^{-1}T_f)] = H_{\T}^0(X \times_S X,p_j^{-1}(T_f))
\end{align*}
as the twisted fundamental class $[\Delta_{X/S}]_{X\times X}^j$ of the diagonal, with respect to
 the $\delta$-parallelization corresponding to the smooth retraction $p_j$ of $\delta$, see \Cref{ex:i-parallelization}.


\begin{proof}
For notational convenience, 
let $p_1:X \times_S X \rightarrow X$ be the projection on the first factor. The associativity formula in 
\cite[Theorem 3.3.2]{DJK} shows the equality of fundamental classes $\eta_\delta.\eta_{p_1}=1$.  
The assumption that $f$ is smooth implies the cartesian square
$$
\xymatrix@=14pt{
X \times_S X\ar^-{p_1}[r]\ar_{p_2}[d]\ar@{}|\Delta[rd] & X\ar^f[d] \\
X\ar_f[r] & S
}
$$
is Tor-independent. Thus the transversal base change formula in \cite[Theorem 3.3.2]{DJK} implies the equality $\Delta^*(\eta_f)=\eta_{p_1}$
from which the commutativity of the square follows. 
\end{proof}

\begin{rem}
Computing fundamental classes of the diagonal is a famous problem, at the center of the Chow-K\"unneth conjecture, for example.
 The previous proposition shows the link between determining the stable homotopy type at infinity,
 or the boundary motive, of $X/S$ and computing the (twisted) fundamental class of its diagonal. The main difference with the Chow-K\"unneth conjecture
 is that we are interested mainly in the non-proper case.
\end{rem}

Similarly, one gets the following link between punctured tubular neighborhoods and another fundamental class.
\begin{prop}
\label{prop:graph&PTN}
Let $(X,Z)$ be a closed $S$-pair such that  $X/S$ is smooth with relative tangent bundle $T_{X/S}$ and such that $Z/S$ is proper and has smooth crossings (see \Cref{df:normal_crossing}). 
Then,  the map $\beta'_{X,Z}$ obtained by adjunction from
$$
\htp_S(Z) \xrightarrow{\beta_{X,Z}} \htp_S(X/X-Z)
\simeq \htp_S(Z,-\twist{i^{-1}T_{X/S}})^\vee,
$$
where the isomorphism follows from \Cref{thm:PTN_dual}, fits into the commutative diagram
$$
\xymatrix@=20pt{
\htp_S(Z) \otimes \htp_S(Z,-\twist{i^{-1}T_{X/S}})\ar^-{\beta'_{X,Z}}[rr]\ar_{Id \otimes i_*}[d] && \un_S \\
\htp_S(Z) \otimes \htp_S(X,-\twist{T_{X/S}})\ar_\simeq^{(*)}[r] & 
\htp_S(Z \times_S X,-\twist{p_j^{-1}T_{X/S}})\ar^-{\gamma_i^!}[r] &
\htp_S(Z)\ar_{q_*}[u]
}
$$
where  $\gamma_i^!$ is the Gysin morphism associated to the graph immersion $\gamma_i=Id\times i:Z\rightarrow Z\times_S X$.
\end{prop}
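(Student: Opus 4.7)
The proof will follow closely the pattern of the preceding \Cref{prop:diagonal&shtpinfty}, with the graph $\gamma_i\colon Z\to Z\times_S X$ of $i$ playing the role of the diagonal $\delta_X\colon X\to X\times_S X$.

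First I would unpack $\beta_{X,Z}$ using homotopy purity. Since $f\colon X\to S$ is smooth in a Nisnevich neighborhood of $Z$, with $f^!(\un_S)\simeq \Th(T_{X/S})$, \Cref{thm:generalizedhomotopypurity} gives a canonical identification $\htp_S(X/X-Z)\simeq \cohtp_S(Z,i^{-1}T_{X/S})$ under which $\beta_{X,Z}$ is obtained from the unit of the $(i^*,i_*)$-adjunction, twisted by the fundamental class of $f$. The rigidity statement of \Cref{thm:generalizedhomotopypurity2} (which applies since $Z/S$ is proper with smooth crossings and $X/S$ is smooth near $Z$) then identifies $\htp_S(X/X-Z)^\vee$ with $\htp_S(Z,-\twist{i^{-1}T_{X/S}})$, so that $\beta'_{X,Z}$ is the pairing adjoint to $\beta_{X,Z}$ under this duality.

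Second, I would introduce the commutative square
$$
\xymatrix@=14pt{
Z\ar[r]^-{\gamma_i}\ar[d]_-{i} & Z\times_S X\ar[d]^-{i\times\mathrm{id}_X} \\
X\ar[r]^-{\delta_X} & X\times_S X
}
$$
which is cartesian: the normal bundle of the regular closed immersion $\delta_X$ is $T_{X/S}$, whose pullback along $i$ is $i^{-1}T_{X/S}$, and this coincides with the normal bundle $N_{\gamma_i}$ of $\gamma_i=(\mathrm{id}_Z,i)$, a section of the base change $p_1\colon Z\times_S X\to Z$ of $f$. Hence the square is Tor-independent, and the transversal base change property of fundamental classes \cite[Theorem 3.3.2]{DJK} identifies $i^*(\eta_{\delta_X})$ with $\eta_{\gamma_i}$ under the canonical matching of normal bundles.

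Finally I would combine the K\"unneth formula of \Cref{prop:Kunneth-smooth-nc}, valid because $Z/S$ has smooth crossings and $X/S$ is smooth, with the associativity formula of \cite[Theorem 3.3.2]{DJK} applied to $p_2\circ\gamma_i=i$ (where $p_2\colon Z\times_S X\to X$ is the second projection), to identify the lower composite of the diagram with the evaluation of $\eta_{\gamma_i}$ against $\mathrm{id}\otimes i_*$ followed by the structure map of $Z$. The asserted commutativity then follows exactly as in \Cref{prop:diagonal&shtpinfty}. The main obstacle is the bookkeeping needed to match the twisted Gysin maps with their fundamental-class avatars and to check compatibility of the K\"unneth isomorphism \eqref{eq:kunneth2} with $\gamma_i^!$ and the adjunction unit defining $\beta_{X,Z}$; once this compatibility is set up, the geometric input reduces to the transversality of the square above and the relation $p_2\circ\gamma_i=i$.
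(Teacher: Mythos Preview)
Your proposal is correct and follows essentially the same approach as the paper, which is very terse (it notes that $\gamma_i$ is a section of the smooth projection $Z\times_S X\to Z$, hence regular with normal bundle $p_j^{-1}T_{X/S}$, invokes \Cref{prop:Kunneth-smooth-nc} for $(*)$, and then declares the commutativity a ``routine check using the definitions of the maps''). Your elaboration via the cartesian square comparing $\gamma_i$ with $\delta_X$ and the transversal base change plus associativity formulas from \cite{DJK} is precisely the kind of unwinding that the paper leaves implicit, mirroring what it does explicitly in the proof of \Cref{prop:diagonal&shtpinfty}.
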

In other words, the map $\beta_{X,Z}$, whose cone is the punctured tubular neighborhood $\TN_S(X,Z)$ of the pair $(X,Z)$,
 can be computed under the canonical isomorphisms
\begin{align*}
[\htp_S(Z),\htp_S(X/X-Z)] & \simeq [\htp_S(Z),\htp_S(Z,-\twist{i^{-1}T_{X/S}})^\vee] 
 \simeq [\htp_S(Z) \otimes \htp_S(Z,-\twist{i^{-1}T_{X/S}}),\un_S] \\
& \simeq [\htp_S(Z\times_S X,-\twist{p_j^{-1} T_{X/S}},\un_S]
 \simeq H_{\T}^0(Z\times_S X,p_j^{-1}T_{X/S})
\end{align*}
as the twisted fundamental class $[\Gamma_i]_{Z\times X}^{can}$ of the graph $\gamma_i$
 of the closed immersion $i:Z\to X$, with obvious $\gamma_i$-parallelization
 $N_{\gamma_i} \simeq \gamma_i^{-1}(p_j^{-1} T_{X/S})$.
\begin{proof}
First, let us note that $\gamma_i$ is a section of the smooth separated morphism 
$Z \times_S X \rightarrow Z$.
So it is a regular closed immersion whose normal bundle is isomorphic to the relative tangent bundle $p_j^*T_{X/S}$ of $Z \times_S X$ over $Z$. This justifies the existence of the Gysin map $\gamma_i^!$ using \Cref{num:Gysin}.
Secondly,  the isomorphism ($*$) follows from the K{\"u}nneth isomorphism of \Cref{prop:Kunneth-smooth-nc}.
A routine check using the definitions of the maps shows that the diagram commutes.
\end{proof}

\begin{num}
Pushing the idea from the preceding result, one obtains a method of computation
 for the decomposition of punctured tubular neighborhoods obtained in \Cref{thm:maincomputation}.
 We use the notations of \emph{op. cit.}: $(X,Z)$ is a closed $S$-pair, $Z=\cup_{i \in I} Z_i$.
Furthermore, we make the following assumptions.
\begin{enumerate}
\item $X/S$ is smooth with relative tangent bundle $T_{X/S}$.
\item $Z/S$ is proper and has smooth crossings.
\end{enumerate}
In fact, as $Z_i/S$ is smooth and proper, one deduces from \Cref{ex:duality} that
  $\htp_S(Z_i,N_i)$, where $N_i$ denotes the normal bundle of $Z_i$ in $X$ is rigid with dual $\htp_S(Z_i,-\twist{T_{X/S}^i})$, where we denote by $T_{X/S}^i$
 the restriction of $T_{X/S}$ to $Z_i$ and use the isomorphism of virtual vector bundles
 $\twist{T_{X/S}^i}=\twist{N_i}+\twist{T_{Z_i/S}}$.
Combined with the K\"unneth formula \eqref{eq:kunneth2},
 one gets a canonical isomorphism
\begin{equation}\label{eq:iso_compute_pre_Mumford}
\varphi:[\htp_S(Z_i),\htp_S(Z_j,N_j)] \xrightarrow{\ \cong \ } H_{\T}^0(Z_i \times_S Z_j,p_2^{-1}T_{X/S}^j)
\end{equation}
\end{num}
\begin{prop}\label{prop:iso_compute_pre_Mumford}
Consider the above assumptions and the cartesian square of closed immersions
\begin{equation}
\label{equation:bigdiagram}
\xymatrix@R=14pt@C=28pt{
Z'_{ij}\ar[r]\ar_{\nu'_{ij}}[d] & X\ar^\delta[d] \\
Z_i\times_S Z_j\ar^{\bar \nu_i \times_S \bar \nu_j}[r] & X\times_S X
}
\end{equation}
Let $\delta_{ij}:\htp_S(Z_i) \rightarrow \htp_S(Z_j,N_j)$ be the map appearing in \Cref{thm:maincomputation}.
\begin{enumerate}
\item Through the isomorphism \eqref{eq:iso_compute_pre_Mumford}, we have 
$$
\delta_{ij}=(\bar \nu_i \times_S \bar \nu_j)^*\big([\Delta_{X/S}]^2_{X\times X}\big)
$$
The right-hand side is the \emph{second} twisted fundamental class of 
 the diagonal of $X/S$ (see \Cref{ex:i-parallelization}).
\item If $i=j$, $\nu'_{ii}$ is the diagonal $\delta_i$ of $Z_i/S$. 
We consider the map
$$
H^0_{\T}(X,N_i) \xrightarrow{\epsilon_{2*}} H^0_{\T}(Z_i,N_{\delta_i}+\delta_i^{-1}p_2^{-1}T_{X/S})
 \xrightarrow{\delta_{i!}} H^0_{\T}(Z_i \times_S Z_i,p_2^{-1}T_{X/S})
$$
where the first map is induced by the canonical isomorphism of virtual bundles
$$
\epsilon_2:\twist{N_i} \simeq \twist{N_i}-\twist{T_{X/S}^i}+\twist{\delta_i^{-1}p_2^{-1}T_{X/S}} \simeq \twist{N_{\delta_i}}+\twist{\delta_i^{-1}p_2^{-1}T_{X/S}}
$$
over $Z_i$ and $\delta_{i!}$ is the Gysin map in cohomotopy (see \Cref{num:Gysin}).
Let also $e(N_i)$ be the Euler class of the normal bundle $N_i$ of $Z_i/X$ (see \Cref{num:Euler_seq}).
Then,  through the isomorphism \eqref{eq:iso_compute_pre_Mumford}, we have
$$
\delta_{ii}=\delta_{i!}(\epsilon_{2*}e(N_i))
$$
\item Assume furthermore that \eqref{equation:bigdiagram} is transversal: 
$\nu'_{ij}$ is regular with normal bundle isomorphic to the restriction of $T_X$ to $Z_{ij}$,
i.e., 
it is of proper codimension.
Then,  $\delta_{ij}$ can be computed through the isomorphism \eqref{eq:iso_compute_pre_Mumford} as
$$
\delta_{ij}=[Z'_{ij}]_{Z_i\times Z_j}^2
$$
Here, 
$[Z'_{ij}]_{Z_i\times Z_j}^2 \in H_{\T}^0(Z_i \times_S Z_j,p_2^{-1}\twist{T_X^j})$ is the twisted fundamental class
of $\nu'_{ij}$ with respect to the obvious $\nu'_{ij}$-parallelization. 
\end{enumerate}
\end{prop}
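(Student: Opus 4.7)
The plan is to deduce the three assertions from Proposition~\ref{prop:diagonal&shtpinfty} combined with the base change and excess intersection formulas for motivic fundamental classes from \cite[Theorem 3.3.2, Proposition 3.3.4]{DJK}, applied to the cartesian square \eqref{equation:bigdiagram}. Part (1) serves as the backbone, and (2) and (3) are derived from it by specializing the excess data in the square.

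First I would prove (1). By Theorem~\ref{thm:maincomputation}, $\delta_{ij}$ equals $\bar{\nu}_j^! \bar{\nu}_{i*}$, and via the purity identification $\Pi_S(Z_j,\twist{N_j}) \simeq \Pi_S(\bar X / \bar X - Z_j)$ of Theorem~\ref{thm:generalizedhomotopypurity} it becomes the composite $\Pi_S(Z_i) \xrightarrow{\bar{\nu}_{i*}} \Pi_S(X) \to \Pi_S(X/X-Z_j)$. Unwinding $\varphi$ as the composite of the duality isomorphism of Example~\ref{ex:duality} (applied to the proper smooth $S$-scheme $Z_j$) with the K\"unneth isomorphism \eqref{eq:kunneth2}, one sees that $\varphi(\delta_{ij})$ is the cohomotopy class adjoint to the pairing
\[
\Pi_S(Z_i) \otimes \Pi_S(Z_j, -\twist{T_{X/S}^j}) \xrightarrow{\bar{\nu}_{i*} \otimes \bar{\nu}_{j*}} \Pi_S(X) \otimes \Pi_S(X, -\twist{T_{X/S}}) \xrightarrow{\alpha'_{X/S}} \un_S.
\]
Proposition~\ref{prop:diagonal&shtpinfty} identifies the second arrow with the K\"unneth image of $[\Delta_{X/S}]^2_{X \times X}$; naturality of K\"unneth and of cohomotopy pullback under $\bar{\nu}_i \times_S \bar{\nu}_j$ then yields $\varphi(\delta_{ij}) = (\bar{\nu}_i \times_S \bar{\nu}_j)^*[\Delta_{X/S}]^2_{X \times X}$.

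Next I would deduce (3) by direct base change. Under the assumption that \eqref{equation:bigdiagram} is transversal with $N_{\nu'_{ij}} \cong (\nu'_{ij})^{-1} p_2^{-1} T_{X/S}$, the square is Tor-independent, and the transversal base change property of fundamental classes \cite[Theorem 3.3.2(iii)]{DJK} identifies the pullback $(\bar{\nu}_i \times_S \bar{\nu}_j)^*\eta_\delta$ with $\eta_{\nu'_{ij}}$ up to the prescribed $\nu'_{ij}$-parallelization. This translates to $(\bar{\nu}_i \times_S \bar{\nu}_j)^*[\Delta_{X/S}]^2_{X \times X} = [Z'_{ij}]^2_{Z_i \times Z_j}$, and combined with (1) this is the claim.

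Finally I would deduce (2). For $i=j$ the pullback $\nu'_{ii}$ is the diagonal $\delta_i$ of $Z_i/S$, whose normal bundle is $T_{Z_i/S}$, so the square is not transversal and has excess bundle $\bar{\nu}_i^{-1}T_{X/S} - T_{Z_i/S} = \twist{N_i}$. The excess intersection formula \cite[Proposition 3.3.4]{DJK} then gives $(\bar{\nu}_i \times_S \bar{\nu}_i)^*[\Delta_{X/S}]^2_{X \times X} = \delta_{i!}(\epsilon_{2*} e(N_i))$, where $\epsilon_2$ is precisely the virtual-bundle comparison chosen in the statement to re-express the excess contribution in the twist $p_2^{-1}T_{X/S}$, and $\delta_{i!}$ is the cohomotopy Gysin map. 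Combined with (1) this yields the stated formula for $\delta_{ii}$. The main obstacle throughout is bookkeeping the various parallelizations and virtual-bundle identifications threaded through the composite of duality, K\"unneth, and purity; once these compatibilities are laid out carefully, the geometric content reduces to the base change and excess formulas of \cite{DJK}.
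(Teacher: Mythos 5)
Your proof is correct and follows the same overall strategy as the paper: establish part (1) and derive (2) and (3) from it by the excess intersection and transversal base change formulas for the cartesian square \eqref{equation:bigdiagram}, respectively. The organizational difference is in part (1): you route the argument through Proposition~\ref{prop:diagonal&shtpinfty}, expressing $\varphi(\delta_{ij})$ as the pairing $\alpha'_{X/S}\circ(\bar\nu_{i*}\otimes\bar\nu_{j*})$ and then identifying $\alpha'_{X/S}$ with the diagonal fundamental class $[\Delta_{X/S}]^2_{X\times X}$, whereas the paper works directly from the explicit diagonal-evaluation pairing of Example~\ref{ex:duality} and cites the composition and transversal base change properties of fundamental classes (\cite[Lemma~3.2.13, Ex.~3.2.9(i)]{DJK}). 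Both routes ultimately rest on the same DJK compatibilities. One caveat worth surfacing: the step ``one sees that $\varphi(\delta_{ij})$ is the cohomotopy class adjoint to the pairing\dots'' is not a tautology---it amounts to checking the identity $\mu_{Z_j}\circ(\bar\nu_j^!\otimes\mathrm{Id})=\alpha'_{X/S}\circ(\mathrm{Id}\otimes\bar\nu_{j*})$ on $\htp_S(X)\otimes\htp_S(Z_j,-\twist{T_{X/S}^j})$, which in turn is the composition formula $\gamma_j^!=\delta_j^!\circ(\bar\nu_j\times\mathrm{Id})^!$ for the graph together with transversal base change applied to the square comparing $\gamma_j$, $\delta$, $\bar\nu_j$, and $\mathrm{Id}\times\bar\nu_j$; this is exactly the verification the paper delegates to DJK and deserves a sentence rather than ``one sees.'' The benefit of your detour through Proposition~\ref{prop:diagonal&shtpinfty} is that it makes the role of the diagonal fundamental class conceptually transparent; the cost is that you must still verify the factorization above, so no genuine work is saved.
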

\begin{proof}
The first statement follows from the definition of the explicit duality pairing given in \Cref{ex:duality},
and the properties of fundamental classes. 
For compatibility with composition and transversal base change formula for closed immersions, 
see \cite[Lemma 3.2.13, Ex. 3.2.9(i)]{DJK}.
 The second (resp. third) computation follows from the first one and the excess intersection (resp. transversal base change) formula for the above cartesian square.
\end{proof}

\begin{ex}
When $\T$ is an oriented motivic category, 
i.e., 
one of the categories under $\DM$ in \eqref{eq:motivic_cat}, 
and we assume that the second condition of the proposition holds,
then $\delta_{ij}=[Z'_{ij}]_{Z_i\times Z_j}$ is the image of the usual cycle class of the natural 
diagonal immersion of $Z'_{ij}$ by the cycle class map
$$
\CH^d(Z_i \times_S Z_j) \rightarrow H_{\T}^{2d,d}(Z_i \times_S Z_j)
$$
where $d$ is the dimension of $X/S$.
 In particular, we get $\delta_{ij}=\delta_{ji}$ after making the identification
 $\CH^d(Z_i \times_S Z_j)=\CH^d(Z_j \times_S Z_i)$.
That is, the matrix in \Cref{thm:maincomputation} is \emph{symmetric}.
In the non-oriented case, this will no longer be true in general,
as we will illustrate in the forthcoming section.
\end{ex}

%

\section{Motivic plumbing}
\label{section:Mumford}
This section describes how to compute punctured tubular neighborhoods in the two-dimensional case. We focus on the computation of the neighborhood at infinity of an arbitrary surface \(X_0\), after compactifying it to \(X\) with a normal crossing boundary \(D=\partial X_0\) (cf., e.g.,  \Cref{prop:basic_comput_thp-infty}). This process also applies to the punctured tubular neighborhood of singularities of normal surfaces over a perfect field. By taking a suitable resolution of singularities, we can reduce the situation to a log-pair \((X,D)\) and reference \Cref{cor:cdh-invariance}. In particular, for rational singularities, we will demonstrate that our framework enables us to provide a motivic version of Mumford's plumbing construction, as discussed in \cite{mumfordihes}.

Let us establish some notation for this section. Except in \Cref{sec:KthNCD}, we work over a base field \(k\) and within a motivic \(\infty\)-category \(\T\) (see \Cref{sec:notations}). We denote \(\htp=\htp_k\) following the notation in \Cref{df:internal_theories}. Our primary cases are \(\T=\SH\) and \(\T=\DM\). Recall that for a smooth \(k\)-scheme \(X\), \(\htp(X)=\Sigma^\infty X_+\) and \(\htp(X)=M(X)\).

\subsection{K-theory and Picard groups of normal crossing divisors}\label{sec:KthNCD}

\begin{num}
Given an arbitrary scheme $X$, one can define its Thomason-Trobaugh K-theory spectrum $K(X)$
 and this defines presheaf of $S^1$-spectrum on the category of qcqs schemes $\Sch$.
 According to \cite{Thomason}, it satisfies Nisnevich descent and therefore defines
 an object $K \in \Sh_\nis(\Sch,\Sp)$ where $\Sp$ is the $\infty$-category of $S^1$-spectra.\footnote{This
 is also the \emph{stabilization} of the big Nisnevich $\infty$-topos with base site $\Sch$.}
 According to \cite[Th. 6.3]{KST}, Weibel's homotopy invariant K-theory $KH$
 can be defined as the $\cdh$-localisation
 of the sheaf $K$, and we will put $K^{\cdh}=KH:=L_{\cdh} K$.

One then considers the adjunction
$$
L_{\cdh}:\Sh_\nis(\Sch) \rightleftarrows \SH_\cdh(\Sch):\cO_{\cdh}
$$
where $L_{\cdh}$ is the $\infty$-categorical associated $\cdh$-sheaf functor.
 Both the above homotopy categories are equipped with standard $t$-structures,
 whose heart are made respectively of Nisnevich and $\cdh$ sheaves of abelian groups,
 and whose towers of truncations, with homological conventions,
$$
\hdots \rightarrow \tau_{\leq n} \rightarrow \tau_{\leq n+1} \rightarrow \hdots
$$
correspond to the ($S^1$-stable) Postnikov tower. Associated with this tower
 applied to $K$ or $KH$, and using the cohomological functor
$\pi_{-p-q}(\Map(\Sigma^\infty X_+,.))$
for a scheme $X$, we get the $t$-descent spectral sequences, $t=\nis, \cdh$
$$
E_{2,t}^{p,q}=H^p_t(X,a_t\pi_{-q}(K)) \Rightarrow K^t_{-p-q}(X)
$$
This is classical (see, e.g., \cite{BG, KST}). Note that the form
 of the $E_2$-term in the $\cdh$-local case follows as
 the functor $L_{\cdh}$ is $t$-exact. Using this fact again, one gets a canonical morphism of towers, induced
 by the unit map of the adjunction $(\cO_{\cdh},L_{\cdh})$
$$
\tau_{\leq -p-q} K \rightarrow \cO_{\cdh}\big(\tau_{\leq -p-q} KH\big)
$$
This gives a canonical morphism of spectral sequences induced by the canonical morphism deduced from $\cdh$-sheafification
$$
E_{2,\nis}^{p,q}=H^p_\nis(X,a_\nis\pi_{-q}(K)) \rightarrow H^p_\cdh(X,a_\cdh\pi_{-q}(K))=E_{2,\cdh}^{p,q}
$$
Given these considerations,
 we will define the $\cdh$-local Picard group of $X$ as the isomorphism group of
 $\cdh$-locally trivial torsors over $X$ under the group $\GG$
$$
\Pic_{\cdh}(X)=H^1_{\cdh}(X,\GG)
$$
\end{num}
\begin{prop}
Let $X$ be a one-dimensional scheme, 
and $\pi_0(X)$ be the (finite) set of its connected components.
Then, 
there exists a commutative diagram of abelian groups, 
in which each horizontal line is an exact sequence
$$
\xymatrix@C=20pt@R=10pt{
0\ar[r] & \Pic(X)\ar[r]\ar[d] & K_0(X)\ar^{\rk}[r]\ar[d] & \ZZ^{\pi_0(X)}\ar[r]\ar@{=}[d] & 0 \\
0\ar[r] & \Pic_{\cdh}(X)\ar[r] & KH_0(X)\ar^{\rk}[r] & \ZZ^{\pi_0(X)}\ar[r] & 0
}
$$
Both exact sequences are split by the determinant functors $K_0(X) \xrightarrow{\det} \Pic(X)$
 and $KH_0(X) \xrightarrow{\det_{\cdh}} \Pic_{\cdh}(X)$ respectively.
\end{prop}
\begin{proof}
We apply the $t$-descent spectral sequences mentioned earlier. 
Since both the Nisnevich and $\cdh$-topologies on $X$ have cohomological dimensions 
less than or equal to $\dim X = 1$, both spectral sequences are concentrated in the 
lines $p = 0$ and $p = 1$. 
In particular, they degenerate at $E_2$ and induce two short exact sequences that are 
functorially related.
Next, we use the identification of the Nisnevich 
sheafification 
$$
 a_\nis \pi_i K = 
\begin{cases}
\ZZ & \text{if } i = 0 \\
\GG & \text{if } i = 1
\end{cases}
$$
This result relies on the observation that for a local ring $R$, 
we have $K_0(R) = \ZZ$ and $K_1(R) = R^\times$ (see \cite[III, Lemma 1.4]{Weibel} 
for the latter statement). 
Additionally, 
the Nisnevich local sheaf represented by $\ZZ$ on $\text{Sch}$ is also a $\cdh$-sheaf, 
which allows us to correctly represent the diagram as stated in the proposition.
Finally, we recall that the determinant is induced by the canonical map 
$$
\tilde \det: K \rightarrow \mathrm{B} \GG
$$
which is a morphism of Nisnevich sheaves of $S^1$-spectra on $\text{Sch}$. 
The $t$-descent spectral sequences are functorial with respect to this morphism, 
demonstrating that $\tilde \det$ induces the desired splitting.
\end{proof}

According to \Cref{thm:SH-orientations}, one derives the following key result
 (see also \cite{RonTheta} for the first occurrence of this kind of fact).

\begin{cor}\label{cor:SL^c-orientability-curves}
Let $X$ be a scheme of dimension one.
 Then the Thom space $\Th_X(v) \in \ho \SH(X)$ of a virtual vector bundle $v$ depends only
 on the rank and determinant of $v$.
 In particular, an orientation\footnote{See \Cref{subsection:ovbaqi} in the Appendix.} $\epsilon \in \Or_X(v)=\Or_X(\det v)$ induces a canonical isomorphism
$$
\epsilon_*:\Th_X(v) \xrightarrow \simeq \un_X(r)[2r], r=\rk v
$$
\end{cor}

\begin{rem}
To put it differently, we discover the surprising fact that when we restrict ourselves to virtual vector bundles over one-dimensional schemes, motivic ring spectra are always canonically $\SL^c$-oriented as defined by Panin and Walter (see \cite{PW1}).
\end{rem}

Note that the above corollary holds for possibly singular schemes.
 The next result will help us understand orientations of line bundles
 in the case of normal crossing singularities.

\begin{thm}\label{thm:Pic_NCD}
Let $D$ be a reduced scheme with finitely many irreducible components $D=\cup_{i \in I} D_i$.
 We use the notation of \ref{num:normal_crossing}, for $Z=D=S$. In particular,
 we assume the set of indices $I$ is linearly ordered.
 Assume that for all $J \subset I$,
 $D_J=(D'_J)_{red}$ is $0$-dimensional when $\sharp J=2$,
 and empty when $\sharp J>2$.

Then, there is a commutative diagram with exact rows of the form
$$
\xymatrix@R=14pt@C=18pt{
0\ar[r] & \GG(D)\ar^-{\sum_i \nu_i^*}[r]\ar[d]
 & \bigoplus_{i \in I} \GG(D_i)\ar^-{\phi}[r]\ar_{(2)}[d]
 & \bigoplus_{i<j} \GG(D'_{ij})\ar[r]\ar^{(3)}[d]
 & \Pic(D)\ar^-{\sum_i \nu_i^*}[r]\ar[d]
 &\bigoplus_{i\in I} \Pic(D_i)\ar[r]\ar^{(5)}[d]
 & 0 \\
0\ar[r] & H^0_\cdh(D,\GG)\ar^-{\sum_i \nu_i^*}[r] & \bigoplus_{i \in I} H^0_\cdh(D_i,\GG)\ar[r]
 & \bigoplus_{i<j, x} \kappa_{ij}(x)^\times\ar[r]
 & \Pic_\cdh(D)\ar^-{\sum_i \nu_i^*}[r]
 & \bigoplus_{i\in I}\Pic_\cdh(D_i)\ar[r]
 & 0
}
$$
where $x$ runs over the points of the $0$-dimensional scheme $D_{ij}$,
 $\kappa_{ij}(x)$ being the associated residue field,
 the vertical maps are the natural arrows obtained via $\cdh$-sheafification,
 and we define
$$
\phi:(u_i)_{i \in I} \mapsto \sum_{i<j} u_i|_{D'_{ij}}.(u_j|_{D'_{ij}})^{-1}
$$
Moreover, if all the $D_i$ are regular schemes, the maps $(2)$ and $(5)$
 are isomorphisms.
\end{thm}
\begin{proof}
We start with the following lemma.
\begin{lm}\label{lm:exact-NCD-Gm}
Under the assumptions of the previous theorem,
 the following sequence of Zariski sheaves on $D$ is exact
$$
\xymatrix@C=34pt{
0\ar[r] & \GGx D\ar^-{\sum_i \nu_i^*}[r]
 & \bigoplus_{i \in I} \nu_{i*}(\GGx{D_i})\ar^-{\phi}[r]
 & \bigoplus_{i<j} \nu_{ij*}\left(\GGx{D'_{ij}}\right)\ar[r]
 & 0
}
$$
where $\GGx D$ denotes the Zariski sheaf on $D$ obtained by restriction,
 and $\phi$ is defined as in the statement of the theorem.
\end{lm}

We demonstrate the exactness on stalks at a point \( x \in X \). If \( x \) does not belong to any of the \( D_{ij} \), then it belongs to a single component \( D_i \), and the exactness is evident. If \( x \in D_{ij} \), we consider the case of a local reduced ring \( A \) with \( D = \Spec(A) \), and two integral components \( D_1 = \Spec(A/I) \) and \( D_2 = \Spec(A/J) \) — in particular, \( I \cap J = 0 \).

We reduce the problem to demonstrating the exactness of the sequence
$$
0 \rightarrow A^\times \rightarrow (A/I)^\times \oplus (A/J)^\times \rightarrow (A/(I+J))^\times \rightarrow 0
$$
which is now an exercise in commutative algebra.

The lemma immediately produces the top exact sequence from the diagram stated in the theorem, given that for any \( 0 \)-dimensional scheme \( X \), it holds that \( \Pic(X) = 0 \).

To obtain the complete diagram, we consider the embedding of Zariski sites: \( \rho: D_\zar \rightarrow \Sch^{pf}_D \), where \( \Sch^{pf}_D \) is the category of finitely presented \( D \)-schemes. This induces an adjunction between the respective categories of abelian Zariski sheaves
$$
\rho_\sharp: \Sh_\zar(D, \ZZ) \rightleftarrows \Sh_\zar(\Sch^{pf}_D, \ZZ): \rho^*
$$
where \( \rho^* \) is the restriction functor. Recall from sheaf theory 
(see e.g., \cite[VII, \textsection 4.0]{SGA4II}) that \( \rho_\sharp \) is fully faithful and exact, while \( \rho^* \) is exact.

We denote by \( \uGGx D \) the sheaf represented by \( \GG \) on the big Zariski site \( \Sch^{pf}_D \), such that \( \rho^* \uGGx D = \GGx D \). Then, the exactness of the sequence from the above lemma is equivalent to the exactness of the following sequence
$$
\xymatrix@C=34pt{
0 \ar[r] & \rho^*(\uGGx D) \ar^-{\sum_i \nu_i^*}[r] & \bigoplus_{i \in I} \rho^*\nu_{i*}(\uGGx{D_i}) \ar^-{\phi}[r] & \bigoplus_{i<j} \rho^*\nu_{ij*}\left(\uGGx{D'_{ij}}\right) \ar[r] & 0
}
$$

We are working within the derived $\infty$-category $\Der(\Sh_\zar(\Sch_D^{pf},\ZZ))$. 
There are adjunctions of $\infty$-functors given by
\[
\rho_\sharp: \Der(\Sh_\zar(D, \ZZ)) \rightleftarrows \Der(\Sh_\zar(\Sch^{pf}_D, \ZZ)): \rho^*
\]
\[
a_{\cdh}: \Der(\Sh_\zar(\Sch^{pf}_D, \ZZ)) \rightleftarrows \Der(\Sh_\cdh(\Sch^{pf}_D, \ZZ)): \cO_\cdh
\]
It is important to note that all the preceding functors are either left or right derived functors, which is particularly relevant for $\cO_\cdh$. Next, we will consider the following diagram in the $\infty$-category $\Der(\Sh(\Sch_D^{pf}, \ZZ))$
$$
\xymatrix@C=34pt@R=14pt{
\rho_\sharp\rho^*(\uGGx D)\ar^-{\sum_i \nu_i^*}[r]\ar[d]
 & \bigoplus_{i \in I} \rho_\sharp\rho^*\nu_{i*}(\uGGx{D_i})\ar^-{\phi}[r]\ar[d]
 & \bigoplus_{i<j} \rho_\sharp\rho^*\nu_{ij*}\left(\uGGx{D'_{ij}}\right)\ar[d] \\
(\uGGx D)\ar^-{\sum_i \nu_i^*}[r]\ar[d]
 & \bigoplus_{i \in I} \nu_{i*}(\uGGx{D_i})\ar^-{\phi}[r]\ar[d]
 & \bigoplus_{i<j} \nu_{ij*}\left(\uGGx{D'_{ij}}\right)\ar[d] \\
\cO_{\cdh} a_\cdh(\uGGx D)\ar^-{\sum_i \nu_i^*}[r]
 & \bigoplus_{i \in I} \cO_{\cdh} a_\cdh\nu_{i*}(\uGGx{D_i})\ar^-{\phi}[r]
 & \bigoplus_{i<j} \cO_{\cdh} a_\cdh\nu_{ij*}\left(\uGGx{D'_{ij}}\right)
}
$$
Here, the vertical maps between the first and second rows represent the obvious counit map, while the vertical maps between the second and third rows correspond to the unit map. Consequently, the diagram is commutative.
Based on what we have just discussed, the top row is homotopy exact. By applying $\cdh$-descent, 
we conclude that the bottom row is also homotopy exact. 
The result follows from applying the functor \( H_0\Map(\mathbb{Z}(D), -) \).
In particular, the $\cdh$-topology does not detect nilpotents, 
which leads to the specific form of the map's target mentioned in point (3).
\end{proof}

\begin{rem}
In characteristic \(0\), the bottom exact sequence mentioned in the previous statement can be matched with the exact sequence in \((\cdh\)-local) motivic cohomology derived from \Cref{ex:sncexample1}. The same is true in characteristic \(p\) after tensoring with \(\Lambda = \mathbb{Z}[1/p]\), and for arbitrary schemes \(D\) as described above, after inverting \(\Lambda = \mathbb{Q}\). This result can be explained either by the representability of motivic cohomology within motivic stable homotopy theory, or by the existence of the motivic \(\infty\)-category \(\DM(-, \Lambda)\). Furthermore, it holds that \((\cdh\)-local) motivic cohomology satisfies the 
relation 
\[
H_M^{i,1}(D,\Lambda) = H^{i-1}_\cdh(D,\GG) \otimes \Lambda
\]
under the appropriate assumptions on \(D\) and \(\Lambda\).
\end{rem}

Here is a simple application relevant to the study
 of singularities of normal surfaces.
\begin{cor}
Let $D$ be a simple normal crossing divisor
 in a regular $2$-dimensional scheme $X$.
 Then, the maps induced by $\cdh$-sheafification
\begin{align*}
\GG(D) & \rightarrow H^0_\cdh(D,\GG) \\
\Pic(D) & \rightarrow \Pic_\cdh(D) \\
K_0(D) & \rightarrow KH_0(D)
\end{align*}
are isomorphisms. Moreover, the following sequence is exact
$$
0 \rightarrow \GG(D)\xrightarrow{\sum_i \nu_i^*}
 \bigoplus_{i \in I} \GG(D_i)\xrightarrow{\sum_{i<j} \nu_i^*(\nu_j^*)^{-1}}
 \bigoplus_{i<j} \GG(D_{ij})\rightarrow
 \Pic(D)\xrightarrow{\sum_{i} \nu_i^*}
 \Pic(D_i) \rightarrow 0
$$
\end{cor}

\begin{notation}\label{not:dual-graph}
Recall that the \emph{dual graph} $\Delta$ of a proper simple normal crossing divisor $D$
 in a smooth algebraic $k$-surface is the (finite) cell complex with a vertex $x_i$
 for each irreducible components $D_i$ of $D$,
 and with a cell of dimension $1$ attached at $x_i$ and $x_j$ for each point
 of $D_{ij}$.
\end{notation}
\begin{cor}\label{cor:Pic-proper-SNC}
Let $D$ be a proper simple normal crossing divisor
 in a smooth $2$-dimensional scheme $X$ over a field $k$.
 Assume that the intersections of the $D_i$ are $k$-rational points
 and let $\Delta$ be the dual graph of $D$.

Then, there exists an isomorphism
$$
H^0_\cdh(D,\GG)=\GG(D) \simeq H^0(\Delta,k^\times) \simeq (k^\times)^{\pi_0(D)}
$$
and a short exact sequence
$$
0 \rightarrow H^1(\Delta,k^\times)
 \rightarrow \Pic(D) \rightarrow \oplus_{i \in I} \Pic(D_i) \rightarrow 0
$$
In particular, if $\Delta$ is simply connected, the restrictions
 to the branches $D_i$ of $D$ induce an isomorphism
$$
\Pic(D) \xrightarrow \simeq \oplus_{i \in I} \Pic(D_i).
$$ 
\end{cor}
\begin{proof}
Indeed, the assumptions imply that the cell cohomological complex
 $C^*(\Delta,k^\times)$ associated with the obvious cellular
 structure of $\Delta$ is isomorphic to the complex 
$$
\bigoplus_{i \in I} \GG(D_i)\xrightarrow{\sum_{i<j} \nu_i^*(\nu_j^*)^{-1}}
 \bigoplus_{i<j} \GG(D_{ij})
$$
concentrated in cohomological degree $[0,1]$.
\end{proof}
 
\begin{ex}
\begin{enumerate}
\item \textit{Multiplicities}. Let
 $D=(D_1 \cup D_2) \subset \PP^2_k$, with homogeneous coordinates $x,y,z$
 where $D_1$ is the projective line $V(y)$ and $D_2$ is the irreducible conic $V(yz-x^2)$.
 In other words, $D$ is the union
 of two rational curves with a single intersection of multiplicity $2$ at the point $[0:0:1]$.
 Then one gets from \Cref{thm:Pic_NCD} that
 $\Pic(D)=k \oplus (\ZZ \times \ZZ)$, and $\Pic_\cdh(D)=\ZZ \times \ZZ$.
%
\item \textit{Non-rational intersections}. Let $D=(D_1 \cup D_2) \subset \PP^2_\RR$ such that
 $D_1=V(x)$, $D_2=V(x^2+y^2+z^2)$.
 Then, one gets from the first corollary a (split) short exact sequence
$$
0 \rightarrow \CC^\times/\RR^\times \rightarrow \Pic(D) \rightarrow \ZZ \times  \ZZ\rightarrow 0
$$
Moreover, $\Pic(D) \simeq \Pic_\cdh(D)$. In conclusion, we deduce that both groups
 account for the non-real intersections of the branches of $D$.
\end{enumerate}
\end{ex}

An important application of the preceding results concerns orientations  of line bundles over normal crossing divisors on a surface. We begin with the following: 
\begin{prop}
\label{thm:orientationncdsurfaces}
Consider the assumptions of \Cref{thm:Pic_NCD}.
 Let $\cL$ be an invertible sheaf on $D$, and write
 $\cL_i=\cL |_{D_i}$, $\cL_{ij}=\cL|{D'_{ij}}$.
 Then in the following diagram of sets 
 \begin{equation}\label{prop:glueing-or1}
\xymatrix@=30pt{
\Or_D(\cL)\ar^-{\prod_i \nu_i^*}[r]
 & \prod_{i \in I} \Or_{D_i}(\cL_i)\ar@<2pt>^-{\prod_{i<j} \nu_{ij}^{i*}}[rr]
\ar@<-2pt>_-{\prod_{i<j} \nu_{ij}^{j*}}[rr]
 && \prod_{i<j} \Or_{D'_{ij}}(\cL_{ij})
}
\end{equation}
the first map is surjective on the equalizer of the last two maps.

Moreover, assuming that the hypothesis in \Cref{cor:Pic-proper-SNC} holds true, and that the dual graph \(\Delta\) of \(D\) in \(X\) is simply connected, then diagram \eqref{prop:glueing-or1} is exact. 

\end{prop}
\begin{proof}
 Recall \Cref{subsection:ovbaqi} that when $\Or_{D}(\cL)$ is nonempty, the morphism $\prod_{i\in I} \nu_i^*:\Or_{D}(\cL)\to \prod_{i\in I} \Or_{D_i}(\cL_i)$ is defined by mapping an orientation class of $\cL$
represented by an isomorphism $\epsilon:\cL\to \cM^{\otimes 2}$ for some invertible sheaf $\cM$ on $D$, to the product of the classes in $\Or_{D_i}(\cL_i)$ of the isomorphisms  $\epsilon_i:\cL_i\to \cM|_{D_i}^{\otimes 2}$ induced by the restrictions of $\epsilon$. The two  right-hand side arrows in the statement are defined in a similar way. Since the restrictions $\epsilon_{ij}:\cL_{ij}\to \cM|_{D'_{ij}}^{\otimes 2}$ of $\epsilon$ satisfy the identities $ \epsilon_i|_{D'_{ij}}=\epsilon_j|_{D'_{ij}}$ for all $i<j$,
it follows that the map $\prod_{i\in I}\nu_i^*$ factors through the equalizer $E(\cL)\subset \prod_{i\in I} \Or_{D_i}(\cL_i)$ of the two right-hand side arrows. 

Now assume given an element $e$ of $E(\cL)$, represented by a collection of isomorphisms $\epsilon_i:\cL_i\to \cM_i^{\otimes 2}$ for some invertible sheaves $\cM_i$ on $D_i$ such that for all $i<j$ the induced orientations $\epsilon_i|_{D'_{ij}}:\cL_{ij}\to \cM_i|_{D'_{ij}}^{\otimes 2}$ and $\epsilon_j|_{D'_{ij}}:\cL_{ij}\to \cM_j|_{D'_{ij}}^{\otimes 2}$ are equivalent. In view of \Cref{thm:Pic_NCD}, up to replacing all the orientations $\epsilon_i$ by equivalent ones, we can assume without loss of generality from the very beginning that $\cM_i=\cM|_{D_i}$ for some invertible sheaf $\cM$ on $D$. 
The assumption that the orientations $\epsilon_i|_{D'_{ij}}$ and $\epsilon_j|_{D'_{ij}}$ are equivalent then determines a collection of elements $$u_{ij}((\epsilon_i)_{i\in I})\in \mathrm{Isom}_{D'_{ij}}(\cM|_{D'_{ij}},\cM|_{D'_{ij}})\cong \GG(D'_{ij}),\quad i<j.$$ Applying \Cref{thm:Pic_NCD} again, this collection determines an invertible sheaf $\cN$ on $D$ with isomorphisms $\alpha_i:\cO_{D_i}\to \cN_i=\cN|_{D_i}$ for every $i\in I$ such that $\alpha_j|_{D'_{ij}}\circ \alpha_i^{-1}|_{D'_{ij}}$ is the multiplication by $u_{ij}((\epsilon_{i})_{i\in I})$. 
Let $\cM'=\cN^\vee\otimes \cM$ and let $\cM'_i=\cM'_{D_i}$. Then the collection of orientations  $\epsilon_i'=(^t\alpha_i^{-1})^{\otimes 2}\circ \epsilon_i:\cL_i\to \cM_i^{\otimes 2}\to  (\cM'_i)^{\otimes 2}$ is equivalent to the collection $(\epsilon_i)_{i\in I}$, whence represents the element $e$, and satisfies $u_{ij}((\epsilon'_{i})_{i\in I})=\Id_{\cM'|_{D'_{ij}}}$. The latter property means that the isomorphisms $\epsilon'_i$ coincide on the intersections $D'_{ij}$, whence glue an isomorphism $\epsilon':\cL\to (\cM')^{\otimes 2}$ of invertible sheaves on $D$ whose restriction on each $D_i$ equals $\epsilon'_i$. This shows that the map $\prod_{i\in I}\nu_i^*:\Or_D(\cL)\to E(\cL)$ is surjective, as required.

We now prove the second assertion. It amounts to verify that under the additional assumptions, the map $\prod_i \nu_i^*: \Or_D(\cL)
  \prod_{i \in I} \Or_{D_i}(\cL_i)$ is injective, with image $E(\cL)$. The property is immediate when $\Or_D(\cL)=\empty$. So assume that $\Or_D(\cL)$ is nonempty, whence that  $\Or_{D_i}(\cL_i)\neq \emptyset$ for every $i\in I$. Since these sets are then principal homogeneous under the action of the groups $\Or_D(\cO_D)\cong H^1(D,\mu_2)$ and  $`\Or_{D_i}(\cO_{D_i})\cong H^i(D_i,\mu_2)$, we are reduced to the case where $\cL=\cO_D$ for which the assertion follows
from the long exact sequence 
{\small
$$
0\to H^0(D,\mu_2)\to \bigoplus_{i\in I} H^0(D_i,\mu_2)\to \bigoplus_{i<j} H^0(D'_{ij},\mu_2)\to H^1(D,\mu_2)\to \bigoplus_{i\in I} H^1(D_i,\mu_2)\to \bigoplus_{i<j} H^1(D_{ij},\mu_2)\to \cdots$$}
analogous to that in \Cref{thm:Pic_NCD}, which can be deduced from \Cref{lm:exact-NCD-Gm} and the identification of the kernel of the map $H^1(D,\mu_2)\to \bigoplus_{i\in I} H^1(D_i,\mu_2)$ with $H^1(\Delta, \mathbb{Z}/2\mathbb{Z})$ as in the proof  \Cref{cor:Pic-proper-SNC}.
\end{proof}

\begin{cor}
\label{thm:orientationncdsurfaces-bis}
Consider the assumptions of \Cref {thm:orientationncdsurfaces} and assume that the hypothesis in \Cref{cor:Pic-proper-SNC} holds true. Then an invertible sheaf $\cL$ on $D$ is orientable if and only if its restrictions $\cL_i=\cL_{D_i}$ are orientable for every $i\in I$. 
\end{cor}
\begin{proof}
One direction is immediate since every  orientation $o$ of $\cL$ induces by restriction orientations $\nu_i^*o$ of $\cL_i=\cL|_{D_i}$. 
Conversely, assume that $\cL_i$ is orientable for every $i\in I$. Then $\Or_{D_i}(\cL_i)$ becomes a principal homogeneous space under the action of $\Or{D_i}(\cO_{D_i})$, the choice of orientation classes $\epsilon_i\in \Or_{D_i}(\cL_i)$ gives isomorphisms $\psi_i:\Or_{D_i}(\cO_{D_i})\to \Or_{D_i}(\cL_i)$, $o_i \mapsto o_i\cdot \epsilon_i$. In particular, there exists a unique collection of orientation classes $\epsilon_{i}'\in \Or_{D_i}(\cL_i)$ such that for every $i\in I$, $\psi_i^{-1}(\epsilon_i')$ equal the neutral element of  $\Or_{D_i}(\cO_{D_i})$ (the class of the inverse of the multiplication map $m_i:\cO_{D_i}\otimes \cO_{D_i}\to \cO_{D_i}$). It is straightforward to check that the so-defined orientation classes $\epsilon_i'$ have the property that $\nu_{ij}^{i*}\epsilon_i'=\nu_ij^{j*}\epsilon_j'$ for all $i<j$. The conclusion then follows from \Cref{thm:orientationncdsurfaces}.
\end{proof}

\subsection{Theta characteristic of curves and homotopy type of NCD on surfaces}
\label{subsection:thetacharacteristic}

\begin{notation}\label{num:geom-orientations}
Let $X$ be a quasi-projective $k$-scheme with canonical sheaf
 $\omega_X=\det(\cL_{X/k})$.
 We will say that $X$ is \emph{orientable} if $\omega_X$ (or what amount to the same: the virtual bundle associated with its cotangent complex)
 is orientable in the sense of \Cref{df:orient_virtual}. In other words, the set of orientations $\Or_X(\omega_X)$ is not empty.
 When specializing this notion to a smooth projective curve $X=C$, it can be linked to the theory
 of Theta characteristics of $C$ (see \cite{MumTheta, AtiTheta}).
 In fact, a Theta characteristic of $C$ is precisely an orientation of $C$, 
i.e., a ``square-root'' of the canonical sheaf $\omega_C$.
 If we denote (as in \emph{op. cit}) by $\mathcal S(C)$ the set of Theta characteristics (up to isomorphisms), 
 we obtain the equality
$$
\mathcal S(C)=\Or(\omega_C)
$$
 
The following result is a slightly more precise version of a theorem due to 
R\"ondigs (see \cite{RonTheta}).
\end{notation}

\begin{prop}\label{prop:splitting-curves}
Consider a smooth projective curve $p:C \rightarrow \Spec(k)$ over the field $k$, 
with a rational point $x \in C(k)$.
We let $\calC_x$ be the conormal sheaf of $x$ in $X$, and let $\Theta:\calC_x \rightarrow \omega_C|_x$
be the canonical isomorphism. Then the following homotopy exact sequence in $\SH(k)$
$$
\htp(C-\{x\}) \xrightarrow{j_*} \htp(C) \xrightarrow{x^!} \htp(k,\langle \calC_x \rangle)
$$
is split if and only if $C$ is orientable, 
i.e., 
$C$ admits a Theta characteristic. 
Moreover, 
if $C$ is orientable, 
one gets a splitting by choosing a quadratic pre-isomorphism of invertible sheaves over $C$
$$
\Upsilon:p^{-1}(\calC_x) \qiso \omega_C
$$
such that $\Upsilon|_x$ is quadratically equivalent to $\Theta$. 
The following composite gives the splitting
$$
p^!_\Upsilon:\htp(k,\langle \calC_x \rangle) \xrightarrow{p^!} 
\htp(C,\langle p^{-1}\calC_x\rangle-\langle \omega_C\rangle)
\xrightarrow{\Upsilon_*} \htp(C)
$$
where we have identified $\Upsilon$ with the orientation class in $\Or_C(p^{-1}\calC_x \otimes \omega_C^\vee)$
obviously associated (see \Cref{ex:quad_iso}, \Cref{num:orientation_classes}),
and the isomorphism $\Upsilon_*$ follows from \Cref{cor:SL^c-orientability-curves}.
\end{prop}
\begin{proof}
Given the current advancements in motivic homotopy technology, 
we can provide a shorter proof than that presented in \cite{RonTheta}. 
For the "if" part, 
we leverage the compatibility of Gysin maps (\Cref{num:Gysin}) with compositions. 
We have the following identity
$$
\htp(k,\langle \mathcal{C}_x\rangle) \xrightarrow{p^!} \htp(D,\langle p^{-1}\mathcal{C}_x\rangle - \langle \omega_C) \rangle) \xrightarrow{x^!} 
\htp(k,(\langle \mathcal{C}_x\rangle -\langle \omega_{C,x}\rangle) + \langle \mathcal{C}_x\rangle) \xrightarrow{\varphi_*} \htp(k,\mathcal{C}_x)
$$
In this identity, the last isomorphism is induced by the functoriality with respect to isomorphisms 
of virtual bundles. 
The conclusion follows from the fact that $\omega_C$ being orientable is equivalent to the existence 
of a quadratic pre-isomorphism $\Upsilon: p^{-1}(\mathcal{C}_x) \cong \omega_C$. 
The condition on $\Upsilon|_x$ translates to the requirement that $x^! \circ p^!_\Upsilon = \text{Id}$.

For the "only if" part, 
we deduce from the assumption that the map $x_*: \GW(k,\mathcal{C}_x) \rightarrow \GW(C)$ 
is a split monomorphism. 
We can examine the $\mathcal{C}_x$-twisted symmetric bilinear form 
$\varphi: k \otimes_k k \rightarrow \mathcal{C}_x$, 
which is obtained by choosing an arbitrary trivialization of $\mathcal{C}_x$. 
The image $x_*(k,\varphi)$ yields a nontrivial symmetric bilinear form on $\omega_C^\vee$, 
as indicated by the identity $[x_*(\mathcal{O}_k)] = [\omega_C^\vee]$ in $K_0(C)$. 
\end{proof}

\begin{rem}
Consider an arbitrary smooth projective curve $C$ over $k$,
and suppose we are given two distinct rational points $x,x' \in C(k)$.
Then $x'_*:\un \rightarrow \htp(C-\{x\})$ is a direct factor, 
split by the projection so that one gets a decomposition
$$
\htp(C-\{x\})
\simeq
\un \oplus \mathrm A_{x,x'}(C)[1]
$$
One can call the stable homotopy type
 $\mathrm A_{x,x'}(C)$ the \emph{Albanese stable homotopy type} of $(C,x,x')$.
 Indeed, its realization via the motive functor $\SH(k) \rightarrow \DM(k)$
 is the homological Voevodsky motive $\underline{\mathrm{Alb}}(C)$,
 associated with the Albanese scheme of $C$ (seen here as the dual of the Jacobian of the pointed curve $(C,x)$).
It is important to note that this object exists even if the curve \( C \) is not oriented. However, if \( C \) is oriented, we can obtain a canonical decomposition 
\[
\Pi(C) \simeq \un \oplus \mathrm{A}_{x,x'}(C)[1] \oplus \un(1)[2]
\]
by first choosing a trivialization \( \mathcal{C}_x \simeq k \) and applying the previous proposition.
This decomposition maps to the homological Chow-Künneth decomposition of \( M(C) \) in \( \DM(k) \), 
as mentioned previously in \cite{RonTheta}. Compared to the aforementioned reference, 
we have only pointed out that the condition of orientation is not necessary to define 
the homotopical version of the (dual) Jacobian of the curve \( C \).
\end{rem}

\begin{notation}\label{num:normalization-splitting}
To specify a method for selecting the quadratic pre-isomorphism \(\Upsilon\) referenced in this proposition, we can proceed as follows: We start by choosing a uniformizer \(\pi_x\) for the point \(x\) in the scheme \(X\), that is a generator $\pi_x$ of the maximal ideal $\mathfrak{m}$ of the discrete valuation ring $\cO_{X,x}$.  This uniformizer determines an isomorphism of $k$-vector spaces \(\overline{\pi}_x: k \rightarrow \calC_x=\mathfrak{m}/\mathfrak{m}^2\) defined by mapping $1$ to the residue class of $\pi_x$. The selection of \(\Upsilon\) thus corresponds to choosing an orientation class \(\tau \in \Or_C(\omega_C)\) such that the restriction \(\tau|_x \in \Or_k(\omega_C|_x)\) is mapped to \(1\) by the following composite isomorphism
\begin{equation*}
\Or_k(\omega_C|_x) \xrightarrow{\Theta^{-1}} \Or_k(\calC_x) \xrightarrow{\overline{\pi}_x^{-1}} \Or_k(k) = Q(k^\times)
\end{equation*}
Here, the latter group represents the set of quadratic classes of units of \(k\) (see \Cref{prop:orient&Pic-mod2}). When an orientation class \(\tau\) satisfies this condition, we call it \emph{$\pi_x$-normalized}. It is important to note that if \(C\) is orientable, a \(\pi_x\)-normalized orientation class \(\tau\) can always be chosen, since the group \(Q(k)\) acts on \(\Or_C(\omega_C)\).

Once such a normalized orientation class \(\tau\) has been selected, we construct \(\Upsilon\) as follows. Specifically, \(\tau\) 
is represented an isomorphism $\tau: \omega_C\to \cL^{\otimes 2}$.
We can then derive \(\Upsilon^{-1}\) as the quadratic class of the following composite isomorphism

$$
\omega_C \xrightarrow{\tau} \cL^{\otimes 2} \xrightarrow{\Id \otimes p^*(\overline{\pi}_x)} \cL^{\otimes 2} \otimes p^{*}\calC_x
$$
\end{notation}

\begin{ex}
Consider \(D = \PP^1_k = \mathrm{Proj}(k[u,v])\), and let \(x\) be the rational point $[0:1]$. We choose \((u/v\)) as a uniformizer for \(x\) in \(D\).  In this case, we have a canonical isomorphism \(\omega_D = \cO_D(-2)\), and the obvious orientation $\tau$ given by the inverse of the canonical morphism \( \cO_D(-1)^{\otimes 2} \xrightarrow{\sim} \cO_D(-2)\) is \((u/v)\)-normalized in the sense described above.
\end{ex}

\begin{notation}\label{num:hyp_Mumford1}
For the next proposition,
we consider a proper curve $D$ with smooth reduced crossings over $k$ in the sense of \Cref{df:normal_crossing}.
We will use the same notation as in \Cref{num:normal_crossing}: 
$(D_i)_{i \in I}$ are the irreducible components of $D$,
 $D'_{ij}=D_i \times_X D_j$, $D_{ij}=(D'_{ij})_\text{red}$. We let
$$
\bar \nu_i:D_i \rightarrow X, \nu_{ij}^l:D_{ij} \rightarrow D_l, l=i,j
$$
be the obvious inclusions.
We assume that $D_i$ admits a rational point $x_i \in D_i(k)$ that will play the role of the point at infinity,
 disjoint of the other components:
 $x_i \notin \cup_{j \neq i} D_{ij}$.
We let $\omega_i$ be the canonical sheaf of the curve $D_i/k$, 
and $\calC_{x_i}$ be the conormal sheaf of the points $x_i$ in $D_i$.
For normalization purposes, 
it will be convenient to choose a uniformizer $\pi_i$ for the point $x_i\in D_i$ with associated isomorphism $\overline{\pi}_i:k \xrightarrow \simeq \calC_{x_i}$.
\end{notation}

\begin{prop}\label{prop:Mumford_source}
Consider the above notation.
 We let $\mathcal D$ be the homotopy cokernel of the double arrows
\begin{equation}
\label{eq:top_part_Mumford}
\xymatrix@=60pt{
\bigoplus_{i<j} \htp(D_{ij})\ar@<2pt>^-{\sum_{i<j} (\nu_{ij}^i)_*}[r]\ar@<-2pt>_-{\sum_{i<j} (\nu_{ij}^j)_*}[r]
 & \bigoplus_{i \in I} \htp(D_i-\{x_i\})
}
\end{equation}
Then there is a canonical homotopy exact sequence
$$
\htp(\mathcal D) \xrightarrow \alpha \htp(D) \xrightarrow \beta \bigoplus_{i \in I} \un(1)[2]
$$
whose right-hand side depends only on the choice of the uniformizers $(\pi_i)_{i \in I}$.

If $\T$ is orientable, the sequence does not depend on such a choice and admits the following (homotopy) splitting
$$
\bigoplus_i \un(1)[2] \xrightarrow{\sum_i p_i^!} \bigoplus_i \htp(D_i) \xrightarrow{\sum_i \nu_{i*}} \htp(D)
$$
where $p_i^!:\un(1)[2] \rightarrow \htp(D_i)$ is the (oriented) Gysin map (\Cref{num:Gysin}).
 In the general case, the sequence admits a splitting
 if each curve $D_i$ is orientable. Moreover, a choice of $\pi_i$-normalized orientations $\tau_i \in \Or_{D_i}(\omega_i)$
 (as defined in \Cref{num:normalization-splitting})
 induces a canonical (homotopy) splitting
$$
\bigoplus_i \un(1)[2] \xrightarrow{\sum_i p_i^!} \bigoplus_i \htp(D_i,1-\langle \omega_i \rangle) \xrightarrow{\sum_i \tau_{i*}} \bigoplus_i \htp(D_i)
 \xrightarrow{\sum_i \nu_{i*}} \htp(D)
$$
where $p_i^!$ is the Gysin map and $\tau_{i*}$ is the isomorphism deduced from \Cref{cor:SL^c-orientability-curves}.
\end{prop}
\begin{proof}
We first build the homotopy exact sequence by considering the following diagram in the $\infty$-category $\T$
\begin{equation}\label{eq:Mumford_source}
\xymatrix@C=54pt@R=28pt{
\bigoplus_{i<j} \htp(D_{ij})\ar@<2pt>^-{\sum_{i<j} (\bar \nu_{ij}^i)_*}[r]\ar@<-2pt>_-{\sum_{i<j} (\bar \nu_{ij}^j)_*}[r]\ar@{=}[d]
 & \bigoplus_{i \in I} \htp(D_i-\{x_i\})\ar^-\delta[r]\ar^{\sum_i j_{i*}}[d] & \htp(\mathcal D)\ar^\alpha[d] \\
\bigoplus_{i<j} \htp(D_{ij})\ar@<2pt>^-{\sum_{i<j} (\nu_{ij}^i)_*}[r]\ar@<-2pt>_-{\sum_{i<j} (\nu_{ij}^j)_*}[r]
 & \bigoplus_{i \in I} \htp(D_i)\ar^{\sum_i \overline{\pi}^{-1}_{i*}x_i^!}[d]\ar^-{\sum_i \nu_{i*}}[r]\ar@{}|{(3)}[rd] & \htp(D)\ar^\beta[d] \\
& \bigoplus_{i \in I} \un(1)[2]\ar@{=}[r] & \bigoplus_{i \in I} \un(1)[2]
}
\end{equation}
in which all rows and columns are exact, and we have used \Cref{ex:sncexample1} for the exactness of the middle row.
 Note that the left top square is well-defined because of the assumption on the $x_i$.
 The map $\overline{\pi}^{-1}_{i*}$ refers to the isomorphism $\htp(k,\langle \calC_{x_i}\rangle)=\Th(\calC_{x_i}) \rightarrow \un(1)[2]$ inferred from $\overline{\pi}_i$. Consequently, 
 the assertion regarding the splitting follows from \Cref{prop:splitting-curves}.
\end{proof}

\begin{notation}\label{num:Mumford_source}
In this section, 
we will clarify how to derive explicit isomorphisms from the previous proposition and simplify the notation.
If the motivic \( \infty \)-category \( \T \) is not orientable, we make the following choices
\begin{itemize}
\item A uniformizer $\pi_i$ of $x_i$ in $D_i$ with induced trivialization \( \bar{\pi}_i: k \xrightarrow{\sim} \mathcal{C}_{x_i} \).
\item An orientation class \( \tau_i \in \Or_{D_i}(\omega_i) \) that is \( \pi_i \)-normalized, 
as defined in \Cref{num:normalization-splitting}.
\end{itemize}
With these choices established, 
we will use the following definitions for the Gysin maps for any index 
\( i \in I \)
\begin{align*}
x_i^!: & \ \htp(D_i) \rightarrow \htp(k, \mathcal{C}_{x_i}) \xrightarrow{\bar{\pi}^{-1}_{i*}} \un(1)[2] \\
p_i^!: & \ \un(1)[2] \rightarrow \htp(D_i, 1 - \omega_i) \xrightarrow{\tau^{-1}_{i*}} \htp(D_i)
\end{align*}
In each composite, the first map is the true (twisted) Gysin map.

When the space \(\T\) is oriented, 
both maps involved are well-defined and canonical, as they are normalized by the choice 
of orientation of \(\T\). We will now examine the maps \(\alpha\) and \(\beta\) as defined 
by diagram \eqref{eq:Mumford_source}. 
It is important to note that \(\beta\) is uniquely defined (up to homotopy) by the relations 
for all \(i \in I\)
\[
\beta \circ \nu_{i*} = x_i^!
\]
This follows from Part (3) of the diagram above and our preceding convention. 
Additionally, the previous proposition provides a splitting of \(\beta\) through the map
\[
\delta = \sum_{i \in I} \nu_{i*} p_i^! : \sum_i \un(1)[2] \rightarrow \htp(D)
\]
In particular, 
\(\beta \circ \delta\) is a homotopy idempotent in \(\htp(D)\). 
We will also consider the map
\[
\gamma = \big[1 - (\beta \circ \delta)\big]|^\cD : \htp(D) \rightarrow \cD
\]
Finally, 
we obtain canonical reciprocal isomorphisms in the homotopy category of \(\T(k)\)
$$
\xymatrix@=30pt{
\htp(D) \ar@<-2pt>_-{\begin{pmatrix}\gamma \\ \beta\end{pmatrix}}[r] & \cD \bigoplus \oplus_{i \in I} \un(1)[2]\ar@<-2pt>_-{(\alpha,\delta)}[l]
}
$$
\end{notation}

\begin{rem}\label{rem:compute-coker-double}
Being stable, the $\infty$-category $\T(k)$ is automatically additive (see \cite[Lem. 1.1.2.9]{LurieHA}).
 Therefore, considering two arrows
 $\xymatrix@C=25pt{M\ar@<2pt>^-{f}[r]\ar@<-2pt>_-{g}[r] & N}$ as in the previous statement,
 one can define the new morphism $(f-g):M \rightarrow N$.
 Moreover, it follows from their respective universal properties that one has an
 identification (up to a contractible set of choices) $\mathrm{coKer}(f,g)=\mathrm{coKer}(f-g)$,
 between the homotopy cokernel of the double arrows (equivalently the homotopy pullback)
 and the homotopy cofiber of their difference.
 Coming back to the assumptions of the above proposition, the above remark shows that
  the object $\mathcal D$ is the homotopy cofiber of the map:
$$
q=\sum_{i<j} \big((\nu_{ij}^i)_*-(\nu_{ij}^j)_*):\bigoplus_{i<j} \htp(D_{ij}\big)
 \rightarrow \bigoplus_{i \in I} \htp(D_i-\{x_i\}).
$$
\end{rem}

\begin{rem}\label{rem:Mumford_Artin_part}
If we assume that all the \( D_i \) are rational curves, then \( \mathcal{D} \) is an Artin-Tate object. 
In the more general case, 
by adding an additional rational point \( x'_i \) to each \( D_i \), \( \mathcal{D} \) 
will include a component reflecting the homotopy type of a dual Jacobian part. 
More precisely, 
\( \mathcal{D} \) can be described as the homotopy cokernel of a double arrow 
 (or the homotopy cofiber of their differences according to the previous remark)
 of the following form
\[
\xymatrix@=40pt{
\bigoplus_{i<j} \htp(D_{ij})\ar@<2pt>[r]\ar@<-2pt>[r]
& \big(\oplus_i \un\big) \oplus \left(\oplus_i \mathrm{A}_{x_i,x'_i}[1]\right)
}
\]
We note that both arrows in this diagram are explicitly computable 
within the framework of \( \text{SH}(k) \).
\end{rem}

\subsection{Punctured tubular neighborhoods and quadratic Mumford matrices}
\label{sec:general_Mumford}

\begin{notation}\label{num:hyp_Mumford2}
Consider a closed pair \((X, D)\) consisting of a smooth surface \(X\) over a field \(k\), along with a normal crossing divisor \(D\) in \(X\) that is proper over \(k\). 

We will refer to this pair as a \emph{log-pair over \(k\)}. Additionally, as stated in \Cref{num:hyp_Mumford1}, we assume that for all \(i \in I\), the component \(D_i\) has a rational point \(x_i \in D_i(k)\) that does not belong to any other components of \(D\). This assumption is not necessary for the next lemma, but it will be crucial for the subsequent theorem.

We denote by \(T_X = \mathbb{V}(\Omega_X)\) the tangent bundle of \(X\) and by \(\omega_X = \det(\Omega_X)\) the canonical sheaf of \(X\). For each \(i \in I\), we denote the conormal sheaf of \(D_i\) in \(X\) by \(\mathcal{C}_i\) and the associated normal bundle by \(N_i = \mathbb{V}(\mathcal{C}_i)\). The canonical sheaf of \(D_i\) is denoted by \(\omega_i\). Since \(D_i\) is smooth over \(k\), there exists a canonical isomorphism of invertible sheaves on \(D_i\)
\begin{equation}\label{eq:hyp_Mumford2}
\omega_X|_{D_i} \simeq \omega_i \otimes \mathcal C_i
\end{equation}

The following lemma is immediate from the results we obtained previously.
\end{notation}

\begin{lm}\label{lm:Mumford_target}
Consider the notation established previously. We assume either that the sheaf $\T$ is orientable, or that the restriction $\omega_X|_D$ is orientable in the sense defined in \Cref{num:orientation_classes}.

For any orientation class $\epsilon \in \Or_D(\omega_X|_D)$, there exists a canonical composite isomorphism in $\T(k)$ given by
$$
\htp(X/X-D) \xrightarrow{\Theta} \htp(D,-T_X|_D)^\vee \xrightarrow{(\epsilon^{-1}_*)^\vee} \big(\htp(D)(-2)[-4]\big)^\vee = \htp(D)^\vee(2)[4]
$$
Here, the first map is the isomorphism described in \eqref{eq:generalizedhomotopypurity2}, which is derived from \Cref{thm:generalizedhomotopypurity2}. The second map is induced by the orientation class $\epsilon^{-1}$ of $\omega^{-1}_X|_D$, according to \Cref{cor:SL^c-orientability-curves}.
\end{lm}


Combining the previous lemma and the computation of the previous section,
 we get the following result, which is the main theorem of this section
 and can be thought of as a stable motivic homotopical interpretation of
 the computation obtained by Mumford in \cite{mumfordihes} via his
 \emph{plumbing construction}.
\begin{thm}
\label{thm:smhatoomatrix-general}
Consider the assumptions of \Cref{num:hyp_Mumford2} for the log-pair $(X,D)$ over $k$.
 We further assume one of the following conditions.
\begin{enumerate}
\item $\T$ is orientable.
\item The invertible sheaves $\omega_X|_D$ over $D$, and $\omega_i$ over $D_i$ for any $i \in I$, are orientable.
 In this case, we choose an arbitrary orientation class $\epsilon \in \Or_D(\omega_X|_D)$,
 and for each $i\in I$, a $\pi_i$-normalized orientation class $\tau_i \in \Or_{D_i}(\omega_i)$, where $\pi_i$ is any uniformizing parameter of the local ring $\cO_{D_i,x_i}$ 
 (see \Cref{num:normalization-splitting}).
\end{enumerate}
 Then the punctured tubular neighborhood $\TN_k(X,D)$ in $\ho \T(k)$
 --- or equivalently when $X$ is proper (\Cref{prop:basic_comput_thp-infty}), the homotopy at infinity $\htp^\infty_k(X-D)$ ---
 is isomorphic to the cone of the map
$$
\beta'=\begin{pmatrix}
a & b' \\
b & \mu
\end{pmatrix}:
\htp(\mathcal D) \oplus \bigoplus_{i \in I} \un_k(1)[2] 
\rightarrow 
\htp(\mathcal D)^\vee(2)[4] \oplus \bigoplus_{j \in I} \un_k(1)[2] 
$$
where $\mathcal D$ was defined in \Cref{prop:Mumford_source}.
In \Cref{num:Mumford_source}, we have 
\begin{align*}
a&=\alpha^\vee (\epsilon^{-1}_*)^\vee \Theta \beta_{X,Z} \alpha \\
b&=\alpha^\vee (\epsilon^{-1}_*)^\vee \Theta \beta_{X,Z} \delta \\
b'&=\delta^\vee (\epsilon^{-1}_*)^\vee \Theta \beta_{X,Z} \alpha \\
\mu&=\delta^\vee (\epsilon^{-1}_*)^\vee \Theta \beta_{X,Z} \delta
\end{align*}
where $\beta_{X,Z}$ refers to the map defined in \Cref{df:PuncturedTN}, 
viewed in the homotopy category $\ho \T(k)$. 
\end{thm}
\begin{proof}
To compute the map \(\beta_{X,D}\) from \Cref{df:PuncturedTN} in the homotopy category \(\ho \T(k)\), 
we follow a structured approach. First, we apply \Cref{num:Mumford_source} to determine the source. 
Next, we use \Cref{lm:Mumford_target} along with the previous result for the target. 
The formulas for the four maps are derived from the additive structure of \(\ho \T(k)\).
\end{proof}

\begin{df}
Under the assumptions of the preceding theorem, in the specific case (2), we refer to the $(I \times I)$-matrix $\mu$ with coefficients in $\GW(k)$ as the \emph{quadratic Mumford matrix} associated with the (log-)pair $(X,D)$. 
\end{df}

By applying the rank morphism $\GW(k) \rightarrow \mathbb{Z}$ to the coefficients of $\mu$, one obtains the intersection matrix of the divisor $D$ within $X$, as discussed by Mumford in \cite[\textsection 1]{mumfordihes} (see the formula \eqref{prop:quad_Mumford1} below).

\begin{rem}\label{rem:orient}
In the oriented case (1), the theorem applies more generally over any base scheme \( S \) — it is necessary for there to be \( S \)-points \( x_i \) of the \( D_i \). The same comment applies if we assume that \( \mathcal{T} \) is \( \mathrm{SL} \)-oriented, the conditions outlined in case (2) are satisfied, and we require that the normal cones \( \mathcal{C}_{x_i} \) are orientable, as indicated by invertible sheaves on the base \( S \). We leave the details to the interested reader.
%
\end{rem}

Note that, according to the additivity of $\ho \T(S)$, the map $\mu:\bigoplus_{i \in I} \un_S(1)[2] \rightarrow \bigoplus_{j \in I} \un_S(1)[2]$
 in the above theorem is given by a square matrix $(\mu_{ij})_{i,j \in I^2}$ with coefficients in the ring $\End_{\ho \T}(\un_k)$.
 Given the preceding formula, one can give a very concrete formula for its computation.
 
\begin{prop}\label{prop:comput_quad_Mumford_matrix}
Consider the assumptions of the previous theorem.
\begin{enumerate}
\item Let us assume that condition (1) of the previous theorem holds,
 and that $\End_{\T}(k)=\ZZ$.\footnote{Relevant cases are
 $\T=\DM, \DM_\et, \DB, D(-_\et,\ZZ_\ell), \DHdg$, from diagram \eqref{eq:motivic_cat}.}
 Then for every $(i,j) \in I$, 
\begin{equation}\label{prop:quad_Mumford1}
\mu_{ij}=\deg([D_i]\cdot[D_j])=(D_i,D_j)
\end{equation}
is the usual \emph{intersection number} of the (effective Cartier) divisors $D_i$ and $D_j$.
\item Let us consider the case $\T=\SH$ and assume that condition (2) in the previous theorem holds.
 Recall that $\End_{\ho \SH(k)}(\un_k)=\GW(k)$.

For any integer $i \in I$, one considers the orientation class $o_i=\epsilon_i \otimes (\tau_i^\vee)^{-1}$ 
of the conormal sheaf $\calC_i$, obtained via the isomorphism \eqref{eq:hyp_Mumford2}.
Then, for every $(i,j) \in I^2$, one gets the formula 
\begin{equation}\label{prop:quad_Mumford2}
\mu_{ij}=\tdeg_{\tau_i}\big(\nu_i^!([D_j,o_j])\big)
\end{equation}
computed using Chow-Witt groups, 
where $\tdeg_{\tau_i}$ is the quadratic degree of the oriented curve $(D_i,\tau_i)$ over $k$
(see \eqref{eq:epsilon-degree}), $\nu_i^!$ is the pullback map (using deformation to the normal cone
 as in \cite{FaselCHWring,Feld1}) associated with the
 regular closed immersion $\nu_i$, 
and $[D_j,o_j]_X$ is the class of the $o_j$-oriented divisor
 $D_j$ of $X$ (see \Cref{rem:oriented-class-CHt}).

In particular, if $i=j$, 
\begin{equation}\label{prop:quad_Mumford3}
\mu_{ii}=\tdeg_{\tau_i}e(N_i,o_i)
\end{equation}
where $e(N_i,o_i)\in  \CHW 1(D_i)$ is the Euler class of the oriented vector bundle $(N_i,o_i)$,
 $N_i=\VV(\calC_i)$
 (see Remark \ref{rem:Mumford-euler}(2)).
\end{enumerate}
\end{prop}
\begin{proof}
According to the formula for $\mu$ in the above theorem, for every $(i,j) \in I^2$,
 one can compute the coefficient $\mu_{ij}$ as the following composite map
\begin{align*}
\un_S(1)[2] \xrightarrow{p_i^!} \htp_S(D_i,-T_i)(1)[2]
 \xrightarrow{\tau_{i*}} \htp_S(D_i) & \xrightarrow{(\bar \nu_i)_*} \htp_S(X) \\
 &\xrightarrow{(\bar \nu_j)^!} \htp_S(D_j,N_j)
 \xrightarrow{o_{i*}^{-1}} \htp_S(D_j)(1)[2] 
 \xrightarrow{(p_j)_*} \un_S(1)[2]
\end{align*}
where we have used \Cref{num:Mumford_source} except that we have indicated by $p_i^!$
 and $x_i^!$ the twisted Gysin maps for clarity.
 Note that we obtain the Gysin map $(\bar \nu_j)^!$ by unwinding the definition
 of the purity isomorphism \eqref{eq:generalizedhomotopypurity2}.

In the case $\T=\SH$, the preceding composite map lives in $\End_{\ho \SH}(\un_k)$. To compute it,
 we can perform a computation of Chow-Witt groups by applying the functor
 $\Hom(-,\uKMW_*(1)[2])$, where $\uKMW_*$ is the unramified Milnor-Witt K-theory sheaves
 seen as a motivic spectrum over $k$. This yields formula \eqref{prop:quad_Mumford2},
 given that the covariant (resp. contravariant) functoriality of $\htp_S(X)$
 corresponds to a pullback (resp. pushforward) in Chow-Witt groups.
 The formula \eqref{prop:quad_Mumford2} follows by the (oriented version of the) 
 self-intersection formula \cite[3.2.9(ii)]{DJK}, and \eqref{prop:quad_Mumford1}
 is obtained by realizing in the appropriate motivic category.
\end{proof}

\begin{rem}\label{rem:Mumford-euler}
The element $\tdeg_{\tau_i}e(N_i,o_i) \in \GW(k)$ coincides with the Euler number $n^{\mathrm{GS}}(N_i,\sigma_0,\rho_i)$ of the zero section $\sigma_0$ of $N_i$ with respect to the
 \emph{relative orientation class} 
 $o_i\otimes (\tau_i^{\vee})^{-1}\in \Or_{D_i}(\calC_i\otimes \omega_i^\vee)$ 
 (see \Cref{ex:relative_Or} for explanations) 
 of $\calC_i$ considered by Bachmann-Wickelgren in \cite{BW_Euler}. 
 One can check that in our setting, this element is actually independent on the chosen orientations, equal to $\frac{1}{2}(D_i, D_i)h$, where $h=\langle 1,-1\rangle\in \GW(k)$ is the class of the hyperbolic plane and where $(D_i, D_i)=\deg(\mathcal{C}_i^\vee)\in 2\mathbb{Z}$ is the usual self-intersection number of 
 $D_i$\footnote{$\mathcal{C}_i$ has even degree on account of being orientable, see \Cref{rem:orient}(1).}.   
In contrast,  
the coefficients $\mu_{ij}$, $i\neq j$ of the matrix $\mu$ do depend by construction on the choice of the 
orientations $\epsilon_i$ and $\tau_i$ made in assumption (M5b) of \Cref{lm:Mumford_target}. 
\end{rem}

We finally give an explicit formula for the coefficients of the quadratic Mumford
 matrix based on the previous computation and computations of Chow-Witt groups.
\begin{prop} \label{prop:explicit-symmetric}
Consider assumption (2) in the previous proposition.
 Let us fix two indices $(i,j)\in I^2$ such that $i \neq j$.

For any point $x \in D_{ij}$, we let $\kappa(x)$ be the associated residue field,
 $\omega_x=\omega_{\kappa(x)/k}$ be the associated canonical sheaf,
 and $m_x(D_i,D_j)=\lg(\mathcal O_{D_{ij},x})$ be the intersection multiplicity at the point $x$
 of the divisors $D_i$ and $D_j$ of $X$.

Given such a point $x$, as $D_i$ and $D_j$ intersects transversally at $x$,
 one also gets a canonical isomorphism
 $\omega_x \simeq \calC_i|_x \otimes \calC_j|_x \otimes \omega_X|_x^\vee$.
 In particular, the product of orientation classes $o_i|_x \otimes o_j|_x \otimes \epsilon^\vee|_x$
 gives an orientation $o_x(D_i,D_j)$ of $\omega_x$,
 that we can view as a rank $1$ element of $\GW(\kappa(x),\omega_x)$
 (see \Cref{rem:orient&GW}).

Then we have 
$$
\mu_{ij}=\sum_{x \in D_{ij}} m_x(D_i,D_j)_\epsilon.\Tr^\omega_{\kappa(x)/k*}\big(o_x(D_i,D_j)\big)
$$
where $n_\epsilon=\sum_{i=0}^{n-1} \langle (-1)^n \rangle \in \GW(k)$,
 $\Tr_{\kappa(x)/k}^\omega:\omega_x \rightarrow k$ is the differential trace form
 of the finite extension $\kappa(x)/k$,
 and $\Tr^\omega_{\kappa(x)/k*}$ is the associated ``Scharlau transfer''
 (see \Cref{num:or_deg}).

In particular, the quadratic Mumford matrix $\mu$ is symmetric.
\end{prop}
\begin{proof}
According to the \Cref{prop:comput_quad_Mumford_matrix}(2),
 we need to determine the right hand-side of equality \eqref{prop:quad_Mumford2}.
 As the intersection of $D_i$ and $D_j$ is transversal
 and using the computation of the quadratic degrees from \Cref{num:quad_deg},
 one can work locally around the finite scheme $D_ {ij}$.
 In particular, one can assume that $D_i$ is principal, say
 with defining equation $\pi_i$.
 Then one can compute the pullback map $\nu_i^!$ at the level
 of quadratic cycles (as defined in \Cref{num:quadratic-cycles}) according to the formula
\begin{equation}\label{eq:pullback&KMW}
\nu_i^!=\partial_i \circ [\pi_i] \circ j^*
\end{equation}
Here, $j:(X-D_i) \rightarrow X$ is the obvious open immersion
 and we have used the maps defined in \cite[\textsection 5.8, 5.10]{Feld1}:
 $[\pi_i]$ is multiplication by the unit $\pi_i$ on $(X-D_i)$,
 and $\partial_i$ is the boundary map associated with the divisor $D_i \subset X$.
 Then relation \eqref{eq:pullback&KMW} can be derived using the proof of \cite[12.4]{RostChow},
 which allows for a reduction to the property (R3d) of the Milnor-Witt module $\KMW_*$
--- see also \cite[3.2.15]{DFJ} for a proof in terms of Chow-Witt groups as a Borel-Moore homology.
 Finally, the formula for $\mu_{ij}$ can be deduced from \eqref{eq:pullback&KMW},
 by coming back to the definition of the basic maps $[\pi_i]$ and $\partial_i$,
 applying \cite[Lem. 2.19]{MorLNM} to get the multiplicity $m_x(D_i,D_j)_\epsilon$,
 and finally use the formula \eqref{eq:degree-formula}.
\end{proof}

\begin{ex}
Let us assume that $D$ is a simple normal crossing divisor with only $k$-rational intersections.
 In this case, for each $x \in D_{ij}$, $\omega_x=k$ and the differential trace map is the identity.
 Moreover, the orientation class $o_x(D_i,D_j)$ belongs to $\Or_x(\omega_x)=Q(k)$, so that it is
 the quadratic class of a unit $u_{ij}^x \in k^\times$.
In this case, the formula for the non-diagonal coefficients of the quadratic Mumford matrix reads
$$
\mu_{ij}=\sum_{x \in D_{ij}} \langle u_{ij}^x \rangle \in \GW(k)
$$
Our main computation will show that one can choose orientation classes so that
 all the $u_{ij}^x=1$.
\end{ex}

\begin{rem}
It is possible to define the quadratic Mumford matrix in slightly greater generality.
In fact, according to \cite[Chap. 4, \textsection 1]{BCDFO}, for any Cartier divisor $D$
 in a smooth $k$-scheme $X$, classified by a line bundle $\cO(D)$ over $X$,
 one associates a canonical quadratic cycle class
 $[D] \in \CHt^1(X,\cO(D))$. 
 Now, if $X$ is a surface with canonical sheaf $\omega_X$, and $D$, $D'$ are Cartier divisor,
 we need only to give a relative orientation $o$ of $\cO(D+D')$;
 that is, a quadratic isomorphism $o:\cO(D+D') \qiso \omega_X$ (\Cref{ex:quad_iso}),
 to define the intersection degree as 
$$
(D.D')_o=\tdeg_o([D].[D'])
$$
Here, we use the quadratic $o$-degree \Cref{num:or_deg} and the intersection product 
$$
\CHt^1(X,\cO(D)) \otimes \CHt^1(X,\cO(D)) \rightarrow \CHt^1(X,\cO(D+D'))
$$

In particular, coming back to the situation of a log-pair $(X,D)$
 as in \Cref{num:hyp_Mumford2}, and under the assumption of \Cref{thm:smhatoomatrix-general}(2),
 one needs only to give a relative orientation of $o:\cO(D) \qiso \omega_X$ in order to
 define all the terms of the Mumford matrix by the formula
$$\mu_{ij}=(D_i.D_j)_o$$
 In \Cref{thm:maincomputation}, however, we need more orientations to split
 the stable homotopy types $\htp(D)$ and $\htp(X/X-D)$.
\end{rem}

\subsection{Abelian mixed motives (Nori and Artin-Tate)}
\begin{num}
In the next example, we apply \Cref{thm:smhatoomatrix-general}(1) to the case of Voevodsky and Nori mixed motives.
 We use $\T=\DM$ and consider a log-pair $(X,D)$ over a field $k$ as in \Cref{num:hyp_Mumford2}.
 We further assume that $k$ has characteristic $0$ with a fixed embedding in the field of complex numbers.

Then we can consider the abelian category $\MMN(k,\ZZ)$ of (mixed) integral Nori motives over $k$, 
 as defined in \cite[4.2.4]{RuiTub} (see \cite{HMS, IvMo} for rational coefficients).\footnote{As we work over a field,
 there is no difference between the ordinary and perverse $t$-structures from \emph{loc. cit.}}
 According to \emph{loc. cit.} Remark 3.1.6 and Proposition 5.1.1, 
 there exists a homological functor\footnote{That is:
 sending homotopy exact sequences to (long) exact sequences.}
$$
\HM_0:\DM_{gm}(k) \rightarrow \mathrm{DN}_{gm}(k) \rightarrow \MMN(k,\ZZ)
$$
Given a $k$-scheme $X$, we write $\HM_n(X)=\HM_0(M[-n])$ and 
refer to objects $M$ of $\DM_{gm}(k)$ as (geometric) Voevodsky motives.
We say that $M$ is concentrated in \emph{Nori-degrees} $[a,b]$ if for any $n \notin [a,b]$,
 $\HM_n(M)=0$.\footnote{Beware, however, that there is no underlying $t$-structure on $\DM_{gm}(k)$
 corresponding to this notion of Nori-degree. First of all, one needs to replace $\DM_{gm}(k)$ with its
 \'etale-localization --- or work with rational coefficients --- to hope that such a $t$-structure exists
 (see \cite[Chap. 5, Prop. 4.3.8]{FSV}). Even under these assumptions, the existence of the motivic $t$-structure
 is conjectural. But see the end of this subsection.}
 The category of geometric Nori motives is monoidal rigid. We let $N^\vee$ be the dual of a Nori motive.\footnote{As usual,
 this comes from resolution of singularities, which implies that every geometric Nori motive admits a finite resolution
 by Nori motives of smooth projective $k$-schemes.}

For $n \geq 0$, we define the Nori motive
$$
\HM_n(\TN(X,D)):=\HM_0(\TN(X,D)[-n])
$$
as the $n$-th (motivic) homology of the punctured tubular neighborhood of $(X,D)$.
When $X$ is proper over $k$, 
this is the homology of the boundary motive of $(X-D)$ 
(see \Cref{ex:realization} and \Cref{prop:basic_comput_thp-infty}), 
or the \emph{(motivic) homology at infinity}
$$
\HM_n^\infty(X-D)=\HM_i(\TN(X,D))
$$
According to \Cref{prop:Mumford_source}, 
we are led to consider the geometric Voevodsky motive $M(\mathcal D)$
given by the complex 
$$
\oplus_{i<j} [D_{ij}] \xrightarrow{\sum_{i<j} \nu_{ij*}^i-\nu_{ij*}^j} \oplus [D_i-\{x_i\}]
$$
in homological degrees $[0,1]$.
Here, 
$[Y]$ denotes the object associated to a smooth $k$-scheme $Y$ in the additive category 
$\mathrm{Sm}^{\mathrm{cor}}_k$ (see \cite[Chap. 5]{FSV}). 
Its image in $\DM(k)$ is precisely the object defined in \emph{loc. cit.}
We let $\HM_n(\mathcal D)$ be its $n$-th motivic homology.
\end{num}

\begin{prop}
\label{prop:Mumford_motives}
The Voevodsky motive $M(\mathcal D)$ is concentrated in Nori-degrees $[0,1]$
 and there exists an exact sequence in $\MMN(k,\ZZ)$
$$
0 \rightarrow \bigoplus_{i \in I} \HM_1(D_i) \rightarrow \HM_1(\mathcal D)
 \rightarrow \bigoplus_{i<j} \HM_0(D_{ij}) \xrightarrow{\sum_{i<j} p^i_{ij*}-p^j_{ij*}} \bigoplus_{i \in I} \un_S 
 \rightarrow \HM_0(\mathcal D) \rightarrow 0
$$
Moreover, the homology motive $\TN(X,D)$ is concentrated in Nori-degrees $[0,3]$
 such that
\begin{align*}
\HM_0(\TN(X,D)) & \simeq \HM_0(\mathcal D) \\
\HM_3(\TN(X,D)) & \simeq \HM_0(\mathcal D)^\vee(2)
\end{align*}
  and there is an exact sequence in $\MMN(k,\ZZ)$:
\begin{align*}
0 \rightarrow \HM_1(\mathcal D)^\vee(2) \rightarrow \HM_2(\TN(X,D))
 \rightarrow \bigoplus_{i \in I} \un(1) 
 \xrightarrow{\ \mu\ } \bigoplus_{j \in I} \un(1)
 \rightarrow \HM_1(\TN(X,D)) \rightarrow \HM_1(\mathcal D) \rightarrow 0
\end{align*}
where $p_{ij}^i:D_{ij} \rightarrow \Spec(k)$ is the canonical projection
 and $\mu$ is the Mumford intersection matrix (acting on the Nori Tate twist).
\end{prop}
\begin{proof}
The first exact sequence follows from the homology exact sequence associated to 
the cone $M(\mathcal D)$ since 
$M(D_i-\{x_i\})\simeq\un \oplus \HM_1(D_i)[1]$
(which follows from the Chow-K\"unneth decomposition of the smooth proper curve $D_i$).

The other statement follows from the homology long exact sequence deduced 
from the distinguished triangle provided by \Cref{thm:smhatoomatrix-general}.
\end{proof}
\begin{rem}
The Nori motive  $\HM_0(\TN(X,D))=\HM_0(\mathcal D)$ is a pure Artin motive.
 By contrast, $\HM_1(\mathcal D)$ is an extension of a pure $1$-motive
 (the sum of the dual Jacobian of each $D_i$) by a pure Artin motive.
 With rational coefficients, $\HM_0(\TN(X,D))$ and $\HM_3(\TN(X,D))$ are pure of respective weights $0$ and $-4$,
 while $\HM_1(\TN(X,D))$ and $\HM_2(\TN(X,D))$ are in general mixed of weights $\{0,-2\}$ and $\{-2,-4\}$, 
 respectively (see \cite{IvMo} for the notion of weights on Artin-Tate-Nori motives).
\end{rem}

\begin{rem}\label{rem:TN-higher-htp}
The computations in this example shows that $M(D)$ is in Nori-degree $[0,2]$
 while $M(X/X-D)$ is in Nori-degree $[2,4]$. We can take a closer look at the
 model of the motivic punctured tubular neighborhood from \Cref{prop:mainHZcomputation}
 and \Cref{rem:mainHZcomputation}. After inverting the characteristic exponent of $k$,
 it is obtained by applying the Suslin singular complex functor $C_*^{Sus}$ to the 
 following complex of sheaves with transfers
$$
\bigoplus_{i<j} \ZZ^{tr}(D_{ij})
 \xrightarrow{\ d_0\ } \bigoplus_{i \in I} \ZZ^{tr}(D_{i}) 
 \xrightarrow{\nu^*\nu_*} \bigoplus_{j \in I} \ZZ^{tr}(X/X-D_j)
 \xrightarrow{\ d^0\ } \bigoplus_{j<k} \ZZ^{tr}(X/X-D_{jk})
$$
where $\ZZ^{tr}(D_i)$ is placed in degree $0$. 
We note that the associated motivic complex 
\( C_*^{Sus} \mathbb{Z}^{tr}(X/X-D_j) \) (respectively, \( C_*^{Sus} \mathbb{Z}^{tr}(X/X-D_{jk}) \)) 
is in Nori degree \([2,4]\) (respectively, \{4\}). 
This observation explains why \(\HM_1\) and \(\HM_2\) of \(\TN(X,D)\) represent an extension of two 
Nori motives: one originating from \( M(D) \) and the other from \( M(X/X-D) \).
\end{rem}

\begin{num}
As another illustration of our main result,
 we consider the case where each branch $D_i$ of the divisor $D$ is rational. 
\Cref{thm:smhatoomatrix-general} shows the motive $M(\TN(X,D))$ over $k$ is Artin-Tate:
 it is in the smallest thick triangulated subcategory $\DMAT(k)$ of $\DM(k)$ which contains motives 
 of the form $M(L)(n)$, where $L/k$ is a finite separable extension of $k$.

If $k$ is of arbitrary characteristic, 
we will assume it has \emph{Kronecker index} at most one;\footnote{Recall the Kronecker index of a field $F$, 
of transcendence degree $d$ over its prime subfield and characteristic $p$, 
is either $d+1$ if $p=0$ or $d$ if $p>0$.}
for example, a number field, 
a finite field or a finitely generated field of transcendence degree $1$ over a finite field.
Let $\DMAT(k,\QQ)$ be the triangulated category of (constructible) Artin-Tate motives over $\QQ$.
From \cite{LevineAT}, it follows that $\DMAT(k,\QQ)$ admits a motivic t-structure (uniquely characterized),
whose heart is the Tannakian category $\MMAT(k,\QQ)$ of abelian Artin-Tate motives.
In particular, we obtain a homological and monoidal functor
$$
\HMat_0:\DMAT(K,\QQ) \rightarrow \MMAT(K,\QQ)
$$
\end{num}
\begin{prop}
\label{cor:Mumford_ATmotives}
Under the above assumptions, 
the Artin-Tate homology motive $\HM_i(X)$ vanishes for $i\not\in [0,3]$ and there is an exact sequence 
in the abelian category $\MMAT(S,\QQ)$ 
\begin{align*}
0 \rightarrow &\HMat_3(\TN(X,D)) \rightarrow \bigoplus_{i \in I} \un(2)
\xrightarrow{\sum_{i<j} p_{ij}^{i!}-p_{ij}^{j!}} \bigoplus_{i<j} M(D_{ij})(2)  \\
& \rightarrow \HMat_2(\TN(X,D)) \rightarrow \bigoplus_{i \in I} \un(1) 
\xrightarrow{\ \mu\ } \bigoplus_{j \in I} \un(1) \\
& \rightarrow \HMat_1(\TN(X,D)) \rightarrow \bigoplus_{i<j} \HMat_0(D_{ij})
\xrightarrow{\sum_{i<j} p^i_{ij*}-p^j_{ij*}} \bigoplus_{i \in I} \un
\rightarrow \HMat_0(\TN(X,D)) \rightarrow 0
\end{align*}
Here, we use a similar notation to that in the above proposition,
and $p_{ij}^{i!}$ is the Gysin map associated with the finite morphism $p_{ij}^i$.
\end{prop}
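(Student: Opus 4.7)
The plan is to apply the homological functor $\HM_0\colon \DMAT(K,\QQ) \to \MMAT(K,\QQ)$ to the fiber triangle
\[
\TN(X,D) \to A \to B
\]
provided by \Cref{thm:smhatoomatrix-general}, where $A = \mathcal D \oplus \bigoplus_{i\in I} \un_S(1)[2]$ and $B = \mathcal D^\vee(2)[4] \oplus \bigoplus_{j\in I} \un_S(1)[2]$, and then splice the resulting long exact sequence with the short exact sequences of heart-objects that compute $\HM_*(\mathcal D)$ and $\HM_*(\mathcal D^\vee(2)[4])$.

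First I would compute these homology groups. By \Cref{lm:Mumford_source}, $\mathcal D$ is the cofiber of the map $q_1 = \sum_{i<j}(p^i_{ij*}-p^j_{ij*}) \colon \bigoplus_{i<j} M(D_{ij}) \to \bigoplus_{i\in I} \un_S$ whose source and target both lie in the heart $\MMAT(K,\QQ)$, so the long exact sequence of $\HM_*$ immediately gives $\HM_0(\mathcal D)=\mathrm{coker}(q_1)$, $\HM_1(\mathcal D)=\ker(q_1)$, and $\HM_i(\mathcal D)=0$ otherwise. Dualizing the defining cofiber sequence and using that each Artin motive $M(D_{ij})$ is self-dual as a $0$-dimensional pure motive yields a fiber triangle $\mathcal D^\vee \to \bigoplus_{i} \un_S \to \bigoplus_{i<j} M(D_{ij})$, whose second map is the dual $q_1^\vee=\sum_{i<j}(p^{i!}_{ij}-p^{j!}_{ij})$. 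After applying $(2)[4]$, one obtains $\HM_4(\mathcal D^\vee(2)[4])=\ker(q_1^\vee)(2)$, $\HM_3(\mathcal D^\vee(2)[4])=\mathrm{coker}(q_1^\vee)(2)$, and zero in all other degrees. Consequently $\HM_i(A)$ is nonzero only for $i\in\{0,1,2\}$ and $\HM_i(B)$ only for $i\in\{2,3,4\}$.

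Feeding these computations into the long exact sequence
\[
\cdots \to \HM_i(\TN(X,D)) \to \HM_i(A) \to \HM_i(B) \to \HM_{i-1}(\TN(X,D)) \to \cdots
\]
associated with the fiber triangle, the vanishing ranges above immediately force $\HM_i(\TN(X,D))=0$ for $i\notin[0,3]$. Splicing the same long exact sequence over $i=3,2,1,0$ and substituting the explicit descriptions of $\HM_i(A)$ and $\HM_i(B)$ produces precisely the stated 10-term exact sequence; the external maps $\sum(p^i_{ij*}-p^j_{ij*})$ and $\sum(p^{i!}_{ij}-p^{j!}_{ij})$ are read off from the cofiber presentation of $\mathcal D$ and its dual, whereas the middle arrow $\bigoplus\un_S(1)\to\bigoplus\un_S(1)$ is induced on $\HM_2$ by the $(2,2)$-block $\mu$ of the matrix $\beta$ from \Cref{thm:smhatoomatrix-general}.

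The main obstacle will be verifying that the off-diagonal blocks $a$, $b$ and $b'$ of $\beta$ contribute nothing to the spliced sequence, so that the only nontrivial map between the pure-Tate summands on $\HM_2$ is indeed $\mu$. This should follow formally from the observation that $\mathcal D$ and $\bigoplus\un_S(1)[2]$ have $\HM_*$ concentrated in disjoint degrees (degrees $0,1$ versus $2$), and likewise $\mathcal D^\vee(2)[4]$ and $\bigoplus\un_S(1)[2]$ on the target side (degrees $3,4$ versus $2$), so that in each degree at most one matrix entry of $\beta$ acts between the relevant homology groups; a careful diagram chase identifies each such entry with one of the four announced morphisms.
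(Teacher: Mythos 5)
Your proof is correct and follows the same strategy as the paper, namely applying the homological functor $\HM_0$ to the distinguished triangle $\TN(X,D)\to A\to B$ furnished by \Cref{thm:smhatoomatrix-general} and splicing the resulting long exact sequence with the two-term presentations of $\HM_*(\mathcal D)$ and $\HM_*(\mathcal D^\vee(2)[4])$. The paper's own proof is a one-line pointer to precisely this calculation, so your write-up is simply the detailed version; the point you flag as an obstacle, the vanishing of the contributions of the blocks $a$, $b$, $b'$, is handled exactly as you indicate, since in each homological degree at most one block of $\beta$ acts between nonzero groups.
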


\begin{rem}
One obtains similar exact sequences of Artin-Tate mixed motives over more general bases $S$ using:
\begin{enumerate}
\item \cite{ScholbachAT}: when $S \subset  \Spec{\mathcal O_K}$, $\mathcal O_K$ a number ring;
\item \cite{IvMo}: $S$ a smooth $K$-scheme, for a field $K$ with a complex embedding $K \subset \CC$.
\end{enumerate}
Indeed, 
the indicated references provide us with a suitable category of Artin-Tate(-Nori) motives,
and one can make precisely the same calculation 
(considering the dimension of $S$ as we use perverse motivic t-structures). 
\end{rem}

\begin{ex} 
\label{ex:ramsurface}
To illustrate \Cref{thm:smhatoomatrix-general}, \Cref{prop:comput_quad_Mumford_matrix}, 
we compute Wildeshaus' \emph{boundary motive}, 
or equivalently the motive at infinity (\Cref{ex:realization}), 
of \emph{Ramanujam's surface} $\Sigma$ over a field $k$ of characteristic different from $2$.
We work in $\T=\DM$, the integral category of motives.

First, we recall the construction of $\Sigma$.
Given a cuspidal cubic $C\subset \mathbf{P}^2_k$ and a smooth $k$-rational conic $Q\subset \mathbf{P}^2_k$ 
intersecting $C$ with multiplicity $5$ in a $k$-rational point $p$, 
let $\Sigma$ be the complement of the proper transforms of $C$ and $Q$ in the blow-up 
$\sigma:\mathbf{F}_1\to \mathbf{P}^2_k$ of the remaining $k$-rational intersection point $q$ of $C$ and $Q$
(see \cite{zbMATH03089705} for Hirzebruch surfaces $\mathbf{F}_n$, $n\geq 0$). 
Over the complex numbers, 
the underlying analytic space of $\Sigma$ is a topologically contractible open smooth manifold non-homeomorphic 
to ${\bf R}^4$ whose topological fundamental group at infinity $\pi^\infty_1(\Sigma)$ is infinite 
with trivial abelianization, 
see \cite{Ramanujam}. 

A compactification $X=\bar \Sigma$ of $\Sigma$ with smooth reduced crossings boundary $D=\partial \Sigma$ (see Definition \ref{df:normal_crossing}) is obtained from $\mathbf{F}_1$ 
by blowing up the singular point of $C$, 
with exceptional divisor $E\simeq \mathbf{P}^1_k$. The irreducible components of $D$ are then $E$ and the proper transforms of $Q$ and $C$, 
with respective self-intersections $E^2=-1$, $Q^2=4$ and $C^2=3$. 
Furthermore, $Q$ and $C$ intersect with multiplicity $5$ at the unique point $p$, and $E$ and $Q$ intersect with multiplicity $2$ at a unique $k$-rational point. 
A minimal log-resolution $Y\to $X of the pair $(X,D)$ is then obtained by performing further sequences of blow-ups with centers over the intersections points of the proper transform of $C$ with those of $E$ and $Q$ in such a way that the total transform $B$ of $D$ in $Y$ has the following weighted dual graph $\Gamma$: 
\[
\xymatrix{ 
& (-2) \ar@{-}[d] & & (Q,-2) \ar@{-}[d]
\\
(E,-3) \ar@{-}[r] & (-1) \ar@{-}[r] & (C,-3) \ar@{-}[r] & (-1) \ar@{-}[r] & (-2) \ar@{-}[r] & \ar@{-}[r] (-2) \ar@{-}[r] & (-2) \ar@{-}[r] & (-2) }
\]
Next, we apply \Cref{thm:smhatoomatrix-general} to the pair $(Y,B)$.
 Since $\Gamma$ is a tree, one first obtains that the \emph{Artin part} $\mathcal D=\un_k$, and then that the maps $a$, $b$, $b'$
 are all zero for degree reasons (see also the proof of \Cref{prop:RationalTree}).
Then from \Cref{prop:comput_quad_Mumford_matrix},
 the map $\mu:(\un_k(1)[2])^{\oplus 10} \to (\un_k(1)[2])^{\oplus 10}$ is given by the integer valued intersection matrix $M(B)$ of $B$. Since the successive blow-up made to obtain the pair $(Y,B)$ from the pair $(X,D)$ do not change the absolute value of the determinant of the intersection matrix, $M(B)$ has the same determinant up to sign has the intersection matrix  
\[
N(D)=\left(\begin{array}{ccc}
4 & 5 & 2 \\
5 & 3 & 0 \\
2 & 0 & -1
\end{array}\right)
\]
of $D$. Since $\det N(D)=1$, we conclude that $M(B)\in \mathrm{GL}_{10}(\mathbb{Z})$. 
\Cref{thm:smhatoomatrix-general} then implies the boundary motive of $\Sigma$ is isomorphic to homotopy fiber 
of the trivial map $\un_k\to \un_k(2)[4]$. 
In summary, we obtain
$$
\partial M(\Sigma)=M^\infty(\Sigma)
\simeq
\un_k\oplus \un_k(2)[3]
$$
\end{ex}

\subsection{Punctured tubular neighborhoods of orientable trees of rational curves} 

\begin{num} 
\label{num:rational-tree}
Consider the assumptions of \Cref{thm:smhatoomatrix-general}(2) in the special case when $D$ is 
an orientable tree of smooth $k$-rational curves on a smooth surface $X$ over a field $k$, 
that is
\begin{enumerate}
\item  $D$ is a smooth normal crossing divisor on $X$ with irreducible components $D_i\simeq \mathbb{P}^1_k$, $i\in I$,  such that for every $i\neq j$, $D_{ij}$ is either empty or consists of a single $k$-rational  point.
\item For every $i\in I$, the conormal sheaf  $\mathcal{C}_i$ of $D_i$ is $X$ is orientable, hence isomorphic to $\mathcal{O}_{D_i}(2n_i)$ for some $n_i\in \mathbb{Z}$. 
\item The dual graph $\Gamma$ of $D$ (see \Cref{not:dual-graph}) is a connected tree.
\end{enumerate}
\end{num}

Since $D_i\simeq \mathbb{P}^1_k$, the canonical sheaf $\omega_i=\omega_{D_i}\cong \cO_{\mathbb{P}^1_k}(-2)$ is orientable for every $i\in I$.  The assumption on the orientability of the conormal sheaves $\calC_i$ implies in turn that $\omega_X|_{D_i}\cong \omega_i\otimes \calC_i$ is orientable for every $i\in I$, whence, by \Cref{thm:orientationncdsurfaces-bis}, that $\omega_X|_D$ is orientable.   Thus, assumption (2) of the theorem mentioned above is fulfilled:
  we can choose some orientation class $\epsilon \in \Or_X(\omega_X|_D)$
  and a $\pi_i$-normalized orientation $\tau_i \in \Or_{D_i}(\omega_i)$ for every $i\in I$.
 \medskip
  
Recall that $h=\langle 1\rangle+\langle-1\rangle=1+\langle-1\rangle \in \GW(k)$ 
denotes the class of the hyperbolic plane.
\begin{thm} 
\label{prop:RationalTree}\label{prop:RationalTree-oriented}
In the above setting, 
 there is a \emph{special choice} of the orientation classes $\epsilon$ and $\tau_i$ as above
 such that the punctured tubular neighborhood $\TN_S(X,D)$ in $\ho \SH(k)$ is isomorphic to
$$
\un_k\oplus \mathrm{hofib}(\mathbf{\mu})\oplus \un_k(2)[3]
$$
Moreover, this choice can be made so as to guarantee that
 the quadratic Mumford matrix $\mu:\bigoplus_{i\in I} \un_k(1)[2]\to \bigoplus_{i\in I} \un_k(1)[2]$ 
 is the same as the classical (integer-valued) Mumford matrix $(D_i,D_j)_{i,j}$ 
  except that each diagonal entry  $(D_i,D_i)=-2n_i$ is replaced by $-n_ih$.
 \end{thm}
\begin{proof}
We first consider an arbitrary choice of orientation classes $\epsilon$ and $\tau_i$
 and apply \Cref{thm:smhatoomatrix-general}(2).
Denote by $J\subset I\times I$ the subset consisting of pairs $i<j$
such that $D_{ij}\neq\emptyset$. 
Since $\Gamma$ is a tree,
we have $\sharp J=\sharp I-1$ and 
$\bigoplus_{i<j}\Pi_{k}(D_{ij})=\bigoplus_{(i,j)\in J}\mathbf{1}_{k}$. 
The map $q$ in \Cref{rem:compute-coker-double}
is given by a matrix in $\mathrm{M}_{\sharp J,\sharp I}(\mathbb{Z})$, 
whose Smith normal form is the diagonal matrix 
\[
\left(\begin{array}{c}
\mathrm{id}_{\sharp J}\\
0
\end{array}\right)
\]
The homotopy cofiber $\mathcal{D}$ of $q$ is thus equivalent to that 
of the trivial map $0\to\un_{S}$, 
i.e., to $\un_{S}$.
This implies $\mathcal D^\vee\simeq \un_S$.
By Morel's $\mathbb{A}^1$-connectivity theorem, 
$\mathrm{Hom}_{\mathrm{SH}(k)}(\mathbf{1}_{k},\mathbf{1}_{k}(i)[2i])=0$ for all $i>0$.
Thus, the maps $a$, $b$, and $b'$ appearing in \Cref{thm:smhatoomatrix-general} 
 must vanish, which implies that $\TN_S(X,D)$ is the homotopy fiber of a map of the form
$$
\beta=\begin{pmatrix}
0 & 0 \\
0 & \mu
\end{pmatrix}: \un_S \oplus \bigoplus_{i \in I} \un_S(1)[2] 
\rightarrow 
\mathcal \un_S(2)[4] \oplus \bigoplus_{j \in I} \un_S(1)[2]
$$
Moreover, by \Cref{prop:comput_quad_Mumford_matrix} and \Cref{rem:Mumford-euler}(2), 
the diagonal entries of $\mu$ are equal to the Euler classes 
$e(\mathcal{C}_i^\vee)=e(\mathcal{O}_{\mathbb{P}^1_k}(-2n_i))=-n_i h\in \GW(k)$. 
We finally show that we can find appropriate choices of orientation classes of the invertible sheaves $\omega_i$, $i\in I$, for which the associated matrix \(\mu\) defined above has the desired form. Assume that we have initially given orientations classes $\epsilon\in \Or_D(\omega_X|_D)$ and $\tau_i\in \Or_{D_i}(\omega_i)$ and let \(\mu_{ij} = \langle \alpha_{ij} \rangle \in \GW(k)\) denote the corresponding elements, as defined by formula \eqref{prop:quad_Mumford2} for these choices. Given units \(v_i \in k^*\), we can define new orientations classes \(\tau'_i = \langle v_i \rangle \tau_i\) in $\Or_{D_i}(\omega_i)$ for which the resulting family of multiplicities is then given by
\[
\mu'_{ij} = \langle \alpha_{ij} v_i v_j^{-1} \rangle,
\]
and our goal is thus to finds such units \(v_j\) for which \(\mu'_{ij} = \langle 1 \rangle\) for all \((i,j)\in J\).
Since \(D\) has \(n\) irreducible components and \(\Gamma\) is a connected tree, there are exactly \(n-1\) intersection points between these components. Each intersection contributes two nonzero coefficients \(\mu_{ij}\) and  \(\mu_{ji}\) which, 
by \Cref{prop:explicit-symmetric} are equal, which gives a total of $n-1$ equations to solve. Meanwhile, there are \(n\) degrees of freedom for the \(v_i\).
Our system is therefore underdetermined and, being multiplicatively linear, it admits
 at least one solution. This completes the proof.

\end{proof}

In the following, we illustrate our techniques by explicitly calculating the punctured tubular neighborhoods of Du Val singularities on normal surfaces. We also explore the stable homotopy types at infinity of Danielewski hypersurfaces, which are a family of smooth affine surfaces that hold historical significance in relation to the Zariski cancellation problem.

\subsubsection{Example 1: Stable motivic links of Du Val singularities on normal surfaces}
Let $X_0$ be a geometrically integral normal surface essentially of finite type over a field $k$ 
with an isolated $k$-rational rational double point $x$, also called a Du Val singularity. 
Recall from \cite{zbMATH03229289}, \cite{zbMATH03557923} that among many equivalent characterizations, 
this means that letting $\pi:X\to X_0$ be the minimal desingularization of $X_0$ and 
$\pi_{\bar{k}}:X_{\bar{k}}\to X_{0,\bar{k}}$ be the base change to an algebraic closure $\bar{k}$ of $k$, 
the following holds:

\begin{enumerate}
\item $\pi^{-1}_{\bar{k}}(x_{\bar{k}})$ is a smooth normal crossing divisor whose irreducible components are proper $\bar{k}$-rational curves $E_i$ intersecting each other transversely at $\bar{k}$-rational points only.
\item The curves $E_i$ have self-intersection number $-2$ and the intersection matrix $(E_i , E_j)_{i,j}$ 
is negative definite.
\end{enumerate}
The incidence graph of the divisor $E=\pi^{-1}_{\bar{k}}(x_{\bar{k}})$ is one of the classical Dynkin diagram of type $A_n$, $n\geq 1$,
$D_n$, $n\geq 4$, $E_6$, $E_7$ and $E_8$ depicted in the left column of Table \ref{tab:DuVal-links}.  If $\bar{k}$ has characteristic different from $2$, $3$ and $5$, the completion of the local ring $\mathcal{O}_{X_{0,\bar{k}},x_{\bar{k}}}$ is isomorphic to $\bar{k}[[x,y,z]]/(f)$ where $f$ is one of the polynomials listed in the second column of Table  \ref{tab:DuVal-links}, in particular the analytic local isomorphism type of the singularity depends only on the Dynkin diagram.\footnote{In characteristics $2$, $3$, and $5$, there are finitely many additional  "normal forms"; see \cite{zbMATH03557923} for the complete list.} Over a non-closed field, Du Val singularities $A_n$, $D_n$ and $E_6$ can in general have non-trivial $k$-forms depending on the action of the Galois group $\mathrm{Gal}(\bar{k}/k)$ on the irreducible components of $E$. We now assume, in addition that all the irreducible components of $E$ are defined over the base field $k$ and isomorphic to $\mathbb{P}^1_k$.\footnote{Over a field of characteristic zero, this amounts to restricting to "split" Du Val singularities $A_n^{-}$, $D_n^{-}$, $E_6^{-}$, $E_7$ and $E_8$, see  \cite{zbMATH01315234}.}
For such singularities, the closed pair $(X,E)$ satisfies the assumptions in \Cref{num:rational-tree}, 
and the punctured tubular neighborhood $\TN_S(X_0,x)$ of $x$ in $X_0$ is a natural invariant of the 
Nisnevich germ of $x$ in $X_0$ which, by \Cref{cor:cdh-invariance}, 
can be computed as the punctured tubular neighborhood $\TN_S(X,E)$. 
Applying \Cref{prop:RationalTree}, we obtain the following 

\begin{prop} 
With the assumption above, 
the punctured tubular neighborhood $\TN_k(X_0,x)$ 
is isomorphic to $$ \un_k\oplus \mathrm{hofib}(\mu(\Gamma))\oplus \un_k(2)[3]
$$ 
Here $\mu(\Gamma)$ is the square matrix with entries in $\GW(k)$ obtained from the integer valued intersection matrix $(E_i,E_j)_{i,j}$ associated to the Dynkin diagram $\Gamma=A_n,D_n,E_6,E_7,E_8$ by replacing each diagonal entry $-2$ by $-h$. 
\end{prop}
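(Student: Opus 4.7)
The plan is to reduce the claim to a direct application of \Cref{prop:RationalTree} after verifying that the pair $(X,E)$ of the minimal resolution fits into the framework of \Cref{num:rational-tree}.

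First, I would use the $\cdh$-invariance of punctured tubular neighborhoods (\Cref{cor:cdh-invariance}) applied to the proper birational morphism of closed pairs $\pi\colon(X,E)\to(X_0,\{x\})$. Since $\pi$ is a resolution of an isolated singularity, it is an isomorphism over $X_0-\{x\}$ and $E=\pi^{-1}(x)_{\mathrm{red}}$, so the associated square is $\cdh$-distinguished. Consequently there is a canonical isomorphism $\TN_k(X_0,x)\simeq\TN_k(X,E)$, and it therefore suffices to compute the right hand side.

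Next, I would verify that $(X,E)$ satisfies hypotheses (M1)–(M4) together with (M3+) of \Cref{num:hyp_Mumford} and the tree conditions of \Cref{num:rational-tree}. Under our hypotheses, $X$ is smooth of relative dimension $2$ over $k$ in a Nisnevich neighborhood of $E$ (being the minimal desingularization), and $E$ is a proper smooth normal crossing divisor whose irreducible components $E_i$ are all $k$-isomorphic to $\mathbb{P}^1_k$; the fixed isomorphisms $E_i\cong\mathbb{P}^1_k$ supply the marked points required in (M4). The components intersect transversely at $k$-rational points, which handles (M3+), and the incidence graph is the Dynkin diagram $\Gamma$, hence a tree. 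Finally, each self-intersection $(E_i,E_i)=-2$ forces $\mathcal{C}_{i}\simeq\mathcal{O}_{E_i}(2)$, so $\mathcal{C}_i$ is orientable with $n_i=1$; this ensures the orientability assumption preceding \Cref{prop:RationalTree}.

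Having checked the hypotheses, the conclusion follows directly from \Cref{prop:RationalTree}: there exist orientations $(\epsilon_i)_{i\in I}$ satisfying (M5b) such that $\TN_k(X,E)$ is isomorphic to
\[
\un_k\oplus\mathrm{hofib}(\mu)\oplus\un_k(2)[3],
\]
where the quadratic Mumford matrix $\mu$ is obtained from the integer intersection matrix $(E_i,E_j)_{i,j}$ by replacing each diagonal entry $(E_i,E_i)=-2=-2n_i$ with $-n_i h=-h\in\GW(k)$, while the off-diagonal entries stay equal to $(E_i,E_j)\in\{0,1\}\subset\GW(k)$ under the canonical identification $\mathbb{Z}\hookrightarrow\GW(k)$.

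The only genuine content of the argument is the $\cdh$-excision step and the verification of the orientability condition $\mathcal{C}_i\cong\mathcal{O}_{E_i}(2)$; both are essentially automatic given the standard characterization of rational double points, so I do not expect any serious obstacle. The main book-keeping subtlety, which has already been absorbed into \Cref{prop:RationalTree}, is the inductive choice of orientations along the tree making the off-diagonal coefficients agree with the topological intersection numbers.
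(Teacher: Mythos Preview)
Your proposal is correct and matches the paper's own argument essentially verbatim: the paper first invokes \Cref{cor:cdh-invariance} to pass from $\TN_k(X_0,x)$ to $\TN_k(X,E)$, then observes that $(X,E)$ satisfies the hypotheses of \Cref{num:rational-tree} (in particular each $\mathcal{C}_i\simeq\mathcal{O}_{E_i}(2)$ is orientable with $n_i=1$), and concludes by a direct application of \Cref{prop:RationalTree}.
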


The above proposition implies that the \emph{stable motivic link} $\TN(\Gamma):=\TN_k(X_0,x)$ of the Du Val singularity 
germ $(X,x_0)$ depends only on the Dynkin diagram $\Gamma$. 
We summarize these links in \Cref{tab:DuVal-links}. 

\begin{table}[htb]
    \centering
    \begin{tabular}{|l|l|l|}
    \hline
    Dynkin diagram & Normal form over $k$ &   $\TN(\Gamma)$ \\  
    \hline 
    $A_n^{-} \quad \dynkin A{}$ &  $x^2-y^2-z^{n+1}=0$ &
    $\begin{cases}
     \un_k\oplus \mathrm{hofib}(-mh)\oplus \un_k(2)[3] & n=2m-1  \\
     \un_k\oplus \mathrm{hofib}(\tfrac{n}{2}h+1)\oplus \un_k(2)[3] & n\equiv 0 \; [4] \\
      \un_k\oplus \mathrm{hofib}((\tfrac{n}{2})+1)h-1)\oplus \un_k(2)[3] & n\equiv 2 \; [4] 
    \end{cases}$
     \\
    \hline
 $D_n^{-} \quad \dynkin D{}$ &  $x^2+y^2z-z^{n-1}=0$ &
    $\begin{cases}
    \un_k\oplus \mathrm{hofib}(-h)\oplus \un_k(2)[3] & n=2m \\
    \un_k\oplus \mathrm{hofib}(-2h)\oplus \un_k(2)[3] & n=2m+1
    \end{cases}$  
    \\
    \hline
    $E_6^{-} \quad \dynkin E6$ & $x^2+y^3-z^4=0$ &
     $\un_k\oplus \mathrm{hofib}(2h-1)\oplus \un_k(2)[3]$ 
     \\
    \hline
     $E_7 \quad \dynkin E7$ & $x^2+y^3+yz^3=0$ &
     $\un_k\oplus \mathrm{hofib}(-h)\oplus \un_k(2)[3]$ 
     \\
     \hline
      $E_8 \quad \dynkin E8$ & $x^2+y^3+z^5=0$ &
     $\un_k\oplus \un_k(2)[3]$ 
     \\
     \hline
    \end{tabular}
\vspace{0.1in}
    \caption{Stable motivic links of classical split forms of Du Val Singularities}
    \label{tab:DuVal-links}
\end{table}

\begin{num} Let us explain how to compute with Smith normal forms the part 
 $\mathrm{hofib}(\mu(\Gamma))$ of $\TN(\Gamma)$, the 
 stable homotopy punctured tubular neighborhood
 associated with du Val singularities in \Cref{tab:DuVal-links}.
 A priori, this is non-standard since we are considering
 a matrix $\mu(\Gamma)$ with coefficients in the 
 non-principal (even non-reduced!) ring 
$$
\ZZ_\epsilon:=\GG(\ZZ)=\ZZ[\epsilon]/(\epsilon^2-1)
$$
However, one can consider the two quotient rings $\ZZ_\pm=\ZZ_\epsilon/(\epsilon\pm 1)$,
 both isomorphic to $\ZZ$ and the canonical injective map 
 $\pi:\ZZ_\epsilon \rightarrow \ZZ_+ \times \ZZ_-$ with image given by pairs $(n,m)$
 such that $n \equiv m \bmod 2$ (see \cite[3.1.1, 3.1.2]{CDFH}).
We begin with the matrix $\mu(\Gamma)$ having coefficients in $\ZZ_\epsilon$, 
and compute the Smith normal form $\mu(\Gamma)_\pm=S_pm D_\pm T_\pm$ of the matrix obtained 
by mapping to the principal ring $\ZZ_\pm$ (\emph{i.e.,} setting $\epsilon=\pm1$).
\emph{If} the invertible matrices $(S_+,S_-)$, $(T_+,T_-)$,
as well as the diagonal matrix $(D_+,D_-)$, are in the image
of $\pi$ (coefficients by coefficients), one can define
unique lifts $(S,T,D)$ with coefficients in $\ZZ_\epsilon$,
such that $S$ and $T$ are invertible
and the relation $\mu(\Gamma)=SDT$ holds true.
In this situation, 
we deduce the desired Smith normal form and in $\SH(k)$ we obtain an isomorphism 
$$
\mathrm{hofib}\big( \mu(\Gamma)\big)
\simeq
\mathrm{hofib}\big(D\big)
$$
\end{num}

\begin{rem}
We observe that, with the exception of the $E_8$ case, the stable motivic link $\TN(\Gamma)$ of a Du Val singularity differs from the stable motivic link $\TN(\mathbb{A}^2_{k},\{0\}) \simeq \un_k \oplus \un_k(2)[3]$ of a regular point on a surface. In particular, $\TN(\Gamma)$ serves to distinguish Du Val singularities, excluding $E_8$, from regular points. This stands in contrast to the \'etale local fundamental groups of these singularities. In characteristic $p>0$, these groups do not differentiate a double point of the form $A_{p^e}$ from a regular point (see \cite{zbMATH03557923}).
For the case of $E_8$ over the complex numbers, we can interpret the isomorphism $\TN(E_8) \simeq \TN(\mathbb{A}^2_{k},\{0\})$ as a reminder that the topological link of $E_8$ is the Poincaré homology $3$-sphere $\Sigma(2,3,5)$. This is a compact topological $3$-manifold that shares the same singular homology groups as $S^3$, but its fundamental group is isomorphic to the binary dodecahedral group.
\end{rem}

\subsubsection{Example 2: Danielewski hypersurfaces}
\label{subsection:examples}

For a field $k$ and $n\geq 1$, 
the Danielewski hypersurface $D_{n}$ is the smooth affine surface $D_{n}$ in $\mathbf{A}_{k}^{3}$ cut out by 
the equation $x^{n}z=y(y-1)$. 
Owing to \cite{Danielewskisurfaces},
$D_{n}$ becomes a Zariski locally trivial $\mathbf{G}_{a}$-bundle over the affine line with two origins 
$\breve{\mathbf{A}}_{k}^{1}$ 
(using the factorization of the surjective projection $\pi_{n}=\mathrm{pr}_{x}:D_{n}\to\mathbf{A}_{k}^{1}$).  
Thus $D_{n}$ is $\mathbf{A}^{1}$-equivalent to $\breve{\mathbf{A}}_{k}^{1}$ and $\mathbf{P}_{k}^{1}$. 
The threefolds $D_n\times \mathbb{A}^1_k$ are isomorphic, 
but the surfaces $D_n$ are pairwise non-isomorphic.
Over $\mathbb{C}$, 
Danielewski \cite{Danielewskisurfaces}, Fieseler \cite{fieseler} established this by showing the underlying complex analytic manifolds have non-isomorphic first singular homology groups at infinity. 
Our methods provide a base field independent argument that distinguishes between the $D_n$'s via their stable homotopy types at infinity. 
\vspace{0.1in}

We begin by constructing explicit smooth projective completions $\bar{D}_{n}$ of the surfaces $D_{n}$, 
whose boundaries are strict normal crossing divisors. 
The morphism $\varphi_{n}=\mathrm{pr}_{x,y}:D_{n}\to\mathbf{A}_{k}^{2}$ expresses $D_{n}$ as the 
affine modification of $\mathbf{A}_{k}^{2}$ with center at the closed subscheme $Z_{n}$ with ideal 
$(x^{n},y(y-1))$ and divisor $D_{n}=\mathrm{div}(x^{n})$, 
cf.~\cite{zbMATH07149737}.
Furthermore, 
$\varphi_{n}$ decomposes into a sequence of affine modifications
\begin{equation}
\label{equation:affinemodifications}
\varphi_{n}
=
\varphi_{1}\circ\psi_{2}\cdots\circ\psi_{n} 
\colon 
D_{n}\to D_{n-1}\to\cdots D_{2}\to D_{1}
\to
{\mathbf{A}}_{k}^{2}
\end{equation}
given by $\psi_{\ell}:D_{\ell}\to D_{\ell-1}$; $(x,y,z)\mapsto(x,y,xz)$, 
with center at the closed subscheme $Y_{\ell-1}=(x,z)$ and divisor $H_{\ell}=\mathrm{div}(x)$.
That is, 
$\varphi_{1}:D_{1}\to\mathbf{A}_{k}^{2}$ is the birational morphism obtained by blowing-up the points $(0,0)$, 
$(0,1)$ in $\mathbf{A}_{k}^{2}$ and removing the proper transform of $\{0\}\times\mathbf{A}_{k}^{1}$, 
and $\psi_{\ell}:D_{\ell}\to D_{\ell-1}$ is the birational morphism obtained by blowing-up the points $(0,0,0)$, 
$(0,0,1)$ in $\pi_{\ell}^{-1}(0)$ and removing the proper transform of $\pi_{\ell-1}^{-1}(0)$.

Now consider the open embedding $\mathbf{A}_{k}^{2}\hookrightarrow\mathbf{P}_{k}^{1}\times\mathbf{P}_{k}^{1}$;
$(x,y)\mapsto([x:1],[y:1])$. 
Then $C_{\infty}=\mathbf{P}_{k}^{1}\times[1:0]$ and $F_{\infty}=[1:0]\times\mathbf{P}_{k}^{1}$ are irreducible
components of $\mathbf{P}_{k}^{1}\times\mathbf{P}_{k}^{1}$ and we set $F_{0}=[0:1]\times\mathbf{P}_{k}^{1}$. 
Let $\bar{\varphi}_{1}:\bar{D}_{1}\to\mathbf{P}_{k}^{1}\times\mathbf{P}_{k}^{1}$ be the blow-up of the points 
$([0:1],[0:1])$, $([0:1],[1:1])$ in $F_{0}$, with respective exceptional divisors $E_{1,0}$, $E_{1,1}$.
From now on the proper transform of $F_{0}$ in $\bar{D}_{1}$ is also denoted by $F_{0}$.
With these definitions, there is a commutative diagram 
$$
\xymatrix@=24pt{
D_{1} \ar_{\varphi_{1}}[d]\ar[r] & \bar{D}_{1} \ar^{\bar{\varphi}_{1}}[d] \\
\mathbf{A}^{2}_k \ar[r] & \mathbf{P}^{1}_k\times\mathbf{P}^{1}_k
}
$$
Here, 
$D_{1}\hookrightarrow\bar{D}_{1}$ is the open immersion given by the complement of the support of the 
strict normal crossing divisor $\partial D_{1}=C_{\infty}\cup F_{\infty}\cup F_{0}$.
The closures in $\bar{D}_{1}$ of the two irreducible components $\{x=y=0\}$
and $\{x=y-1=0\}$ of $\pi_{1}^{-1}(0)$ equal the exceptional divisors $E_{1,0}$ and $E_{1,1}$, 
respectively. 
We calculate the self-intersection numbers $C_{\infty}^{2}=F_{\infty}^{2}=0$, $F_{0}^{2}=-2$ in $\bar{D}_1$; 
that is, 
the usual degrees of the respective normal line bundles of these curves in $\bar{D}_1$, 
see e.g., 
\cite[Chapter 5.6]{zbMATH00790015}, \cite[Chapter IV]{zbMATH06176082}.

To construct $\bar{D}_{n}$, 
$n\geq2$, 
we start with $\bar{D}_{1}$ and proceed inductively by performing the same sequence of blow-ups as 
for the affine modifications $\psi_{\ell}:D_{l}\to D_{\ell-1}$ in \eqref{equation:affinemodifications}.
This yields birational morphisms $\bar{\psi}_{\ell}:\bar{D}_{\ell}\to\bar{D}_{\ell-1}$
consisting of the blow-up of one point on $E_{\ell,0}-E_{\ell-1,0}$ and another point on 
$E_{\ell,1}-E_{\ell-1,1}$ with respective exceptional divisors $E_{\ell+1,0}$ and $E_{\ell+1,1}$
(by convention $E_{0,0}=E_{0,1}=F_{0})$. 
Moreover, 
$D_{\ell}$ embeds into $\bar{D}_{\ell}$ as the complement of the support of the strict normal crossing divisor 
$\partial D_{\ell}=C_{\infty}\cup F_{\infty}\cup F_{0}\cup\bigcup_{i=1}^{\ell-1}(E_{i,0}\cup E_{i,1})$ in such 
a way that the closures of the two irreducible components $\{x=y=0\}$ and $\{x=y-1=0\}$ of $\pi_{\ell}^{-1}(0)$ 
coincide with the divisors $E_{\ell+1,0}$ and $E_{\ell+1,1}$, respectively.
By construction, there is a commutative diagram 
$$
\xymatrix@=22pt{
\bar{D}_{\ell} \ar^-{\bar{\psi_{\ell}}}[r] & \bar{D}_{\ell-1} \ar[r] & \cdots \ar[r] & \bar{D}_{2} 
\ar^-{\bar{\psi_{2}}}[r]  & \bar{D}_{1} \ar^-{\bar{\varphi}_{1}}[r] & \mathbf{P}^{1}_k\times\mathbf{P}^{1}_k\\
D_{\ell} \ar^-{\psi_{\ell}}[r] \ar[u] & D_{\ell-1} \ar[r] \ar[u] & \cdots \ar[r] & 
D_{2} \ar^-{\psi_{2}}[r] \ar[u] & D_{1} \ar^-{\varphi_{1}}[r] \ar[u] & \mathbf{A}^{2}_k \ar[u] }
$$
For every $n\geq2$, 
we may visualize the boundary divisor $\partial D_{n}$ as a fork of $2n+1$ copies of $\mathbf{P}^{1}_k$ 
\[
\xymatrix{ 
& & & (E_{1,0},-2) \ar@{-}[r] & \cdots \ar@{-}[r]  & (E_{n-1,0},-2) \\
(F_{\infty},0) \ar@{-}[r] & (C_{\infty},0) \ar@{-}[r] & (F_{0},-2) \ar@{-}[ur] \ar@{-}[dr] \\
& & & (E_{1,1},-2) \ar@{-}[r] & \cdots \ar@{-}[r] & (E_{n-1,1},-2) \\}
\]
intersecting transversally in $k$-rational points, 
with the indicated self-intersection numbers for each irreducible component. 
We may order the irreducible components of $\partial D_n$ by setting
$$
F_\infty < C_\infty < F_0  < E_{1,0} < \ldots < E_{n-1,0} < E_{1,1} < \ldots < E_{n-1,1}
$$

The above constructed boundary divisor $\partial D_n$ satisfies the assumption of \Cref{num:rational-tree}. 
Applying Proposition \ref{prop:RationalTree}, 
we deduce that $\Pi^\infty_k(D_n)$ is isomorphic to 
$$\un_k\oplus \mathrm{hofib}(\mu_n)\oplus \un_k(2)[3]$$ 
where $\mu_n$ is the following matrix (with zero entries mostly left out of the notation) 
\[
\mu_{n}=\left(\begin{array}{ccccccccc}
0 & 1\\
1 & 0 & 1\\
 & 1 & -h & 1 &  0 &  & 1\\
 &  & 1 & -h & 1 \\
 &  &  0 & 1 & \ddots & 1\\
 &  &  &  & 1 & -h & 0\\
 &  & 1 &  &  & 0 & -h & 1\\
 &  &  &  &  &  & 1 & \ddots & 1 \\
 &  &  &  &  &  &  & 1 & -h
\end{array}\right)
\in 
\mathrm{M}_{2n+1,2n+1}(\mathrm{GW}(k))
\]
Elementary row and column operations show that $\mu_{n}$ is equivalent to the diagonal matrix 
$\Delta(1,\ldots,1,nh)$. 
We deduce that $\Pi^\infty_k(-)$ distinguishes between all the Danielewski surfaces.

\begin{prop} 
\label{prop:hmDsurfaces}
Over a field $k$ and $n\geq 1$, 
the stable homotopy type at infinity of the Danielewski surface $D_n$ is given by 
$$
\Pi^\infty_k(D_n)\simeq \un_k\oplus \mathrm{hofib}(nh)\oplus \un_k(2)[3]
$$
\end{prop}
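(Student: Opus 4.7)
The plan is to combine the explicit compactification $\bar D_n \supset D_n$ constructed above with the tree-of-rational-curves machinery, and then reduce the resulting quadratic Mumford matrix to its Smith form in $\GW(k)$.

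First, by \Cref{prop:basic_comput_thp-infty} there is a canonical isomorphism
$\Pi^\infty_k(D_n) \simeq \TN_k(\bar D_n, \partial D_n)$.
By construction $(\bar D_n, \partial D_n)$ fits the framework of \Cref{num:rational-tree}: the boundary $\partial D_n$ is a tree of $2n+1$ smooth $k$-rational curves meeting transversely at $k$-rational points, all with even self-intersections $(0, 0, -2, \ldots, -2)$. Hence \Cref{prop:RationalTree-oriented} applies and yields
$$
\TN_k(\bar D_n, \partial D_n) \simeq \un_k \oplus \mathrm{hofib}(\mu_n) \oplus \un_k(2)[3],
$$
with $\mu_n$ the $(2n+1) \times (2n+1)$ matrix over $\GW(k)$ displayed in the excerpt, obtained from the integer intersection matrix of the fork by replacing each diagonal entry $-2$ by $-h$.

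Second, it suffices to show that $\mu_n$ is equivalent to $\Delta(1,\ldots,1,nh)$ (with $2n$ ones) via elementary row and column operations over $\GW(k)$. Such operations correspond to pre- and post-composing $\mu_n$, viewed as a self-map of $\bigoplus_{i=1}^{2n+1} \un_k(1)[2]$, with automorphisms in $\SH(k)$, and therefore preserve the homotopy fiber up to canonical isomorphism. The reduction proceeds in two stages. The components $F_\infty$ and $C_\infty$ with zero self-intersection contribute a hyperbolic $2 \times 2$ block in the upper left corner; using the $1$s at positions $(1,2), (2,1)$ and $(2,3), (3,2)$ as pivots one clears the first two rows and columns, reducing $\mu_n$ to a $(2n-1) \times (2n-1)$ matrix $M_n$ with $-h$ down the diagonal, a $1$ linking $F_0$ to the first node of each arm, and $1$s along each arm. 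Then, starting from the outermost vertex of each arm, one peels off nodes one at a time: each off-diagonal $1$ is a unit in $\GW(k)^\times$, and after a row swap and a clearing step one obtains a $1$ on the peeled-off diagonal slot while modifying the adjacent $-h$ entry via the key ring-theoretic identity $h^2 = 2h$ in $\GW(k)$. Inductively each arm collapses onto the root $F_0$, so that the cumulative contribution from the two arms together with the central $-h$ concentrates into a single entry $\pm n h$ in the last diagonal slot, with $2n$ units elsewhere.

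Since $-1 = \langle -1 \rangle$ is a unit in $\GW(k)$, the sign may be absorbed; and since $\mathrm{hofib}(\mathrm{id}) = 0$, we conclude $\mathrm{hofib}(\mu_n) \simeq \mathrm{hofib}(nh)$, yielding the claimed splitting.

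The main obstacle is the explicit bookkeeping of the matrix reduction for general $n$. Because $\GW(k)$ is not a principal ideal domain and $h$ is not invertible, one cannot simply invoke a Smith normal form theorem; the argument relies crucially on the off-diagonal $1$s in $\mu_n$ being units, combined with the relation $h^2 = 2h$, to drive the inductive reduction. The cleanest approach is to formalize a recursion that, at each stage of peeling, reduces the effective "quadratic residue" from $-(k-1)h$ to $-kh$, so that after $n$ steps of collapse along each arm the final entry emerges as $\pm n h$.
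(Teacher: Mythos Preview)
Your proposal is correct and follows exactly the paper's approach: apply \Cref{prop:RationalTree} to the explicit compactification to obtain the quadratic Mumford matrix $\mu_n$, then reduce it to $\Delta(1,\ldots,1,nh)$ by elementary row and column operations over $\GW(k)$. The paper states the reduction in one line; your sketch of how the off-diagonal units together with $h^2=2h$ drive the peeling is the right mechanism. One small slip: you write ``$-1=\langle -1\rangle$'', but in $\GW(k)$ the additive inverse $-1=-\langle 1\rangle$ is \emph{not} the class $\langle -1\rangle$ (indeed $1+\langle -1\rangle=h\neq 0$); nonetheless $-1$ is a unit since $(-1)^2=1$, so your absorption of the sign is fine.
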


\section{Appendix: quadratic orientations and isomorphisms, cycles and degree}
\label{section:ovbaqi}
\subsection{Oriented vector bundles and quadratic isomorphisms}
\label{subsection:ovbaqi}

\begin{num}
\label{num:quadratic_iso}
The notion of oriented real vector bundles was extended to the algebraic setting by Barges-Morel in \cite{BargeMorel}.
 In what follows, we extend their theory to take into account the functoriality properties of induced trivializations
 of Thom spaces.
\begin{df}
\label{def:quad-preiso} A \emph{quadratic pre-isomorphism} from an invertible sheaf $\cL$ to an invertible sheaf $\cL'$  is an isomorphism 
$\tau:\cL \rightarrow \cL' \otimes \cM^{\otimes 2}$,  where $\mathcal{M}$ is an arbitrary invertible sheaf on $X$.

Two quadratic pre-isomorphisms 
$\tau:\cL \rightarrow \cL' \otimes \cM^{\otimes 2}$ and $\tau':\cL \rightarrow \cL' \otimes \cN^{\otimes 2}$ are called equivalent if there exists an isomorphism $\phi:\cM \rightarrow \cN$ such that the following diagram commutes
$$
\xymatrix@C=60pt@R=0pt{
& \cL' \otimes \cM^{\otimes 2}\ar^{Id \otimes \phi^{\otimes 2}}[dd] \\
\cL\ar^-\tau[ru]\ar_-{\tau'}[rd]\ar[rd] & \\
& \cL' \otimes \cN^{\otimes 2}
}
$$
A \emph{quadratic isomorphism}  $\epsilon:\cL \qiso \cL'$ is the equivalence class of a quadratic pre-isomorphism.
\end{df}

The composition of quadratic pre-isomorphisms $\tau:\cL \rightarrow \cL' \otimes \cM^{\otimes 2}$ and 
$\tau':\cL' \rightarrow \cL'' \otimes \cN^{\otimes 2}$ is defined by the formula
\begin{equation}\label{eq:composition_iso_mod2}
\tau' \circ \tau: \cL \xrightarrow{\tau} \cL' \otimes \cM^{\otimes 2} 
\xrightarrow{\tau' \otimes Id} \cL'' \otimes \cN^{\otimes 2} 
\otimes \cM^{\otimes 2} \simeq \cL'' \otimes (\cN \otimes \cM)^{\otimes 2}
\end{equation}
The composition law is compatible with the equivalence relation on quadratic pre-isomorphism. It admits as the identity of an invertible sheaf $\cL$ 
 the canonical isomorphism $\Id_\cL \otimes m^{-1}:\cL \rightarrow \cL \otimes \mathcal{O}_X^{\otimes 2}$ where $m:\mathcal O_X \otimes \mathcal O_X \rightarrow \mathcal O_X$
 is the multiplication map, and it satisfies the associativity relation.
\end{num}

\begin{ex}\label{ex:quad_iso}
An invertible sheaf $\cL$ is orientable in the sense of Barge-Morel if and only if it is quadratically isomorphic to $\cO_X$,
 and an \emph{orientation} (resp. \emph{class of orientation}) of $\cL$ is a quadratic pre-isomorphism (resp. isomorphism)
 -- we will elaborate on this relation below.
 Moreover, if $X$ is a smooth scheme over a field $k$, with canonical sheaf $\omega_X$ and 
 $L=\mathbb{V}(\cL)$ is a line bundle on $X$, then a  \emph{relative orientation} of $L$ in the sense of Bachmann-Wickelgren \cite{BW_Euler} is the same as a quadratic isomorphism $\cL \qiso \omega_X$.
\end{ex}


\begin{df}\label{df:uPicard_mod2}
The \emph{quadratic Picard groupoid} $\uPicO(X)$ of a scheme $X$ is the category whose objects are invertible sheaves on $X$
and with quadratic isomorphisms as morphisms.
\end{df}

Let $\uPic(X)$ denote Deligne's Picard category  of invertible sheaves on $X$ (see \Cref{sub:conventions} 
for our conventions). There is a functor
$$
\rho_X:\uPic(X) \rightarrow \uPicO(X)
$$
which is the identity on objects and maps an isomorphism $\phi:\cL \rightarrow \cL'$ to the equivalence class of the quadratic pre-isomorphism
$\phi \otimes m^{-1}:\cL=\cL\otimes \cO_X  \rightarrow \cL' \otimes (\mathcal{O}_X)^{\otimes 2}$. 
Moreover, one checks the following properties
\begin{enumerate}
\item The tensor product of invertible sheaves induces a symmetric monoidal structure on $\uPicO(X)$, such that $\rho$ becomes monoidal.
 Therefore $\uPicO(X)$ is a Picard groupoid and $\rho_X$ is a natural transformation of Picard groupoids.
\item Given a morphism of schemes $f:Y \rightarrow X$, the pullback of invertible sheaves induces a functor
 $f^*:\uPicO(X) \rightarrow \uPicO(Y)$ such that $\rho_X$ is natural in $X$.
\end{enumerate}
We henceforth denote by $\Isom$ (resp. $\Isom_Q$) the sets of isomorphisms (resp. quadratic isomorphisms) 
of invertible sheaves. 

\begin{num}\label{num:orientation_classes}\textit{Orientation classes}. 
The notion of quadratic isomorphisms naturally covers Barge-Morel's formalism of orientations.
 Given an invertible sheaf $\cL$ over a scheme $X$,
 we define the set of \emph{orientation classes} of $\cL$ as
$$
\Or_X(\cL)=\Isom_Q(\cL,\cO_X) = \big\{(\epsilon,\cM) \mid 
\epsilon:\cL \xrightarrow{\simeq} \mathcal{O}_X\otimes \cM^{\otimes2} \big\}/\sim
$$
Naturally, we say that $\cL$ is \emph{orientable} if the above set is non-empty.
This assignment is functorial for quadratic isomorphisms. Given a morphism of schemes $f:Y\to X$, we denote by $f^*:\Or_X(\cL)\to \Or_Y(f^*\cL)$ the associated map.
The monoidal structure on $\uPicO(X)$ induces a product
$$
\Or_X(\cL) \otimes \Or_X(\cL') \rightarrow \Or_X(\cL \otimes \cL'), (\epsilon,\epsilon')
 \mapsto \epsilon.\epsilon'=(m^{-1}\otimes \mathrm{id}_{(\cM\otimes \cM')^{\otimes 2}}) \circ (\epsilon \otimes \epsilon')
$$
The composition law 
$$
\Or_X(\cO_X) \otimes \Or_X(\cO_X) \rightarrow \Or_X(\cO_X \otimes \cO_X) \xrightarrow{m^{-1}} \Or_X(\cO_X)
$$
defines an abelian group structure on
$\Or_X(\cO_X)$. Its neutral element is the class of the quadratic pre-isomorphism $m^{-1}:\cO_X\to \cO_X^{\otimes 2}$.
\footnote{One can check that the composition of quadratic isomorphisms also induces this group structure.}
Moreover, the preceding product induces an action of $\Or_X(\mathcal{O}_X)$ on $\Or_X(\cL)$.
 In fact, the set of orientations of $\cO_X$ has an interpretation in terms of torsors with
 coefficients in the sheaf $\mu_{2,X}$ of square roots of $X$,
 which we refer to as the sheaf of \emph{local orientations} of $X$
\begin{equation}\label{eq:orient&mu2}
\Or_X(\cO_X)=H^1_{\textrm{Zar}}(X,\mu_2)
\end{equation}
where the torsors are taken in the Zariski (or the Nisnevich) topology. 
 This immediately yields the following result that is very useful in practice.
\end{num}
\begin{prop}\label{prop:orient&Pic-mod2}
For any scheme $X$, there is a short exact sequence of abelian groups
$$
\xymatrix@C=20pt@R=-2pt{
0 \ar[r] & \GG(X)/\GG(X)^2\ar[r] & \Or_X(\cO_X)\ar[r] & \Pic(X)_2\ar[r] & 0 \\
& u \ar@{|->}[r] & m^{-1} \circ (\times u) && \\
&& (\epsilon,\cM)\ar@{|->}[r] & \cM &
}
$$
where $\Pic(X)_2$ is the $2$-torsion subgroup of $\Pic(X)$.

The action of $\Or_X(\cO_X)$ on $\Or_X(\cL)$ is faithful.
 In fact, $\Or_X(\cL)$ is a formally principal homogeneous $\Or_X(\cO_X)$-set: it is either empty
 or a principal homogeneous $\Or_X(\cO_X)$-set.

Moreover, when $\Pic(X)$ has no $2$-torsion and $\Or_X(\cL) \neq \emptyset$, 
 the abelian group $\Or_X(\cO_X) \simeq \GG(X)/\GG(X)^2$ acts fully faithfully on the set $\Or_X(\cL)$.
 In particular, two classes of orientations of $\cL$ differ by a uniquely defined element of $\GG(X)/\GG(X)^2$
 (modulo this action).
\end{prop}

\begin{rem}\label{rem:compare_or}
We first remark that our point of view differs slightly from other sources as we really focus
 on orientation \emph{classes}. This allows one to get structures on those classes,
 and to formulate the preceding result.

In practice, the preceding theorem means that an invertible sheaf $\cL$ on $X$ is orientable
 if and only if its class in $\Pic(X)$ is $2$-divisible.
 Moreover, if $\Pic(X)$ has no $2$-torsion, then two orientation classes of $\cL$ differs
 by a unique quadratic class $\bar \phi \in \GG(X)/\GG(X)^2$ for some global invertible function $\phi$ on $X$. 

For instance, if $X=\PP^1_k$ is the projective line over a field $k$, an invertible sheaf $\cL$ is orientable
 if and only if it has an even degree; moreover, two orientations of $\cL$ differ by a unique quadratic class in $Q(k)=k^*/(k^*)^2$.
\end{rem}

\begin{rem}\label{rem:orient&GW}
We remark that $\Or_X(\cL)$ can be seen as a subgroup of 
the $\cL$-twisted Grothendieck-Witt group $\GW(X,\cL)$ of $X$, defined as for the usual Grothendieck-Witt group
 except that one considers non-degenerate symmetric bilinear $\cL$-forms
 $\mathcal E \otimes_{\cO_X} \mathcal E \rightarrow \cL$.  
 Here, $\mathcal E$ is a finite rank locally free $\cO_X$-module.
Indeed, there is canonical rank map $\rk_{X,\cL}:\GW(X,\cL) \rightarrow \uZ_X$
 induced by the rank map of $\cO_X$-module, and one obtains
$$
\Or_X(\cL)=\rk_{X,\cL}^{-1}(1)
$$
That is, orientations classes of $\cL$ corresponds to classes
of $\cL$-twisted symmetric bilinear forms on line bundles of $X$.
\end{rem}

\begin{ex}\label{ex:relative_Or}
With reference to \Cref{ex:quad_iso}, 
the previous definitions (\Cref{def:quad-preiso}, \Cref{num:orientation_classes}) readily imply that the set $\Or_X(\cL \otimes \omega_X^\vee)$ 
is in bijection with quadratic isomorphisms $\epsilon:\cL \qiso \omega_X$ and also with relative orientations 
of $L=\mathbb V(\cL)$ in the sense of Bachmann-Wickelgren \cite{BW_Euler}. 
\end{ex}

To be precise, we formulate the following definition, which extends the previous case.
\begin{df}\label{df:orient_virtual}
Let $\cV$ be a virtual locally free sheaf over a scheme $X$.
 We say that $\cV$ is orientable if its determinant $\det(\cV)$ is orientable.
 An orientation (class) of $\cV$ is an orientation (class) of $\det(\cV)$.
 We put $\Or_X(\cV):=\Or_X(\det(\cV))$.
\end{df}

\begin{num}
In general, the Thom space functor  (see \ref{num:Thom_spaces})
$$
\Th_X:\uK(X) \rightarrow \h\SH(X)
$$
does not factor through Deligne's graded determinant functor (see \Cref{sec:notations}). 
The purpose of the next
theorem is to give a criterion for when this can be achieved.

Following \cite[\textsection 7.13]{DFJK}, one introduces a variant of Thom spaces
in the case of an invertible sheaf $\cL$ on $X$, using the formula
$$
\Tw_X(\cL):=\Th(\langle \cL\rangle-\langle \mathcal{O}_X\rangle)=\Th(\cL)(-1)[-2]
$$
As explained in \emph{loc. cit.},
 this kind of \emph{twists} is especially relevant when dealing with the so-called $\SL$-oriented theories
 (see \cite{PW1, Anan}). Nevertheless, we consider the functor
$$
\Tw_X:\uZ_X \times \uPic(X) \rightarrow \SH(X), (r,\cL) \mapsto \Tw(r,\cL):=\Tw_X(\cL)(r)[2r]
$$

The next theorem extends earlier considerations due to Röndigs 
\cite[Lemma 4.2]{RonTheta} and Ananyevskiy \cite[Lemma 4.1]{Anan}.
\end{num}
\begin{thm}\label{thm:SH-orientations}
Let $X$ be a scheme such that the canonical map of groups
$$
K_0(X)  \rightarrow H^0_\nis(X,\ZZ) \times \Pic(X), v \mapsto \big(\rk(v),\det(v)\big)
$$
is an isomorphism.

Then the twist map $\Tw_X$ defined above is monoidal and functorial
 with respect to quadratic isomorphisms. Moreover, it fits into the following commutative diagram, 
 in which the dotted arrow is uniquely defined
$$
\xymatrix@C=40pt@R=12pt{
\uK(X)\ar^{\Th_X}[r]\ar_{(\rk,\det)}[d]\ar@{}|{(1)}[rd] & \SH(X)\ar@{=}[d] \\
\uZ_X \times \uPic(X)\ar|-{\Tw_X}[r]\ar_{\Id \times \rho_X}[d]\ar@{}|{(2)}[rd] & \SH(X)\ar@{=}[d] \\
\uZ_X \times \uPicO(X)\ar@{-->}_-{\Tw^{or}_X}[r] & \SH(X)
}
$$
\end{thm}
In practice, the preceding theorem allows one to associate to any orientation class $\epsilon \in \Or_X(\cV)$,
 a canonical isomorphism
$$
\epsilon_*:\Th(\cV) \rightarrow \un_S(r)[2r]
$$
where $r=\rk \cV$.
\begin{proof}
The assumption implies that the functor $(\rk,\det)$ is an equivalence of categories,
 and therefore of Picard categories.
 Let $\tau:\uZ_X \times \uPic(X) \rightarrow \uK(X)$ be the functor
 which associates to $(r,\cL)$ the virtual locally free sheaf $\langle \cL\rangle+(r-1)\langle \mathcal{O}_X\rangle$ on $X$.
 It is clear that $(\rk,\det) \circ \tau \simeq \Id$. Therefore $\tau$ is the inverse
 of $(\rk,\det)$, and as such, it is an equivalence of Picard categories.
 By definition, $\Tw_X=\Th \circ \tau$. And this implies that $\Tw$ is monoidal,
 as well as commutativity of the square (1).

According to \cite[Lem. 4.1]{Anan}, for any invertible sheaf $\cL$ over $X$,
 there exists an isomorphism $\Th(\cL) \simeq \Th(\cL^\vee)$, 
 which is equivalent to the existence of an isomorphism $\Th(\langle \cL \rangle-\langle \cL^\vee \rangle) \simeq \un_X$,
 functorial in $\cL$ (with respect to isomorphisms of invertible sheaves on $X$).
 As $(\rk,\det)$ is an equivalence of categories, one deduces the existence of an isomorphism
$$
\twist{\cL}-\twist{\cL^\vee} \simeq \twist{\cL^{\otimes 2}}-\twist{\cO_X}
$$ 
which is functorial in $\cL$, as both virtual locally frees sheaves have the same rank and determinant.
 One deduces the existence of an isomorphism $\Tw_X(\cL^{\otimes 2}) \simeq \un_X$
 that is functorial with respect to isomorphisms in $\cL$. This implies the existence of $\Tw^{or}_X$
 such that the square (2) commutes.
 The uniqueness follows as $\Id \times \rho_X$ is full (and the identity on objects).
\end{proof}

\begin{ex}
Let $\E$ be a ring spectrum over a scheme $S$ equiped with an $\SL$-orientation $\thom$
 in the sense of Panin-Walter (see \cite{Anan, DFJK}). Let $X$ be a separated $S$-scheme and $\mathcal{V}$ a virtual locally free sheaf on $X$.
 Let us consider the category of modules $\E-\mathrm{mod}_X$ over the monoid $\E_X$
 in the monoidal category $\ho \SH(X)$. One then considers the canonical functors

\begin{align*}
\Th^\E_X:\uK(X) &\rightarrow \E-\mathrm{mod}_X, \cV \mapsto \E_X\otimes \Th_X(\cV) \\
 \Tw^\E_X:\uZ(X) \times \uPic(X) &\rightarrow \E-\mathrm{mod}_X, (r,\cL) \mapsto \E_X\otimes \Tw_X(\cL) (r)[2r].
\end{align*}

The existence of the Thom isomorphism associated with the $\SL$-orientation of $\E$ enables
 the construction of an essentially commutative diagram analogous to the one above
$$
\xymatrix@C=40pt@R=12pt{
\uK(X)\ar^{\Th^\E_X}[r]\ar_{(\rk,\det)}[d] & \E-\mathrm{mod}_X\ar@{=}[d] \\
\uZ_X \times \uPic(X)\ar|-{\Tw^\E_X}[r]\ar_{\Id \times \rho_X}[d]\ar@{}|{(2)}[rd]\ar@{=>}|{\thom}[ru]
 & \E-\mathrm{mod}_X\ar@{=}[d] \\
\uZ_X \times \uPicO(X)\ar@{-->}_-{\Tw^{or,\E}_X}[r] & \E-\mathrm{mod}_X
}
$$
The upper commutative square witnesses that
 for any virtual locally free sheaf $\cV$ of rank $r$ and determinant $\cL$,
 one gets a canonical "Thom" isomorphism
$$
\thom(v):\E_X \otimes \Th_X(v) \xrightarrow \sim \E_X \otimes \Tw_X(\cL)(r)[2r]
$$
This depends on the chosen orientation $\thom$ of $\E$.

The second square means that for any orientation class $\epsilon \in \Or_X(\cL)$, 
there exists a canonical isomorphism in $\ho \SH(X)$
$$
\epsilon^\thom_*:\E_X \otimes \Tw_X(\cL) \xrightarrow \sim \E_X
$$
This isomorphism a priori depends on the chosen orientation $\thom$
but it exists for arbitrary smooth $S$-scheme $X$.\footnote{One can take $X=S$, which may be singular.}

When $X$ satisfies the assumption of the previous theorem,
 by functoriality of the constructions, one deduces the equality of homotopy classes
$$
\E_X \otimes \epsilon_*=\epsilon^\thom_*
$$
where the left-hand side refers to the isomorphism obtained in the previous theorem.

In particular, the above isomorphisms induces the following more usual Thom isomorphisms in cohomology
\begin{align*}
\thom(v)_*:&\E^{**}(X,v) \xrightarrow \sim \E^{*-2r,*-r}(X,\Tw_X(\det v)), v \in \uK(X) \\
\epsilon^{\thom}_*:&\E^{**}(X,\Tw_X(\cL)) \xrightarrow \sim \E^{**}(X), \epsilon \in \Or_X(\cL)
\end{align*}
explaining that $\SL$-oriented cohomologies are bigraded and depend only upon the twist by a line bundle up to orientation.
 Chow-Witt groups provide the most fundamental example for us (the unramified Milnor-Witt sheaf $\uKMW_*$ represents these groups over fields).
\end{ex}

\subsection{Quadratic 0-cycles and quadratic degrees} 
\label{subsection:q0caqd}

\begin{num}\label{num:quadratic-cycles}
 Next, we recall a few definitions of Chow-Witt groups suitable for our needs.\footnote{We focus on zero cycles and emphasize (quadratic) cycles rather than cycle classes.}
 We fix a base field $k$, not necessarily perfect.
 
Given a finitely generated extension field $K/k$, 
we let $K_*^{MW}(K)$ be the Milnor-Witt ring of $K$ (see \cite[Def. 3.1]{MorLNM},
 or \cite{Deg18}).
 Given an invertible $K$-vector space $\mathcal L$, we define the twisted Milnor-Witt ring of $K$ by the formula 
 in \cite[Rem. 3.21]{MorLNM}
\begin{equation}\label{eq:twists_KMW}
K_*^{MW}(K,\mathcal L):=K_*^{MW}(K) \otimes_{\ZZ[K^\times]} \ZZ[\cL^\times]
\end{equation}
where $\cL^\times=\cL-\{0\}$, 
using the action of $K^\times$ on $K_*^{MW}(K)$ via the canonical map $K^\times \rightarrow \GW(K)=K_0^{MW}(K)$.

%

Let now $X$ be an essentially smooth $k$-scheme of dimension $d$ and $\cL$ an invertible sheaf on $X$. 
One defines the group of \emph{quadratic ($d$-codimensional) cycles} on $X$ twisted by $\cL$ as 
\begin{equation}\label{eq:quad-cycles}
\ZW d(X,\cL):=\bigoplus_{x \in X^{(d)}} \GW(\kappa(x),\omega_{x/X}^\vee \otimes_{\kappa(x)} \cL|_x)
\end{equation}
Here $X^{(d)}$ is the set of closed points $x$ of $X$ and $\omega_{x/X}$ is the determinant of the $\kappa(x)$-vector
 space $\mathcal{C}_{x/X}=\mathfrak{m}_x/\mathfrak{m}_x^2$.
 The \emph{support} of a quadratic cycle $\alpha$ is the set of points $x \in X^{(d)}$ whose coefficient in $\alpha$ is
 non-zero. We will consider it as a finite reduced closed subscheme of $X$.

Owing to \cite[Rem. 5.13]{MorLNM}, \cite{FaselCHW}, or \cite[Def. 7.2]{Feld1}\footnote{In Morel's notation, $\ZW d(X,\cL)$ is the $d$-th term of the Rost-Schmid complex $C^*_{\mathrm{RS}}(X, \KMW_d\{\cL\})$.
 In Feld's notation (which uses a different normalization for twists, see the following remark) it is the end of the complex $C_*(X,\KMW_*,\omega^\vee_{X/k} \otimes \cL)$, where $\omega_{X/k}=\det(\Omega_{X/k})$ is the canonical sheaf of $X/k$. 
 Note that in both references, the definition is only given under the additional assumption
 that $k$ is finitely generated over some perfect base field $k_0$.
 We refer the reader to \cite[1.3.8, 1.4.4]{DFJ} (with homological conventions)
 or \cite{Deg18} for the case of an arbitrary base field $k$.} there is a map $$
\mathrm{div}:\bigoplus_{y \in X^{(d-1)}} \KMW_1(\kappa(y),\omega_{y/X}^\vee \otimes \cL|_y) \longrightarrow \Zt^d(X,\cL)
$$
Two quadratic cycles are said to be \emph{rationally equivalent} if their difference is in the image of $\mathrm{div}$. The above defines an additive equivalence relation $\sim_{rat}$ on quadratic $d$-codimensional cycles, and the \emph{$d$-th Chow-Witt group} of $X$ twisted by $\cL$ is the quotient
$$
\CHt^d(X,\cL)=\Zt^d(X,\cL)/\!\sim_{rat}=\mathrm{coKer}(\mathrm{div})
$$
This group depends functorially on $\cL$ for quadratic isomorphisms.
\end{num}

\begin{rem} Several remarks are in order to explain our choice of conventions.
\begin{enumerate}
\item The advantage of considering quadratic cycles of maximum codimension
 is that one can define them by the simple formula \eqref{eq:quad-cycles}.
 In the case of arbritrary codimension, one has to consider additionally a condition
 of \emph{non-ramification}; in other words, quadratic cycles can be define as
 the kernel of the differential in the Rost-Schmid complex.\footnote{Thus they are
 ``cycles'' in the traditional sense with respect to the Rost-Schmid complex!}
\item Formula \eqref{eq:quad-cycles} follows \emph{cohomological conventions}.
 These conventions do coincide with the original definition of Fasel in \cite{FaselCHW},
 and for example with the one chosen in \cite[Chap. 2, \textsection 3]{BCDFO}.
 But they do not coincide with the convention of Feld in  \cite[\textsection 5.2]{Feld1},
 for which the twists differ. Note that the groups defined by Feld only differs
 up to a canonical isomorphism, obtained by changing the twists.
 In fact, the convention of Feld is rather \emph{homological} (though he uses a graduation
 with respect to codimension).

We formalized the passage from cohomological to homological
 after the next proposition.
 This is also explained in \cite[Chap. 6, Rem. 4.2.14]{BCDFO}. 
\end{enumerate}
\end{rem}

\begin{prop}
\label{prop:cohomology-tchd}
Let $X$ be an essentially smooth $k$-scheme of dimension $d$ and let $v=\mathbb{V}(\cV)$ be a virtual vector bundle 
of rank $d$ on $X$. Then there is a canonical isomorphism
$$
H^0_{\SH}(X,v):=[\un_X,\Th(v)] \simeq \CHt^d\big(X,\det\cV\big)
$$
\end{prop}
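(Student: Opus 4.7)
The plan is to identify $[\un_X,\Th(v)]$ with the top Nisnevich cohomology of a Milnor--Witt sheaf on $X$, and then with the corresponding Chow--Witt group via the Rost--Schmid complex. The argument has three steps.

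First, we reduce the computation of $H^0_{\SH}(X,v)=[\un_X,\Th(v)]$ to a cohomology computation using Morel's homotopy $t$-structure on $\SH(X)$. Writing $\Th(v)=\Tw(\det\mathcal V)\otimes \un_X(d)[2d]$ up to a canonical correction that depends only on the data $(\rk v,\det v)$ when fed into $[\un_X,-]$ (this canonical reduction is discussed below), one observes that $\Th(v)$ is $d$-connective in the homotopy $t$-structure: its homotopy sheaves $\underline{\pi}_n$ vanish for $n<d$ by Morel's $\AA^1$-connectivity theorem. Since $X$ is essentially smooth over $k$ of dimension $d$, its Nisnevich cohomological dimension is $\leq d$, so the Nisnevich descent spectral sequence
$$E_2^{p,q}=H^p_{\Nis}(X,\underline{\pi}_q(\Th(v)))\Longrightarrow \pi_{q-p}\Map_{\SH(X)}(\un_X,\Th(v))$$
collapses in the relevant bidegree to produce a canonical isomorphism
$$[\un_X,\Th(v)]\;\simeq\;H^d_{\Nis}\bigl(X,\underline{\pi}_d(\Th(v))\bigr).$$

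Second, we identify the sheaf $\underline{\pi}_d(\Th(v))$ with the twisted unramified Milnor--Witt sheaf $\uKMW_d\{\det\mathcal V\}$ in the notation of \eqref{eq:twists_KMW}. For $v$ trivial of rank $d$ this is Morel's computation $\underline{\pi}_*(\un_k(d)[2d])_0 \simeq \uKMW_d$ in the appropriate degree. For arbitrary $v$, the monoidal functor $\Th\colon \uK(X)\to \SH(X)$ combined with Proposition~\ref{prop:Thom_line_bundles} shows that, after passing to the level of homotopy sheaves $\underline{\pi}_d$, the resulting sheaf depends only on $(\rk v,\det v)$ through $\uKO(X)$ in the precise sense that tensoring with $\Tw(\cL)$ produces the twist $\{\cL\}$ in the Milnor--Witt sheaf, and quadratic isomorphisms on $\cL=\det v$ act via the canonical $\GG/\GG^2$-action on $\uKMW_*\{\cL\}$ described in \S\ref{sub:conventions}. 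This yields the required canonical identification of $\underline{\pi}_d(\Th(v))$ with $\uKMW_d\{\det\mathcal V\}$.

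Third, we apply the Rost--Schmid complex of Morel \cite{MorLNM} (see also \cite{FaselCHW, Feld1}), which computes $H^*_{\Nis}(X,\uKMW_d\{\det\mathcal V\})$. Its degree-$d$ cokernel is, by definition,
$$H^d_{\Nis}\bigl(X,\uKMW_d\{\det\mathcal V\}\bigr)\;=\;\CHt^d(X,\det\mathcal V),$$
which combined with Steps~1 and 2 produces the desired canonical isomorphism $H^0_{\SH}(X,v)\simeq \CHt^d(X,\det\mathcal V)$.

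The main obstacle is Step~2: one must check that the identification of $\underline{\pi}_d(\Th(v))$ with $\uKMW_d\{\det\mathcal V\}$ is truly canonical and functorial in quadratic isomorphisms of $\det v$, not just defined up to a unit square. Concretely, $\Th(v)$ depends on the full virtual bundle $v$, while the target Chow--Witt group depends only on $(\rk v,\det v)$, so the canonical passage from one to the other must go through the quotient $\uKO(X)$ of $\uK(X)$, and this is precisely the content of the quadratic Picard groupoid formalism developed in \S\ref{subsection:ovbaqi}. Once this naturality is in place, the identification with $\CHt^d$ and the twists in \eqref{eq:twists_KMW} match by construction of the Rost--Schmid complex.
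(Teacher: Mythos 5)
Your proof is correct but organizes the computation via a genuinely different spectral sequence than the paper. The paper runs the \emph{coniveau} spectral sequence attached to the cohomology theory $H^{*}_{\SH}(-,v)$: filtering $X$ by codimension of support and applying homotopy purity gives an $E_1$-page indexed by points $x\in X^{(p)}$, Morel's $\AA^1$-connectivity forces $q\le 0$, and the line $E_1^{*,0}$ is identified with the Rost--Schmid complex (in Feld's presentation, twisted by $\omega_{X/k}^\vee\otimes\det\mathcal V$); the conclusion is then read off from the edge map. You instead run the \emph{Postnikov/Nisnevich descent} spectral sequence $H^p_{\nis}(X,\underline{\pi}_q(\Th(v)))\Rightarrow\pi_{q-p}$, collapse it using $d$-connectivity of $\Th(v)$ together with $\operatorname{cd}_{\nis}(X)\le d$, arrive at $H^d_{\nis}(X,\underline{\pi}_d(\Th(v)))$, identify $\underline{\pi}_d(\Th(v))$ with the twisted Milnor--Witt sheaf $\uKMW_d\{\det\mathcal V\}$, and finish using that the Rost--Schmid complex is a flasque (Cousin) resolution. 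Both routes ultimately rest on the same theorems of Morel (connectivity, the computation of the zeroth stable homotopy sheaf, and the Gersten/Rost--Schmid formalism), so the choice of spectral sequence is largely a reorganization; but two things your version glosses over and the paper's sidesteps: (i) the paper first reduces to $k$ perfect before invoking Morel's results, and you should do the same before asserting $\underline{\pi}_d(\Th(v))\simeq\uKMW_d\{\det\mathcal V\}$; and (ii) you assert $d$-connectivity of $\Th(v)$ as an object of $\SH(X)$, whereas Morel's connectivity theorem is native to $\SH(\kappa)$ for a field $\kappa$ --- the extension to essentially smooth $X$ over a perfect field is true but is an additional input, while the coniveau route only ever applies connectivity over the residue fields $\kappa(x)$ appearing in the $E_1$-page. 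The issue you flag as the ``main obstacle,'' namely that the identification of $\underline{\pi}_d(\Th(v))$ should factor through $\uKO(X)$, is correctly resolved by applying \Cref{prop:Thom_qiso_semilocal} stalkwise (local rings have $K_0=\ZZ$), which globalizes to the required sheaf identification and matches the paper's twist $\omega_{X/k}^\vee\otimes\det\mathcal V$ after passing from Feld's convention to Morel's.
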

\begin{proof}
Because the stable homotopy category satisfies continuity (\cite[Def. 4.3.2]{CD3}),
 and $k$ is a filtered colimit of finitely generated field extension over its prime sub-field $F$,
 one can assume that $k=F$, and therfore $k$ is perfect.
 The coniveau spectral sequence (see \cite{Deg11},
 \textsection 1.1.1 and Def. 1.4)
 associated with the cohomology theory $H^*_{\SH}(X,v)$ takes the form
$$
E_1^{p,q}=\oplus_{x \in X^{(p)}} H^{p+q}_{\SH}(\Th(N_xX_{(x)}),v)
 \Rightarrow H_{\SH}^{p+q}(X,v)
$$
Here $X_{(x)}=\Spec(\mathcal O_{X,x})$ and $N_x X_{(x)}$ is the normal bundle of $x$;
 and we have used Morel-Voevodsky's homotopy purity theorem to
 identify cohomology with support with the cohomology of the relevant Thom space, which applies as $\kappa(x)/k$
 is separable (therefore essentially smooth) as we assumed $k$ is perfect. 
The $E_1$-term is concentrated in the range $p \in [0,d]$ and by the $\AA^1$-connectivity theorem, 
in the range $q \leq 0$.
According to Morel's computation of the $0$-stable stem and Feld's theory \cite{Feld2},
there is an isomorphism between complexes
$$
E_1^{*,0} \simeq C^*(X,\KMW_*,\omega_{X/k}^\vee \otimes \det\cV)
$$
We conclude by looking at the line $p+q=d$.
\end{proof}

\begin{num}\label{num:Chow-hlg}\textit{Quadratic $0$-cycles and homological conventions}.
Let $X/k$ be an essentially smooth scheme of dimension $d$
 with canonical sheaf $\omega_X=\det(\Omega_{X/k})$.

To begin with, note that there is a canonical isomorphism
\begin{equation}\label{eq:duality-CHt}
\ZW d(X,\omega_X)=\oplus_{x \in X^{(d)}} \GW(\kappa(x),\omega_{x/X}^\vee \otimes \omega_{X}|_x)
\simeq \oplus_{x \in X_{(0)}} \GW\big(\kappa(x),\omega_{x/k}\big)=:\ZWhlg 0(X)
\end{equation}
The elements of the latter group deserves the name of \emph{quadratic $0$-cycles},
 and corresponds to homological conventions (after taking rational equivalence
 classes, the group coincides with some Borel-Moore homology; see \cite[Chap. 6, Rem. 4.2.14]{BCDFO}).
 The above isomorphism is a lift of the natural Poincaré duality isomorphism between
 cohomological and homological Chow-Witt groups of the smooth $k$-scheme $X$.
 We have used, for any closed point $x \in X$, the conormal exact sequence 
$$ 0\to \mathcal{C}_{x/X}\to \Omega_{X}|_x\to \Omega_{x/k}\to 0$$ 
gives a canonical isomorphism $\omega_{x/X}^\vee \otimes \omega_{X}|_x \simeq \omega_{x/k}$
 of invertible $\kappa(x)$-vector spaces.
Thus, an $\omega_X$-twisted quadratic $0$-cycle can be identified with a formal sum 
$\alpha=\sum_{i \in I} \langle\sigma_i\rangle.x_i$, 
where $x_i \in X$ is a closed point and $\sigma_i$ is the class of
 a non-degenerate $\omega_{x_i/k}$-symmetric bilinear form over $\kappa(x_i)$.\footnote{That is
 the class in the Grothendieck-Witt group of a non-degenerate
 symmetric bilinear morphism $V_i \otimes V_i \rightarrow \omega_{x_i,/k}$
 where $V_i$ is a finite $\kappa(x_i)$-vector space (see \cite[2.1.14]{Deg18}).}
\end{num}

\begin{num}\label{num:quad_deg}\textit{Quadratic Degree}.
It is natural to consider quadratic $0$-cycles when it comes to
 the question of \emph{quadratic degree.}

Let $X$ be a proper smooth $k$-scheme.
 One defines the \emph{quadratic degree} $\tdeg$ of a quadratic $0$-cycle $\alpha \in \ZWhlg 0(X)$
 as the proper pushforward associated with the projection of $X/k$ (see \cite[\textsection 5.3]{Feld1},
 or \cite[1.3.8]{DFJ} in general).
 It is defined at the level of cycles, and factorizes through rational equivalence,
 as follows. For any point $x_i$ in the support of $\alpha$, one can consider the differential trace
 map of the finite (lci) extension $\kappa(x_i)/k$ deduced from Grothendieck duality
 (see e.g., \cite[Def. 6.2.4]{Deg18})
$$
\Tr^\omega_{\kappa(x_i)/k}:\omega_{x_i/k} \rightarrow k
$$
Then one defines the quadratic degree of $\alpha$ over $k$ as the element 
$$
\tdeg(\alpha)=\sum_{i \in I} \langle\Tr^\omega_{\kappa(x_i)/k} \circ \sigma_i\rangle \in \GW(k)
$$
In the classical terminology of Grothendieck-Witt rings, one can write
$$
\Tr^\omega_{\kappa(x_i)/k*}(\langle \omega_i \rangle):=\langle\Tr^\omega_{\kappa(x_i)/k} \circ \sigma_i\rangle
$$
and call it the \emph{Scharlau transfer} associated with the differential trace $\Tr^\omega_{\kappa(x_i)/k}$
 (though Scharlau transfers are usually considered without twists, \cite{Scharlau}).
 If $\kappa(x_i)/k$ is separable, then one has $\omega_{x_i/k}=\kappa(x_i)$ and the differential trace map
 corresponds to the usual trace map $\Tr_{x_i/k}:\kappa(x_i) \rightarrow k$.

More generally, let $\cL$ be an invertible sheaf over $X$ with a relative orientation
 (see \Cref{ex:relative_Or}) given by a quadratic isomorphism $\epsilon:\cL \qiso \omega_X$.
We define the \emph{quadratic $\epsilon$-degree} as the composite
\begin{equation}\label{eq:epsilon-degree}
\tdeg_\epsilon:\ZW d(X,\cL) \xrightarrow{\epsilon_*} \ZW d(X,\omega_X)\simeq \ZWhlg 0(X)
\xrightarrow{\tdeg} \GW(k)
\end{equation}
When $\epsilon$ is the identity quadratic isomorphism of $\omega_X$,
 we just write $\tdeg$, hiding the duality isomorphism \eqref{eq:duality-CHt}.
\end{num}

\begin{num}\label{num:or_deg}\textit{Oriented degree of oriented $0$-cycles}.
Let $X$ be a $d$-dimensional proper smooth $k$-scheme and assume that 
$\omega_X$ is orientable, with chosen orientation class $\tau \in \Or_X(\omega_X)$. 

Suppose that $Z$ is a reduced, regularly immersed closed subscheme of $X$ of pure codimension $d$,
 such that for each generic point $x \in Z^{(0)}$,
 the corresponding irreducible component $Z(x)$ (with its reduced subscheme structure) is also regularly
 immersed in $X$.\footnote{Examples
 can be smooth subschemes of $X$, or normal crossing divisors in $X$.}
 Let $\omega_{Z/X}=\det\mathcal{C}_{Z/X}$ be the determinant of the conormal sheaf of $Z$ in $X$,
 which is locally free of finite rank by assumption.
 Let finally $\epsilon \in \Or_Z(\omega_{Z/X})$ be an orientation class.

This allows us to define a canonical quadratic $d$-codimensional cycle
 $[Z,\epsilon]_X \in \tilde Z^d(X)$ associated with $(Z,\epsilon)$,
 as the image of $\sum_{x \in Z_{(0)}} \langle 1 \rangle . x \in \tilde Z^0(Z)$ under the composite map
$$
\tilde Z^0(Z) \xrightarrow{\epsilon^{-1}_*} \tilde Z^0(Z,\omega_{Z/X}) \xrightarrow{i_*} \tilde Z^d(X)
$$
With more notation, we can give an explicit formula for this quadratic cycle.
 Note that $\epsilon$ is represented by an isomorphism $\omega_{Z/X} \rightarrow \cL \otimes \cL$,
 that we also denote by $\epsilon$.
 By restriction to $x=\Spec{\kappa(x)}$, taking dual and passing to the reciprocal isomorphism, we get another
 non degenerate symmetric bilinear $\omega_{x/X}^\vee$-form on $Z(x)$\footnote{Note that $\epsilon_x^\vee$ can also be seen
 as the orientation of $\omega_{x/X}^\vee=(\omega_{Z/X}|_x)^\vee$
 obtained by restriction of $\epsilon$ to $x$ and passage to the dual.}
$$
\epsilon_x^\vee:\cL_x^\vee \otimes \cL_x^\vee \rightarrow  \omega_{x/X}^\vee
$$
and the formula
$$
[Z,\epsilon]_X=\sum_{x \in X^{(d)} \cap Z} \langle \epsilon_x^\vee \rangle.x 
$$
Considering $\tau$ as a relative orientation of the trivial bundle $\cO_X$,
 one gets from the above the quadratic $\tau$-degree $\tdeg_\tau$.
We can also give an explicit formula for the $\tau$-oriented degree of the 
$\epsilon$-oriented cycle $[Z,\epsilon]_X$.
First, by definition, 
$\tau$ is the quadratic class of (the inverse of) an isomorphism $\cM \otimes \cM \rightarrow \omega_X$.
One deduces an isomorphism
$$
\epsilon_x^\vee \otimes \tau|_x:(\cL_x^\vee \otimes \cM_x) \otimes (\cL_x^\vee \otimes \cM_x) \rightarrow (\omega_{x/X}^\vee \otimes \omega_X|_x) \simeq \omega_{x/k}
$$
which we can view as a symmetric bilinear $(\omega_{x/k})$-form.
 Seen as an isomorphism, its inverse determines a quadratic class which is
 an orientation in $\Or_x(\omega_{x/k})$. By abuse of notation, we write
 $\epsilon_x^\vee \otimes \tau|_x \in \Or_x(\omega_{X/k})$ for this specific orientation.
In $\GW(k)$, we deduce the formula 
\begin{equation}\label{eq:degree-formula}
\tdeg_{\tau}([Z,\epsilon]_X)=\sum_{x \in Z_{(0)}} \langle \Tr^\omega_{\kappa(x)/k} \circ (\epsilon_x^\vee \otimes \tau|_x) \rangle
\end{equation}
For each term, 
we consider the class of $\epsilon_x^\vee \otimes \tau|_x$ in 
$\Or_x(\omega_{x/k}) \subset \GW(\kappa(x),\omega_{x/k})$,
and apply the twisted Scharlau transfer 
$\Tr^{\omega}_{\kappa(x)/k*}:\GW(\kappa(x),\omega_{x/k}) \rightarrow \GW(k)$,
where $\Tr^{\omega}_{\kappa(x)/k}$ is the differential trace map.

Note, finally, that if $\kappa(x)/k$ is separable, then $\omega_{x/k}=\kappa(x)$. 
In particular,
 the class
 $\langle (\epsilon_x^\vee \otimes \tau|_x)^{-1} \rangle \in \Or_x(\kappa(x))=Q(\kappa(x))$
 (see \Cref{prop:orient&Pic-mod2}) is actually the quadratic class of a unit $u_x \in \kappa(x)^\times$
 (uniquely determined up to a square), and $\langle \Tr^\omega_{\kappa(x)/k} \circ (\epsilon_x^\vee \otimes \tau|_x) \rangle$
 is the class of the symmetric bilinear form
$$
\kappa(x) \otimes_k \kappa(x) \rightarrow k, (a,b)  \mapsto \Tr_{\kappa(x)/k}(u_x.ab)
$$
\end{num}

\begin{rem}\label{rem:oriented-class-CHt}

As a final remark, we note that the construction of the quadratic \(d\)-codimensional cycle \([Z,\epsilon]_X\) can be extended to arbitrary codimension (where \(d = \dim(X)\)). We provide a concise definition up to rational equivalence; that is, using Chow-Witt groups.
Consider a closed pair \((X, Z)\) consisting of smooth \(k\)-schemes. Assume that \(Z \subset X\) has pure codimension \(n\), and let \(\epsilon\) be an orientation of the cotangent sheaf \(\mathcal{C}_{Z/X}\), which is equivalent to orienting its determinant \(\omega_{Z/X} = \det(\mathcal{C}_{Z/X})\). The quadratic class \([Z, \epsilon]_X\) is defined as the image of the rational class of the quadratic cycle \(\sum_{x \in Z^{(0)}} \langle 1 \rangle \cdot x\) under the composite map
\[
\CHt^0(Z) \xrightarrow{\epsilon^{-1}_*} \tilde{\CHt}^0(Z,\omega_{Z/X}) \xrightarrow{i_*} \CHt^n(X)
\]
where \(i_*\) is the direct image morphism in Chow-Witt groups (refer to \cite[\textsection 3]{BCDFO} for our conventions).
Indeed, as discussed previously, one can infer from the definition of \(i_*\) that \([Z, \epsilon]_X\) corresponds to the class of the element
\[
\sum_{x \in X^{(n)} \cap Z} \langle \epsilon_x^\vee \rangle \cdot x
\]
This formula serves as a definition for the corresponding quadratic \(n\)-codimensional cycle, making it a canonical representative of the class \([Z, \epsilon]_X\).
\end{rem}

\bibliographystyle{abbrv}
\bibliography{htpinfty}

\end{document}